\documentclass{article}
\usepackage[letterpaper, left=1in, right=1in, top=1in, bottom=1in]{geometry}
\usepackage{authblk}

\PassOptionsToPackage{hypertexnames=false}{hyperref}  
\usepackage{parskip}

\usepackage[colorlinks=true, linkcolor=blue!70!black, citecolor=blue!70!black,urlcolor=black,breaklinks=true]{hyperref}
\usepackage{microtype}
\usepackage{hhline}

\makeatletter
\newcommand{\neutralize}[1]{\expandafter\let\csname c@#1\endcsname\count@}
\makeatother

\usepackage{algorithm}

\usepackage{natbib}
\bibpunct{(}{)}{;}{a}{,}{,}

\usepackage{amsthm}
\usepackage{mathtools}
\usepackage{amsmath}
\usepackage{bbm}
\usepackage{amsfonts}
\usepackage{amssymb}

\usepackage{xpatch}

\newtheorem*{theorem*}{Theorem}
\newtheorem*{lemma*}{Lemma}
\newtheorem{theorem}{Theorem}
\newtheorem{lemma}{Lemma}

\newtheorem{assumption}{Assumption}
\newtheorem{corollary}{Corollary}

\newtheorem{fact}{Fact}

\newtheorem{example}{Example}
\theoremstyle{remark}

\newtheorem{definition}{Definition}
\theoremstyle{definition}

\newtheorem{remark}{Remark}

\AtBeginEnvironment{remark}{%
  \pushQED{\qed}%
}
\AtEndEnvironment{remark}{\popQED\endexample}

\makeatletter
  \renewenvironment{proof}[1][Proof]%
  {%
   \par\noindent{\bfseries\upshape {#1.}\ }%
  }%
  {\qed\newline}
  \makeatother

\xpatchcmd{\proof}{\itshape}{\normalfont\proofnameformat}{}{}
\newcommand{\proofnameformat}{\bfseries}

\usepackage[nameinlink,capitalize]{cleveref}

\crefformat{equation}{#2(#1)#3}
\Crefformat{equation}{#2(#1)#3}
\usepackage{mdframed}
\usepackage{lipsum}

\newenvironment{fexam}
  {\begin{mdframed}\begin{example}}
  {\end{example}\end{mdframed}}
\Crefformat{figure}{#2Figure #1#3}
\Crefname{assumption}{Assumption}{Assumptions}
\Crefformat{assumption}{#2Assumption #1#3}
\usepackage[customcolors]{hf-tikz}
\usepackage{crossreftools}
\newenvironment{frem}
  {\begin{mdframed}\begin{remark}}
  {\end{remark}\end{mdframed}}
\Crefformat{figure}{#2Figure #1#3}
\Crefname{assumption}{Assumption}{Assumptions}
\Crefformat{assumption}{#2Assumption #1#3}
\usepackage[customcolors]{hf-tikz}
\usepackage{crossreftools}
\usepackage{xparse}

\ExplSyntaxOn
\DeclareDocumentCommand{\XDeclarePairedDelimiter}{mm}
 {
  \__egreg_delimiter_clear_keys: 
  \keys_set:nn { egreg/delimiters } { #2 }
  \use:x 
   {
    \exp_not:n {\NewDocumentCommand{#1}{sO{}m} }
     {
      \exp_not:n { \IfBooleanTF{##1} }
       {
        \exp_not:N \egreg_paired_delimiter_expand:nnnn
         { \exp_not:V \l_egreg_delimiter_left_tl }
         { \exp_not:V \l_egreg_delimiter_right_tl }
         { \exp_not:n { ##3 } }
         { \exp_not:V \l_egreg_delimiter_subscript_tl }
       }
       {
        \exp_not:N \egreg_paired_delimiter_fixed:nnnnn 
         { \exp_not:n { ##2 } }
         { \exp_not:V \l_egreg_delimiter_left_tl }
         { \exp_not:V \l_egreg_delimiter_right_tl }
         { \exp_not:n { ##3 } }
         { \exp_not:V \l_egreg_delimiter_subscript_tl }
       }
     }
   }
 }

\keys_define:nn { egreg/delimiters }
 {
  left      .tl_set:N = \l_egreg_delimiter_left_tl,
  right     .tl_set:N = \l_egreg_delimiter_right_tl,
  subscript .tl_set:N = \l_egreg_delimiter_subscript_tl,
 }

\cs_new_protected:Npn \__egreg_delimiter_clear_keys:
 {
  \keys_set:nn { egreg/delimiters } { left=.,right=.,subscript={} }
 }

\cs_new_protected:Npn \egreg_paired_delimiter_expand:nnnn #1 #2 #3 #4
 {
  \mathopen{}
  \mathclose\c_group_begin_token
   \left#1
   #3
   \group_insert_after:N \c_group_end_token
   \right#2
   \tl_if_empty:nF {#4} { \c_math_subscript_token {#4} }
 }
\cs_new_protected:Npn \egreg_paired_delimiter_fixed:nnnnn #1 #2 #3 #4 #5
 {
  \mathopen{#1#2}#4\mathclose{#1#3}
  \tl_if_empty:nF {#5} { \c_math_subscript_token {#5} }
 }
\ExplSyntaxOff

\XDeclarePairedDelimiter{\supnorm}{
  left=\lVert,
  right=\rVert,
  subscript=\infty
  }

\usepackage[utf8]{inputenc} 
\usepackage[T1]{fontenc}    
\usepackage{url}            
\usepackage{booktabs}       
\usepackage{amsfonts}       
\usepackage{nicefrac}       
\usepackage{microtype}      
\usepackage{authblk}
\usepackage{tocloft}            

\usepackage{enumitem}

\usepackage{breakcites}
\usepackage{dsfont}
\usepackage{etoolbox}
\usepackage{comment}
\newtoggle{draft}
\togglefalse{draft}

\usepackage{color-edits}

\usepackage{mathrsfs}

\usepackage{algorithm}
\usepackage{verbatim}
\usepackage[noend]{algpseudocode}

\usepackage{multicol}

\usepackage{colortbl}
\usepackage{bbm}
\usepackage{setspace}

\usepackage{transparent}

\usepackage{inconsolata}
\usepackage[scaled=.90]{helvet}
\usepackage{xspace}

\usepackage{pifont}

\usepackage{tikz-cd}
    \newcommand{\lp}{\left(}
    \newcommand{\rp}{\right)}

    \newcommand{\E}{{\mathbb E}}
    
    \newcommand{\R}{{\mathbb R}}
    \renewcommand{\P}{{\mathbb P}}

    \newcommand{\PP}{\mathbb{P}}

    \renewcommand{\b}[1]{\boldsymbol{\mathbf{#1}}}
    \renewcommand{\vec}[1]{\b{#1}}

    \newcommand{\cG}{\mathcal{G}}

    \newcommand{\eps}{\epsilon}

    \newcounter{rcnt}[section]

    \def\argmin{\mathop{\rm argmin}}
    
    \newcommand{\Ef}{\mathrm{E}_{f^{*}}}

    \newcommand{\Vol}{\operatorname{Vol}}
    
    \newcommand{\Var}{\operatorname{Var}}

    \newcommand{\Span}{\operatorname{Span}}

    \newcommand{\EE}{\mathbb E}

    \newcommand{\QQ}{\mathbb Q}
    
    \newcommand{\erm}{\widehat{f}_n}
    \newcommand{\ft}{f^{*}}

    \newcommand{\nos}[2]{\left\|{#1}\right\|^{#2}}
    \newcommand{\inner}[1]{\left\langle #1 \right\rangle}
    \def\ddefloop#1{\ifx\ddefloop#1\else\ddef{#1}\expandafter\ddefloop\fi}
    \def\ddef#1{\expandafter\def\csname c#1\endcsname{\ensuremath{\mathcal{#1}}}}
    \ddefloop ABCDEFGHIJKLMNOPQRSTUVWXYZ\ddefloop

    \newcommand{\GE}[1]{\mathcal{G}_{n}({#1})}

    \newcommand{\Sn}{\mathbb{S}^{n-1}}

\newcommand{\bana}{(\cB(\cX),\| \cdot \|)}
\newcommand{\banad}{(\cB(\cX),\| \cdot \|_*)}
\newcommand{\lerm}{\widehat{w}_p}
\newcommand{\lft}{w_*}
\newcommand{\ermp}{\widehat{w}_n}

\title{Minimum Norm Interpolation via the Local Theory of Banach Spaces: The Role of $2$-Uniform Convexity}
\author[1]{Gil Kur}
\author[2]{Pierre Bizeul}

\affil[1]{Institute for Machine Learning, ETH Z\"urich}
\affil[2]{Department of Mathematics, Weizmann Institute of Science}
\begin{document}
\date{}
\maketitle

\begin{abstract}
The minimum-norm interpolator (MNI) framework has recently attracted considerable attention as a tool for understanding generalization in overparameterized models, such as neural networks. In this work, we study the MNI under a $2$-uniform convexity assumption, which is weaker than requiring the norm to be induced by an inner product; in this setting, the MNI typically does not admit a closed-form solution. At a high level, we show that this condition yields an upper bound on the MNI bias in both linear and nonlinear models. We further show that this bound is sharp for overparameterized linear regression when the norm's unit ball is in isotropic or John's position and the covariates are i.i.d.\ sub-Gaussian, for example, when each covariate vector has i.i.d.\ Rademacher entries. Finally, under the same assumption on the covariates, we prove sharp generalization bounds for the $\ell_p$-MNI when $p \in \bigl(1 + C/\log d, 2\bigr]$. To the best of our knowledge, this is the \emph{first} work to establish sharp bounds for non-Gaussian covariates in linear models when the norm is not induced by an inner product. This work is deeply inspired by classical work on $K$-convexity and recent work on the geometry of $2$-uniformly convex and isotropic convex bodies.\footnote{A preliminary work 
``Minimum Norm Interpolation Meets The Local Theory of Banach Spaces'' appeared at the 2024 International Conference on Machine Learning. This work has additional results, a detailed discussion, simplified proofs, and improved accessibility for non-experts in functional analysis. This is the first work in a line of work on the relationship between the local theory of Banach spaces and Minimum Norm Interpolation.}
\end{abstract}

\section{Introduction}

Experiments with neural networks have revealed a phenomenon that defies traditional statistical intuition, according to which regularization is critical for large models when fitting noisy data. Instead, in the overparameterized regime, interpolators that achieve zero training error can still generalize well and do not benefit from sacrificing data fit; in other words, interpolation can be harmless (see, e.g., the experiments in \cite{nakkiran2021}). The effort to explain this counterintuitive observation (see, e.g., \cite{zhang2021understanding,belkin2018understand}) has given rise to a line of work on \emph{benign overfitting} that
seeks to establish 
generalization bounds for \emph{interpolating} overparameterized models. In regression---the focus of this paper---interpolation corresponds to achieving zero squared loss on the training data.

As there are infinitely many interpolating solutions in overparameterized regimes, the specific choice of interpolator can drastically affect generalization performance.
The most commonly studied choice in the literature is the minimum-norm interpolator (MNI)---a natural choice when the ground truth has a simple structure, such as a small norm in a Banach space.
For additional motivation, first-order methods for the squared loss, initialized at zero, typically exhibit an implicit bias toward such
minimum-norm solutions (i.e., they converge to them)~\cite{oravkin2021optimal,shamir2022implicit,efron-LARS}. 

The statistical analysis of the mean squared error (MSE) of the MNI has primarily focused on overparameterized linear regression with the $\ell_2$ norm and on reproducing kernel Hilbert spaces (RKHSs), where the MNI has a closed-form solution; in both cases, the norm is induced by an inner product.
In overparameterized linear models, the literature includes asymptotic results on the \textbf{$\ell_2$}-MNI in the proportional regime $d/n\to\gamma \in (1,\infty)$, where $\gamma$ is an absolute constant and $d$ and $n$ denote the numbers of parameters and samples, respectively (cf. \cite{hastie2022surprises,ghorbani-2021,mei2022generalization}), as well as non-asymptotic results in the overparameterized regime $d/n\to\infty$.

In more detail, \cite{bartlett2020benign,tsigler2023benign,lecue2023geometrical,chinot2020robustness,muthukumar2020} prove finite-sample bounds for the $\ell_2$-MNI that vanish
when the eigenvalues of the data covariance matrix decay rapidly.
These proofs exploit the inner-product structure, which allows an explicit analysis of the closed-form solution. Moreover, consistency heavily relies on eigenvalue decay in the covariate distribution, because the $\ell_2$-MNI cannot capture ``structural assumptions'' and, under isotropic covariates in high dimensions, suffers from high bias in the bias--variance decomposition of the MSE.

Moving away from the $\ell_2$-MNI, \cite{koehler2021uniform} introduced a framework for analyzing MNIs in overparameterized linear regression when no closed-form solution is available (i.e., when the norm is not induced by an inner product). Their approach applies only to Gaussian covariates and crucially relies on the Convex Gaussian Minimax Theorem (CGMT)
\cite{gordon1988,thramp2015}.
Follow-up works \citep{donhauser2022fast,wang2022tight} used this technique
to establish sharp rates for the $\ell_p$-MNI when $p \in [1,2]$; see \Cref{Ex:P} below for further details. We also refer to \cite{chinot2020robustness,muthukumar2020} for work on the $\ell_1$-MNI.

To our knowledge, all works analyzing MNIs with non-inner-product norms rely on the Gaussianity of the covariates, and their proofs use the CGMT, which offers no geometric insight and mostly leads to direct computations. In nonlinear models, the phenomenon of harmless interpolation, or benign overfitting, is much less well understood. Most work so far has considered settings in which linearization is a good approximation, such as specific RKHSs (cf. \cite{liang2020multiple,aerni23}), or local interpolation schemes (cf. \cite{belkin2019does,belkin2018overfitting}). Despite these efforts, a comprehensive theoretical understanding of nonlinear models remains open.
The main motivation of this work is to address the following questions:
\begin{enumerate}
    \item Can we generalize bounds on the statistical performance of MNIs under a condition less restrictive than requiring an inner-product norm?
    \item In linear models, can we develop a geometric analysis that extends beyond Gaussian covariates, for example, to sub-Gaussian covariates?
\end{enumerate}

To address the first question, we relate the generalization properties of MNIs for regression under additive \textbf{isotropic} Gaussian noise\footnote{In most of our results, this assumption can be relaxed to isotropic log-concave or Rademacher noise.} to the (local) geometric properties of the Banach space. These include uniform convexity and smoothness, type and cotype, and the position of the norm; see the books of \cite{artstein2015asymptotic,artstein2022asymptotic} and the references therein. Roughly speaking, we upper-bound the bias of the MNI when its underlying norm is $2$-uniformly convex, and this bound is sharp in linear models when the norm is in an appropriate position, e.g., isotropic or John's position.

To answer the second question, we provide a sharp bound on the MSE of the $\ell_p$-MNI under sub-Gaussian (s.g.) covariates, extending the Gaussian-covariate result of \cite{donhauser2022fast}. To the best of our knowledge, this is the \textbf{first} work to prove benign overfitting for s.g. covariates in linear models when the norm is not an \textbf{inner product}.

Before proceeding, we mention that our work is deeply inspired by the influential work of Maurey and Pisier on probability in Banach spaces~\cite{maurey1976series,pisier1977nouveau} and by more recent work on the geometry of $2$-uniformly convex bodies~\cite*{klartag2008volume,milman2009gaussian,Paorlp}. We also build on recent developments concerning isotropic convex bodies~\cite{milman2015mean,klartag2025affirmative,bizeul2025slicing}.
 
\paragraph{Organization:}
In \Cref{ss:MNI}--\Cref{ss:pos}, we provide the background needed for the overview of our contributions in \Cref{ss:con}. In \Cref{S:Seittng}, we provide additional preliminaries. In \Cref{sec:Results}, we state all our results. We then discuss them in \Cref{sec:Discussion}, present open problems in \Cref{S:OP}, give a proof sketch of the main theorems in \Cref{S:PI}, and present all proofs in \Cref{S:Pfs}.
\paragraph{Notation:} Throughout this work, $C,C_1,C_2 \geq 0$ and $c_1,c_2,c_3 \in (0,1)$ are absolute constants. For an arbitrary argument vector $a$, we write $C(a) \geq 0$ and $c(a) \in (0,1)$ for constants that depend only on $a$; these constants \textbf{may} change from line to line.
For any measure $\QQ$ on $\cX$, we use $\| \cdot \|_{\QQ}$ to denote the $L_2(\QQ)$-norm.
We use standard Landau notation (also known as big-$O$, big-$\Omega$, and big-$\Theta$ notation), which hides only absolute constants and, in particular, constants independent of all model parameters. Similarly, $\asymp,\lesssim,\gtrsim$ denote equalities and inequalities up to a universal multiplicative constant. $B_d$ denotes the Euclidean ball in $\R^d$ of radius one, and $\mathbb{S}^{d-1} = \partial B_d$ is its unit sphere. The measure $\gamma_n$ denotes the Gaussian measure on $\R^n$, and $\sigma_n$ denotes the uniform measure on $\Sn$. Whenever $p,q \in [1,\infty)$ appear together, they satisfy $1/p+1/q = 1$.

\subsection{Minimum-Norm Interpolator}\label{ss:MNI}

Let $\PP$ be a probability measure on a domain $\cX$. We consider a Banach space $\bana$ with $\cB(\cX) \subset L_2(\PP)$, and denote its unit ball by
\begin{equation}
    \cF:= \{ f \in \cB(\cX): \|f\| \leq 1 \}.
\end{equation}
Given data points

$\{(x_i,y_i)\}_{i=1}^{n}$, the MNI is defined by
\begin{equation}
\label{eq:minnorm}
     \erm(\vec x,\vec y):=\argmin_{f \in \cB(\cX): \ \vec{f}=\vec{y}}\| f \|,
\end{equation}
where $\vec f:= (f(x_1),\ldots,f(x_n))$, $\vec y = (y_1,\ldots,y_n)$, and $\vec x:=(x_1,\ldots,x_n)$.
In words, among all functions in $\cB(\cX)$ that interpolate the observations $\vec y$ at the data points $\vec x$, we choose one with the smallest norm. In general, the solution in \eqref{eq:minnorm} may not be unique, but a mild uniform convexity of the norm guarantees uniqueness. From a computational perspective, depending on the setting, the solution may be computed efficiently or may arise as the implicit bias of a first-order method for a convex optimization problem.

Note that the MNI differs from a standard constrained ERM, which searches only over the function class $\cF$; for example, the constrained least-squares estimator is
\begin{equation}\label{Eq:LSE}
\bar{f}_n  \in  \argmin_{f \in \cF}\sum_{i=1}^{n}(y_i - f(x_i))^2.
\end{equation}

\textbf{In this work}, we study the statistical performance of the MNI in the well-specified regression model
\begin{equation}\label{Eq:Model}
     Y = \ft(X) + \xi,
\end{equation}
where $X \sim \PP$, $\xi \sim N(0,1)$, and $\ft \in \cF$. Our dataset $\cD := \{(X_i,Y_i)\}_{i=1}^n$ consists of $n$ i.i.d. samples from \eqref{Eq:Model}. We use the notation $\vec X = (X_1,\ldots,X_n)$, $\vec Y = (Y_1,\ldots,Y_n)$, and $\vec \xi = (\xi_1,\dots,\xi_n)$ and, unless stated otherwise, write $\erm:=\erm(\cD)$. Throughout this work, we make two assumptions on our model:
\begin{assumption}\label{A:One}
    Both the $L_2(\PP)$ and $\|\cdot\|$ norms of $\ft$ are of order one, i.e., $\| \ft \|_{\PP} \asymp \|\ft\| \asymp 1$.
\end{assumption}
This assumption rules out the pathological case in which the norms of the true signal $\ft$ vanish with the number of samples $n$. In high dimensions, such an assumption is crucial. Next, we need an ``inductive bias'' assumption:
\begin{assumption}\label{A:NS}
There exists a sufficiently large absolute constant $C \geq 1$ such that, with probability at least $1-n^{-2}$ over $\vec X,\vec \xi$, it holds that
\begin{equation}\label{Eq:ANS}
 \|\erm(\vec X,\vec \xi)\| \geq C.
\end{equation}
\end{assumption}
Under this assumption, if $C \geq 1$ is sufficiently large, the norm exhibits an inductive bias toward $\ft$. To see this, note that $\|\erm(\vec X,\vec \ft)\| \lesssim 1$ with probability one, since $\ft$ is a feasible interpolator. The inequality in the assumption indicates that the norm has an ``inductive bias'' toward the ground truth $\ft$: interpolating pure noise requires a larger norm than interpolating the underlying true signal $\ft \in \cF$. A motivating example is the $\ell_p$-MNI in overparameterized linear regression for $p \geq 1$; see \Cref{Ex:P} below for further details.
\subsection{A ``new'' mean squared error decomposition}
First, for any $f \in \cB(\cX)$, let $\cP_{\ft}(f)$ denote the orthogonal projection of $f$ onto $\operatorname{span}\{\ft\}$ in $L_2(\PP)$, i.e.,
\[
\cP_{\ft}(f):=\inner{f,\frac{\ft}{\|\ft\|_{\PP}}}_{L_2(\PP)}\frac{\ft}{\|\ft\|_{\PP}}.
\]
Let $(\ft)^{\perp}$ denote the subspace orthogonal to $\ft$. We consider the following decomposition of the MSE:
\begin{equation}
\begin{aligned}
\label{eq:decompose}
\E_{\cD} \|\erm - \ft\|_{L_2(\PP)}^2
&=\underbrace{\|\E_{\cD} \erm - \ft\|_{\PP}^2}_{B^2(\erm)} + \underbrace{\E \|\erm - \E_{\cD} \erm\|_{\PP}^2}_{\mathrm{Var}_{\cD}[\erm]} \\
&= B^2(\erm) + \mathrm{Var}_{\vec X}\!\left[\E_{\vec \xi}[\erm\mid\vec X]\right] + \underbrace{\E_{\vec X}\mathrm{Var}_{\vec \xi}[\erm\mid\vec X]}_{T_2} \\
&= \underbrace{\E\|\cP_{\ft}(\E_{\vec \xi}[\erm\mid\vec X]) - \ft\|_{\PP}^2}_{E_1} + \underbrace{\E \|\cP_{(\ft)^{\perp}}(\E_{\vec\xi}[\erm\mid\vec X])\|_{\PP}^2}_{E_2} + T_2
\\&:= T_1 + T_2.
\end{aligned}
\end{equation}
Here we used the bias--variance decomposition, the law of total variance, and Pythagoras' theorem in $L_2(\PP)$. We refer to $T_1$ and $T_2$ as the \textbf{structural error} and the \textbf{noise-effect error}, respectively.
For regularized estimators in the underparameterized setting, it is more natural to analyze the bias and variance separately when bounding the MSE. Roughly speaking, in that case, the bias can be interpreted as an approximation error of $\ft$ induced by the function space, and the variance as the effect of noise.
However, in overparameterized models, i.e., when $\erm$ interpolates the observations $\vec Y$, we need a different approach. Define $f_{\vec X}:=\cP_{(\ft)^{\perp}}(\E_{\vec\xi}[\erm\mid\vec X])$ and note that
\begin{equation}
    \label{eq:decomplevel2}
    T_1=\E\|\cP_{\ft}(\E_{\vec \xi}[\erm\mid\vec X]) - \ft\|_{\PP}^2 + \E \|f_{\vec X}\|_{\PP}^2,
\end{equation}

The first term in \eqref{eq:decomplevel2}, denoted by $E_1$, measures how much ``energy'' the MNI preserves from the original signal $\ft$; i.e., it measures the shrinkage of $\ft$ due to undersampling. Intuitively, in overparameterized models, this term is typically nonzero, and elementary convexity (at least under sufficient regularity of the norm) shows that the MNI must shrink $\ft$.

Since $\E_{\vec \xi}[\erm\mid\vec X]$ interpolates $\vec{\ft}$ and $\cP_{\ft}(\E_{\vec \xi}[\erm\mid\vec X])=(1-\lambda)\ft$ for some $\lambda \leq 1$, the function $f_{\vec X}$, which is orthogonal to $\ft$ in $L_2(\PP)$, interpolates $\lambda \vec{\ft}$. Hence, the second term in \eqref{eq:decomplevel2}, denoted by $E_2$, measures how much $L_2(\PP)$-energy is added by $f_{\vec X}$, in expectation, to the MNI. The $2$-uniform convexity helps us upper-bound $\|f_{\vec X}\|$, and techniques from the empirical-process literature allow us to upper-bound the $L_2(\PP)$-norm of $f_{\vec X}$. Finally, the noise-effect error $T_2$ measures the energy added to the MNI by the noise in the observations.
\subsection{Gaussian and Rademacher complexities}
Next, we recall two fundamental quantities in statistical learning theory (cf. \cite{bartlett2002rademacher,mendelson2014learning}) and in high-dimensional geometry, known as Gaussian and Rademacher complexities.
\begin{definition}\label{Def:GW}
The Gaussian complexity of a set $\cH \subset L_2(\PP)$ is defined by
\[
    \cG_n(\cH):= n^{-1}\E_{\vec X,\vec \xi}\sup_{h \in \cH} \inner{\vec h,\vec \xi}.
\]
Here, $\vec \xi \sim N(0,I_n)$ is independent of $\vec X$, and $\inner{\vec x,\vec y}$ denotes the $\ell_2$ inner product between $\vec x,\vec y \in \R^n$.
\end{definition}
\begin{definition}
The Rademacher complexity of a function class $\cH \subset L_2(\PP)$ is defined by
\[
 \cR_n(\cH):= n^{-1}\E_{\vec X,\vec \eps}\sup_{h \in \cH} \inner{\vec h,\vec \eps}.
\]
Here, $\vec \eps$ has i.i.d. Rademacher coordinates and is independent of $\vec X$.
\end{definition}
To motivate these quantities statistically, recall that, under suitable assumptions, the risk of the constrained least-squares estimator over $\cF$ defined above is bounded by the Gaussian complexity, i.e.,
\[
    \sup_{\ft \in \cF}\E_{\cD} \|\bar{f}_n - \ft\|_{\PP}^2 \lesssim \GE{\cF},
\]
see for example \cite{chatterjee2014new}. The Rademacher complexity also has a statistical motivation. For example, the classical Giné--Zinn symmetrization lemma (cf. \citep[Thm.~2.1]{koltchinskii2011oracle}) implies that
\begin{equation}\label{Eq:Symetrization}
    2^{-1}\cR_n(\cH_{c})\leq \E \sup_{f \in \cH} |\int f \, d(\PP_n - \PP)| \leq 2 \cR_n(\cH),
\end{equation}
where $\cH_{c}:= \{h - \int h \, d\PP: h \in \cH\}$, and $\PP_n$ denotes the empirical measure over $\vec X$, i.e., $\PP_n := n^{-1}\sum_{i=1}^{n}\delta_{X_i}$.

The contraction lemma (cf. \citep[Thm.~2.3]{koltchinskii2011oracle}) shows that, for any $L$-Lipschitz function $\psi$,
\begin{equation}\label{Eq:contraction}
     \cR_n(\psi \circ \cH) \lesssim L \cdot \cR_n(\cH) \text { where }  
     \psi \circ \cH := \{ \psi \circ h: h \in \cH\}.
\end{equation}

Jensen's inequality together with the maximal inequality for $n$ i.i.d. standard Gaussian variables implies that
 $$
   \cR_n(\cF) \lesssim  \cG_n(\cF) \lesssim \sqrt{\log(n)} \cR_n(\cF).
 $$

\subsection{On Curvature Notions in Banach Spaces (Part I)}
First, we introduce $q$-uniform convexity ($UC(q)$), for $q \in [2,\infty)$, and its dual notion, $p$-uniform smoothness ($US(p)$), for $p \in (1,2]$. These notions first appeared in the influential work of \cite{clarkson1936uniformly}; see also
 \citep{ledoux2013probability}.
 
\begin{definition}\label{Def:UC}
\label{def:UC}
For $q \in [2,\infty)$, the Banach space $\bana$ is \textbf{$q$-uniformly convex} with constant $t>0$ if, for all $f,g \in \cB(\cX)$,
\[
   \nos{\frac{f+g}{2}}{q} + t\nos{\frac{f-g}{2}}{q} \leq \frac{\|f\|^q + \|g\|^q}{2}.
\]
\end{definition}
\textbf{Our work focuses} on the case $q=2$, i.e., when the norm is $2$-uniformly convex ($UC(2)$) with constant $t \geq 0$. This condition plays a key role in high-dimensional geometry because it implies concentration of Lipschitz functionals; see also \cite{gromov1983topological,gromov1987generalization}. To give some intuition for a $UC(2)$ norm, one may think of it in $\R^d$ as providing a lower bound on the smallest singular value of the Hessian on its unit ball. By the parallelogram identity, any inner-product norm is $UC(2)$ with constant $1$. The $\ell_p$ norm, for $p \in (1,2]$ in any dimension, is $UC(2)$ with constant $t = \Theta(p-1)$ (cf. \citep[Ch.~10]{pisier2016martingales}). To present our results, we also need the dual notion of uniform convexity, namely, uniform smoothness.

 \begin{definition}
\label{def:US}
For $p \in (1,2]$, the Banach space $\bana$ is \textbf{$p$-uniformly smooth} with constant $s > 0$ if, for all $f,g \in \cB(\cX)$,
\[
   \nos{\frac{f+g}{2}}{p} + s\nos{\frac{f-g}{2}}{p} \geq \frac{\|f\|^p + \|g\|^p}{2}.
\]
\end{definition}

The fact that the optimal exponents are $q=2$ for uniform convexity and $p=2$ for uniform smoothness is already visible in dimension one. Moreover, the optimal constants are $t \le 1$ for uniform convexity and $s \ge 1$ for uniform smoothness, and these bounds are sharp. They are attained in Hilbert spaces, where both inequalities hold with equality by the parallelogram identity. We conclude this part with a known classical fact (cf. \citep{lindenstrauss2013classical}): 
\begin{fact}
$\bana$ is $UC(q)$ with constant $t>0$ $\iff$ $\banad$ is $US(p)$
with constant $t^{-p/q}=t^{-(p-1)}$.
\end{fact}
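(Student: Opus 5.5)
The plan is to prove both directions through a single duality identity at the level of moduli, rather than directly on the two-point inequalities. By homogeneity, the $UC(q)$ inequality with constant $t$ for $\|\cdot\|$ is equivalent to a lower bound on a convexity functional: for all $f,g$,
\[
\tfrac12\|f\|^q+\tfrac12\|g\|^q-\bigl\|\tfrac{f+g}{2}\bigr\|^q\ \ge\ t\,\bigl\|\tfrac{f-g}{2}\bigr\|^q .
\]
Equivalently, with $x=(f+g)/2$ and $y=(f-g)/2$,
\[
\tfrac12\|x+y\|^q+\tfrac12\|x-y\|^q\ \ge\ \|x\|^q+t\|y\|^q .
\]
Likewise, $US(p)$ with constant $s$ for $\|\cdot\|_*$ reads
\[
\tfrac12\|u+v\|_*^p+\tfrac12\|u-v\|_*^p\ \le\ \|u\|_*^p+s\|v\|_*^p .
\]
I would work with the convex functions $\phi=\frac1q\|\cdot\|^q$ and $\psi=\frac1p\|\cdot\|_*^p$. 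These are Fenchel conjugates of each other, since $(\frac1q|\cdot|^q)^*=\frac1p|\cdot|^p$ composed with the norm duality $\|u\|_*=\sup_{\|f\|\le1}u(f)$.

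For the direction ($\Rightarrow$), assume $UC(q)$ with constant $t$. Fix $u,v$ in the dual and take the conjugate representation $\psi(u\pm v)=\sup_{f}\{(u\pm v)(f)-\phi(f)\}$. Choose near-maximizers $f_+$ and $f_-$, and write $f_\pm=x\pm y$. Then
\[
\tfrac12\psi(u+v)+\tfrac12\psi(u-v)\le u(x)+v(y)-\tfrac12\bigl(\phi(x+y)+\phi(x-y)\bigr).
\]
Applying the rewritten $UC(q)$ inequality gives
\[
\tfrac12\bigl(\phi(x+y)+\phi(x-y)\bigr)\ge \phi(x)+\tfrac{t}{q}\|y\|^q .
\]
Hence
\[
\tfrac12\psi(u+v)+\tfrac12\psi(u-v)\le \sup_x\{u(x)-\phi(x)\}+\sup_y\bigl\{v(y)-\tfrac{t}{q}\|y\|^q\bigr\} .
\]
The first supremum is $\psi(u)$. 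The second is a one-dimensional conjugate computation: maximizing $r\|v\|_*-\frac tq r^q$ over $r\ge 0$ gives $\frac1p t^{-p/q}\|v\|_*^p$. Multiplying by $p$ yields exactly the $US(p)$ inequality with constant $t^{-p/q}=t^{-(p-1)}$, using $p/q=p-1$.

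The direction ($\Leftarrow$) is symmetric. Assume $US(p)$ with constant $s$ for the dual norm. Now represent $\phi$ via $\phi(f)=\sup_u\{u(f)-\psi(u)\}$, and apply the estimate to $\phi(x)+\frac{t}{q}\|y\|^q$, choosing a near-maximizing functional for $x$ and for $y$ separately and testing against $u\pm v$. This gives
\[
\phi(x)+\tfrac tq\|y\|^q\le \tfrac12\phi(x+y)+\tfrac12\phi(x-y)
\]
whenever $\frac1p t^{-p/q}$ matches $s/p$, that is, when $t=s^{-q/p}$. Here bidualization is needed: the relevant suprema over the dual are attained or approximated, by Hahn--Banach in general or trivially in the finite-dimensional setting that the paper uses. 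Consistency of the constants is clear, since $t^{-p/q}=s$ exactly when $t=s^{-q/p}$.

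The main obstacle is \emph{not} the duality bookkeeping. It is the precise normalization of the two-point inequalities in Definitions~\ref{def:UC} and~\ref{def:US}: the left side uses $\|(f+g)/2\|^q$ and $t\|(f-g)/2\|^q$, and the substitution $x=(f+g)/2$, $y=(f-g)/2$ must be done so that the factors of $2$ cancel rather than produce $2^{q}$-type constants. I would check this on Hilbert space first, where $t=s=1$, to make sure the exponent $t^{-p/q}$ and not some $2$-dependent variant comes out. If the constants only match up to absolute factors, I would state the fact in that form, which is all the paper later uses, since only $t=\Theta(p-1)$-type scalings matter.
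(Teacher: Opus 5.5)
Your proof is correct. It also supplies an argument the paper does not give: the paper records this only as a classical fact, with a pointer to Lindenstrauss--Tzafriri. Your route applies the Legendre-transform duality between $\phi=\frac1q\|\cdot\|^q$ and $\psi=\frac1p\|\cdot\|_*^p$ directly to the two-point inequalities, and both directions close with the exact constant.

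In ($\Rightarrow$) you do not even need near-maximizers. Your bound holds for arbitrary $f_\pm=x\pm y$, and you then take suprema over $f_+$ and $f_-$ independently.

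Your sketch of ($\Leftarrow$) also goes through. Set $s=t^{-(p-1)}$. For arbitrary $u,v$ in the dual, the $US(p)$ hypothesis and Fenchel--Young give
\[
u(x)-\psi(u)+v(y)-\tfrac{s}{p}\|v\|_*^p\le\tfrac12\bigl[(u+v)(x+y)-\psi(u+v)\bigr]+\tfrac12\bigl[(u-v)(x-y)-\psi(u-v)\bigr]\le\tfrac12\phi(x+y)+\tfrac12\phi(x-y).
\]
Take suprema over $u$ and $v$ separately. Since $\sup_{r\geq0}\{r\|y\|-\frac{s}{p}r^p\}=\frac1q s^{-1/(p-1)}\|y\|^q$ and $s^{-1/(p-1)}=t$, the left side becomes $\phi(x)+\frac tq\|y\|^q$. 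This step uses only that the dual norms the space (Hahn--Banach), not reflexivity or attainment of suprema.

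Compare this with the route through the classical duality of the moduli of convexity and smoothness, which is the kind of statement found in the cited reference. There, converting between moduli and the paper's two-point inequalities costs constants depending on $q$. Your argument avoids that loss and yields exactly the constant $t^{-(p-1)}$ asserted in the statement.

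For the same reason, the ``main obstacle'' you flag does not exist. The substitution $f=x+y$, $g=x-y$ turns the paper's definitions into exactly the displays you wrote at the start, with no powers of $2$. So there is no need to hedge toward a statement up to absolute factors.
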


\subsection{Two Motivating Examples}\label{S:Ex}
For concreteness, we now provide two examples that motivated this work; see \cite{ledoux2013probability,pisier1999volume} for more examples. The first is the standard linear regression model with respect to $\ell_p$ norms for $p\in [1,\infty)$.

\vspace{3mm}
\begin{fexam}\label{Ex:P}
Assume that $d > n$, and consider the $\ell_p$ norm in $\R^d$, i.e., its unit ball is defined via
\[ 
\cF:= B_p^d := \{w \in \R^{d}: \|w\|_{p} \leq 1 \}. 
\]
The {\bf $\ell_p$-MNI} is defined via
\[
    \lerm := \argmin_{\vec Xw = \vec Y}\|w\|_{p}.
\]
Throughout, we assume that $X = (X^{(1)},\ldots,X^{(d)})$ is an isotropic vector in $\R^d$ with i.i.d. \emph{symmetric} entries with bounded sub-Gaussian norm, i.e., $X^{(1)},\ldots,X^{(d)}$ are i.i.d., $X^{(1)} \overset{d}{=} -X^{(1)}$, and
\[
\text{(s.g.)} \qquad \Pr(|X^{(1)}| \geq t) \leq 2\exp(-t^2/C^2), \qquad t \geq 0,
\]
for some constant $C>0$. Note that in this model $\|\cdot\|_{\PP} = \|\cdot\|_2$.
In \Cref{T:VarLinOpt} below, we show that with high probability
\[
\|\lerm(\vec X,\vec \xi)\| \asymp \sqrt{n} \cdot \max\left\{\sqrt{p-1}, \log(d/n)^{-1/2}
\right\} \cdot d^{1/p-1}.
\]
Therefore, to satisfy \Cref{A:NS}, we also require that for $C_1 \geq 0$
\[
n\log(d)^{C_1} \lesssim d \lesssim n^{q/2}\log(d)^{-C_1},
\]
and assume that the signal $\lft \in \R^d$ is $O(1)$-sparse, i.e., $\|\lft\|_0 = O(1)$. Note that, as $p \to 2$, we cannot satisfy this assumption, whereas, as $p \to 1$, the dimension $d$ can be more than polynomial in $n$.
\end{fexam}

The second example is the Sobolev norm, which plays a key role in nonparametric statistics; see \cite{tysbakovnon,gine2021mathematical}. In this work, we do not focus on nonparametric MNIs; we refer to the recent work of \cite{karhadkar2026harmful} and the references therein. See also \Cref{ss:renorming} below.
\vspace{3mm}
\begin{fexam}\label{Ex:NP}
Fix a domain $\Omega \subset \R^d$ and let $\PP = \mathrm{Unif}(\Omega)$, $p \in [1,\infty)$, and $k \in \mathbb{N}$. For a multi-index $\alpha=(\alpha_1,\ldots,\alpha_d) \in \mathbb{N}_0^d$ with $|\alpha| \leq k$, define
\[
D^{\alpha}f := \frac{\partial^{|\alpha|}f}{\partial x_1^{\alpha_1}\cdots \partial x_d^{\alpha_d}}.
\]
The Sobolev space $W^{k,p}(\Omega)$ consists of functions whose weak derivatives $D^{\alpha}f$, for $|\alpha| \leq k$, lie in $L_p(\PP)$, and is equipped with the norm
\[
    \|f\|_{k,p}:= \left(\sum_{\alpha \in \mathbb{N}_0^d:\,|\alpha| \leq k} \| D^{\alpha}f \|_{L_p(\PP)}^{p}\right)^{1/p}.
\]
The MNI associated with this norm is well defined only when $W^{k,p}(\Omega)$ is compactly embedded in $L_2(\PP)$.
\end{fexam}

 \subsection{Positions}\label{ss:pos}
For any \emph{symmetric} convex set $K \subset \R^d$, i.e., $K = -K$, with nonempty interior, its Minkowski norm is defined by
\[
\|z\|_{K}:=\inf\{r>0: z\in rK\}.
\] 
Its polar body, $K^{\circ}$, is defined by
\[
K^{\circ} := \{ x \in \R^{d}: \sup_{y \in K}\inner{x,y} \leq 1 \}.
\]
The dual norm $\|\cdot\|_{K^{\circ}}$ is also referred to as the support function $h_K$ of the original body $K$, because, for a unit vector $\vec \xi$, it measures the distance from the origin to the supporting hyperplane of $K$ in that direction.

The notion of a \emph{position} is fundamental in high-dimensional convex geometry. Given a convex body $K\subset\R^d$, which we assume to be symmetric, one seeks an invertible linear map $T$ such that the image $TK$ is more ``regular'' with respect to a chosen functional.

In \textbf{finite-dimensional} Banach spaces, which include linear models, the \textbf{position} plays a key role in the statistical performance of the MNI. Here, we introduce a few standard examples:
\begin{itemize}
\item  $K$ is in \textbf{John's} position if among all ellipsoids contained in $K$, the ball $B_2^d$ has maximal volume.

\item To introduce Pisier's $\mathbf{\ell}$-position, we define the mean norm and width (respectively) by
\[
 M_s(K) :=\int_{\mathbb{S}^{d-1}} \|\vec \xi\|_{K} \, d\sigma_d \quad \text{and} \quad M_s^*(K) = M_s(K^{\circ}) =\int_{\mathbb{S}^{d-1}} \|\vec \xi\|_{K^{\circ}} \, d\sigma_d.
\]
It is not hard to verify that, for any symmetric convex set $K$,
\[
    1\leq M_s(K)M_s^*(K).
\]
At the same time, one intuitively expects $M_s^*(K)M_s(K) \lesssim 1$ when $K$ does not lie in a low-dimensional subspace and is ``balanced.'' Remarkably, the works of \cite{pisier1977nouveau,figiel1979projections} show that, for every $K \subset \R^d$, there exists an invertible linear transformation $T$ such that $TK$ is in $\ell$-position and
\[
    M_s(TK)M_s^*(TK) \lesssim \log(d).
\]

\item $K$ is in (Milman's) \textbf{M}-position if there is a constant $C > 0$, independent of $K$ and $d$, such that
\[
\max\{\cN(K,B_d),\cN(B_d,K),\cN(K^{\circ},B_d),\cN(B_d,K^{\circ})\} \le C^d,
\]
where $\cN(A,B)$ denotes the minimal number of translates of $B$ required to cover $A$. We remark that $M$-position is not unique---even up to rotation.

\item $K$ is in \textbf{isotropic} position if the uniform measure on $K$ has zero mean and identity covariance. Recent progress shows that isotropic position is much closer to $M$- and $\ell$-positions than one might first expect. The resolution of the slicing problem by \cite{klartag2025affirmative,bizeul2025slicing}, building on the work of \cite{guan2024note}, implies that isotropic convex bodies are in $M$-position up to a universal constant. The recent result of \cite{bizeul2025distances} and the mean-width estimate of \cite{milman2015mean} further show that isotropic position is as good as Pisier's $\ell$-position, in the sense that
\[
    M_s(K)M_s^*(K) \lesssim \log^{3}(d).
\]
\end{itemize}

The unit balls of commonly used norms---such as the $\ell_p$ and the top-$k$ norms---satisfy the standard positions up to scaling. Those norms can therefore misleadingly suggest that the choice of position is irrelevant. For general norms, however, the situation is much more subtle, and the position plays a key role in its geometry (cf. \cite{giannopoulos2000extremal}). \textbf{Throughout this work,} we say that a norm is in a certain position when its unit ball is in that position up to normalization, i.e., when $\lambda \cF$ is in that position for some $\lambda > 0$. 

\subsection{Overview of Contributions}\label{ss:con}
\begin{itemize}
    \item In \Cref{T:UC2}, we provide an ``unlocalized'' upper bound on the structural error under the $UC(2)$ assumption and mild regularity conditions. Roughly speaking, we show that
    \[
        T_1^{\Omega} \lesssim (s/t) \cdot \cG_n(\cF)^{p}.
    \]
    where $T_1^{\Omega}$ has the same definition as $T_1$, only the expectation is taken on a regular event $\Omega$, of large probability. See the statement of Theorem \Cref{T:UC2} for a more precise definition. Surprisingly, under nontrivial smoothness ($p=1$), it aligns with the classical unlocalized bound on the MSE of the least-squares estimator (cf. \cite{chatterjee2014new}). This bound should be viewed as the crudest bound on $T_1^{\Omega}$, and therefore we refer to it as the ``basic inequality'' (cf. \cite{van2000empirical}).
   
    \item In \Cref{T:UC2Loc}, we provide sharp rates for the structural error under the $UC(2)$ assumption in linear models when the norm is in isotropic or John's position and the covariates have i.i.d. symmetric sub-Gaussian entries. Roughly speaking, we show that
    \[
         T_1 \lesssim (s/t)^2 \cdot \cG_n(\cF)^{2p}.
    \]
    This bound should be viewed as the most optimistic bound on $T_1$, and therefore it requires quite strong assumptions. Furthermore, in \Cref{C:ExmapleOne}, we show that, for the $\ell_p$-MNI defined in \Cref{Ex:P} with an $O(1)$-sparse ground truth, this bound is tight up to logarithmic factors, i.e.,
    \[
        T_1 \lesssim \frac{d^{2p-2}}{n^{p}},
    \]
    recovering the estimate in \cite{donhauser2022fast}. In \Cref{sec:Discussion}, we discuss the challenges of extending this theorem to weaker notions of curvature; in our context, this means a norm of cotype $2$ (see \Cref{Def:Cotype}) in an appropriate position.
   
    \item In \Cref{T:VarLinOpt}, we provide a sharp bound on the noise-error term $T_2$ of the $\ell_p$-MNI; its proof is the most technical argument in this work. Roughly speaking, we prove that for $1 < C_1 \leq C_2$ that
  \[
     C_1^{\frac{1}{p-1}} \cdot \frac{n}{d} \lesssim T_2 \lesssim   C_2^{\frac{1}{p-1}} \cdot \frac{n}{d}
  \]
    under isotropic Gaussian covariates for $p \geq 1 + \Omega(\log(d)^{-1})$, matching the result of \cite{donhauser2022fast}. \textbf{However,} our proof also holds for covariates with i.i.d. symmetric isotropic sub-Gaussian entries. In \Cref{sec:Discussion}, we discuss the connection of our results to the work of \cite{Paorlp} and the challenges of extending this result to a $UC(2)$ norm in an appropriate position.
    \item In \Cref{T:UC2Var}, we show that, when the norm has cotype $2$, one can obtain a ``reverse'' Efron--Stein inequality (cf. \cite{boucheron2013concentration}) for the expected conditional variance of the MNI, namely, the $T_2$ error term.

\end{itemize}

\section*{Acknowledgments}
GK conducted part of this work during a visit to the IDEAL Institute, hosted by Lev Reyzin and supported by NSF grant ECCS-2217023. He is also grateful to his postdoctoral host, Andreas Krause, for encouraging him to finalize the extended version of this manuscript. On a more personal note, he warmly acknowledges the \textbf{GAFA community} for its support and for many stimulating discussions throughout this project. In particular, he thanks Grigoris Paouris and Alexandros Eskenazis for their many insightful conversations, their generosity with their time, and the inspiring visits that deepened his understanding of geometric functional analysis and helped him become more involved in this community.

We acknowledge Almut R\"odder---GK's former and outstanding intern---for identifying several inaccuracies in the original proof of Theorem \ref{T:VarLinOpt} and for helping to relax the original small-ball assumption. We also deeply thank Reese Pathak for his helpful comments, which improved the manuscript. We further thank Dan Mikulincer and Konstantin Donhauser for their useful suggestions.

\section{Additional Preliminaries}\label{S:Seittng}
We now introduce concepts from high-dimensional geometry that will be used to analyze the generalization properties of the MNI defined in \eqref{eq:minnorm}.

Let $\cF_n$ be the random coordinate projection of $\cF$ associated with $\vec X$, namely the random convex and symmetric set
\begin{equation}\label{Eq:RndPrj}
    \cF_n:=\{(f(X_1),\ldots,f(X_n)):f\in\cF\}\subset\R^n.
\end{equation}
We will repeatedly use the following fact.
\begin{fact}\label{F:NID}
For every realization of $\vec X$ and every $\vec z\in\R^n$ for which the interpolation problem is feasible,
\begin{equation}\label{Eq:NormIncu}
    \|\erm(\vec X,\vec z)\|=\|\vec z\|_{\cF_n}.
\end{equation}
\end{fact}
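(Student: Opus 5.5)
The identity follows by unwinding the definitions: the minimum norm over interpolators is exactly the gauge of the projected unit ball. Fix $\vec X$ and $\vec z$ for which the interpolation problem is feasible. Let $A:=\{f\in\cB(\cX):\vec f=\vec z\}$, which is nonempty. We show that $\inf_{f\in A}\|f\|=\|\vec z\|_{\cF_n}$, and that the infimum is attained whenever the minimizer $\erm(\vec X,\vec z)$ exists, so that it equals $\|\erm(\vec X,\vec z)\|$. Note that $\cF_n$ is the image of $\cF$ under the linear evaluation map $\Phi:f\mapsto\vec f$. It is therefore symmetric and convex, so its Minkowski functional $\|\vec z\|_{\cF_n}=\inf\{r>0:\vec z\in r\cF_n\}$ makes sense as a gauge, possibly degenerate when $\cF_n$ lies in a proper subspace. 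Feasibility guarantees $\vec z\in\Phi(\cB(\cX))=\operatorname{span}\cF_n$, so the gauge is finite.

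\textbf{Upper bound.} Take any $f\in A$. If $f=0$, then $\vec z=0$ and both sides vanish. Otherwise $f/\|f\|\in\cF$, so $\vec z/\|f\|\in\cF_n$, which gives $\|\vec z\|_{\cF_n}\le\|f\|$. Taking the infimum over $A$ yields $\|\vec z\|_{\cF_n}\le\inf_{A}\|f\|$.

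\textbf{Lower bound.} Conversely, suppose $r>0$ satisfies $\vec z\in r\cF_n$. Then $\vec z=r\vec g$ for some $g\in\cF$. The function $f=rg$ lies in $A$ and satisfies $\|f\|\le r$. Taking the infimum over admissible $r$ gives $\inf_A\|f\|\le\|\vec z\|_{\cF_n}$. The case $\vec z=0$ is trivial, since $f=0$ is admissible.

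\textbf{Attainment.} Combining the two bounds, the optimal value of \eqref{eq:minnorm} equals $\|\vec z\|_{\cF_n}$. Hence any minimizer $\erm(\vec X,\vec z)$ has norm $\|\vec z\|_{\cF_n}$. The only point requiring care is that the argmin is nonempty, which the paper implicitly assumes when it writes $\erm$. If one wants it explicitly, the infimum is achieved whenever $\cF$ is weakly compact in a reflexive space, or, in finite dimensions, by compactness of $\cF$: then $\cF_n$ is compact, so $\vec z\in\|\vec z\|_{\cF_n}\cF_n$ and the lower-bound construction with $r=\|\vec z\|_{\cF_n}$ produces an exact minimizer. The $UC(2)$ assumption gives reflexivity through Milman--Pettis, so attainment follows in the settings considered. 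No other obstacle arises; the statement is essentially definitional.
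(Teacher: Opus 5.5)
Your proof is correct and is exactly the definitional unwinding the paper has in mind (it records the identity as a fact without proof): by your two inclusions, the optimal value of \eqref{eq:minnorm} equals the gauge of $\cF_n=\Phi(\cF)$, so any minimizer $\erm(\vec X,\vec z)$ has norm $\|\vec z\|_{\cF_n}$. One correction to your optional attainment remark, which does not affect the argument because the statement presupposes that $\erm(\vec X,\vec z)$ exists: reflexivity via Milman--Pettis yields attainment only if the evaluations $f\mapsto f(X_i)$ are bounded on $\cB(\cX)$. That boundedness is what makes $\Phi$ weak-to-norm continuous and $\cF_n$ compact; without it the infimum can fail to be attained.
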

Next, for a centrally symmetric convex body $K\subset\R^n$, define the Gaussian mean of its induced norm by
\[
   M_g(K):=\int_{\R^n}\|\vec\xi\|_K\,d\gamma_n(\vec\xi).
\]
We write $M_g^*(K):=M_g(K^\circ)$ for the Gaussian mean of the dual norm. By the polar decomposition of a standard Gaussian vector,
\begin{equation}
    M_g(K)
    =\E\|\vec\xi\|_2\,M_s(K)
    =\left(\sqrt n+O(n^{-1/2})\right)M_s(K).
\end{equation}

Throughout the paper, we use the shorthand notation
\[
    M_n(\cF):=\E_{\vec X}M_g(\cF_n)
    \qquad\text{and}\qquad
    M_n^*(\cF):=\E_{\vec X}M_g^*(\cF_n),
\]
and define
\[
    R_{MM^*}(\cF)
    :=\frac{M_n(\cF)M_n^*(\cF)}{n}
    \asymp
    \E_{\vec X}M_s(\cF_n)\cdot
    \E_{\vec X}M_s^*(\cF_n).
\]
The quantity $R_{MM^*}$, motivated by the classical $MM^*$ estimate (cf. \cite{artstein2015asymptotic}), plays a key role in the statistical performance of the MNI\@. For a centrally symmetric convex body $K\subset\R^n$, let
\[
    b(K):=\sup_{\theta\in\mathbb S^{n-1}}\|\theta\|_K
    \qquad\text{and}\qquad
    R_{bM^*}(K):=b(K)M_s^*(K).
\]
For the random coordinate projection, we use the averaged quantity
\[
    R_{bM^*}(\cF)
    :=\E_{\vec X}b(\cF_n)\cdot
      \E_{\vec X}M_s^*(\cF_n).
\]
Thus, $1/b(K)$ is the Euclidean inradius of $K$, and $R_{MM^*}(\cF)\lesssim R_{bM^*}(\cF)$. The factor $b(K)$ is needed when applying the CCP because it is the Lipschitz (in terms of $\ell_2$-norm) constant of the map $x\mapsto\|x\|_K$.

Next, we define the localization radius of the MNI,  designed to capture the correct scale of the term $T_2$.
\begin{definition}
For sufficiently large constant $C > 1$, define $r_*:=r_*(\cF,n,\PP)$ to be
\begin{equation}\label{Eq:localR}
    r_*:=\inf\left\{r\geq0:
    M_n\bigl(\{f\in\cF:\|f\|_{\PP}\leq r\}\bigr)
    \leq C \cdot M_n(\cF)\right\}.
\end{equation}
\end{definition}
The numerical factor $2$ is arbitrary and can be replaced by any fixed constant larger than $1$. Under suitable regularity, standard localization arguments (see \cite{kur2021minimal}) yield
\[
    T_2
    =\E_{\vec X}\Var_{\vec\xi}\left[\erm\mid\vec X\right]
    \gtrsim
    \bigl(r_*M_n(\cF)\bigr)^2.
\]
Intuitively, this says that a typical minimum-norm interpolator of pure noise must have squared $L_2(\PP)$-norm at least of order $\bigl(r_*M_n(\cF)\bigr)^2$. For Hilbertian geometry, and more generally for $US(2)$ norms with $s=\Theta(1)$, suitable regularity also yields
\[
    T_2\asymp\bigl(r_*M_n(\cF)\bigr)^2.
\]
In the setting of \Cref{Ex:P}, this can be verified directly for $2\leq p<\infty$. One would hope that, under $2$-uniform convexity and in a suitable position,
\[
    T_2\lesssim C(t)\bigl(r_*M_n(\cF)\bigr)^2.
\]
To illustrate this quantity, consider the overparameterized model in \Cref{Ex:P} with $1<p\leq2$. In the relevant regime,
\[
  M_n(B_p^d)
  \asymp
  \sqrt n\,\max\left\{\sqrt{p-1},\log(d/n)^{-1/2}\right\}d^{1/p-1}.
\]
As shown in the proof of \Cref{T:VarLinOpt}, when $p\geq1+\Omega(1/\log d)$,
\[
    r_*=d^{-1/p+1/2}\exp\left(\Theta\left(\frac{1}{p-1}\right)\right).
\]
Since $\ell_p$ is $UC(2)$ with $t=\Theta(p-1)$, this indicates that the dependence of $C(t)$ may be exponential in $1/t$.
\subsection{Regularity Assumptions}
We introduce two regularity assumptions on the function class and the covariate distribution. The first ensures that the Gaussian mean and dual mean of $\cF_n$ are, with high probability, comparable to their expectations. This is useful because we do not assume that $\cF$ is uniformly bounded, has a bounded envelope, or that the covariates satisfy stronger structural conditions.
\begin{assumption}\label{A:DeviationOfSuperma}
With probability at least $1-n^{-2}$ over $\vec X$, the random set $\cF_n$ satisfies
\[
    M_g(\cF_n)\asymp M_n(\cF)
    \qquad\text{and}\qquad
    M_g^*(\cF_n)\asymp M_n^*(\cF).
\]
\end{assumption}

\begin{definition}
The lower isometry remainder $\cI_L(n,\cH,\PP)$ is the infimum of all $\delta\geq0$ such that, with probability at least $1-n^{-2}$ over $\vec X$,
\[
    \forall f\in\cH,\qquad
    \|f\|_{\PP}^2\leq4\|f\|_{\PP_n}^2+\delta.
\]
\end{definition}

Our second regularity assumption controls this remainder and therefore lets us transfer empirical $L_2$ control to population $L_2$ control.
\begin{assumption}\label{A:SB}
There exists a universal constant $C_4>0$ such that
\[
    \cI_L(n,\cF,\PP)\leq C_4\cG_n(\cF)^2.
\]
\end{assumption}
For comparison, symmetrization and the contraction principle give
\[
    \E_{\vec X}\sup_{f\in\cF}
    \left|\|f\|_{L_1(\PP)}-\|f\|_{L_1(\PP_n)}\right|
    \lesssim \cR_n(\cF).
\]
The small-ball method of \cite{mendelson2014learning} shows that, under suitable regularity, one can strengthen such first-moment control to the lower $L_2$ estimate
\[
    \|f\|_{\PP}^2
    \lesssim
    \|f\|_{\PP_n}^2+\cR_n(\cF)^2,
    \qquad \forall f\in\cF,
\]
with high probability.\footnote{In some cases, the lower isometry remainder can be significantly smaller than the Rademacher complexity $\cR_n(\cF)$. This occurs, for instance, for Donsker classes (see \cite{van2000empirical}; for a precise formulation, see \citep[Def.~2.1]{mendelson2014learning}), and is commonly referred to as a localization phenomenon. In high-dimensional settings, or for non-Donsker classes, and in particular under $2$-uniform convexity in a suitable position, the localized bound typically matches $\cR_n(\cF)$ up to a multiplicative constant depending on the $UC(2)$ constant $t \geq 0$.}
A standard sufficient route to such a bound is the small-ball property
\[
    \forall f\in\cF:\qquad
    \Pr_{X\sim\PP}\left(|f(X)|\geq c_1\|f\|_{\PP}\right)\geq c_2,
\]
for constants $c_1,c_2\in(0,1)$, together with the corresponding localized-complexity control; see \cite{mendelson2017extending}. This applies, for example, to classes of linear functionals under isotropic i.i.d.\ sub-Gaussian covariates.

\subsection{Curvature Notions in Banach Spaces: Part II}
\label{subsec:Curvature_B_spaces}

Following the first part, we note that uniform convexity and uniform smoothness are strong pointwise notions of curvature: their defining inequalities must hold for every pair of elements in the Banach space. We now introduce the weaker, averaged notions of \emph{type} $p\in[1,2]$ and \emph{cotype} $q\in[2,\infty)$.
 
\begin{definition}\label{Def:Cotype}
The Banach space $\bana$ has \textbf{cotype} $q\in[2,\infty)$ with lower constant $t>0$ if, for every $m\geq1$ and $f_1,\ldots,f_m\in\cB(\cX)$,
\[
   t^q\sum_{i=1}^m\|f_i\|^q
   \leq
   \E_{\vec\eps}\left\|\sum_{i=1}^m\eps_i f_i\right\|^q,
\]
where $\eps_1,\ldots,\eps_m$ are independent Rademacher random variables.
\end{definition}
\begin{definition}
The Banach space $\bana$ has \textbf{type} $p\in[1,2]$ with upper constant $s>0$ if, for every $m\geq1$ and $f_1,\ldots,f_m\in\cB(\cX)$,
\[
   \E_{\vec\eps}\left\|\sum_{i=1}^m\eps_i f_i\right\|^p
   \leq
   s^p\sum_{i=1}^m\|f_i\|^p.
\]
\end{definition}

Hilbert spaces have both $T(2)$ and $CO(2)$ with constants $s=t=1$. In the converse direction, Kwapie\'n's theorem (for a short proof, see \cite{yamasaki1984simple}) implies that a Banach space with type $2$ and cotype $2$ is isomorphic to a Hilbert space. More quantitatively, if the corresponding upper and lower constants are $s$ and $t$, respectively, then every finite-dimensional subspace $E\subset\cB(\cX)$ satisfies
\[
    d_{BM}(E,\ell_2^{\dim E})\lesssim \frac{s}{t}.
\]

We next recall the relation between type and cotype. Let $1<p\leq2$ and let $q=p/(p-1)$ be its conjugate exponent. A classical duality result (see \cite{milman1986asymptotic}) gives the following.
\begin{fact}
If $\bana$ has type $p$ with upper constant $s$, then its dual has cotype $q$ with lower constant at least $c/s$, where $c>0$ is universal.
\end{fact}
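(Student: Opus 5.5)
The plan is to prove the Fact by duality: express the $\banad$ norm of a Rademacher sum through a norming functional, and then invoke the type-$p$ inequality of $\bana$. Fix $f_1^*,\ldots,f_m^*$ in the dual space and a small $\delta>0$. For each $i$, pick $f_i\in\cB(\cX)$ with $\|f_i\|=1$ and $f_i^*(f_i)\ge(1-\delta)\|f_i^*\|_*$. Set the scalar weights $a_i:=\|f_i^*\|_*^{q-1}$ and $g_i:=a_if_i$. The point of this choice is that $\sum_i f_i^*(g_i)\ge(1-\delta)\sum_i\|f_i^*\|_*^q$ and $\sum_i\|g_i\|^p=\sum_i\|f_i^*\|_*^{(q-1)p}=\sum_i\|f_i^*\|_*^q$, since $(q-1)p=q$.

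Next I would pass from the diagonal sum to a Rademacher average, using independence and orthogonality $\E\eps_i\eps_j=\delta_{ij}$:
\[
\sum_i f_i^*(g_i)=\E_{\vec\eps}\Bigl(\sum_i\eps_if_i^*\Bigr)\Bigl(\sum_j\eps_jg_j\Bigr)\le\E_{\vec\eps}\Bigl\|\sum_i\eps_if_i^*\Bigr\|_*\Bigl\|\sum_j\eps_jg_j\Bigr\|.
\]
Applying H\"older with exponents $q,p$ bounds the right-hand side by
\[
\Bigl(\E\Bigl\|\sum_i\eps_if_i^*\Bigr\|_*^q\Bigr)^{1/q}\Bigl(\E\Bigl\|\sum_j\eps_jg_j\Bigr\|^p\Bigr)^{1/p}.
\]
Type $p$ of $\bana$ then bounds the second factor by $s\bigl(\sum_i\|f_i^*\|_*^q\bigr)^{1/p}$.

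Combining the two displays gives
\[
(1-\delta)\Bigl(\sum_i\|f_i^*\|_*^q\Bigr)^{1-1/p}\le s\Bigl(\E\Bigl\|\sum_i\eps_if_i^*\Bigr\|_*^q\Bigr)^{1/q},
\]
and $1-1/p=1/q$. Raising both sides to the power $q$ and letting $\delta\to0$ yields
\[
(1/s)^q\sum_i\|f_i^*\|_*^q\le\E\Bigl\|\sum_i\eps_if_i^*\Bigr\|_*^q,
\]
which is cotype $q$ with lower constant $1/s$. In particular the statement holds with $c=1$.

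The only delicate step is the existence of near-norming vectors $f_i$, since the dual norm is a supremum that need not be attained. The slack $\delta$ handles this, and no reflexivity is needed. I do not expect any genuine obstacle beyond matching the exponents correctly, so that the weights $a_i$ make the type bound homogeneous of the right degree.
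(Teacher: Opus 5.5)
Your proof is correct, and it is essentially the same approach as the paper's: the paper gives no proof of its own and cites the classical result in Milman--Schechtman, whose standard proof is exactly your duality argument (pair the two Rademacher sums using $\E\eps_i\eps_j=\delta_{ij}$, apply H\"older with exponents $q,p$, then apply type $p$). Since the paper defines type through the $p$-th moment and cotype through the $q$-th moment, your H\"older step matches those definitions directly, no Kahane inequality is needed, and you obtain the sharp constant $c=1$.
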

Unlike the duality between uniform convexity and uniform smoothness, the reverse implication from cotype to type requires an additional geometric parameter. The work of \cite{pisier2006duality} relates this loss to the $K$-convexity constant. We use the following informal formulation; a formal definition is given in \Cref{sec:Discussion}.
\begin{definition}[$K$-convexity (informal)]\label{Def:KconvInf}
The constant $K_{\|\cdot\|}$ measures the loss in reverse type--cotype duality: if $\bana$ has cotype $q$ with lower constant $t>0$, then $\banad$ has type $p$ with upper constant at most
$
    C(p)\frac{K_{\|\cdot\|}}{t}.
$

\end{definition}
We record several useful facts about $K_{\|\cdot\|}$:
\begin{fact}
$K_{\|\cdot\|}=K_{\|\cdot\|_*}$, where $\|\cdot\|_*$ is the dual norm.
\end{fact}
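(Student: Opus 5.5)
The plan is to go through the formal definition of the $K$-convexity constant, namely the norm of the Rademacher projection, and use self-adjointness of that projection under the natural duality between vector-valued $L_2$ spaces. For a Banach space $E$ with norm $\|\cdot\|$, let $L_2(E)$ denote the space of functions $F:\{-1,1\}^m\to E$ with norm $(\E_{\vec\eps}\|F(\vec\eps)\|^2)^{1/2}$. The Rademacher projection is
\[
R_m F:=\sum_{i=1}^m \eps_i\,\E_{\vec\eps'}\bigl[\eps_i' F(\vec\eps')\bigr],
\]
and $K_{\|\cdot\|}=\sup_m\|R_m\|_{L_2(E)\to L_2(E)}$. The goal is to show $\|R_m\|_{L_2(E)\to L_2(E)}=\|R_m\|_{L_2(E^*)\to L_2(E^*)}$ for every $m$; taking the supremum over $m$ then gives the statement.

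First, pair $L_2(E^*)$ with $L_2(E)$ through $\langle G,F\rangle:=\E_{\vec\eps}\langle G(\vec\eps),F(\vec\eps)\rangle$. The underlying probability space is finite, so $L_2(E)^*=L_2(E^*)$ isometrically. This follows from the vector-valued $L_2$ duality on a finite measure space, which needs no Radon--Nikodym property because only finitely many atoms appear.

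Next, check that $R_m$ is self-adjoint for this pairing. Expanding the definition gives
\[
\langle R_mG,F\rangle=\sum_i \bigl\langle \E[\eps_i G],\E[\eps_i F]\bigr\rangle=\langle G,R_mF\rangle .
\]
The adjoint of $R_m$ acting on $L_2(E)$ is therefore $R_m$ acting on $L_2(E^*)$. Since an operator and its adjoint have the same norm, we get $\|R_m\|_{L_2(E)}=\|R_m\|_{L_2(E^*)}$.

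The main obstacle is the identification $L_2(E)^*=L_2(E^*)$ when $E$ is infinite-dimensional. On the finite cube this is elementary: a functional on $L_2(E)$ is a finite family $(\phi_\omega)_\omega$ of functionals on $E$, and optimizing over $\|F(\omega)\|$ subject to $\E\|F\|^2\le1$ recovers the $\ell_2$-weighted norm. The adjoint norm equality can then be applied directly.

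If the paper instead defines $K$-convexity through the informal type--cotype duality loss, I would first reduce to the Rademacher projection formulation following \cite{pisier2006duality}, and then run the argument above.
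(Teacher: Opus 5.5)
The paper states this as a classical fact without proof. The definition it must match is the formal one in \eqref{Eq:Kconv}, not the informal type--cotype description. There, $K_{\|\cdot\|}$ is the norm, for $\ell(\cdot)$ on $L_2(\gamma_n;E)$ with $E=(\cB(\cX),\|\cdot\|)$, of the Gaussian linearization $\cL(F)=\E F+\sum_i\E[\xi_iF]\,\xi_i$, which covers Hermite degrees zero and one. The paper's $\alpha_i$ equal $\E[\xi_iF]$ by the symmetry $\xi_i\mapsto-\xi_i$.

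Your self-adjointness argument is the standard proof, and it is correct as written for the Rademacher constant $\sup_m\|R_m\|$. That is a different constant, however, and your proposed bridge does not give the stated equality. The usual comparisons between the Gaussian and Rademacher projections lose universal factors: the central limit theorem in one direction, and $\E\|\sum_i a_i\varepsilon_i\|^2\le\frac{\pi}{2}\E\|\sum_i a_i\xi_i\|^2$ in the other. In addition, $\cL$ keeps the degree-zero term that $R_m$ omits. Going through the Rademacher constant therefore only yields $K_{\|\cdot\|}\asymp K_{\|\cdot\|_*}$.

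The fix is to run your argument directly on Gaussian space. For $F\in L_2(\gamma_n;E)$ and $G\in L_2(\gamma_n;E^*)$,
\[
\E\langle \cL G,F\rangle=\langle \E G,\E F\rangle+\sum_{i=1}^n\langle \E[\xi_iG],\E[\xi_iF]\rangle=\E\langle G,\cL F\rangle ,
\]
so $\cL$ is symmetric for the pairing. What breaks is your ``main obstacle'' step. The measure $\gamma_n$ has no atoms, and the identification $L_2(\gamma_n;E)^*=L_2(\gamma_n;E^*)$ requires $E^*$ to have the Radon--Nikodym property; it fails, for example, for $E=\ell_1$. So you cannot simply invoke $\|T^*\|=\|T\|$.

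You only need the weaker fact that each of $L_2(\gamma_n;E)$ and $L_2(\gamma_n;E^*)$ norms the other, and this holds for every $E$. Approximate by simple functions, and on each level set choose a norming functional (by Hahn--Banach) or an almost-norming vector. Then, by Cauchy--Schwarz,
\[
\ell_{\|\cdot\|_*}(\cL G)=\sup_{\ell(F)\le1}\bigl|\E\langle \cL G,F\rangle\bigr|=\sup_{\ell(F)\le1}\bigl|\E\langle G,\cL F\rangle\bigr|\le K_{\|\cdot\|}\,\ell_{\|\cdot\|_*}(G),
\]
so $K_{\|\cdot\|_*}\le K_{\|\cdot\|}$. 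The symmetric computation gives $K_{\|\cdot\|}\le K_{\|\cdot\|_*}$, which is the exact equality. For finite-dimensional or reflexive $E$, which covers the examples in the paper, the full duality identification does hold, and your proof goes through verbatim once $R_m$ is replaced by $\cL$.
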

\begin{fact}
If $\bana$ is $UC(2)$ with constant $t>0$, then $K_{\|\cdot\|}\lesssim t^{-1/2}$.
\end{fact}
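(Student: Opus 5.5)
The plan is to pass to the dual. There $2$-uniform convexity becomes $2$-uniform smoothness, which yields \emph{martingale} type $2$ with constant $O(t^{-1/2})$. I then bound the Rademacher projection by martingale arguments in the dual, and transfer back using the fact that $K_{\|\cdot\|}=K_{\|\cdot\|_*}$. Write $X=\bana$, $Y=\banad$, and let $\mathrm{Rad}$ denote the Rademacher projection on $L_2(X)$:
\[
\mathrm{Rad}f=\sum_i \eps_i\,\E[\eps_i f]=\sum_i\bigl(\E[f\mid\eps_i]-\E f\bigr).
\]
$K_{\|\cdot\|}$ is its operator norm.

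\textbf{Step 1 (curvature constants).} By the Fact relating $UC(q)$ and $US(p)$, $Y$ is $US(2)$ with constant $s=t^{-1}$. Applying the defining inequality with $f=S+x$, $g=S-x$ gives
\[
\tfrac12\bigl(\|S+x\|_*^2+\|S-x\|_*^2\bigr)\le\|S\|_*^2+s\|x\|_*^2.
\]
The same inequality holds with $S$ and $x$ replaced by $\eps$-measurable martingale differences. Summing over $i$ yields martingale type $2$ for $Y$:
\[
\E\Bigl\|\sum_i d_i\Bigr\|_*^2\le \E\|d_0\|_*^2+t^{-1}\sum_{i\ge1}\E\|d_i\|_*^2 .
\]
Symmetrically, $UC(2)$ of $X$ gives martingale cotype $2$:
\[
\E\Bigl\|\sum_i d_i\Bigr\|^2\ge t\sum_i\E\|d_i\|^2 .
\]
In particular $X$ has cotype $2$ with lower constant $\sqrt t$, and $Y$ has type $2$ with upper constant $t^{-1/2}$.

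\textbf{Step 2 (duality reduction).} Take $g\in L_2(Y)$ and $f\in L_2(X)$. Then $\langle \mathrm{Rad}f,g\rangle=\langle f,\mathrm{Rad}\,g\rangle$, so it suffices to bound $\|\mathrm{Rad}\,g\|_{L_2(Y)}$ by $C t^{-1/2}\|g\|_{L_2(Y)}$. Write $\mathrm{Rad}\,g=\sum_i\eps_i y_i$ with $y_i=\E[\eps_i g]$. Type $2$ of $Y$ gives
\[
\|\mathrm{Rad}\,g\|^2_{L_2(Y)}\le t^{-1}\sum_i\|y_i\|_*^2 .
\]
It therefore remains to show $\sum_i\|y_i\|_*^2\lesssim\|g\|_{L_2(Y)}^2$ with an \emph{absolute} constant.

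\textbf{Step 3 (the key estimate).} Let $d_i=\E[g\mid\eps_1,\dots,\eps_i]-\E[g\mid\eps_1,\dots,\eps_{i-1}]$ be the Walsh–Paley martingale differences of $g$. Since $y_i=\E[\eps_i d_i]$, we get $\|y_i\|_*\le \E\|d_i\|_*$ by Jensen. The natural next move is to dualize: choose norming functionals $x_i\in X$ with $\|x_i\|=\|y_i\|_*$, so that
\[
\sum_i\|y_i\|_*^2=\E\Bigl\langle \sum_i\eps_i x_i,\,g\Bigr\rangle .
\]
This quantity is at most $\|g\|_{L_2(Y)}\,\|\sum_i\eps_ix_i\|_{L_2(X)}$. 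Bounding the last factor this way, however, requires \emph{type} of $X$, which is unavailable ($\ell_p$ with $p<2$ has only type $p$). Used naively, this route gives only $K\le T_2(X)T_2(Y)$.

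\textbf{Main obstacle.} This Step 3 estimate is where I expect the real difficulty. To get the sharp $t^{-1/2}$, I would first try to avoid $T_2(X)$ altogether. The idea is to split $g$ into its level-$\le1$ Walsh part and the remainder, and to control the level-one coefficients using only the martingale type-$2$ inequality from Step 1. The point is that smoothness of $Y$ makes the conditional-expectation operators $g\mapsto\E[g\mid\eps_i]-\E g$ "almost orthogonal" in $L_2(Y)$, with loss governed by $s=t^{-1}$ entering only once, under a square root.

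If the direct argument fails, the fallback is Pisier's semigroup approach. I would bound the holomorphic extension of the Bonami–Beckner semigroup on $L_2(Y)$ using the $2$-smoothness constant, and extract $\mathrm{Rad}$ as the first Taylor coefficient by a Cauchy integral. This should give $K_{\|\cdot\|_*}\lesssim s^{1/2}=t^{-1/2}$, and hence $K_{\|\cdot\|}\lesssim t^{-1/2}$ by the Fact $K_{\|\cdot\|}=K_{\|\cdot\|_*}$.
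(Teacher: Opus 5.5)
The paper records this as a standard fact and gives no proof, so your argument has to stand on its own. It breaks at the end of Step~2: the inequality you reduce to is false.

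Apply the estimate $\sum_i\|y_i\|_*^2\lesssim\|g\|_{L_2(Y)}^2$ to a level-one function $g=\sum_i\eps_i y_i$. It then says exactly that $Y$ has cotype $2$ with an absolute constant. Together with type $2$, Kwapie\'n's theorem would force every finite-dimensional subspace of $Y$ to be $O(t^{-1/2})$-isomorphic to a Euclidean space.

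Concretely, take $X=\ell_p^n$ with $1<p<2$ fixed, so $Y=\ell_q^n$ with $q>2$, and set $g(\eps)=\sum_{j=1}^n\eps_je_j$. Then $y_i=e_i$ and $\sum_i\|y_i\|_q^2=n$, while $\|g\|_{L_2(\ell_q^n)}^2=n^{2/q}$. The ratio $n^{1-2/q}$ is unbounded at fixed $t$.

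Since $\mathrm{Rad}\,g=g$ in this example, the loss is created by the type-$2$ step itself. Routing the estimate through the square function $(\sum_i\|y_i\|^2)^{1/2}$, whether in $X$ or in $Y$, can only work in spaces that have both type $2$ and cotype $2$.

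This also rules out the repair you sketch in Step~3. The level-one part of $g$ is precisely the counterexample. Moreover, martingale type $2$ bounds $\|\sum_i d_i\|$ from above by the square function; it never bounds the square function by $\|\sum_i d_i\|$.

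Your fallback therefore carries the entire proof, and it is only asserted. In Pisier's approach one writes $\mathrm{Rad}=\frac{1}{2\pi i}\oint_{|z|=\rho}T_z\,z^{-2}\,dz$ for the complexified noise operators $T_z$. This gives $K_{\|\cdot\|}\le\rho^{-1}\sup_{|z|=\rho}\|T_z\|$. The claimed rate thus amounts to proving that $T_z$ is uniformly bounded on $L_2(X)$ for $|z|\le c\sqrt t$.

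That bound is exactly the missing quantitative input, and nothing in Step~1 supplies it. You give no mechanism by which the $2$-smoothness constant controls the radius on which $T_z$ stays bounded.

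For the model case $\ell_p$, the rate comes without any square function:
\begin{itemize}
\item By Fubini, $\mathrm{Rad}$ on $L_p(\ell_p)$ has the same norm as the scalar Rademacher projection on $L_p$.
\item That norm is $\lesssim\sqrt q\asymp(p-1)^{-1/2}$, by Khintchine's inequality in $L_q$ together with self-adjointness.
\item Kahane's inequality then transfers this bound to $L_2(\ell_p)$.
\end{itemize}
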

A Banach space has a finite $K$-convexity constant if and only if it has nontrivial type; see, for example, \cite{milman2007remark}. A quantitative form of this equivalence is the following.
\begin{fact}
For every $\kappa<\infty$, there exist $p(\kappa)>1$ and $s(\kappa)<\infty$ such that $K_{\|\cdot\|}\leq\kappa$ implies that $\bana$ has type $p(\kappa)$ with upper constant $s(\kappa)$.
\end{fact}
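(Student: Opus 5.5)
The plan is to follow the classical Maurey--Pisier route: a uniform bound on the $K$-convexity constant rules out almost-isometric copies of $\ell_1^N$ for a fixed $N=N(\kappa)$. Ruling out such copies in turn forces a uniform gain in Rademacher averages of length $N$, and a submultiplicativity argument turns that gain into type $p(\kappa)>1$.

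\textbf{Step 1 ($K$-convexity obstructs $\ell_1^N$).} I will use two standard properties of $K_{\|\cdot\|}$, both read off from the formal definition in \Cref{sec:Discussion}. First, $K_{\|\cdot\|}$ is monotone under passing to subspaces: $K_E\le K_{\|\cdot\|}$ for every subspace $E\subset\cB(\cX)$. Second, it is stable under isomorphisms: $K_E\le d_{BM}(E,F)\,K_F$. Next I invoke the classical computation $K_{\ell_1^N}\gtrsim\sqrt{\log N}$. This follows by testing the Rademacher projection on the function $\vec\eps\mapsto$ (normalized indicator of $\vec\eps$) in $L_2(\{\pm1\}^m;\ell_1^{2^m})$ with $N=2^m$, which has norm one while its Rademacher projection has norm $\gtrsim\sqrt m$. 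Combining these, if $\cB(\cX)$ contains a $2$-isomorphic copy of $\ell_1^N$, then $\kappa\ge K_{\|\cdot\|}\gtrsim\sqrt{\log N}$. Hence, with $N(\kappa):=\exp(C\kappa^2)$, the space contains no $2$-isomorphic copy of $\ell_1^{N(\kappa)}$.

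\textbf{Step 2 (a uniform gain for Rademacher sums).} Define
\[
a_N:=\sup\Bigl\{N^{-1}\,\E_{\vec\eps}\Bigl\|\sum_{i=1}^N\eps_if_i\Bigr\|:\ \|f_i\|\le1\Bigr\}\le1 .
\]
I will show that if $a_N>1-\delta$ for $\delta=\delta(N)$ small enough, then $\cB(\cX)$ contains a $2$-copy of $\ell_1^N$. The argument starts from vectors with $\E\|\sum\eps_if_i\|\ge(1-\delta)N$. Since every sign pattern gives at most $N$, Markov's inequality shows that $\|\sum\eps_if_i\|\ge(1-\sqrt\delta)N$ for all but a $\sqrt\delta$-fraction of sign patterns. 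Choosing $\delta<2^{-N}$ then makes this hold for every pattern. For arbitrary real coefficients $\alpha_i$, the lower bound $\|\sum\alpha_if_i\|\ge(1-C N\sqrt\delta)\sum|\alpha_i|$ follows by writing $\sum\alpha_if_i$ as a convex-type combination of sign patterns and using the triangle inequality against the norming functional of the sign pattern $\operatorname{sgn}(\alpha)$. Together with Step 1, this gives $a_{N(\kappa)}\le1-\delta(\kappa)$.

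\textbf{Step 3 (submultiplicativity and conversion to type).} Grouping $NM$ vectors into $M$ blocks of size $N$ and using the contraction principle (conditioning on the within-block signs) yields $a_{NM}\le a_Na_M$. Therefore $a_{N^k}\le(1-\delta)^k$, that is, $a_m\lesssim m^{-\theta}$ with $\theta=\log\frac1{1-\delta}/\log N>0$. This is an ``equal-norm type'' estimate, $\E\|\sum_{i\le m}\eps_if_i\|\lesssim m^{1-\theta}\max_i\|f_i\|$. A standard dyadic decomposition of $(\|f_i\|)$ into level sets, combined with Kahane's inequality to move between first and $p$-th moments, upgrades it to type $p$ for any $p<1/(1-\theta)$, say $p(\kappa)=1+\theta/2$, with constant $s(\kappa)$ depending only on $\theta$ and hence only on $\kappa$.

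I expect the main obstacle to be Step 2, the quantitative passage from ``Rademacher averages almost $N$'' to an honest $\ell_1^N$ copy. The crude Markov-plus-union bound above forces $\delta$ to be exponentially small in $N$. That still suffices for the qualitative statement, since only dependence on $\kappa$ is required, but it gives a very poor $p(\kappa)$. If a sharper dependence were desired, I would instead follow Pisier's original argument, which works with the Rademacher projection directly.
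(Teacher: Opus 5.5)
\textbf{Overall.} The paper does not prove this Fact; it quotes the classical theorem of Pisier (finite $K$-convexity $\Leftrightarrow$ $B$-convexity $\Leftrightarrow$ nontrivial type). Your three-step outline is the standard route to that theorem, and Steps~1 and~2 are sound up to small slips.

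\textbf{The gap is in Step~3.} Step~3 rests on a false inequality: the equal-norm quantity $a_N$ is \emph{not} submultiplicative. Already for $\cB(\cX)=\R$, where $K_{\|\cdot\|}=1$, one has $\E|\eps_1f_1+\eps_2f_2|=\max\{|f_1|,|f_2|\}$, so $a_2=1/2$. On the other hand, $a_4\geq\frac14\E|\eps_1+\eps_2+\eps_3+\eps_4|=\frac38>a_2^2$. More generally $a_N\sim\sqrt{2/(\pi N)}$ in $\R$, so $a_{NM}/(a_Na_M)\to\sqrt{\pi/2}>1$.

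Your conditioning argument breaks exactly where you need it. Freeze the within-block signs and set $g_j=\sum_k\eps''_{jk}f_{jk}$. The definition of $a_M$ (equivalently, contraction) then only gives $\E_{\eps'}\|\sum_j\eps'_jg_j\|\leq Ma_M\max_j\|g_j\|$. After averaging over $\eps''$ you face $\E\max_j\|g_j\|$, which is not bounded by $\max_j\E\|g_j\|\leq Na_N$. A lossy version $a_{NM}\leq Ca_Na_M$ would not rescue the iteration either, because Step~2 only gives $a_N\leq1-\delta$ with $\delta$ tiny.

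\textbf{The standard repair.} Run Steps~2 and~3 with the quadratic constant
$t_N:=\sup\{(\E\|\sum_{i\leq N}\eps_if_i\|^2)^{1/2}:\sum_i\|f_i\|^2\leq1\}\leq\sqrt N$.
\begin{itemize}
\item \emph{Submultiplicativity.} The same blocking argument is now exact: $\E_{\eps'}\|\sum_j\eps'_jg_j\|^2\leq t_M^2\sum_j\|g_j\|^2$ and $\E_{\eps''}\sum_j\|g_j\|^2=\sum_j\E\|g_j\|^2$. Hence $t_{NM}\leq t_Nt_M$.
\item \emph{Step~2 adapts.} If $t_N>(1-\delta)\sqrt N$, there is near-equality in $\|\sum_i\eps_if_i\|\leq\sum_i\|f_i\|\leq\sqrt N(\sum_i\|f_i\|^2)^{1/2}$. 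This forces all $\|f_i\|$ to be nearly equal and every sign pattern to be nearly extremal. Your norming-functional argument then produces a $2$-copy of $\ell_1^N$.
\item \emph{Step~3 goes through.} Monotonicity of $t_m$ in $m$ gives $t_m\lesssim_N m^{1/2-\theta}$. This yields your equal-norm estimate $\E\|\sum_{i\leq m}\eps_if_i\|\lesssim m^{1-\theta}\max_i\|f_i\|$, and your dyadic/Kahane upgrade finishes the proof.
\end{itemize}

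\textbf{Two minor points.}
\begin{itemize}
\item In Step~2, Markov leaves an exceptional fraction $\sqrt\delta$ of sign patterns, so you need $\delta<4^{-N}$ rather than $2^{-N}$. Alternatively, argue directly that a single bad pattern costs a $2^{-N}$ share of the mean.
\item The paper's $K_{\|\cdot\|}$ is the Gaussian constant. Transfer your cube test function to $F\circ\operatorname{sgn}$, whose Gaussian first-chaos coefficients are $\sqrt{2/\pi}\,\widehat F(i)$. Then $K_{\ell_1^N}\gtrsim\sqrt{\log N}$ still holds in the paper's normalization.
\end{itemize}
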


Consequently, $2$-uniform convexity with constant $t>0$ implies nontrivial type, with parameters depending only on $t$. We conclude with two standard quantitative bounds:
\begin{enumerate}
    \item For every norm $\|\cdot\|$ on $\R^d$,
    \[
        K_{\|\cdot\|}
        \lesssim
        \log\left(1+d_{BM}(\|\cdot\|,\ell_2^d)\right)
        \lesssim
        \log(d+1).
    \]
    \item If $\|\cdot\|$ has type $p>1$ with upper constant $s$, then
    \[
         K_{\|\cdot\|}\lesssim C(p,s).
    \]
    In particular, if $\|\cdot\|$ is $UC(2)$ with constant $t$, then $K_{\|\cdot\|}\lesssim t^{-1/2}$.
\end{enumerate}
We conclude this part with the following remark.
\begin{remark}\label{R:Type}
Cotype is usually defined using Rademacher random variables, but an equivalent definition uses Gaussian random variables. For every $q\in[2,\infty)$, the two formulations are equivalent up to constants depending only on $q$; see \citep[Theorem~5.4.1]{artstein2022asymptotic}.
\end{remark}
\subsubsection{Examples}
\paragraph{$\ell_p$ norms in $\R^d$.}
The following standard facts about $\ell_p$-spaces can be found, for example, in \citep{ball1994sharp,pisier2016martingales,artstein2022asymptotic}:
\begin{itemize}
    \item When $1<p\leq2$, $\ell_p^d$ is $UC(2)$ with constant $t=p-1$ and $US(p)$ with constant $s=1$. It also has $T(p)$ and $CO(2)$ with absolute constants. At the endpoint $p=1$, $\ell_1^d$ is not uniformly convex, although it still has cotype $2$ with an absolute constant.
    \item When $2\leq p<\infty$, $\ell_p^d$ is $UC(p)$ with constant $t=1$ and $US(2)$ with constant $s=p-1$. It has cotype $p$ with an absolute constant and type $2$ with constant
    \[
        s\lesssim \sqrt{\min\{p,\log(d+1)\}}.
    \]
    \item If $1<p\leq2$ and $q=p/(p-1)$, then
    \[
        K_{\ell_p^d}=K_{\ell_q^d}
        \lesssim
        \min\left\{(p-1)^{-1/2},\sqrt{\log(d+1)}\right\}.
    \]
\end{itemize}
\paragraph{Sobolev spaces.}
Let
\[
N_k:=\#\{\alpha\in\mathbb N_0^d:\ |\alpha|\leq k\}=\binom{d+k}{k},
\]
and define
\[
T:W^{k,p}(\Omega)\to L_p(\Omega;\mathbb R^{N_k}),\qquad
Tf=(D^\alpha f)_{|\alpha|\leq k},
\]
where $\mathbb R^{N_k}$ is equipped with its $\ell_p^{N_k}$-norm. Then
\[
\|Tf\|_{L_p(\Omega;\ell_p^{N_k})}
=
\left(\sum_{|\alpha|\leq k}\|D^\alpha f\|_{L_p(\Omega)}^p\right)^{1/p}
=
\|f\|_{W^{k,p}(\Omega)},
\]
so $T$ is an isometric embedding of $W^{k,p}(\Omega)$ onto a closed linear subspace of $L_p(\Omega;\ell_p^{N_k})$.

Hence, for $1<p<\infty$, $W^{k,p}(\Omega)$ inherits the uniform convexity and uniform smoothness power-type estimates of the ambient $L_p$-space. In particular, it has type $\min\{p,2\}$ and cotype $\max\{p,2\}$, with constants inherited from the ambient space.
\paragraph{Schatten norms.}
For $1\leq p<\infty$, let $S_p$ be the class of compact operators $A$ on a Hilbert space such that
\[
    \sum_j s_j(A)^p<\infty,
\]
where
\[
    s_1(A)\geq s_2(A)\geq\cdots\geq0
\]
are the singular values of $A$, i.e., the eigenvalues of $|A|=(A^*A)^{1/2}$. The Schatten $p$-norm is
\[
    \|A\|_{S_p}=\left(\sum_j s_j(A)^p\right)^{1/p}.
\]
For $p=\infty$, one sets $\|A\|_{S_\infty}=s_1(A)$, the usual operator norm. Thus, the Schatten $p$-norm is the $\ell_p$-norm of the singular-value sequence:
\[
    \|A\|_{S_p}=\|s(A)\|_{\ell_p}.
\]
For $1<p\leq2$, the noncommutative Clarkson--Ball--Carlen--Lieb inequality gives
\[
\left\|\frac{A+B}{2}\right\|_{S_p}^2
+(p-1)\left\|\frac{A-B}{2}\right\|_{S_p}^2
\leq
\frac{\|A\|_{S_p}^2+\|B\|_{S_p}^2}{2}.
\]
Thus, $S_p$ is $UC(2)$ with constant $t=p-1$ for every $1<p\leq2$; see \cite{ball1994sharp}.

\subsection{Convex Concentration Property}\label{ss:ccp}
In overparameterized linear regression, our proof relies crucially on the convex concentration property (CCP); see \cite{adamczak2015note}. We say that a random vector $X\sim G$ in $\R^m$ satisfies the CCP with constant $L>0$ if, for every convex $1$-Lipschitz function $F:\R^m\to\R$ with $\E|F(X)|<\infty$,
\begin{equation}\label{Eq:CCP}
    \Pr_{X\sim G}\left(\left|F(X)-\E F(X)\right|\geq u\right)
    \leq 2\exp\left(-\frac{u^2}{L^2}\right),
    \qquad u>0.
\end{equation}
The CCP implies sub-Gaussian concentration for all linear functionals, but the converse is false. For example, a random vector with independent entries bounded in absolute value by $M$ satisfies the CCP with $L=O(M)$ by Talagrand's convex distance inequality; see \cite{boucheron2013concentration}. More importantly, an isotropic random vector with independent entries having a bounded sub-Gaussian norm satisfies the CCP with $L\lesssim\sqrt{\log(m)}$; see \citep[Lemma~1.4]{klochkov2020uniform}. Finally, the CCP is weaker than dimension-free concentration for all Euclidean Lipschitz functionals; product Rademacher measures provide a standard example of this distinction; see \cite{ledoux2001concentration}.

\section{Main Results}
  \label{sec:Results}
  For simplicity, in the first two theorems we assume that $\|\ft\|_{\PP}=\|\ft\|=1$. In the first theorem, the expectation is taken on the intersection of the events appearing in its assumptions. We also implicitly restrict to the nontrivial regime $s=o(M_n(\cF)^p)$ when $p>1$; otherwise, our bounds are of order one. The implicit constants in the upper bounds below may depend on the constants in the assumptions.
  
  The following theorem is our ``unlocalized'' upper bound (the basic inequality, in the terminology of \cite{van2000empirical}) on the \textbf{structural error} for $UC(2)$ norms:

\begin{theorem}[Basic inequality]\label{T:UC2}
Let $\bana$ be $UC(2)$ with constant $t>0$. Under Assumptions
\ref{A:One}--\ref{A:SB}, there exists an event $\Omega$, depending only
on $\vec X$, with probability at least $1-Cn^{-2}$ such that, if
\[
    T_1^{\Omega}
    :=
    \E_{\vec X}\left[
        \left\|
            \E_{\vec\xi}[\erm\mid\vec X]-\ft
        \right\|_{\PP}^2
        \mathbbm 1_{\Omega}
    \right].
\]
Then, up to a nonessential logarithmic factor,
\begin{equation}\label{Eq:BiasUC2}
    T_1^{\Omega}
    \lesssim
    \frac{R_{MM^*}\cG_n(\cF)}{t}.
\end{equation}
If, in addition, $\bana$ is $US(p)$ for some $p\in(1,2]$ with constant
$s>0$, then, up to a nonessential logarithmic factor,
\[
    T_1^{\Omega}
    \lesssim
    \frac{sR_{MM^*}^{2-p}\cG_n(\cF)^p}{t}.
\]
\end{theorem}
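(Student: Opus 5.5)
Write $\bar f:=\E_{\vec\xi}[\erm\mid\vec X]$ and let $\Omega$ be the intersection of the events in Assumptions~\ref{A:NS}, \ref{A:DeviationOfSuperma} and \ref{A:SB}, so that $\Pr(\Omega)\ge 1-Cn^{-2}$. On $\Omega$, Assumption~\ref{A:SB} gives $\|\bar f-\ft\|_{\PP}^2\le 4\|\bar f-\ft\|_{\PP_n}^2+C_4\cG_n(\cF)^2\|\bar f-\ft\|^2$, after rescaling by the norm. The empirical term vanishes: by linearity of the interpolation constraint, $\bar f$ interpolates $\vec\ft$ exactly. Hence everything reduces to a high-probability bound on $\|\bar f-\ft\|$ in the Banach norm. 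The target is
\[
\|\bar f-\ft\|^2\lesssim \frac{R_{MM^*}}{t\,\cG_n(\cF)}\quad\text{(resp. } \tfrac{s R_{MM^*}^{2-p}\cG_n(\cF)^{p-2}}{t}\text{)},
\]
and multiplying by $\cG_n(\cF)^2$ then gives \eqref{Eq:BiasUC2}.

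To bound $\|\bar f-\ft\|$ I will use $UC(2)$ around the minimizer. Fix $\vec X$ and write $\erm=\erm(\vec X,\vec\ft+\vec\xi)$. Both $\erm$ and $g:=\ft+\erm(\vec X,\vec\xi)$ interpolate $\vec Y$, and so does their midpoint. Minimality of $\erm$ together with Definition~\ref{def:UC} applied to the pair $(\erm,g)$ gives
\[
t\Big\|\frac{g-\erm}{2}\Big\|^2\le \frac{\|g\|^2-\|\erm\|^2}{2}.
\]
The more useful form comes from the first-order characterization of the minimizer: for a $UC(2)$ norm, if $h$ minimizes $\|\cdot\|$ on an affine subspace $A$ and $u\in A$, then $t\|u-h\|^2\lesssim \|u\|^2-\|h\|^2$. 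Averaging over $\vec\xi$ and using convexity of $\|\cdot\|^2$ (Jensen), I would obtain
\[
t\|\bar f-\ft-\E_{\vec\xi}\erm(\vec X,\vec\xi)\|^2\lesssim \E_{\vec\xi}\|\ft+\erm(\vec X,\vec\xi)\|^2-\E_{\vec\xi}\|\erm(\vec X,\vec\ft+\vec\xi)\|^2 .
\]
By symmetry of $\vec\xi$ and uniqueness, $\E_{\vec\xi}\erm(\vec X,\vec\xi)=0$. By Fact~\ref{F:NID}, both norms on the right are gauges of $\cF_n$, namely $\|\vec\ft+\vec\xi\|_{\cF_n}$ and $\|\vec\xi\|_{\cF_n}$ up to the additive $\ft$.

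The right-hand side is a difference of squared norms, which I will expand. Writing $a=\|\vec\xi\|_{\cF_n}\asymp M_n(\cF)$ on $\Omega$ (using Assumption~\ref{A:DeviationOfSuperma} and concentration), the right side is at most $(1+a)^2-\E\|\vec\ft+\vec\xi\|_{\cF_n}^2$. The lower bound on $\E\|\vec\ft+\vec\xi\|_{\cF_n}$ comes from duality. Choose $\theta\in\cF_n^\circ$ attaining $\|\vec\xi\|_{\cF_n}$; by symmetry of $\vec\xi$, $\langle\theta,\vec\ft\rangle$ has mean zero. So the first-order terms cancel, and what remains is a second-order term bounded by $1$ plus a fluctuation of size $\E|\langle\theta(\vec\xi),\vec\ft\rangle|$.

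In the $UC(2)$-only case I would not expand further and would use instead the crude bound $\|\vec\ft\|_2\lesssim \sqrt n$ together with $|\langle\theta,\vec\ft\rangle|\le \|\theta\|_{\cF_n^\circ}$-type estimates. This gives the cross term at most of order $M_n^*(\cF)\cdot(\text{scale})/n$, producing $R_{MM^*}/\cG_n(\cF)$ after recalling $\cG_n(\cF)=M_n^*(\cF)/n$. In the $US(p)$ case, I would use that $\|\cdot\|_{\cF_n}$ inherits $US(p)$ with constant $s$: quotients of $US(p)$ spaces remain $US(p)$. Then
\[
\|\vec\xi+\vec\ft\|^p+\|\vec\xi-\vec\ft\|^p\le 2\|\vec\xi\|^p+2s\|\vec\ft\|^p ,
\]
and by symmetry of $\vec\xi$ the fluctuation is replaced by $s$. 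Converting the $p$-th powers to squares at scale $a\asymp M_n$ costs the factor $M_n^{2-p}$, which yields $sR_{MM^*}^{2-p}\cG_n^{p-2}$.

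\textbf{Main obstacle.} The hardest step is the last one: the expansion of $\E\|\vec\ft+\vec\xi\|_{\cF_n}^2-\E\|\vec\xi\|_{\cF_n}^2$ and the precise power bookkeeping matching $R_{MM^*}$. I expect the logarithmic losses to come in at exactly this point, from the concentration of $\|\vec\xi\|_{\cF_n}$ around $M_g(\cF_n)$ needed to pass between squares and first moments. I would handle that with Gaussian concentration, using the Lipschitz constant $b(\cF_n)$ only inside logarithms.
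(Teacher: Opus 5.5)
Your skeleton matches the paper: bound $\|\bar f-\ft\|$ in the Banach norm via $UC(2)$, minimality and Jensen, then transfer to $L_2(\PP)$ through Assumption~\ref{A:SB}. The $UC(2)$-only bound does follow from it, though not for the reason you give. You pair $\erm(\vec X,\vec\ft+\vec\xi)$ with the feasible interpolator $\ft+\erm(\vec X,\vec\xi)$. The paper instead pairs $w_+=\erm(\vec\xi+\vec\ft)$ with $-w_-$, where $w_-=\erm(-\vec\xi+\vec\ft)$, and bounds $\|\bar f\|$. Your pairing gives $t\|\bar f-\ft\|^2\le 2\bigl(\E\|\ft+\erm(\vec\xi)\|^2-\E\|\vec\ft+\vec\xi\|_{\cF_n}^2\bigr)\le 2\bigl(\E(1+a)^2-\E a^2\bigr)=2(1+2m_{\vec X})$, with $m_{\vec X}:=\E_{\vec\xi}\|\vec\xi\|_{\cF_n}$. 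This uses the triangle inequality and $\E\|\vec\ft+\vec\xi\|_{\cF_n}^2\ge\E a^2$, which your duality/oddness argument supplies. So $R_{MM^*}/\cG_n(\cF)=M_n(\cF)$ comes simply from the term $2\E a$. The cross term $\E[a\langle\theta(\vec\xi),\vec\ft\rangle]$ vanishes and contributes nothing. The discussion of $\|\vec\ft\|_2\lesssim\sqrt n$, $M_n^*(\cF)$ and an unspecified ``scale'' is spurious and should be deleted. This route even avoids the logarithm in this case. Two small repairs are needed:
\begin{itemize}
\item Assumption~\ref{A:NS} is an event in $(\vec X,\vec\xi)$, so you need Fubini and Markov to extract an $\vec X$-event with $m_{\vec X}\gtrsim1$, which is needed to absorb the $1$.
\item $a\asymp M_n(\cF)$ holds only on average over $\vec\xi$, not pointwise.
\end{itemize}

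The $US(p)$ refinement has a genuine gap, in two places.
\begin{itemize}
\item In your inequality, $\E\|\vec\ft+\vec\xi\|_{\cF_n}^2$ enters with a minus sign. The quotient smoothness bound $\E\|\vec\xi+\vec\ft\|_{\cF_n}^p\le\E\|\vec\xi\|_{\cF_n}^p+s$ bounds it from above, so it is useless there. You must instead control $Y:=\|\ft+\erm(\vec\xi)\|$. This requires $US(p)$ in $\cB(\cX)$ applied to $\erm(\vec\xi)\pm\ft$, together with oddness of $\vec z\mapsto\erm(\vec z)$, which gives $\E Y^p\le\E X^p+s$ with $X:=\|\vec\xi\|_{\cF_n}$. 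With the paper's pair, the right-hand side is the pure quotient quantity $\E\|\vec\xi+\vec\ft\|_{\cF_n}^2-\E\|\vec\xi\|_{\cF_n}^2$, and there quotient smoothness is the right tool.
\item More importantly, the passage from a $p$-th moment gap to a second-moment gap at cost $m_{\vec X}^{2-p}$ does not follow from concentration around $a\asymp M_n$. In general only $b(\cF_n)\lesssim m_{\vec X}$ is available, so $X$ need not concentrate at relative scale. Also $Y^p-X^p$ changes sign pointwise, so a first-order expansion around $a$ gives nothing better than the $UC(2)$-only bound.
\end{itemize}
The missing ingredient is Anderson's inequality in distributional form, $\Pr(Y>u)\ge\Pr(X>u)$ for all $u>0$. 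It holds here because $Y\ge\|\vec\ft+\vec\xi\|_{\cF_n}$ pointwise and Anderson's lemma gives $\Pr(\|\vec\ft+\vec\xi\|_{\cF_n}>u)\ge\Pr(\|\vec\xi\|_{\cF_n}>u)$. Your duality argument yields only the second-moment version. With stochastic domination, $\E Y^2-\E X^2=\int_0^\infty2u\bigl(\Pr(Y>u)-\Pr(X>u)\bigr)\,du$ has a nonnegative integrand. The bound $2u\le2T^{2-p}u^{p-1}$ on $[0,T]$ then controls the part over $[0,T]$ by $\frac2pT^{2-p}(\E Y^p-\E X^p)\lesssim sT^{2-p}$. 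Gaussian concentration, using $b\lesssim m_{\vec X}$ and $Y\le1+X$, is needed only for the tail beyond $T\asymp m_{\vec X}\sqrt{\log(e+m_{\vec X})}$. This is the paper's Lemma~\ref{lem:anderson_p_2}, and it is exactly where the logarithmic factor enters.
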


Under $2$-uniform convexity and no additional smoothness assumption (formally $p=1$), \Cref{T:UC2} agrees with the ``unlocalized'' risk bound for constrained least squares on $\cF$; see \cite{van2000empirical,chatterjee2014new} and the references therein. Although it may seem that greater smoothness of $\cF$ reduces $T_1$, the opposite can hold, especially in linear models. In the examples below, greater smoothness can lead to larger Gaussian complexity; see \Cref{ss:tr} for details.

To go beyond the basic inequality, we need to use the structure of the covariates and the \textbf{position} of the norm. We show that, in linear models, sharp bounds can be obtained when the norm is in a ``good'' position and $X$ has i.i.d.\ symmetric sub-Gaussian entries, as in \Cref{Ex:P}. We identify $\cF$ with $K$, the unit ball of a norm on $\R^d$, and $\ft$ with $\lft\in\R^d$. We make the following strong assumption, motivated by the $\ell_p$ ball with a $1$-sparse vector $\lft$:
\begin{assumption}\label{a:sym}
Let $K\subset\R^d$ be the unit ball of the norm and let $H=(\lft)^\perp$. We assume that $K$ is
invariant under the reflection about $H$. Namely for every $a\in\R$ and every $w\in H$,
\[
    a\lft+w\in K
    \qquad\Longleftrightarrow\qquad
    a\lft-w\in K.
\]
In other words, every affine section $K\cap(a\lft+H)$ is centrally
symmetric around the point $a\lft$.
\end{assumption}
The following proof-of-concept theorem gives an optimistic bound on $T_1$, that is tight for \Cref{Ex:P}:

\begin{theorem}[Sharp localized bound]\label{T:UC2Loc}
Assume that $d\geq C_1n$ for a sufficiently large absolute constant
$C_1>0$. Let $(\R^d,\|\cdot\|_K)$ be $UC(2)$ with constant $t>0$.
Under Assumptions \ref{A:One}, \ref{A:NS}, and \ref{a:sym}, suppose first
that the covariates are isotropic Gaussian. If $K$ is in isotropic
position, then
\[
    T_1
    =
    \widetilde{O}\left(
        \frac{\cG_n(K)^2+n^{-1}}{t^2}
    \right).
\]
If, in addition, $(\R^d,\|\cdot\|_K)$ is $US(p)$ for some $p\in(1,2]$
with constant $s>0$, then
\[
    T_1
    =
    \widetilde{O}\left(
        \frac{s^2\cG_n(K)^{2p}+n^{-1}}{t^2}
    \right).
\]
If instead the covariates are isotropic with i.i.d.\ symmetric
entries with bounded sub-Gaussian norm, the same bounds
hold provided that $\lft$ is $1$-sparse.\footnote{The result can be
extended to $O(1)$-sparse vectors with more technical efforts.}
\end{theorem}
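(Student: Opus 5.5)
We bound $T_1$ through the decomposition \eqref{eq:decomplevel2}, $T_1 = E_1 + E_2$. Here $\bar w_{\vec X}:=\E_{\vec\xi}[\lerm\mid\vec X]$. Since the MNI map $\vec z\mapsto \erm(\vec X,\vec z)$ is odd and $\vec\xi$ is symmetric, convexity of the norm gives $\|\bar w_{\vec X}\|_K\le \E\|\erm(\vec X,\vec Y)\|_K$. The first step removes the noise from the structural term by using \Cref{a:sym}. Reflection $R$ about $H=(\lft)^\perp$ preserves $K$. Under Gaussian covariates, the law of $\vec X$ is invariant under $\vec X\mapsto \vec X R$, and for $1$-sparse $\lft=e_1$ with symmetric i.i.d.\ entries the same holds, since $R$ flips the coordinates other than the first. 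Moreover, $\vec X\lft$ is unchanged by this map. From these invariances we get that the noiseless interpolator $w_0:=\erm(\vec X,\vec X\lft)$ satisfies $\E_{\vec X} w_0 \in \operatorname{span}\{\lft\}$. More importantly, $w_0$ decomposes as $w_0=(1-\lambda)\lft+h$ with $h\in H$ interpolating $\lambda\vec X\lft$. Uniqueness (from $UC(2)$) combined with the reflection symmetry gives the structure we need: $w_0$ minimizes over the section $K\cap(a\lft+H)$, whose center is $a\lft$.

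\textbf{Step 1 (shrinkage, $E_1$).} We control $\lambda$ through $UC(2)$. Apply the uniform convexity inequality to $\lft$ and to a feasible competitor. Since $\|\lft\|_K=1$ and $\|w_0\|_K\le 1$, we obtain $t\|w_0-\lft\|_K^2\lesssim 1-\|w_0\|_K^2$. We then compare $\|w_0\|_K$ with its projection: interpolating $\vec X\lft$ with a component of size $\lambda$ along $\lft$ forces $h$ to interpolate $\lambda\vec X\lft$. Since $\vec X\lft\sim N(0,I_n)$, or a vector with i.i.d.\ symmetric sub-Gaussian entries, \Cref{F:NID} and \Cref{A:DeviationOfSuperma} give $\|h\|_K\gtrsim \lambda M_n(K)$. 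Optimizing $(1-\lambda)+t\lambda^2M_n(K)^2$-type tradeoffs yields $\lambda\lesssim 1/(tM_n(K)^2)$. In isotropic position, $M_n$ and $\cG_n$ are dual up to logs, by the $MM^*$ bound $\log^3 d$ of \cite{milman2015mean,bizeul2025distances}. This gives $E_1\lesssim \lambda^2 = \widetilde O(\cG_n(K)^2/t^2)$. With $US(p)$, the smoothness improves the lower bound on $\|h\|_K$ via the dual $UC(q)$, giving $\cG_n^{2p}s^2$.

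\textbf{Step 2 ($E_2$ and noise).} The term $E_2=\E\|h_{\vec X}\|_2^2$ with $h\in H$ interpolating $\lambda\vec X\lft$ and $\|h\|_K\lesssim\lambda M_n(K)$. The Euclidean norm is controlled by the empirical-process bound: conditionally on $\vec X\lft$, the block $\vec X P_H$ is independent of it, by Gaussianity or by coordinate independence under $1$-sparsity. Hence $\|h\|_2\lesssim \|h\|_K\cdot r$, where $r$ is the in-section radius measured against Gaussian widths, i.e.\ $\sup\{\|w\|_2: w\in K\cap H,\ \vec Xw \text{ small}\}$. By the CCP (\Cref{ss:ccp}) this is bounded via $M^*_s(K)/\sqrt n$ up to $\sqrt{\log d}$. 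Combining gives $E_2\lesssim\lambda^2 M_n^2\cG_n^2\lesssim \cG_n^2/t^2$ up to logs. Finally, the $n^{-1}$ term accounts for the difference between the noisy mean $\bar w_{\vec X}$ and $w_0$: by symmetry, the noise contributes to the structural part only through fluctuations of $\vec X\lft$, whose norm fluctuates at order $n^{-1/2}$ relative to its mean.

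\textbf{Main obstacle.} The hardest step is Step 1's lower bound $\|h\|_K\gtrsim\lambda M_n(K)$. It needs $\vec X\lft$ to behave like a generic direction for the random projection $(K\cap H)_n$. For sub-Gaussian non-Gaussian covariates this holds only because $1$-sparsity makes $\vec X e_1$ independent of the remaining columns. The second difficulty is turning isotropic position into $M_n M_n^*\lesssim n\,\mathrm{polylog}$ for the section $K\cap H$, which I would handle via the slicing/mean-width results cited in \Cref{ss:pos}.
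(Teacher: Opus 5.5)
You are bounding the wrong object. Both $E_1$ and $E_2$ are defined through the \emph{noisy} conditional mean $\E_{\vec\xi}[\erm(\vec X,\vec X\lft+\vec\xi)\mid\vec X]$. Your Steps~1--2 instead analyze the noiseless interpolator $w_0=\erm(\vec X,\vec X\lft)$, and you then assert that the noise adds only an $n^{-1}$ correction. Because the MNI is nonlinear, this is false: for non-Hilbertian norms the noise is what drives the shrinkage.

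Take the $\ell_p$ example with Gaussian covariates and $\lft=e_1$. Write $\vec\eta=\vec X\lft$, let $\|\cdot\|_{H,n}$ be the quotient norm of the section $K\cap H$, and let $M=\E_{\vec\xi}\|\vec\xi\|_{H,n}$, which is large by \Cref{A:NS}.
The noiseless coefficient minimizes $(1-\lambda)^p+\lambda^p\|\vec\eta\|_{H,n}^p$, so $\lambda\asymp M^{-q}$.
The noisy coefficient minimizes $(1-\delta)^p+\|\vec\xi+\delta\vec\eta\|_{H,n}^p\approx(1-\delta)^p+M^p(1+\delta^2)^{p/2}$, using rotation invariance and concentration, so $\delta_*\asymp M^{-p}$.

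For $p$ bounded away from $2$, these differ by the factor $M^{q-p}\gg1$. It is $M^{-2p}\asymp d^{2p-2}/n^p$ that is the tight value of $T_1$ behind \Cref{C:ExmapleOne}, and for fixed $p\in(1,2)$ this exceeds $n^{-1}$ by a polynomial factor. Consistently, the tradeoff in your Step~1 gives $\lambda^2\lesssim(tM^2)^{-2}$, which for $p<2$ lies below $M^{-2p}$, so it cannot be a bound on $E_1$. Likewise, $\E_{\vec X}w_0\in\operatorname{span}\{\lft\}$ does not help with $E_2$. That term needs the $H$-component of the conditional mean to be small for each fixed $\vec X$ before squaring.

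The missing idea is to keep the noise and treat the signal as additional noise. Every interpolator of $\vec\eta+\vec\xi$ is $(1-\delta)\lft+u$ with $u\in H$ and $\vec Xu=\vec\xi+\delta\vec\eta$. So the MNI selects $\delta_*\in\operatorname*{argmin}_\delta m_{1-\delta}(\vec\xi+\delta\vec\eta)$, and the paper combines two estimates.

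The first is the quantitative Anderson inequality of \Cref{lem:good2}: $m_a(\vec\xi+\mu\vec\eta)\geq(1+ct\mu^2/\Lambda^2-C|\mu|\Lambda\log(d)/\sqrt n)\,m_a(\vec\xi)$. It comes from three ingredients:
$UC(2)$ applied to the minimizers for $\vec\xi\pm\mu\vec\eta$;
the duality bound $(\E\|\vec\eta\|_{H,n}^2)^{1/2}\gtrsim M/\Lambda$;
and the CCP in $\vec\eta$, which is where independence of $\vec\eta$ from $P_H$, hence $1$-sparsity, is used.

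The second is the slope bound $m_1(\vec\xi)-m_{1-\delta}(\vec\xi)\lesssim\rho\delta$ with $\rho=\min\{1,sM^{1-p}\}$, from \Cref{Lem:Redu2}; this is where $US(p)$ enters.

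Because the noise makes $m_1(\vec\xi)\asymp M$ large, the quadratic gain $tM\delta^2/\Lambda^2$ outweighs the linear saving $\rho\delta$ unless $|\delta_*|\lesssim\Delta:=t^{-1}(\Lambda^2\rho/M+\Lambda^3\log(d)/\sqrt n)$. The $n^{-1}$ term in the theorem comes from this CCP fluctuation, not from any noisy/noiseless comparison.

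For $E_2$, the paper compares $\ermp$ with $\overline w_n=\lft+\widetilde w_n(\vec\xi)$, where $\widetilde w_n$ minimizes $m_1(\vec\xi)$. By \Cref{a:sym}, $\widetilde w_n$ is odd in $\vec\xi$, so its $H$-part has zero conditional mean for every fixed $\vec X$. $UC(2)$ and the norm gap $\lesssim t\Delta^2M$ give $\|\ermp-\overline w_n\|_K\lesssim\Delta M$. The lower isometry turns this into $\|\ermp-\overline w_n\|_2\lesssim\Lambda\Delta$. Finally, $M\cG_n(K)\lesssim\Lambda$ and $M^{-1}\lesssim\Lambda\cG_n(K)$, with $\Lambda\lesssim\log(d)^3$, come from \Cref{Lem:BMstar} applied to the projected section and its polar.
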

We note that similar bounds holds when $K$ is in \textbf{John's} position using the results of \cite{klartag2008volume}, see \Cref{R:John}.  And also in the proportional Gaussian regime $d\asymp n$, under Gaussian covariates, it suffices to
assume that $K$ is in $M$-position and polynomially comparable to the euclidean ball. See the proof for precise statements. Before proceeding, we state a lemma used in the proof of \Cref{T:UC2Loc} that may be of independent interest.
\begin{lemma}\label{Lem:BMstar}
Let $P=d^{-1/2}\vec X$ be a sub-Gaussian random projection, assume that $\sqrt{d}K$ is in isotropic position, and let $d\geq C_3 n$ for a sufficiently large absolute constant $C_3>0$. Then, with probability at least $1-\exp(-cn)$,
\[
    1\lesssim b^{-1}(PK)\leq M_s^*(PK)\lesssim \log(d)^3,
    \qquad
    1\lesssim b^{-1}(P(K^{\circ}))\leq M_s^*(P(K^{\circ}))\lesssim \log(d)^2.
\]
\end{lemma}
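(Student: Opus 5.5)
The plan is to express both parameters of $PK$ through the support function. Write $K=d^{-1/2}L$ with $L$ isotropic. Then
\[
h_{PK}(\theta)=h_K(P^{\top}\theta)=d^{-1}h_L(\vec X^{\top}\theta),\qquad \theta\in\Sn .
\]
For fixed $\theta$, the vector $Z_\theta:=\vec X^{\top}\theta\in\R^d$ has independent, symmetric, unit-variance sub-Gaussian coordinates. So both claims reduce to showing $h_L(Z_\theta)\asymp d$ up to polylogarithms: on average over $\theta$ for the mean width, and uniformly over $\theta$ for the inradius. The middle inequality $b^{-1}(PK)\le M_s^*(PK)$ needs no work: $PK\supseteq b^{-1}(PK)B_n$ gives $h_{PK}\ge b^{-1}(PK)$ pointwise, and we then average. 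The statements for $K^\circ$ follow by the same argument with $L^\circ$ in place of $L$. Here we use that $L^\circ$, suitably rescaled, is in $M$-position (by the slicing resolution) and has mean width $\lesssim\sqrt d\log d$ on the sphere. This gives one fewer logarithm.

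\textbf{Mean width.} First bound the expectation. By Talagrand's majorizing measure theorem, $\E h_L(Z_\theta)\lesssim \E h_L(G)$, where $G$ is a standard Gaussian in $\R^d$. The process $y\mapsto\inner{Z_\theta,y}$ has sub-Gaussian increments, at worst with an extra $\sqrt{\log d}$ if we pass through the CCP of \Cref{ss:ccp}. We have $\E h_L(G)\asymp\sqrt d\,M_s^*(L)\lesssim d\log^2 d$ by E.~Milman's mean-width estimate for isotropic bodies \cite{milman2015mean}, using that the isotropic constant $L_L$ is now bounded \cite{klartag2025affirmative,bizeul2025slicing}. Hence $\E_{\vec X}M_s^*(PK)\lesssim\log^{3}d$.

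For concentration, note that $\vec X\mapsto M_s^*(PK)=d^{-1}\E_\theta h_L(\vec X^{\top}\theta)$ is convex, being an average of convex functions. Its Lipschitz constant in Frobenius norm is at most
\[
d^{-1}R(L)\sup_{\|A\|_F\le1}\E_\theta\|A^{\top}\theta\|_2\le d^{-1}R(L)\,n^{-1/2},
\]
where $R(L)\le d$ is the outradius of $L$. So the constant is at most $n^{-1/2}$. Applying the CCP to $\vec X$ (with $L\lesssim\sqrt{\log(nd)}$) gives deviations of order $O(1)$ with probability $1-\exp(-cn/\log d)$. To reach the stated $1-\exp(-cn)$, I would either absorb the logarithm or use Talagrand's inequality directly in the bounded (Rademacher) case.

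\textbf{Inradius (main obstacle).} We must show that $h_L(\vec X^{\top}\theta)\gtrsim d$ for all $\theta\in\Sn$ simultaneously. Naive bounds are too weak. The estimate $h_L(v)\ge\E_{Y\sim L}|\inner{Y,v}|\gtrsim\|v\|_2$ combined with $s_{\min}(\vec X^{\top})\gtrsim\sqrt d$ only gives $\gtrsim\sqrt d$. So we need the fact that on a typical direction $h_L$ is of order $\sqrt d$ times the Euclidean norm, and that this holds uniformly over the $n$-dimensional range of $\vec X^{\top}$.

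I would argue by a net. Fix $\theta$. The event $h_L(Z_\theta)\le c d$ means $Z_\theta\in cd\,L^\circ$. Since $\sqrt d L^\circ$ is in $M$-position, it is covered by $C^d$ translates of $\sqrt d B_d$. Hence $cd\,L^\circ$ is covered by $C^d$ Euclidean balls of radius $c\sqrt d$. For each such ball, a small-ball estimate for vectors with independent coordinates (e.g.\ Rudelson--Vershynin, or Rogozin's inequality) bounds the probability that $Z_\theta$ lands in it by $(Cc)^d$. The difficulty is that Rademacher coordinates have no density, so these estimates hold only at scales $\gtrsim1$ per coordinate, which is exactly the scale $c\sqrt d$ here. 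Choosing $c$ small, the total probability is at most $\exp(-c'd)$. A union bound over a $1/2$-net of $\Sn$ of size $5^n$, together with the upper bound $\|\vec X\|_{op}\lesssim\sqrt d$ and Lipschitz continuity of $h_L$ along the range of $\vec X^\top$, yields the uniform bound when $d\ge C_3n$. The resulting failure probability is $\exp(-cn)$. The delicate part I expect is making the small-ball step rigorous for discrete coordinates and keeping the constants compatible with the net argument.
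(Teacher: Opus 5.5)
Your mean-width half is correct, and it takes a different route from the paper's. The paper pushes Dudley's entropy sum for $K$ through $P$ by chaining with Bernstein's inequality at each level, and that is where its third logarithm comes from. You instead use Talagrand's comparison for the sub-Gaussian process $y\mapsto\inner{\vec X^\top\theta,y}$, which gives $\E_{\vec X}M_s^*(PK)\lesssim\log(d)^2$ directly. Taking deviation $u\asymp\sqrt{\log d}$ in the CCP then already yields probability $1-e^{-cn}$.

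The inradius step, however, has a genuine gap. You invoke the per-ball estimate $\Pr(\|\vec X^\top\theta-x_0\|_2\le c\sqrt d)\le(Cc)^d$ for each fixed $\theta$, and it is false for discrete entries, which the lemma is meant to cover. Take Rademacher entries and $\theta=e_1$. Then $\vec X^\top e_1$ is a uniform vertex of $\{\pm1\}^d$, so a ball of radius $c\sqrt d$ centred at a vertex has probability at least $2^{-d}$, however small $c$ is. The scale $c\sqrt d$ means $c<1$ \emph{per coordinate}, which is below the spacing of the atoms, not above it. Your union bound over the $C^d$ covering balls therefore gives at best $(C/2)^d$. That is useless unless the $M$-position covering constant happens to be below $2$. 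The same obstruction persists at net points near $e_1$. If $\|\theta-e_1\|_2\le c/2$, Chebyshev shows that each coordinate of $\vec X^\top\theta$ lies within $c$ of the atom $\theta_1$ with probability at least $3/8$. So no per-ball bound better than $(3/8)^d$ is available there, and you give no mechanism for handling such compressible directions.

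There is also a second, independent gap in the net step. Transferring a lower bound $cd$ from a net of mesh $\eta$ to all of $\Sn$ requires $\eta\,\sup_{\theta\in\Sn}h_L(\vec X^\top\theta)\lesssim cd$. The bound $\|\vec X\|_{op}\lesssim\sqrt d$ does not make this supremum $O(d)$. For the isotropic cross-polytope $L\asymp dB_1^d$ one has $h_L(\vec X^\top\theta)\asymp d\|\vec X^\top\theta\|_\infty$, whose supremum over $\theta$ is the largest column norm times $d$, i.e.\ $\asymp d\sqrt n$. A $1/2$-net therefore cannot transfer the bound. A net of mesh $\asymp c/\sqrt n$ costs $e^{Cn\log n}$, which forces $d\gtrsim n\log n$ instead of $d\ge C_3 n$.

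The paper avoids both problems in the following way:
\begin{enumerate}
\item It replaces $K$ by $K\cap B_d$. Then $h_{K\cap B_d}\le\|\cdot\|_2$, so the upper bound needed for the net is just $\|\vec X\|_{op}$.
\item $M$-position gives $\Vol(K\cap B_d)\ge e^{-Cd}\Vol(B_d)$.
\item Blaschke--Santal\'o then shows that $(K\cap B_d)^\circ\supset B_d$ has bounded volume ratio.
\item The finite-volume-ratio form of Kashin's theorem for matrices with independent sub-Gaussian entries \cite{litvak2005euclidean} gives $cB_n\subset P(K\cap B_d)\subset PK$. That theorem is precisely where the discreteness of Bernoulli-type entries is handled.
\item $K^\circ$ is treated in the same way, since $M$-position is self-dual.
\end{enumerate}
After this truncation, your covering-plus-small-ball argument does go through for Gaussian or bounded-density entries. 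It does not cover the full sub-Gaussian class in the statement without the random-matrix volume-ratio theorem.
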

This implies that $R_{bM^*}=\Tilde{O}(1)$ for the MNI with respect to both $\|\cdot\|_K$ and $\|\cdot\|_{K^{\circ}}$. We briefly discuss the proof in \Cref{Rem:isotropiccool}.

Our next theorem gives a lower bound on the noise error term $T_2$, namely the expected conditional variance, for $CO(2)$ norms in terms of the $K$-convexity constant of $\bana$.
 
For any fixed vector $\vec v\in\R^n$ and $r\geq0$, define
 \[
   \Psi_{n}(\vec v,r):= \mathrm{Med} \left[\min_{\{ f \in r \cdot \cF: \vec f = \vec v\}}\|f\|_{\PP}\right],
\]
Thus, $\Psi_n(\vec v,r)$ is the \emph{median} (over $\vec X$) of the minimum $L_2(\PP)$ norm among functions in $r\cdot\cF$ that interpolate $\vec v$ on $\vec X$. We use the shorthand $\vec e_1=(1,0,\ldots,0)\in\R^n$.
We use the shorthand
\[
    \vec e_1=(1,0,\ldots,0)\in\R^n.
\]

\begin{theorem}\label{T:UC2Var}
Assume that $\bana$ is
$CO(2)$ with constant $t>0$ and finite $K$-convexity constant
$K_{\|\cdot\|}$. Under Assumptions~\ref{A:One}, \ref{A:NS}, and
\ref{A:DeviationOfSuperma},
\begin{equation}\label{Eq:UC2CV}
    T_2
    \gtrsim
    n\,
    \Psi_n\left(
        \vec e_1,
        \frac{C K_{\|\cdot\|}M_n(\cF)}
             {t\sqrt n}
    \right)^2,
\end{equation}
where $C>0$ is an absolute constant.
\end{theorem}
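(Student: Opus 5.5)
The plan is to lower-bound the conditional variance by its projection onto the first Gaussian chaos. Fix a realization of $\vec X$ and regard $F(\vec\xi):=\erm(\vec X,\vec{\ft}+\vec\xi)$ as a $\cB(\cX)$-valued function of $\vec\xi\sim N(0,I_n)$. Since $\xi_1,\dots,\xi_n$ are orthonormal in $L_2(\gamma_n)$, Bessel's inequality in $L_2(\gamma_n;L_2(\PP))$ gives
\[
    \Var_{\vec\xi}[\erm\mid\vec X]\;\geq\;\sum_{i=1}^n\|g_i\|_{\PP}^2,
    \qquad g_i:=\E_{\vec\xi}\bigl[\xi_i\,F(\vec\xi)\bigr]\in\cB(\cX).
\]
The key structural observation is that each $g_i$ is an interpolant of a basis vector. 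Indeed, $F(\vec\xi)$ interpolates $\vec{\ft}+\vec\xi$ on $\vec X$, and point evaluation is linear. Hence $\vec g_i=\E[\xi_i(\vec{\ft}+\vec\xi)]=\vec e_i$. Consequently $\|g_i\|_{\PP}\geq m_i(r):=\min\{\|f\|_{\PP}: f\in r\cF,\ \vec f=\vec e_i\}$ whenever $\|g_i\|\leq r$.

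Next I will control the $\cB$-norms of the $g_i$ via $K$-convexity and cotype. The map $F\mapsto\sum_i\xi_i g_i$ is the Gaussian (first-chaos) projection. By the Gaussian version of $K$-convexity (equivalent to the Rademacher one up to constants), this gives
\[
    \E\Bigl\|\sum_i\gamma_i g_i\Bigr\|\lesssim K_{\|\cdot\|}\bigl(\E\|F(\vec\xi)\|^2\bigr)^{1/2}.
\]
By \Cref{F:NID}, $\|F(\vec\xi)\|=\|\vec{\ft}+\vec\xi\|_{\cF_n}\leq\|\ft\|+\|\vec\xi\|_{\cF_n}$. On the event of \Cref{A:DeviationOfSuperma}, together with Gaussian concentration of $\|\vec\xi\|_{\cF_n}$ (or simply $L_2$–$L_1$ comparison of norms of Gaussian vectors), the right-hand side is $\lesssim K_{\|\cdot\|}(1+M_n(\cF))\lesssim K_{\|\cdot\|}M_n(\cF)$. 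Here $M_n(\cF)\gtrsim1$ follows from \Cref{A:NS}. Applying the Gaussian form of cotype $2$ (\Cref{R:Type}) then yields
\[
    t\Bigl(\sum_i\|g_i\|^2\Bigr)^{1/2}\lesssim K_{\|\cdot\|}M_n(\cF).
\]
By Markov's inequality over indices, at least $7n/8$ indices satisfy $\|g_i\|\leq r:=CK_{\|\cdot\|}M_n(\cF)/(t\sqrt n)$ for a suitable absolute $C$.

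Finally I will pass from $m_i(r)$ to the median $\Psi_n(\vec e_1,r)$. The coordinates of $\vec X$ are i.i.d., so $m_i(r)$ has the same law as $m_1(r)$. Hence $\Pr(m_i(r)\geq\Psi_n(\vec e_1,r))\geq1/2$ for each $i$, and the count $N:=\#\{i: m_i(r)\geq\Psi_n\}$ satisfies $\E N\geq n/2$. Since $N\leq n$, this gives $\Pr(N\geq n/4)\geq1/3$. On the intersection of this event with the high-probability event above, at least $n/8$ indices satisfy both $\|g_i\|\leq r$ and $m_i(r)\geq\Psi_n$. For these, $\|g_i\|_{\PP}^2\geq\Psi_n^2$, and hence $\Var_{\vec\xi}[\erm\mid\vec X]\geq(n/8)\Psi_n^2$ with probability at least $1/3-O(n^{-2})$. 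Taking expectation over $\vec X$ gives \eqref{Eq:UC2CV}.

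I expect the main obstacle to be the $K$-convexity step. That step requires the Gaussian-chaos form of the projection bound, applied to the nonlinear map $\vec\xi\mapsto\erm$. The difficulty is that this map is only defined on the event where \Cref{A:DeviationOfSuperma} holds, so $\vec X$ must be conditioned on first and the second moment $\E\|F\|^2$ bounded uniformly. The first thing I would try is to apply the definition conditionally on $\vec X$ in the good event, using that $\|\vec\xi\|_{\cF_n}$ is Lipschitz in $\vec\xi$ to get the $L_2$ bound from the $L_1$ bound.
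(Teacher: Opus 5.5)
Your proposal is correct and follows essentially the same route as the paper: Bessel's inequality onto the first Gaussian chaos in $L_2(\PP)$, the identity $\vec g_i=\vec e_i$ for $g_i=\E[\xi_iF(\vec\xi)]$, a bound on $\sum_i\|g_i\|^2$ from $K$-convexity and Gaussian cotype~$2$ using $\E\|F\|^2\lesssim M_n(\cF)^2$ on the regular event, and finally Markov's inequality combined with exchangeability and the median. The only differences are cosmetic. You count good indices per realization, via $\Pr(N\ge n/4)\ge 1/3$, whereas the paper draws a uniform random index $I$ and intersects events of probability $5/8$ and $1/2$. Also, the ``obstacle'' you flag is not a real one, since $F$ is defined for every $\vec X$ and the regular event is needed only to bound $\E\|F\|^2$.
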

This result can be interpreted as a ``reverse'' Efron--Stein inequality for minimum norm interpolators: it lower-bounds the variance by the marginal contribution of each point $X_i$. Intuitively, the expected conditional variance in $L_2(\PP)$ is bounded below by the $L_2(\PP)$ norm needed to interpolate appropriately scaled $1$-spikes. In the proof, we assume for simplicity that the MNI is a unique solution. 

\subsubsection*{Remarks}
\begin{remark}
 We refer to the proof of \Cref{T:UC2}, and in particular to \Cref{rmk:log}, for a precise statement and discussion of the logarithmic factors appearing in its bound.
\end{remark}

\begin{remark}
We believe that, without additional regularity assumptions, the upper bound in \Cref{T:UC2} is sharp, since it uses little of the covariate structure. We refer to \cite{bartlett2005local,van2000empirical} for localized bounds for ERM.
\end{remark}

 \begin{remark}
We believe that $UC(2)$ is not the optimal notion for \Cref{T:UC2,T:UC2Loc}. We conjecture that, when $\|\cdot\|$ is in the correct position, $CO(2)$ is the appropriate curvature assumption.
\end{remark}
\begin{remark}\label{Rem:isotropiccool}
The proof of \Cref{Lem:BMstar} uses Dudley's integral, the Blaschke--Santal\'o inequality, the finite-volume-ratio version of Kashin's theorem from \cite{litvak2004random}, and the fact that an isotropic $K$ is, up to constants, also in $M$- and $\ell$-position, as discussed above. In John's position, \citep[Cor.~4.8]{klartag2008volume} yields
$
    R_{bM^*} \lesssim \log(d)/\sqrt{t}.
$
\end{remark}

\begin{remark}
Without \Cref{a:sym}, the $n^{-1}$ term is replaced by a $n^{-1/2}$ term. Under Gaussian covariates and sufficient regularity, however, one can obtain the lower rate $\sqrt{(E_2+T_2)/n}\ll1/\sqrt n$ whenever $E_2+T_2\to0$.
\end{remark}

\begin{remark}
The $UC(2)$ assumption is needed for sub-Gaussian covariates to control both $E_1$, the shrinkage factor, and $E_2$. Under \emph{Gaussian} covariates, however, one can control $E_1$ without cotype or uniform convexity when the norm is in isotropic position. We believe that such curvature assumptions are necessary to control $E_2$; this is proved in a later work of the first author \cite{Gaussian}.
\end{remark}
\begin{remark}\label{R:John}
When $K$ is in John's position, then \Cref{T:UC2Loc} holds with the following bounds:
\[
    T_1
    \lesssim
    \frac{\log(d)^8}{t^6}\cG_n(K)^2
    +
    \frac{\log(d)^{10}}{t^6n}.
\]
Under the same $US(p)$ assumption,
\[
    T_1
    \lesssim
    \frac{s^2\log(d)^{2p+6}}{t^{p+5}}\cG_n(K)^{2p}
    +
    \frac{\log(d)^{10}}{t^6n}.
\]
\end{remark}
\subsection*{On the $\ell_p$-MNI}
From now on, we consider $\ell_p$ regression in the setting of \Cref{Ex:P}, with $p\in[1+C/\log d,2]$, and write $q=p/(p-1)$. As mentioned above, we extend the results of \cite{donhauser2022fast} to i.i.d.\ symmetric sub-Gaussian covariates and a $1$-sparse ground truth $\lft$; the same argument extends to $O(1)$-sparse ground truths. Applying \Cref{T:UC2Loc} and following its proof gives the following corollary.
 \begin{corollary}\label{C:ExmapleOne}
Let $\|\lft\|_0=\|\lft\|_2=1$, let $p\in[1+C/\log d,2]$, and assume that $n\log(d)^C\leq d\leq n^{q/2}\log(d)^{-C}$. Then
\[
    T_1 =  \widetilde{O}\left(\frac{d^{2p-2}}{n^{p}}\right).
\]
 \end{corollary}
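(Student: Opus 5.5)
The plan is to specialize the second bound of \Cref{T:UC2Loc} (the $US(p)$ version) to $K=B_p^d$, suitably normalized. First I will check the hypotheses. For $1<p\le 2$, $\ell_p^d$ is $UC(2)$ with $t=p-1$ and $US(p)$ with $s=1$. The unit ball $B_p^d$ is unconditional, hence invariant under every coordinate reflection. Since $\lft$ is $1$-sparse, say $\lft=e_1$, the reflection about $H=(\lft)^\perp$ maps $a e_1+w$ to $a e_1-w$ and preserves $\|\cdot\|_p$, so \Cref{a:sym} holds. A suitable multiple $\lambda B_p^d$ is in isotropic position: the covariance of the uniform measure is a multiple of the identity by the permutation and sign symmetries, and the paper's convention allows positions up to scaling. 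The constraint $n\log(d)^C\le d\le n^{q/2}\log(d)^{-C}$, combined with the estimate $\|\lerm(\vec X,\vec\xi)\|\asymp \sqrt n\max\{\sqrt{p-1},\log(d/n)^{-1/2}\}d^{1/p-1}$ from \Cref{T:VarLinOpt}, gives \Cref{A:NS}. \Cref{A:One} holds because $\|e_1\|_p=\|e_1\|_2=1$, and $d\ge C_1 n$ holds since $d\ge n\log(d)^C$.

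Second, I will compute $\cG_n(B_p^d)$. Since $\|\cdot\|_\PP=\|\cdot\|_2$,
\[
n\,\cG_n(B_p^d)=\E\sup_{\|w\|_p\le1}\inner{\vec Xw,\vec\xi}=\E\|\vec X^{\top}\vec\xi\|_q .
\]
Conditionally on $\vec\xi$, the vector $\vec X^\top\vec\xi$ has i.i.d.\ symmetric sub-Gaussian coordinates with scale $\|\vec\xi\|_2\asymp\sqrt n$. This gives $\E\|\vec X^\top\vec\xi\|_q\lesssim \sqrt n\,\sqrt{q}\,d^{1/q}$, and since $q\lesssim\log d$ the factor $\sqrt q$ is logarithmic. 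Hence
\[
\cG_n(B_p^d)=\widetilde O\bigl(d^{1/q}n^{-1/2}\bigr)=\widetilde O\bigl(d^{1-1/p}n^{-1/2}\bigr).
\]

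Third, I substitute into the bound. \Cref{T:UC2Loc} is scale-sensitive through $\cG_n$, $t$, $s$ and the normalization $\|\ft\|=1$. Rescaling the norm so that $\lambda B_p^d$ is isotropic changes $\|\lft\|$. The plan is therefore to follow the proof of \Cref{T:UC2Loc} rather than its statement, working directly with the original $\|\cdot\|_p$ where $\|\lft\|_p=1$; the paper signals this by saying ``following its proof.'' The position is used only through \Cref{Lem:BMstar}, that is, through $R_{bM^*}=\widetilde O(1)$, and this quantity is scale-invariant. With $s=1$ the $US(p)$ bound becomes
\[
\frac{\cG_n^{2p}+n^{-1}}{t^2}=\widetilde O\!\left(\frac{(d^{1-1/p}n^{-1/2})^{2p}+n^{-1}}{(p-1)^2}\right)=\widetilde O\!\left(\frac{d^{2p-2}/n^{p}+n^{-1}}{(p-1)^2}\right).
\]
Because $p-1\ge C/\log d$, the factor $(p-1)^{-2}$ is polylogarithmic and is absorbed into $\widetilde O$. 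Finally, $d\ge n\log(d)^C$ implies $d^{2p-2}/n^p\ge n^{p-2}\ge n^{-1}$ for $p\ge1$, so the $n^{-1}$ term is dominated and the claimed rate follows.

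The main obstacle is the normalization and position bookkeeping in the third step. I have to verify that \Cref{Lem:BMstar} applies to $B_p^d$ with only polylogarithmic losses uniformly for $p\in[1+C/\log d,2]$, and that none of the constants in the proof of \Cref{T:UC2Loc} carries a hidden power of the isotropic scaling $\lambda$ (which is of order $d^{1/p}$). I would first try to rewrite each step of that proof in terms of scale-free quantities ($R_{bM^*}$, and ratios such as $\cG_n(K)/\|\lft\|_K$) and check that the final expression is invariant under $K\mapsto\lambda K$ together with the corresponding rescaling of $\lft$.
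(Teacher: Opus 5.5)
Your proposal is correct and matches the paper's route: insert $t=p-1$, $s=1$ and $\cG_n(B_p^d)=\widetilde O\bigl(d^{1-1/p}n^{-1/2}\bigr)$ into the $US(p)$ bound of \Cref{T:UC2Loc}, absorb $(p-1)^{-2}\lesssim\log(d)^2$, and note that the $n^{-1}$ term is dominated because $d\geq n$. The normalization issue you flag is already handled by the paper: positions are taken up to scaling, and the proof uses the position only through the scale-invariant $R_{bM^*}$. So $K=B_p^d$ with $\|\lft\|_p=\|\lft\|_2=1$ can be fed into the theorem directly.
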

 \begin{frem}
When the covariates are isotropic Gaussian, this bound is tight:
$T_1=\Theta(d^{2p-2}/n^p)$. The corollary also extends to $O(1)$-sparse $\lft$. The upper dimension restriction ensures Assumption \ref{A:NS}.
 \end{frem}
 
Our next theorem controls the noise error term $T_2$, which constitutes the most technical part of this work. For $1<p\leq2$, let
\[
    \beta(p):=\frac{\exp(C_0/(p-1))}{\sqrt{p-1}},
\]
where $C_0>0$ is an absolute constant. 
\begin{theorem}\label{T:VarLinOpt}
Assume that $\|\lft\|_0=O(1)$, that $p\in[1+C/\log d,2]$ for a sufficiently large absolute constant $C>0$, that $q=p/(p-1)\leq n$, and that the observation noise is standard Gaussian.
\begin{itemize}
\item If $X\sim N(0,I_d)$ and $n\beta(p)\lesssim d$, then there exist absolute constants $1<C_1<C_2$ such that
\[
    C_1^{1/(p-1)}\frac{n}{d}
    \lesssim T_2 \lesssim
    C_2^{1/(p-1)}\frac{n}{d}.
\]
\item If $X$ is distributed as in \Cref{Ex:P}, and $d\geq C_4n$, then
\[
    T_2
    \lesssim
    \left[
        \left(\frac{C_4}{p-1}\right)^{1/(p-1)}
        +
        \frac{n}{d\log(d/n)}
        \left(C_4L^2\log(d)\right)^{1/(p-1)}
    \right] \cdot \frac{n}{d},
\]
where $C_4 \geq 0$ is a universal constant and $L$ is the CCP constant of $X$\footnote{In this work, in order to reduce the burden of notation, we assume that $X$ is s.g. with a constant of order one. In the general case, $C_4 \geq 0$ is a constant that only depends on the s.g. constant, and that one may always assume that  $L\lesssim\sqrt{\log(n)}$ (cf. \cite{klochkov2020uniform}).}.
\end{itemize}
\end{theorem}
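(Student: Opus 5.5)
The plan is to reduce $T_2$ to the squared Euclidean norm of the $\ell_p$-MNI of pure noise and then analyze that interpolator through its dual (Lagrangian) representation. By linearity of the model, $\lerm(\vec X,\vec Y)$ differs from $\lerm(\vec X,\vec X\lft)$ by a nonlinear but monotone perturbation. Since $\|\cdot\|_{\PP}=\|\cdot\|_2$, we have $\Var_{\vec\xi}[\lerm\mid\vec X]\le\E_{\vec\xi}\|\lerm(\vec X,\vec Y)-\lerm(\vec X,\vec X\lft)\|_2^2$.

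Because $\|\lft\|_0=O(1)$ and, by \Cref{A:NS}, the noise interpolator has norm $\gg1$, the signal is a bounded perturbation. Using $UC(2)$ with $t=p-1$, I would show this difference is within a constant factor of $\|\lerm(\vec X,\vec\xi)\|_2^2$. For the lower bound in the Gaussian case, I would use the conditional Efron--Stein/orthogonality structure, or directly the symmetry $\vec\xi\mapsto-\vec\xi$, which makes $\E_{\vec\xi}\lerm(\vec X,\vec\xi)=0$. Hence it suffices to show
\[
\|\lerm(\vec X,\vec\xi)\|_2^2 \asymp C^{1/(p-1)}\frac nd
\]
with high probability.

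For the noise interpolator, the KKT conditions give the explicit form $w=\nabla\|\cdot\|_q^*$ applied to $\vec X^\top\lambda$. Concretely, $w_j\propto \mathrm{sign}(v_j)|v_j|^{q-1}$ with $v=\vec X^\top\lambda$, where $\lambda\in\R^n$ maximizes $\langle\lambda,\vec\xi\rangle$ subject to $\|\vec X^\top\lambda\|_q\le1$. Thus
\[
\|w\|_2^2=\|w\|_p^2\,\frac{\|v\|_{2q-2}^{2q-2}}{\|v\|_q^{2q-2}}.
\]
Here $\|w\|_p=M$ is already given by the formula in \Cref{Ex:P} (proved via $M_g(\cF_n)$ and \Cref{A:DeviationOfSuperma}). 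What remains is to control the ratio of $\ell_{2q-2}$ and $\ell_q$ norms of the random vector $v=\vec X^\top\lambda$ at the optimal $\lambda$. If the coordinates of $v$ behaved like i.i.d. Gaussians of variance $\|\lambda\|_2^2$, the ratio would be $d^{-1+2/q}\,(\E|g|^{2q-2}/(\E|g|^q)^{(2q-2)/q})$. Since $q\asymp 1/(p-1)$, the Gaussian moment ratio equals $\exp(\Theta(1/(p-1)))$, and combining with $M^2\asymp n(p-1)d^{2/p-2}$ gives $C^{1/(p-1)}n/d$. This explains both bounds and the $\beta(p)$ threshold.

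The main obstacle is that $\lambda$ depends on $\vec X$, so $v$ is not a vector of i.i.d. Gaussians. I would handle this uniformly over $\lambda\in\mathbb S^{n-1}$. Upper bounds come from a net argument on $\mathbb S^{n-1}$ for the convex Lipschitz functions $\lambda\mapsto\|\vec X^\top\lambda\|_r$ with $r\in\{q,2q-2\}$. Each function concentrates (Gaussian concentration, or the CCP with constant $L$ in the sub-Gaussian case) around $d^{1/r}(\E|g|^r)^{1/r}$ up to the Lipschitz constant $\|\vec X\|_{2\to r}\lesssim d^{1/r}+\sqrt{n}\,\cdot$(polylog), which is dominated when $d\gtrsim n\beta(p)$.

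In the sub-Gaussian case, the moments of $X^{(1)}$ are at most $(C\sqrt r)^r$, giving the $(C_4/(p-1))^{1/(p-1)}$ term. The loss in the CCP constant, $L^2\log d$ raised to the power $1/(p-1)$ on an $n/(d\log(d/n))$ fraction, produces the second term. Coordinates where $v_j$ is atypically large are handled by a separate tail/union bound.

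The lower bound in the Gaussian case uses the same two-sided uniform concentration; this is the step where Gaussianity is needed to get matching lower moments. The restriction $q\le n$ keeps $\log$-net sizes $e^{Cn}$ compatible with deviations $\exp(-c\,d^{2/r}/\mathrm{Lip}^2)$. The hardest step is this uniform concentration for the $\ell_{2q-2}$ norm when $q$ is as large as $\log d$, where I would first try truncating at level $\sqrt{\log d}$.
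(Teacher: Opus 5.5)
Your KKT identity $\|w\|_2^2=\|w\|_p^2\,\|v\|_{2q-2}^{2q-2}/\|v\|_q^{2q-2}$ with $v=\vec X^\top\lambda$ is correct. The Gaussian moment-ratio heuristic also predicts the right scale $\exp(\Theta(1/(p-1)))\,n/d$.

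\textbf{The gap.} The step meant to make this rigorous is two-sided uniform concentration of $\lambda\mapsto\|\vec X^\top\lambda\|_r$ over $\mathbb S^{n-1}$ for $r\in\{q,2q-2\}$, and it fails in the theorem's range. Two terms must be negligible against the mean $d^{1/r}\sqrt r$: the $\sqrt n$ term in your Lipschitz bound, and the $O(\sqrt n)$ deviation from a union bound over an $e^{Cn}$-net. This holds only when $d\gtrsim(n/r)^{r/2}$, i.e.\ when $n$ is below the Dvoretzky dimension of $\ell_r^d$. It does not follow from $d\gtrsim n\beta(p)$. For instance, at $p=3/2$ the theorem allows $d\asymp n$, whereas your step needs $d\gg n^{3/2}$ for $r=q$ and $d\gg n^{2}$ for $r=2q-2$.

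In that regime the uniform statement is false, not merely unproven. Let $\vec X^{(j)}\in\R^n$ be the $j$-th column of the design and take $\lambda=\vec X^{(j)}/\|\vec X^{(j)}\|_2$. Then $\|\vec X^\top\lambda\|_r\geq\|\vec X^{(j)}\|_2\approx\sqrt n\gg d^{1/r}\sqrt r$. So $\vec X B_p^d$ has spikes. This is exactly the obstruction the paper flags when it notes that the critical dimension of the $\ell_p$ ball is much smaller than $n$. Your Gaussian lower bound relies on the same two-sided concentration, so it fails for the same reason.

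\textbf{What each half of the ratio actually needs.}
\begin{itemize}
\item The lower bound on $\|v\|_q/\|\lambda\|_2$ is the inradius of $\vec X B_p^d$. Nets or Gordon's inequality on the full ball hit the same $\sqrt n$ barrier. The paper obtains $c\eps_d^{-1/2}B_2^n\subset P\widetilde B_p^d$ (with $P=\vec X/\sqrt d$, $\eps_d=p-1$) by applying Dvoretzky's theorem to the trimmed body $\operatorname{conv}\{\cV_{d/\log d}\}$. That body keeps the mean width but has much smaller Euclidean radius. In the sub-Gaussian case Kashin's inclusion $cB_2^n\subset P\cC_d$ plays this role.
\item The upper bound on $\|v\|_{2q-2}$ must hold at the data-dependent optimizer. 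No union bound over the sphere can give it, since $\sup_{\lambda\in\mathbb S^{n-1}}|\langle\vec X^{(j)},\lambda\rangle|\approx\sqrt n$. You must show that the optimizer avoids the spike directions, which requires a decoupling argument. Your ``separate tail/union bound'' does not supply this.
\end{itemize}
The paper does the second part on the primal side. It splits $\ermp$ into dyadic shells and bounds the bulk shells by a block-replacement/minimality argument with sparse nets. It handles the largest coordinates through leave-out quotient norms, conditional Gaussian (or CCP) concentration, and a discretized scalar variational problem.

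Even granting $\|v\|_\infty\lesssim\sqrt{\log d}\,\|\lambda\|_2$ from a leave-one-out argument, the bound $\|v\|_{2q-2}^{2q-2}\leq\|v\|_\infty^{q-2}\|v\|_q^q$ only gives $(n/d)\log(d)^{C/(p-1)}$. The sharp $C^{1/(p-1)}$ needs the shell-by-shell analysis.

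\textbf{The signal reduction.} This step is not justified as stated either. $UC(2)$ controls differences of interpolators in $\|\cdot\|_p$, and passing to $\|\cdot\|_2$ costs a factor $d^{1/p-1/2}$. The paper instead uses additivity of $\|\cdot\|_p^p$ across $\operatorname{supp}(\lft)$ and its complement. This reduces to a zero-signal problem on the complement with shifted noise $\vec\zeta+\alpha P\lft$, which is independent of the remaining columns, followed by a grid over $\alpha$. For the lower bound with signal, the symmetry $\E_{\vec\xi}\lerm=0$ is unavailable. The paper works with the centered interpolator of $\vec\zeta$ and shows it cannot lie in a truncated body whose image is contained in $\tfrac12B_2^n$.
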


The endpoint scales relevant for interpreting the two bounds are $p=1+\varepsilon_{\mathrm G}(d)$ in the Gaussian case and $p=1+\varepsilon_{\mathrm{sg}}(d)$ in the sub-Gaussian case, where
\[
    \varepsilon_{\mathrm G}(d)
    \asymp\frac{1}{\log\log d},
    \qquad
    \varepsilon_{\mathrm{sg}}(d)
    \asymp\frac{\log\log\log d}{\log\log d}.
\]
When $d\asymp n^2$ and the displayed factors are polylogarithmic, the MNI may attain a parametric rate up to logarithmic factors.

An interesting consequence of the proof is related to \citep[Thm.~1.1 and \S 3.3.2]{Paorlp}, which shows that, for $q\asymp(p-1)^{-1}$ and $p\geq1+C/\log d$,
\[
    \widetilde{\Var}_{\vec \xi\sim N(0,I_d)}\left(\|\vec \xi\|_q\right)
    \asymp \frac{2^q}{q^2d},
\]
where $\widetilde{\Var}(Z):=\Var(Z)/(\E Z)^2$ denotes normalized variance. We prove the following.
\begin{corollary}\label{Cor:VarPaou}
Fix $\vec \xi\in\Sn$ and assume that the covariates are isotropic Gaussian. If $p\in(1+\frac{C}{\log d},2]$ and $n\log(d)^C\leq d\leq n^{q/2}\log(d)^{-C}$, then
\[
\widetilde{\Var}_{\vec X}\|\lerm(\vec X,\vec \xi)\|_{p} \lesssim \frac{C^{\frac{1}{p-1}}}{d}.
\]
\end{corollary}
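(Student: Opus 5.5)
The plan is to apply the Gaussian Poincar\'e inequality to the map $\vec X\mapsto F(\vec X):=\|\lerm(\vec X,\vec\xi)\|_p$, viewed as a function of the $n\times d$ standard Gaussian matrix. This gives
\[
\Var_{\vec X}F\le \E_{\vec X}\|\nabla_{\vec X}F\|_{HS}^2 .
\]
We then compare the gradient with $F$ itself.

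\textbf{Step 1: the gradient.} By convex duality, $F(\vec X)=\sup\{\inner{\theta,\vec\xi}:\ \|\vec X^{\top}\theta\|_q\le 1\}$. The minimizer $\lerm$ and the dual maximizer $\theta^*$ are the saddle point of the Lagrangian $\|w\|_p-\inner{\theta,\vec Xw-\vec\xi}$. The envelope theorem (uniqueness holds since $\ell_p$ is $UC(2)$) then gives
\[
\nabla_{\vec X}F=-\theta^*\lerm^{\top},
\qquad
\|\nabla_{\vec X}F\|_{HS}=\|\theta^*\|_2\,\|\lerm\|_2 .
\]
Consequently
\[
\widetilde\Var(F)\lesssim \E\Bigl[\tfrac{\|\theta^*\|_2^2}{F^2}\,\|\lerm\|_2^2\Bigr]\Big/\bigl(\E F/\sqrt{\E F^2}\bigr)^2 .
\]
The last ratio is $O(1)$ by a crude concentration argument, so it remains to bound $\|\theta^*\|_2/F$ and $\|\lerm\|_2$.

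\textbf{Step 2: $\|\theta^*\|_2\lesssim F$.} Testing $\theta=\vec\xi/\|\vec X^{\top}\vec\xi\|_q$ gives $F\ge 1/\|\vec X^{\top}\vec\xi\|_q$. On the other hand, $\|\vec X^{\top}\theta^*\|_q=1$, so $\|\theta^*\|_2\le 1/\inf_{\theta\in\Sn}\|\vec X^{\top}\theta\|_q$. It therefore suffices to show that
\[
\|\vec X^{\top}\theta\|_q\asymp \E\|\vec g\|_q\,\|\theta\|_2
\quad\text{uniformly over }\theta\in\R^n,
\qquad \vec g\sim N(0,I_d),
\]
with high probability. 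This is a Dvoretzky-type statement for $\ell_q^d$: the Gaussian operator $\vec X^{\top}$ embeds $\ell_2^n$ almost isometrically into $\ell_q^d$ as long as $n$ is below the Dvoretzky dimension $k(\ell_q^d)\asymp \min\{q\,d^{2/q},\,d/\log d\}$ (up to logs). The hypothesis $d\le n^{q/2}\log(d)^{-C}$ is exactly $n\,\log(d)^{2C/q}\le d^{2/q}$, so the Gordon/Milman argument applies. The proof uses the Gaussian min--max bound together with concentration of $\|\vec g\|_q$, whose Lipschitz constant is $1$ while its mean is $\asymp\sqrt q\,d^{1/q}$. On this event, $\|\theta^*\|_2/F=O(1)$.

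\textbf{Step 3: bounding $\|\lerm\|_2$.} With $\vec\xi\in\Sn$ fixed and $\vec X$ rotation invariant, $\lerm(\vec X,\vec\xi)$ has the same law as $\lerm(\vec X,\vec g/\|\vec g\|_2)$ with $\vec g\sim N(0,I_n)$. Its squared $\ell_2$ norm is therefore, up to the factor $\|\vec g\|_2^2\approx n$, the quantity controlled in the proof of \Cref{T:VarLinOpt}. I would import the high-probability version of that bound, which comes from the localization radius $r_*=d^{-1/p+1/2}e^{\Theta(1/(p-1))}$ and the corresponding estimate on $M_n$:
\[
\|\lerm(\vec X,\vec\xi)\|_2^2\lesssim \frac{C^{1/(p-1)}}{d}.
\]
This is where the assumptions $p\ge 1+C/\log d$ and $n\log(d)^C\le d$ enter. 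Combining Steps 1--3 gives $\widetilde\Var(F)\lesssim C^{1/(p-1)}/d$.

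\textbf{Main obstacle: the bad events.} The difficulty is controlling the exceptional events, where the Dvoretzky embedding fails or $\|\lerm\|_2$ is atypically large, since Poincar\'e needs an $L_2$ bound on the gradient everywhere. I would replace $F$ by a truncated Lipschitz modification: cap $F$ at a multiple of its median, and restrict to the good event via a smooth cutoff whose gradient is controlled. The bad events have probability $e^{-cn}$ (or $n^{-C}$ in the worst case), whereas $F$ has polynomial moments in $\vec X$. Hence the contribution of the bad events to $\Var F$ is $o(1/d)\cdot(\E F)^2$, using $d\le n^{q/2}$ and Cauchy--Schwarz.
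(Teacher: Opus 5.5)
\textbf{Gap in Step 2.} Your overall route is viable: Gaussian Poincar\'e for $F(\vec X)=\|\lerm(\vec X,\vec\xi)\|_p$, with the envelope-theorem gradient $\|\nabla_{\vec X}F\|_{HS}=\|\theta^*\|_2\|\lerm\|_2$, fed by the high-probability $\ell_2$ localization of the MNI from the proof of \Cref{T:VarLinOpt}. However, Step~2 rests on a false claim. The hypothesis $d\le n^{q/2}\log(d)^{-C}$ is equivalent to $n\ge d^{2/q}\log(d)^{2C/q}$, which is the reverse of what you wrote. So $n$ lies \emph{above} the Dvoretzky scale of $\ell_q^d$, not below it. This is the regime of \Cref{Ex:P} in which the noise interpolator has large norm $\sqrt{n(p-1)}\,d^{-1/q}$, i.e.\ $n\gtrsim q\,d^{2/q}$. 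In that regime $\vec X^{\top}$ is not a near-isometric embedding of $\ell_2^n$ into $\ell_q^d$. Taking $\theta$ along a column gives $\sup_{\theta\in\Sn}\|\vec X^{\top}\theta\|_q\ge\max_j\|\vec X^{(j)}\|_2\approx\sqrt n$, which exceeds $\E\|\vec g\|_q\asymp\sqrt q\,d^{1/q}$ by the factor $\sqrt{n/(q\,d^{2/q})}$. The Gordon bound you sketch, $\E\|\vec g\|_q-\sqrt n\sup_{u\in B_p^d}\|u\|_2$, gives nothing once $n\gtrsim q\,d^{2/q}$.

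\textbf{How to repair it.} Step~2 only needs two things: the uniform \emph{lower} bound $\inf_{\theta\in\Sn}\|\vec X^\top\theta\|_q\gtrsim\sqrt q\,d^{1/q}$, and the fixed-vector bound $\|\vec X^\top\vec\xi\|_q\lesssim\sqrt q\,d^{1/q}$.
\begin{itemize}
\item After rescaling, that lower bound is exactly the inclusion $c\eps_d^{-1/2}B_2^n\subset P\widetilde B_p^d$ of \Cref{lem_dvoretzky_inclusion}. The paper obtains it only by trimming to $\mathrm{conv}\{\cV_{d/\log d}\}$, because Dvoretzky fails for the full ball, and only under $n\beta(p)\lesssim d$.
\item A cheaper repair suffices here. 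By \Cref{Lem:SM}, $\|\theta^*\|_2\le\|\vec X^\top\theta^*\|_2/\sigma_{\min}(\vec X)\le d^{1/2-1/q}/\sigma_{\min}(\vec X)\lesssim d^{-1/q}$. Hence $\|\theta^*\|_2/F\lesssim\sqrt q$, and the extra factor $q$ is absorbed into $C^{1/(p-1)}$.
\end{itemize}

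\textbf{Two smaller fixes.}
\begin{itemize}
\item Poincar\'e needs only $\E\|\nabla F\|_{HS}^2$, not a gradient bound everywhere. So drop the unspecified smooth cutoff, which is hard to build around an event defined through nets and the MNI itself. Instead, bound the bad part by Cauchy--Schwarz, using the deterministic estimate $\|\theta^*\|_2\|\lerm\|_2\le d^{1/p-1/q}\sigma_{\min}(\vec X)^{-2}$ and inverse-Wishart moments.
\item Your displayed bound on $\widetilde{\Var}(F)$ is not a valid inequality as written. Use $\E[\|\theta^*\|_2^2\|\lerm\|_2^2]/(\E F)^2$ instead, together with $\|\theta^*\|_2\lesssim\sqrt q\,F$ and $F\lesssim\E F$ on the good event.
\end{itemize}

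\textbf{Comparison with the paper.} With these repairs, your proof is a differential version of the paper's. The paper never differentiates. It replaces $\widetilde B_p^d$ by the truncated body $K=\widetilde B_p^d\cap C_1\sqrt{\eps_d}e^{C/\eps_d}B_d$, so that $\|\vec\xi\|_{P\widetilde B_p^d}=\|\vec\xi\|_{PK}$ on the event where the MNI lies in $K$. A direct perturbation argument then shows that $P\mapsto\|\vec\xi\|_{PK}$ is $D(K)/c$-Lipschitz between any two matrices with $cB_2^n\subset PK$, which is only a Kashin-type inclusion. The bad event is handled by a McShane-extension variance lemma that needs only $\E f^4$, bounded via $B_2^d\subset\widetilde B_p^d$ and inverse-Wishart moments.

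Your $\|\theta^*\|_2\|\lerm\|_2$ is the infinitesimal form of $D(K)/c$: the dual optimizer is controlled by the inverse inradius, and the primal optimizer by its Euclidean size. Your version avoids the truncation and the extension lemma, since Poincar\'e only sees pointwise gradients and you never need Lipschitz bounds between pairs of good matrices. The cost is the envelope theorem and gradient moments on the bad event. The paper's version avoids the dual variable and differentiability altogether.

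Both arguments rest on the same imported input. That input is the high-probability bound $\|\ermp\|_2\lesssim\sqrt{\eps_d}\,e^{C/\eps_d}$ from Steps II--III of the proof of \Cref{T:VarLinOpt}, not the radius $r_*$, which only summarizes it.
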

This result should be compared with the bound obtained from Gaussian Lipschitz concentration, which is significantly larger as $p\to1$. The use of a fixed $\vec \xi$ follows from rotational invariance: $U\vec X\stackrel{d}{=}\vec X$ for every $U\in O(n)$, so the law depends on $\vec \xi$ only through $\|\vec \xi\|_2$.

\subsubsection*{Remarks}
Our proof yields several additional insights on the behavior of the $\ell_p$-MNI.
\begin{remark}
Our proof implies that it also holds that 
\[
    E_2 \lesssim \min\left\{\frac{C_2^{\frac{1}{p-1}} \cdot n}{d}, \frac{d^{2p-2}}{n^{p}} \right\}
\]
for standard Gaussian observation noise.\footnote{The proof of \Cref{T:VarLinOpt} that appeared in the preliminary work relied on the small-ball assumption and did not exploit the CCP for i.i.d.\ sub-Gaussian covariates. The new proof is significantly simpler and removes the small-ball assumption.}
\end{remark}
\begin{remark}
Our proof controls the magnitude of all entries of the $\ell_p$-MNI. \textbf{For example}, the $\ell_{\infty}$ norm of the MNI when the ground truth is zero is very close to the MSE. Specifically, 
\[
\E \|\lerm\|_{\infty} \lesssim \frac{\log(d)^{C/\varepsilon_{G}} \cdot \E\|\lerm\|_{2}}{\sqrt{d}} 
\]
Our proof implies that to upper bound the lowest  
    $[d,C n/\log(d/n)]$ 
entries the \textbf{sub-Gaussianity constant suffices}. However, the \textbf{CCP constant is needed} to  upper bound the top
$
    [cn/\log(d/n),1]
$
entries.
\end{remark}

\section{Discussion}
\label{sec:Discussion}

\subsection{Tradeoffs between type $p$ and Gaussian complexity}\label{ss:tr}
As briefly discussed above, \Cref{T:UC2} may suggest that the upper bound improves under additional smoothness, i.e., when the norm is $US(p)$ for $p>1$ with constant $s^p \geq 0$, which also implies type $p$ with constant $s \geq 0$. We now provide more details on why this is not necessarily true. Here, we consider overparameterized linear models under isotropic s.g. covariates, for which $\|\cdot\|_{\PP} = \|\cdot\|_2$.

First, we argue that when the class contains $m$ ``well-separated'' signals in $L_2(\PP)$, there is a price to be paid: the Gaussian complexity of the class is at least of order $s^{-1}\tfrac{m^{1-1/p}}{\sqrt{n}}$.
Therefore, a larger $p$ does not automatically lead to an improved upper bound in \Cref{T:UC2}.

To see this, let $m$ be the maximal number of elements $f_1,\ldots,f_m \in \cF$ that are orthogonal and well-separated, namely
\[
    \forall 1 \leq i < j \leq m, \qquad \inner{f_i,f_j} = 0 \quad \text{and} \quad \|f_i - f_j\|_2 \gtrsim 1,
\]
where $\inner{\cdot,\cdot}$ denotes the inner product associated with the $\ell_2$ norm.
Then, if $(\R^d,\|\cdot\|)$ has type $p$ with constant $s \geq 0$, the Gaussian complexity of $\cF$ is lower-bounded by
\begin{equation}
\label{eq:gclower}
     \GE{\cF} \gtrsim s^{-1} \cdot \sqrt{\frac{\max\{m^{2-2/p},\log(m)\}}{n}},
\end{equation}
when $m = \Tilde{\Omega}(n)$ and $m = \Tilde{O}(n^{q/2})$. In particular, for type $2$ spaces, we have $\GE{\cF} \gtrsim \sqrt{m/n}$. In contrast, for a space that is both cotype $2$ and type $1$, such as $\ell_1^d$, we pay only a logarithmic price in the number of different signals, which is tight in Example \ref{Ex:P}.

To simplify the presentation, we now give the argument for type $2$ spaces; the lower bound \eqref{eq:gclower} for general type $p$ follows analogously. Indeed, in a $T(2)$ space with constant $s$, we have
\begin{align*}
    \left(\E \left\|\frac{\sum_{i=1}^{m}\eps_i f_i}{\sqrt{m}}\right\|\right)^2 &
    \asymp \frac{\E \|\sum_{i=1}^{m}\eps_i f_i \|^2}{m} 
     \lesssim s^2,
\end{align*}
where we used Kahane's inequality (cf. \cite{milman1986asymptotic}) and the fact that $\|f_i\| \leq 1$ for all $1 \leq i \leq m$.
Moreover, by the orthogonality of $\{f_1,\ldots,f_m\}$, we have
\[
   \E \left\|\frac{1}{\sqrt{m}} \sum_{i=1}^{m} \eps_i f_i\right\|_2 \asymp 1.
\]
Combining the last two displays implies that the unit ball $\cF$ contains a set of $2^{\Omega(m)}$ functions, obtained from the random sign combinations above, that are pairwise separated by $\Theta(s^{-1})$ in $\|\cdot\|_2$. Hence, under isotropic s.g. covariates, which also satisfy the small-ball assumption, each fixed pair $f,g \in \cF$ satisfies
\[
    \|f - g\|_{\PP_n} \geq c_4 \|f - g\|_2 \gtrsim s^{-1},
\]
with probability at least $1-\exp(-cn)$. By the union bound, we obtain $2^{\Omega(\min\{m,n\})}$ sign mixtures that are well-separated and belong to $\cF_n$ with high probability.

By Sudakov's minoration inequality (cf. \cite{wainwright2019high}), it immediately follows that
\[
    \GE{\cF}\gtrsim s^{-1} \cdot \sqrt{\frac{\min\{m,n\}}{n}}.
\]
Therefore, we conclude that additional smoothness may increase Gaussian complexity and reduce the possibility of benign overfitting.
\subsection{The roles of $UC(2)$ and $R_{MM^*}$}
\label{ss:UC2}
Here, we provide rough intuition for the approach that led to the results in \Cref{sec:Results}. Consider $\ft \in \cF$ satisfying the assumptions above, and define the section orthogonal to $\ft$ by
\begin{equation}
       \cF^* := \{f \in \cF: \inner{f,\ft}_{\PP} = 0\}.
\end{equation}
Let $\cF_n^*$ denote its projection under $\vec X$, and let $\|\cdot\|_{\cF_n^*}$ be the induced Minkowski norm on $\R^n$. Anderson's lemma (cf. \cite{gardner1995geometric}) implies that, for any fixed $\vec X$,
\[
    \E_{\vec \xi}\|\vec \xi + \delta \cdot \vec \ft\|_{\cF_n^*}^2 \geq \E_{\vec \xi}\|\vec \xi\|_{\cF_n^*}^2.
\]
Clearly, we would like the last inequality to be strict, since we want this random norm to be sensitive to the presence of an additional signal rather than pure noise. The optimistic case is linear regression with isotropic Gaussian covariates and $\|\lft\| \asymp \|\lft\|_2 \asymp 1$. In this case, for $\delta \ll 1$,
\[
    \E_{\vec \xi,\vec X}\|\vec \xi + \delta \cdot \vec \lft\|_{\cF_n^*}^2 \approx  \lp 1 + \delta^2 \rp \cdot \E\|\vec \xi\|_{\cF_n^*}^2,
\]
where we use that $\delta \vec \lft \sim N(0,\delta^2 I_n)$ is independent of the random norm $\|\cdot\|_{\cF_n^*}$, and that $\cF_n^* \sim U\cF_n^*$ for every fixed rotation matrix $U$. This simple observation suggests that the covariates provide curvature, since the displayed relation holds for any \emph{norm}. If the norm is induced by an inner product, then the same inequality holds for any covariate distribution.

To control the structural error $E_1$, we need the lower-bound direction of the display above. We aim to show that the fact that the sampled signal $\ft$ is uncorrelated with every function in $\cF^*$ implies that $\|\vec \xi + \delta \cdot \vec \ft\|_{\cF_n^*}$ behaves like $\|\vec \xi'\|_{\cF_n^*}$, where
\[
\vec \xi' \sim N\left(0,\left(1+\Theta(\delta^2)\right)I_n\right).
\]
Here, we use $\|f\|_{\PP} \asymp \|f\|_{\PP_n} \asymp 1$.
In other words, $\vec \ft$ ``behaves'' as additional independent noise.
Our solution for obtaining this sensitivity without relying heavily on the covariate distribution is to impose $2$-uniform convexity on the unit ball $\cF$. The main idea of \Cref{T:UC2} is to show that, with high probability,
\[
    \E_{\vec \xi}\|\vec \xi + \delta \cdot \vec \ft\|_{\cF_n^*}^2 \geq \lp1+ct\delta^2 R_{MM^*}^{-2}\rp\E\|\vec \xi\|_{\cF_n^*}^2,
\]
where the factor $R_{MM^*}$ arises from the symmetrization and contraction lemmas \cite{koltchinskii2011oracle}. Therefore, if $R_{MM^*}$ is small, then $2$-uniform convexity gives us sensitivity to an additional input. We \emph{conjecture} that the same holds for $CO(2)$ norms \textbf{in a suitable position}, rather than only for $UC(2)$ norms.
\subsection{Quantitative Anderson's Theorem}\label{s:QA}

A basic ingredient in our analysis is Anderson's inequality, which implies that for any norm $\|\cdot\|$ and any $x \in \R^d$,
\[
    \E \|\vec \xi + x\|^2 \geq \E \|\vec \xi\|^2 ,
\]
where $\vec \xi$ is an isotropic Gaussian vector. In this paper, we assume that the norm is $UC(2)$ with constant $t>0$, which yields the quantitative improvement
\[
    \E \|\vec \xi + x\|^2 \geq \E \|\vec \xi\|^2 + t\|x\|^2 ,
\]
valid for any symmetric noise. However, the assumption of $2$-uniform convexity appears to be stronger than what is intrinsically needed. Indeed, $UC(2)$ is a \emph{pointwise} curvature requirement on the norm, whereas the inequality above only probes curvature \emph{on average} under the noise distribution. Evidence for this comes from $\ell_p$ spaces. In \cite{Anderson}, we prove that for every $p\in[1,2]$ and $x\in\R^d$,
\[
    \E \|\vec \xi + x\|_p^2 
    \geq 
    \E \|\vec \xi\|_p^2 + c\|x\|_p^2 ,
\]
for an absolute constant $c>0$ independent of $d$ and $p$. Thus, a dimension-free quantitative Anderson inequality holds with an absolute constant, even though the $UC(2)$ constant of $\ell_p$ is of order $p-1$ and therefore degenerates as $p \downarrow 1$ (and $\ell_1$ is not uniformly convex).

In contrast, the situation depends strongly on the distribution of the noise. For example, if $\vec \eps \sim U(\{-1,1\}^d)$ is Rademacher noise and we work with the $\ell_1$ norm, then, for the diagonal direction $u=(1/d,\dots,1/d)$ with $\|u\|_1=1$ and $t \leq d$,
\[
\E\|\vec \eps + t u\|_1^2 
= d^2 + \frac{t^2}{d}
= \E\|\vec \eps\|_1^2 + \frac{1}{d}\|t u\|_1^2.
\]
Thus, sensitivity to an additional signal deteriorates with dimensionality. It is plausible that for Gaussian noise, a norm with cotype $2$ satisfies a dimension-free quantitative Anderson inequality once the body is placed in a suitable position. Note that the choice of position plays a crucial role. 

For example, consider the norm whose unit ball is
\[
K=[-1,1]\times B_2^{n-1} \subset \R^n.
\]
Although $K$ is in \emph{John's position} and its norm has cotype $2$ with an absolute constant, one easily checks that
\[
\E\|e_1+\vec \xi\|^2
\le
\E\|\vec \xi\|^2 + C e^{-c n}.
\]
For comparison, rescaling the Euclidean factor gives the body
\[
\widetilde K = [-1,1] \times \sqrt{n}\, B_2^{n-1},
\]
for which the two factors contribute on the same Gaussian scale. The \emph{isotropic position} is likewise insufficient. In dimension $2n$, consider
\[
    K=[-\sqrt 3,\sqrt 3]^n \times \sqrt{n+2}\,B_2^n .
\]
This product is isotropic, since both factors are isotropic. Nevertheless, for the vector
\[
    x=(0,\sqrt{n+2}\,e_1), \qquad \|x\|_K=1,
\]
in the direction of the Euclidean factor, one can check that
\[
    \E\|\vec \xi + x\|_K^2
    \le
    \E\|\vec \xi\|_K^2 + C e^{-cn}\|x\|_K^2 .
\]
The underlying phenomenon is a scaling discrepancy from the Gaussian perspective: the $\ell_\infty$ factor typically contributes a quantity of order
$\sqrt{\log n}$ to the product norm, whereas the Euclidean factor contributes
only a quantity of order one. Thus, a unit displacement inside the Euclidean
factor is almost always hidden by the $\ell_\infty$ factor, except on an exponentially small event.

We refer the reader to the forthcoming manuscripts \cite*{Anderson,AndersonTwo}, which investigate such quantitative Anderson inequalities. In particular, these works suggest considering the positions obtained from
\[
\argmin_{T \in \mathrm{SL}_n(\mathbb R)} \EE\|G\|_{TK}^2 \text{ and }
\argmin_{T \in \mathrm{SL}_n(\mathbb R)} \EE\|G\|_{TK}.
\]
These positions are close to the min-norm position.
\subsection{Limitations in linear models}
The theorems in this work do not fully exploit Gaussian noise to obtain sharper bounds, and Theorems \ref{T:UC2Loc} and \ref{T:VarLinOpt} rely heavily on high-probability events in the covariates. This leads us to use $2$-uniform convexity rather than average notions of curvature, such as cotype $2$. Theorem \ref{T:VarLinOpt} uses specific properties of the $\ell_p$ norm; however, based on \cite{klartag2008volume}, we expect that it should hold for every $UC(2)$ norm in a suitable position. The main bottleneck to extending Theorem \ref{T:VarLinOpt} to other positions is that sharp $\eps$-covering-number bounds for the unit balls of $UC(2)$ norms, in John's position or any other regular position, are not known. The best available bound\footnote{Personal communication with G. Paouris} appears in \cite{milman2015mean} for isotropic position, but it does not yield the desired bound of $\widetilde{O}(n/d)$ under a sufficiently large $UC(2)$ constant. We believe that none of the classical positions would yield optimal bounds and that finding a new position is required for optimal performance of the MNI under $UC(2)$ norms.
\subsection{The MNI has no renorming property under large noise}\label{ss:renorming}
Here, we show that when $\sigma \asymp 1$, the MSE of the MNI does not satisfy a renorming theorem. In this setting, one can find two norms that are equivalent up to an absolute constant, yet their MSEs differ significantly. To support this claim, \cite{wang2022tight} first showed that, for the $\ell_1$-MNI with $\lft \equiv 0$,
\[
    \E \|\widehat{w}_1\|_2^2 = T_2  \asymp \log(d/n)^{-1}.
\]
For simplicity, assume that $d \gtrsim n^{1+\gamma}$ and consider the $\ell_p$ norm in $\R^d$, where
$
    p = 1 + \frac{C(\gamma)}{\log(d)}.
$
This norm is equivalent to $\ell_1$ up to a constant that depends only on $\gamma$. The proof of Theorem \ref{T:VarLinOpt} implies that
\[
    T_2 \lesssim n^{-c_1}
\]
for some $c_1 = c_1(C(\gamma)) \geq 0$, and therefore the claim follows.

To introduce the second example, we first define the $2$-Firey sum, an operation that preserves $2$-uniform convexity; see \cite{klartag2008volume}.
\begin{fact}[$2$-Firey Sum.] Given two norms $\|\cdot\|_{\cF_1}$ and $\|\cdot\|_{\cF_2}$ that are $UC(2)$ with constant $t \geq 0$ on the same space, their $2$-Firey sum is the norm defined by
\[
    \|f\|_{\cF}:= \sqrt{\inf_{f = g + h}\{\|g\|_{\cF_1}^2 + \|h\|_{\cF_2}^2\}}
\]
and is also $UC(2)$ with constant $t \geq 0$.
\end{fact}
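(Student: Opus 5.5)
The plan is to prove the Fact directly from the definition, by choosing near-optimal decompositions and applying the $UC(2)$ inequality to each summand norm separately. Write $N(f):=\|f\|_{\cF}^2=\inf_{f=g+h}\{\|g\|_{\cF_1}^2+\|h\|_{\cF_2}^2\}$.

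First I check that $\|\cdot\|_{\cF}=\sqrt{N}$ is indeed a norm.
\begin{itemize}
\item \emph{Homogeneity.} Rescaling a decomposition of $f$ by $\lambda$ gives a decomposition of $\lambda f$, so $N(\lambda f)=\lambda^2N(f)$.
\item \emph{Triangle inequality.} Take decompositions $f=g+h$ and $f'=g'+h'$. By the triangle inequality in each factor,
\[
\|g+g'\|_{\cF_1}^2+\|h+h'\|_{\cF_2}^2\le \bigl|(\|g\|_{\cF_1}+\|g'\|_{\cF_1},\,\|h\|_{\cF_2}+\|h'\|_{\cF_2})\bigr|_2^2 .
\]
Minkowski's inequality in $\R^2$ bounds the right-hand side by $\bigl(\sqrt{\|g\|_{\cF_1}^2+\|h\|_{\cF_2}^2}+\sqrt{\|g'\|_{\cF_1}^2+\|h'\|_{\cF_2}^2}\bigr)^2$. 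Taking infima over both decompositions gives $\|f+f'\|_{\cF}\le\|f\|_{\cF}+\|f'\|_{\cF}$.
\item \emph{Nondegeneracy.} This holds whenever the two norms are equivalent, e.g.\ in finite dimension. Indeed, if $\|\cdot\|_{\cF_2}\ge c\|\cdot\|_{\cF_1}$, then $\|f\|_{\cF_1}\le\|g\|_{\cF_1}+c^{-1}\|h\|_{\cF_2}$. By Cauchy--Schwarz this is at most $\sqrt{1+c^{-2}}\sqrt{\|g\|_{\cF_1}^2+\|h\|_{\cF_2}^2}$, so $N(f)>0$ for $f\neq0$.
\end{itemize}

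The main step is the $UC(2)$ inequality. Fix $f_1,f_2$ and $\varepsilon>0$, and choose decompositions $f_i=g_i+h_i$ with $\|g_i\|_{\cF_1}^2+\|h_i\|_{\cF_2}^2\le N(f_i)+\varepsilon$. Then
\[
\tfrac{f_1\pm f_2}{2}=\tfrac{g_1\pm g_2}{2}+\tfrac{h_1\pm h_2}{2}
\]
are admissible decompositions, so
\[
N\!\left(\tfrac{f_1\pm f_2}{2}\right)\le \left\|\tfrac{g_1\pm g_2}{2}\right\|_{\cF_1}^2+\left\|\tfrac{h_1\pm h_2}{2}\right\|_{\cF_2}^2 .
\]
Add the ``$+$'' inequality to $t$ times the ``$-$'' inequality and regroup by factor. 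Applying the $UC(2)$ inequality (Definition~\ref{Def:UC} with $q=2$) in $\|\cdot\|_{\cF_1}$ to $(g_1,g_2)$ and in $\|\cdot\|_{\cF_2}$ to $(h_1,h_2)$ gives
\[
N\!\left(\tfrac{f_1+f_2}{2}\right)+tN\!\left(\tfrac{f_1-f_2}{2}\right)\le \frac{\|g_1\|_{\cF_1}^2+\|h_1\|_{\cF_2}^2+\|g_2\|_{\cF_1}^2+\|h_2\|_{\cF_2}^2}{2}.
\]
The right-hand side is at most $\frac{N(f_1)+N(f_2)}{2}+\varepsilon$. Letting $\varepsilon\to0$ yields exactly the $UC(2)$ inequality for $\|\cdot\|_{\cF}$ with the same constant $t$.

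No step here is a serious obstacle. The only subtleties are that the infimum need not be attained, which the $\varepsilon$-approximation handles, and that the two parts of the decomposition must be split with the same pair for the sum and the difference. The latter works because the decomposition map is linear.
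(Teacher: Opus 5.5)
Your proof is correct and complete. The paper states this only as a Fact and gives no proof, so there is no argument of its own to compare against. Your argument is the standard direct one: take $\varepsilon$-optimal decompositions $f_i=g_i+h_i$, observe that $\tfrac{g_1\pm g_2}{2}+\tfrac{h_1\pm h_2}{2}$ are admissible decompositions of $\tfrac{f_1\pm f_2}{2}$, and apply the two-point $UC(2)$ inequality separately in each factor. This gives exactly the same constant $t$, and it gives $\min\{t_1,t_2\}$ if the two constants differ.

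Your nondegeneracy caveat is well placed and is not just cosmetic. In infinite dimensions, two incompatible Hilbertian norms can have a $2$-Firey sum that is only a seminorm. For example, on finitely supported sequences take $\|x\|_{\ell_2}$ and $\|Tx\|_{\ell_2}$ with $Te_1=e_1$ and $Te_n=e_n-ne_1$; the splitting $e_1=-e_n/n+(e_1+e_n/n)$ gives $N(e_1)\le 2/n^2$ for every $n$, so $N(e_1)=0$. The paper's phrase ``is the norm'' therefore tacitly assumes some comparability of the two norms, such as the one-sided domination you use.
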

Consider \Cref{Ex:NP}. The Sobolev-norm MNI may be inconsistent as $n \to \infty$ \citep{karhadkar2026harmful}, since the $L_2$ norm of the ``bumps'' is of order one. Yet, using the $2$-Firey sum defined above, one can combine the Sobolev norm with a kernel-induced norm having ``small bumps'' and obtain a norm equivalent to $W^{k,p}(\Omega)$ up to an absolute constant, for which the MNI is a minimax-optimal estimator.
\subsection{Pisier's K-Convexity and the MNI}
\label{R:Kconvexity}
First, for the reader's convenience, we define the $K$-convexity constant with respect to the Gaussian measure (cf. \citep[Ch.~6]{artstein2015asymptotic}). For any map $F:\R^n \to \bana$, its $\ell$-norm is defined by
\begin{equation*}
    \ell(F) = \ell_{\|\cdot\|}(F):= \sqrt{\int \|F(\vec \xi)\|^2 \, d\gamma_n(\vec \xi)}.
\end{equation*}
Note that if $\|x\|^2=x^{\top}Ax$ for a positive-definite matrix $A$ and $F$ is the identity map, then $\ell(F)^2=\operatorname{tr}(A)$.
The linearization of $F$ is defined by
\[
    L_F(\vec \xi):= \sum_{i=1}^n \alpha_i \xi_i + \int_{\R^n} F(\vec z) \, d\gamma_n(\vec z),
\]
where
\[
    \alpha_i = \int_{\R^n} \frac{\xi_i}{2} \left(F(\vec \xi_+^{(i)}) - F(\vec \xi_-^{(i)})\right) d\gamma_n(\vec \xi) \text{ and } \vec \xi_{\pm}^{(i)} = (\xi_1,\ldots,\pm \xi_i,\ldots,\xi_n).
\]
In other words, we project $F$ onto the subspace spanned by Hermite polynomials of degrees zero and one. 

Consider the Banach space $
    \left(\{F:\R^n \to \cB(\cX)\}, \ell_{\|\cdot\|}\right)
$
and define the linearization operator by
\[
    \cL(F):=L_F.
\]
The (Gaussian) $K$-convexity constant is the operator norm of $\cL$, i.e.,
    $ K_{\|\cdot\|}:= \sup_{\ell(F) \leq 1} \ell(\cL(F))$.
Equivalently,
\begin{equation}\label{Eq:Kconv}
   \forall F:\R^n \to \bana : \  \ell(L_F) \leq K_{\|\cdot\|} \ell(F).
\end{equation}
\paragraph{Key motivation for this work:} 
Here, we provide an application of Pisier's $K$-convexity argument, which is not commonly used in statistics and learning theory. Fix a realization of $\vec X$ and linearize the MNI as a map from the noise in $\R^n$ to $\cB(\cX)$. Namely, consider the operator
\[
\widehat{L}_n:=L(\erm \mid \vec X):\R^n \to \cB(\cX).
\]
It is not hard to verify that $\widehat{L}_n$ produces an interpolator for every $\vec \xi \in \R^n$. Hence, by definition,
\[
   \|\erm(\vec X,\vec \xi)\| \leq  \|\widehat{L}_n(\vec \xi)\|,
\]
and, in particular,
\[
    \ell(\erm) \leq \ell(\widehat{L}_n).
\]
When $\bana$ is an RKHS, $\widehat{L}_n = \erm$ by the \textbf{representer theorem}. What can we say when the norm is not induced by an inner product? Here, the $K$-convexity constant $K_{\|\cdot\|}$ enters, and
\[
    K_{\|\cdot\|}^{-1} \ell(\widehat{L}_n) \leq \ell(\erm) \leq \ell(\widehat{L}_n).
\]
In finite-dimensional linear models, $K_{\|\cdot\|}$ is at most $O(\log d)$. Under nontrivial type, i.e., type $1+\delta$ with constants $s,\delta>0$ independent of the dimension and $n$, it is at most $C(s,\delta)$.

Namely, \textbf{in terms of the $\ell$-norm}, the MNI is almost equivalent to an ellipsoid, somewhat resembling the representer theorem on average. More interestingly, the $K$-convexity argument shows that this ellipsoid is obtained by averaging the local behavior of the MNI; in high dimensions, one expects that
\[
    \alpha_{i} \approx \int_{\sqrt{n} \cdot \Sn}\frac{|\xi_i|}{2} \cdot \left(\erm(\vec \xi_{+}^{(i)}) - \erm(\vec \xi_{-}^{(i)})\right)d\sigma,
\]  
Remarkably, \textbf{on average}, this linearization captures the global behavior of the MNI $\erm$. For further intuition, consider the linear model case and let
\[
    E:=\Span\{\alpha_i\}_{i=1}^n \qquad \text{and} \qquad \cE:=\Span\{\vec X_i\}_{i=1}^n.
\]
The $K$-convexity argument implies that
\[
   M_{s}(\cP_{\cE}(K)) \leq  M_{s}(\cP_{\cE}(K \cap E)) \lesssim \log(d) M_{s}(\cP_{\cE}(K)),
\]
where $\cP$ is the linear projection onto the subspace $\cE$. This means that there exists a section $K \cap E$ that, on average, is as complex as the entire body, and this section can be found in a computationally efficient manner, provided that $\erm$ can be computed efficiently.
\subsubsection{Example of $\ell_p$-linear regression}
Let us apply Theorem \ref{T:UC2Var} to Example \ref{Ex:P} and, for simplicity, take $p \in [1,2]$.
In this case, the $K$-convexity constant of $\ell_p$ equals
\[
 K_p \asymp \min\{(p-1)^{-1/2},\sqrt{\log(d)}\},
\]
and
\[
    M_n(B_p^d) \asymp \sqrt{n} \cdot \max\left\{\sqrt{p-1}, \log(d/n)^{-1/2}
\right\} \cdot d^{1/p-1}.
\]
It is not hard to verify that, for isotropic Gaussian (or, more generally, sub-Gaussian) covariates, any interpolator of $\vec e_1$ satisfies
\[
\|w\|_{2}^2 \gtrsim 1/d,
\]
which is consistent with a $\Theta(d)$-dense vector, i.e., one whose entries are mostly balanced. The proof of Theorem \ref{T:VarLinOpt} implies that
$
    \lerm
$
is approximately $d \cdot \exp(O(-1/(p-1)))$-sparse, but its first-degree Hermite coefficients $\alpha_i$ may be $\Theta(d)$-dense, especially when $d \gg n^2$. The $K$-convexity factor in the theorem's bound, i.e.,
\[
\frac{C \cdot  K_{\|\cdot\|} \cdot M_n(\cF)}{\sqrt{n}}
\]
is essential for controlling these $\alpha_i$, since, as the proof below shows, they have $\ell_p$ norm of order $K_{\|\cdot\|} \cdot M_n(B_p^d)$.
 Therefore, for any $p \in [1,2]$, we have that 
    \[
        \E\mathrm{Var}\left[\widehat{w}_p \mid \vec X\right] \gtrsim n/d.
    \]
\section{Open Problems}\label{S:OP}
Here, we present a few open problems arising from our work:
\begin{itemize}
    \item Motivated by the possibility of relaxing the $UC(2)$ assumption in \Cref{T:UC2} and \Cref{T:UC2Loc} to cotype $2$, we formulate the following problem. Given a norm $\|\cdot\|$, let $c_{\|\cdot\|}$ be the largest constant for which the following quantitative version of Anderson's inequality holds:
\[
\E \|\vec \xi + x\|^2
\ge
\E \|\vec \xi\|^2 + c_{\|\cdot\|}\|x\|^2
\]
for every $x \in \R^d$. Is it true that, for every cotype $2$ norm with constant $t \geq 0$, there exists a position such that $c_{\|\cdot\|} \gtrsim c(t)$?
    \item Consider overparameterized linear regression under isotropic Gaussian covariates. Assume that the norm defining the MNI is $UC(2)$ with constant $t \geq 0$, but that we may choose its position, i.e., apply a linear transformation $T$ to its unit ball and consider the corresponding MNI.

Does there exist a linear transformation $T$ such that the MNI whose unit ball is $TK$ satisfies
\[
    T_2 \lesssim C^{1/t} \cdot (n/d).
\]
Equivalently, is the $\ell_p$ case the worst-case scenario, in the sense that the worst $UC(2)$ norm with constant $t \geq 0$ is $\ell_{t+1}$? For $\ell_p$, as mentioned above, all the ``nice'' positions are identical. However, as shown in \cite{klartag2008volume}, positions can be far apart for arbitrary $UC(2)$ norms.
    \item The proof of Theorem \ref{T:VarLinOpt} applies only to the $\ell_p$-MNI and relies heavily on $\ell_p$ geometry. Can one find a proof that applies, for example, to unconditional or one-symmetric norms?
\end{itemize}

\section{Proof Ideas}\label{S:PI}
Here, we provide proof sketches to the important theorems in this work.
\subsection{Proof Ideas for \Cref{T:UC2} and \Cref{T:UC2Loc}}
\paragraph{Notation.}
For brevity, write $\erm(\vec z):=\erm(\vec X,\vec z)$ for
$\vec z\in\R^n$. For a convex body $K$, $\|\cdot\|_K$ denotes the norm
whose unit ball is $K$. Recall the definition of $\cF_n$ in
\eqref{Eq:RndPrj} and Fact~\ref{F:NID}.
\subsubsection{The basic inequality}
Fix $\vec X$ and define the quotient norm
\[
    \|\vec z\|_n:=\|\erm(\vec z)\|,
    \qquad
    m_{\vec X}:=\E_{\vec\xi}\|\vec\xi\|_n.
\]
For $r\in[1,2]$, set
\[
    \Delta_r
    :=
    \E_{\vec\xi}\|\vec\xi+\vec\ft\|_n^r
    -
    \E_{\vec\xi}\|\vec\xi\|_n^r.
\]
Anderson's inequality gives $\Delta_r\geq0$. Without a smoothness
assumption, $\Delta_1\leq\|\vec\ft\|_n\leq1$. If the norm is $US(p)$
with constant $s$, the symmetrized smoothness inequality gives
$\Delta_p\lesssim s$. The moment-comparison lemma in the proof then yields,
up to the logarithmic factor in \Cref{T:UC2},
\[
    \Delta_2\lesssim m_{\vec X}
    \quad\text{or}\quad
    \Delta_2\lesssim s\,m_{\vec X}^{2-p},
\]
respectively. On the regular event, $m_{\vec X}\asymp M_n(\cF)$.

Let
\[
    w_+:=\erm(\vec\xi+\vec\ft),
    \qquad
    w_-:=\erm(-\vec\xi+\vec\ft).
\]
The vector $(w_+-w_-)/2$ interpolates $\vec\xi$. Therefore, minimality and
$2$-uniform convexity imply
\[
    \frac{\|w_+\|^2+\|w_-\|^2}{2}
    \geq
    \|\vec\xi\|_n^2
    +t\left\|\frac{w_++w_-}{2}\right\|^2.
\]
After averaging over the Gaussian noise and applying Jensen's inequality,
\[
    t\left\|
        \E_{\vec\xi}[\erm(\vec\xi+\vec\ft)\mid\vec X]
    \right\|^2
    \lesssim\Delta_2.
\]
Set
\[
    f_{\vec X}:=
    \E_{\vec\xi}[\erm(\vec\xi+\vec\ft)\mid\vec X].
\]
Since $f_{\vec X}$ interpolates $\vec\ft$, the function
$f_{\vec X}-\ft$ vanishes on the sample. The lower-isometry assumption
therefore converts the preceding Banach-norm estimate into an
$L_2(\PP)$ estimate. Using
\[
    M_n^*(\cF)=n\cG_n(\cF),
    \qquad
    R_{MM^*}=M_n(\cF)\cG_n(\cF),
\]
and averaging over $\vec X$ gives
\[
    T_1^{\Omega}
    \lesssim
    \frac{R_{MM^*}\cG_n(\cF)}{t},
\]
and, under $US(p)$,
\[
    T_1^{\Omega}
    \lesssim
    \frac{sR_{MM^*}^{2-p}\cG_n(\cF)^p}{t}.
\]

\subsubsection{Localization}
We now work in the linear model. Set
\[
    H:=(\lft)^\perp,
    \qquad
    \vec\eta:=\vec X\lft,
    \qquad
    \ermp:=\erm(\vec\eta+\vec\xi).
\]
For $a\in\R$ and $\vec z\in\R^n$, define
\[
    m_a(\vec z)
    :=
    \inf\left\{
        \|a\lft+u\|_K:
        u\in H,\ \vec Xu=\vec z
    \right\}.
\]
Every interpolator of $\vec\eta+\vec\xi$ can be written as
$(1-\delta)\lft+u$, where $u\in H$ and
$\vec Xu=\vec\xi+\delta\vec\eta$. Hence the MNI selects
\[
    \delta_*
    \in
    \operatorname*{argmin}_{\delta\in\R}
    m_{1-\delta}(\vec\xi+\delta\vec\eta).
\]
This one-dimensional variational formulation is the main idea of the
localized proof.

Let $\Lambda$ control the relevant $R_{bM^*}$ parameters of the projected
central section and its polar. The geometric part of the proof gives
$\Lambda\lesssim\log(d)^3$ in isotropic position and
$\Lambda\lesssim\log(d)/\sqrt t$ in John's position. If
\[
    \|\vec z\|_n
    :=
    \inf\{\|u\|_K:u\in H,\ \vec Xu=\vec z\},
    \qquad
    M:=\E_{\vec\xi}\|\vec\xi\|_n,
\]
then, on the same regular event,
\[
    M\cG_n(K)\lesssim\Lambda,
    \qquad
    M^{-1}\lesssim\Lambda\cG_n(K).
\]

\paragraph{Bounding the shrinkage term $E_1$.}
The quantitative Anderson estimate used in the proof has the form
\[
    m_a(\vec\xi+\mu\vec\eta)
    \geq
    \left(
        1+\frac{ct\mu^2}{\Lambda^2}
        -\frac{C|\mu|\Lambda\log(d)}{\sqrt n}
    \right)m_a(\vec\xi).
\]
Nearby sections satisfy
\[
    0\leq
    m_1(\vec\xi)-m_{1-\delta}(\vec\xi)
    \lesssim
    \rho\,\delta,
    \qquad
    \rho:=\min\{1,sM^{1-p}\},
\]
where $\rho=1$ without a smoothness assumption. Comparing these two
estimates at the minimizer $\delta_*$ gives
\[
    |\delta_*|
    \lesssim
    \delta_U
    :=
    \frac{C}{t}
    \left(
        \Lambda^2\frac{\rho}{M}
        +\frac{\Lambda^3\log(d)}{\sqrt n}
    \right).
\]
Since $\delta_*$ is the error in the signal direction,
$E_1\lesssim\delta_U^2$.

\paragraph{Bounding $E_2$ via $2$-uniform convexity.}
Let $\widetilde w_n(\vec\xi)\in H$ be the minimizer defining
$m_1(\vec\xi)$, and set $\overline w_n:=\lft+\widetilde w_n$. The vectors
$\ermp$ and $\overline w_n$ interpolate the same observations. The
variational estimate above bounds their norm gap, and uniform convexity
therefore gives
\[
    \|\ermp-\overline w_n\|_K
    \lesssim
    \delta_U M.
\]
Their difference lies in $\ker(\vec X)$, so the lower-isometry estimate
implies
\[
    \|\ermp-\overline w_n\|_2^2
    \lesssim
    \Lambda^2\delta_U^2.
\]
By the reflection symmetry in \Cref{a:sym},
$\widetilde w_n(-\vec\xi)=-\widetilde w_n(\vec\xi)$, and hence
$\E_{\vec\xi}\widetilde w_n=0$. It follows that
\[
    E_2\lesssim\Lambda^2\delta_U^2.
\]

Combining the two terms and using the bounds relating $M$, $\Lambda$, and
$\cG_n(K)$ gives
\[
    T_1
    \lesssim
    \frac{\Lambda^8}{t^2}
    \left(
        \cG_n(K)^2+
        \frac{\log(d)^2}{n}
    \right),
\]
and, under $US(p)$,
\[
    T_1
    \lesssim
    \frac{1}{t^2}
    \left(
        s^2\Lambda^{2p+6}\cG_n(K)^{2p}
        +
        \frac{\Lambda^8\log(d)^2}{n}
    \right).
\]
Substituting the position-dependent value of $\Lambda$ yields the bounds in
\Cref{T:UC2Loc}. For Gaussian covariates, rotational invariance removes the
sparsity assumption; in the proportional Gaussian regime, the same argument
applies in $M$-position. 
\begin{remark}
 The term
$E_1$ measures shrinkage in the signal direction and is governed by the
scalar parameter $\delta_*$. Under Gaussian covariates, its control can
exploit an internal rotational invariance of the random quotient norm.

To explain this, fix the noise vector
$\vec\xi\in\sqrt n\Sn$ and one of the localized sections
$B\subset H$ appearing in the argument. Let
$\widetilde{\vec X}$ denote the Gaussian block of the design acting on $H$,
and define
\[
    \|z\|_{\widetilde{n}}
    :=
    \|z\|_{\widetilde{\vec X}B}.
\]
For every $U\in O(n)$,
$U\widetilde{\vec X}\stackrel{d}{=}\widetilde{\vec X}$. Therefore, the map
\[
    z\longmapsto
    \E_{\widetilde{\vec X}}
    \|z\|_{\widetilde{n}}
\]
is rotationally invariant and positively homogeneous. It follows that for every fixed $z\in\R^n$ it holds
\[
    \E_{\widetilde{\vec X}}
    \|z\|_{\widetilde{n}}
    =
    \frac{\|z\|_2}{\sqrt n}
    \E_{\widetilde{\vec X}}
    \|\vec\xi\|_{\widetilde{n}}.
\]
Let $g_{\lft}:=\vec X\lft$. The vector $g_{\lft}$ is Gaussian and is
independent of $\widetilde{\vec X}$. Hence, conditionally on $g_{\lft}$,
\[
    \E_{\widetilde{\vec X}}
    \|\vec\xi+\mu g_{\lft}\|_{\widetilde{n}}
    =
    \frac{\|\vec\xi+\mu g_{\lft}\|_2}{\sqrt n}
    \E_{\widetilde{\vec X}}
    \|\vec\xi\|_{\widetilde{n}}.
\]
Thus, in expectation, Gaussianity supplies the desired bound, and a position should be used to ensure a high-probability bound. Therefore, $E_1$ does not rely on uniform convexity of $K$, when the covariates give an additional ``curvature ''.
\end{remark}

\begin{remark}
The role of $UC(2)$ is more structural for $E_2$. This term measures the
component of the conditional mean transverse to $\lft$, and the proof must
convert a small gap between the norms of two interpolators into a bound on
their distance. Uniform convexity gives precisely this stability estimate for
$\ermp$ and $\overline w_n$. Gaussianity improves the shift comparison used
for $E_1$, but it does not by itself control the average geometry of the
near-minimizing caps that governs $E_2$. It is plausible that a cotype-$2$
condition, or sufficiently strong symmetries of the norm, could replace
$UC(2)$ in this step; the present proof uses $UC(2)$ to obtain a direct,
uniform bound.
\end{remark}

\subsection{Proof Ideas for \Cref{T:VarLinOpt}}\label{S:ThmLinear}

\paragraph{Notation.}
We retain the notation $p$, $q$, and $\beta(p)$ from \Cref{T:VarLinOpt}, and set
\[
    P:=\frac{\vec X}{\sqrt d},
    \qquad
    \lambda_p:=d^{\frac1p-\frac12}.
\]
For $j\in[d]$, let $P_j$ denote the $j$-th column of $P$. For
$S\subset[d]$, let $P_S$ denote the submatrix of $P$ formed by the columns
indexed by $S$. We write
\[
    \widetilde B_p^d:=\lambda_p B_p^d,
    \qquad
    \|v\|_p:=\|v\|_{\widetilde B_p^d}
    =\lambda_p^{-1}\|v\|_{\ell_p^d},
\]
where the second notation is used only in this section and in the proof of
\Cref{T:VarLinOpt}. We also set
\[
    \eps_d:=p-1,
    \qquad
    \cC_d:=\left[-\frac1{\sqrt d},\frac1{\sqrt d}\right]^d.
\]
Finally, throughout this section and its proof, we abbreviate $M_s$ and
$M_s^*$ by $M$ and $M^*$, respectively, and write
$R(K):=\sup_{x\in K}\|x\|_2$.

\subsubsection*{Preliminaries}

We first recall the version of Dvoretzky's theorem used in the Gaussian
argument; see, for example, \cite{artstein2015asymptotic}.

\begin{lemma}\label{Lem:Dvo}
Let $K\subset\R^d$ be a symmetric convex body, and let $A$ be an $n\times d$
random matrix with i.i.d.\ entries distributed as $N(0,1/d)$. Then, with
probability at least $1-2\exp(-cn)$,
\[
\begin{aligned}
    \left(
        1-C\frac{R(K)}{M^*(K)}\sqrt{\frac nd}
    \right)M^*(K)B_2^n
    &\subset AK
    \subset
    \left(
        1+C\frac{R(K)}{M^*(K)}\sqrt{\frac nd}
    \right)M^*(K)B_2^n.
\end{aligned}
\]
\end{lemma}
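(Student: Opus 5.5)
The plan is to control the support function of $AK$ uniformly on $\mathbb{S}^{n-1}$. For $\theta\in\mathbb{S}^{n-1}$,
\[
h_{AK}(\theta)=\sup_{x\in K}\inner{A^\top\theta,x}=h_K(A^\top\theta)=\|A^\top\theta\|_{K^\circ},
\]
and the two inclusions in the statement are equivalent to
\[
\bigl(1-\epsilon\bigr)M^*(K)\;\le\;\inf_{\theta\in\mathbb{S}^{n-1}}h_{AK}(\theta)\;\le\;\sup_{\theta\in\mathbb{S}^{n-1}}h_{AK}(\theta)\;\le\;\bigl(1+\epsilon\bigr)M^*(K),
\qquad \epsilon=C\frac{R(K)}{M^*(K)}\sqrt{\frac nd}.
\]
Write $A=G/\sqrt d$ with $G$ an $n\times d$ standard Gaussian matrix. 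Then the relevant random variables are
\[
S(G):=\sup_{\theta\in\mathbb{S}^{n-1}}\sup_{x\in K}\frac{\inner{G x,\theta}}{\sqrt d},
\qquad
I(G):=\inf_{\theta\in\mathbb{S}^{n-1}}\sup_{x\in K}\frac{\inner{G x,\theta}}{\sqrt d}.
\]
The argument has three steps: bound $\E S$ from above, bound $\E I$ from below, and then pass from expectations to high probability by Gaussian concentration in $G$.

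\emph{Expectations.} For the upper bound I would apply Chevet's inequality to the bilinear Gaussian form $\inner{Gx,\theta}$ with $x\in K$ and $\theta\in B_2^n$. This gives
\[
\E\sup_{\theta,x}\inner{Gx,\theta}\le \sup_{\theta}\|\theta\|_2\,\E h_K(g_d)+R(K)\,\E\|g_n\|_2\le M_g^*(K)+R(K)\sqrt n .
\]
For the lower bound, Gordon's min--max comparison applied to $\inner{Gx,\theta}$ against the decoupled process $\|x\|_2\inner{g_n,\theta}+\inner{g_d,x}$ should give
\[
\E\inf_{\theta}\sup_{x}\inner{Gx,\theta}\ge M_g^*(K)-R(K)\,\E\|g_n\|_2\ge M_g^*(K)-R(K)\sqrt n .
\]
This is the step I expect to be the main obstacle. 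The min--max structure and the signs in Gordon's theorem must be set up correctly, and in the decoupled process the $\|x\|_2$ weight must be bounded by $R(K)$ uniformly over $K$. A fallback is to first restrict the sup over $x$ to a fine finite subset of $K$, so that a discrete Gordon inequality applies, and then pass to the limit. Finally, by polar integration $M_g^*(K)=\E\|g_d\|_2\,M^*(K)=\bigl(\sqrt d+O(d^{-1/2})\bigr)M^*(K)$. Dividing by $\sqrt d$ therefore gives $\E S\le M^*(K)+R(K)\sqrt{n/d}$ and $\E I\ge M^*(K)-R(K)\sqrt{n/d}$, up to negligible $O(1/d)$ relative corrections.

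\emph{Concentration.} Both $S$ and $I$ are sup/inf of functions of the form $G\mapsto \sup_{x\in K}\inner{Gx,\theta}/\sqrt d$. Each of these is $R(K)/\sqrt d$-Lipschitz with respect to the Frobenius norm on $G$, since $|\inner{(G-G')x,\theta}|\le\|G-G'\|_{HS}\|x\|_2\|\theta\|_2$. A sup or inf of $L$-Lipschitz functions is again $L$-Lipschitz, so Gaussian concentration gives, for both $S$ and $I$,
\[
\Pr\bigl(|S-\E S|>u\bigr),\ \Pr\bigl(|I-\E I|>u\bigr)\le 2\exp\!\left(-\frac{c\,u^2 d}{R(K)^2}\right).
\]
Taking $u=R(K)\sqrt{n/d}$ makes the failure probability at most $2\exp(-cn)$. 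Combining this with the expectation bounds shows that, with probability at least $1-2\exp(-cn)$,
\[
\bigl(1-C\tfrac{R(K)}{M^*(K)}\sqrt{n/d}\bigr)M^*(K)\le I\le S\le\bigl(1+C\tfrac{R(K)}{M^*(K)}\sqrt{n/d}\bigr)M^*(K).
\]
By the equivalence at the start, these bounds on $S$ and $I$ are exactly the claimed inclusions $AK\subset(\cdot)M^*(K)B_2^n$ and $(\cdot)M^*(K)B_2^n\subset AK$.
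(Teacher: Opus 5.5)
The paper gives no proof of this lemma; it only refers to \cite{artstein2015asymptotic}. Your Chevet, Gordon and concentration scheme is the standard Gaussian (Gordon-style) proof of this sharp form of Dvoretzky--Milman. It is what yields the linear dependence $\frac{R(K)}{M^*(K)}\sqrt{n/d}$ without the $\log(1/\varepsilon)$ loss of a sphere-concentration-plus-net argument. Three of your steps are sound:
\begin{itemize}
\item The reduction to $\inf_\theta$ and $\sup_\theta$ of $h_K(A^\top\theta)$ is correct.
\item The Chevet bound holds with constant one. The supremum over $B_2^n$ may be taken over $\mathbb{S}^{n-1}$, where the Sudakov--Fernique increment gap equals $2(1-\langle\theta,\theta'\rangle)(R(K)^2-\langle x,x'\rangle)\ge 0$.
\item The concentration step is correct. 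Each $G\mapsto\sup_{x\in K}\langle Gx,\theta\rangle/\sqrt d$ is $R(K)/\sqrt d$-Lipschitz in Hilbert--Schmidt norm, and sups and infs preserve this constant.
\end{itemize}

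However, the Gordon step fails as you set it up. Take $X_{\theta,x}=\langle Gx,\theta\rangle$ and $Y_{\theta,x}=\|x\|_2\langle g_n,\theta\rangle+\langle g_d,x\rangle$. The variances are $\|x\|_2^2$ versus $2\|x\|_2^2$. Worse, for a fixed outer index $\theta$,
\[
\mathbb{E}(Y_{\theta,x}-Y_{\theta,x'})^2=\|x-x'\|_2^2+(\|x\|_2-\|x'\|_2)^2\;\ge\;\|x-x'\|_2^2=\mathbb{E}(X_{\theta,x}-X_{\theta,x'})^2 .
\]
This is the wrong direction. To conclude $\mathbb{E}\min_\theta\max_x X\ge\mathbb{E}\min_\theta\max_x Y$, Gordon's theorem needs the inner (same-$\theta$) increments of $X$ to dominate those of $Y$, and the cross-$\theta$ increments to be dominated. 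So the comparison breaks down precisely when the points of $K$ have different Euclidean norms.

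The standard repair is to compare $Y$ with $X'_{\theta,x}:=\langle Gx,\theta\rangle+\|x\|_2\,g$, where $g\sim N(0,1)$ is independent.
\begin{itemize}
\item The variances now agree: $\mathbb{E}X'^2_{\theta,x}=\mathbb{E}Y_{\theta,x}^2=2\|x\|_2^2$.
\item For equal $\theta$, the increments of $X'$ and $Y$ coincide.
\item For $\theta\neq\theta'$, a direct computation gives
\[
\mathbb{E}(Y_{\theta,x}-Y_{\theta',x'})^2-\mathbb{E}(X'_{\theta,x}-X'_{\theta',x'})^2=2(1-\langle\theta,\theta'\rangle)\bigl(\|x\|_2\|x'\|_2-\langle x,x'\rangle\bigr)\ge 0 .
\]
\end{itemize}
Gordon's theorem then gives
\[
\mathbb{E}\min_\theta\max_x X'_{\theta,x}\;\ge\;\mathbb{E}\min_\theta\max_x Y_{\theta,x}\;\ge\;M_g^*(K)-R(K)\,\mathbb{E}\|g_n\|_2 .
\]
The last inequality uses $\|x\|_2\langle g_n,\theta\rangle\ge -R(K)\|g_n\|_2$; this is where your uniform bound on $\|x\|_2$ enters. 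To remove the auxiliary term, note $\|x\|_2 g\le R(K)g_+$, so $\max_x X'_{\theta,x}\le\max_x\langle Gx,\theta\rangle+R(K)g_+$. Hence
\[
\mathbb{E}\inf_\theta\sup_x\langle Gx,\theta\rangle\;\ge\;M_g^*(K)-R(K)\bigl(\sqrt n+1/\sqrt{2\pi}\bigr).
\]
The extra $R(K)/\sqrt{2\pi}$ is absorbed into $CR(K)\sqrt{n/d}$ after dividing by $\sqrt d$. So is the $O(M^*(K)/d)$ correction from $\mathbb{E}\|g_d\|_2\ge\sqrt{d/(d+1)}\,\sqrt d$, since $M^*(K)\le R(K)$. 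With this repair, and the usual finite-subset approximation needed to apply Gordon's inequality, the rest of your argument goes through unchanged.
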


In the proof, this result is applied to a trimmed auxiliary body, not to the
full ball $\widetilde B_p^d$. The condition $n\beta(p)\lesssim d$ is the
range in which the resulting lower Euclidean inclusion is nontrivial.

For sub-Gaussian projections, we use the following consequence of Kashin's
theorem; see \cite{litvak2005euclidean,szarek1990spaces}.

\begin{lemma}\label{Lem:Kashin}
Assume that the entries of the design matrix $\vec X$ are independent,
centered, variance-one, and sub-Gaussian. If $d\geq Cn$, then
\[
    cB_2^n\subset P\cC_d
\]
with probability at least $1-2\exp(-cd)$.
\end{lemma}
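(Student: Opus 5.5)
The statement to prove is \Cref{Lem:Kashin}: for a design with independent, centered, variance-one, sub-Gaussian entries and $d\geq Cn$, the inclusion $cB_2^n\subset P\cC_d$ holds with probability at least $1-2\exp(-cd)$. The plan is to prove it by duality. Since $P\cC_d$ is a symmetric convex body (when nondegenerate), $cB_2^n\subset P\cC_d$ is equivalent to $h_{P\cC_d}(\theta)\geq c$ for all $\theta\in\Sn$. Here
\[
    h_{P\cC_d}(\theta)=\sup_{x\in\cC_d}\inner{P^{\top}\theta,x}=\frac{1}{\sqrt d}\|P^{\top}\theta\|_1=\frac{1}{d}\|\vec X^{\top}\theta\|_1=\frac1d\sum_{j=1}^d|\inner{X^{(j)},\theta}|,
\]
where $X^{(j)}\in\R^n$ is the $j$-th column of $\vec X$. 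The columns are independent, with independent centered variance-one sub-Gaussian entries. The goal is therefore the uniform lower bound $\inf_{\theta\in\Sn}\frac1d\sum_j|\inner{X^{(j)},\theta}|\geq c$, holding with probability $1-2e^{-cd}$.

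\textbf{Step 1 (fixed direction).} Fix $\theta\in\Sn$ and set $Z_j=\inner{X^{(j)},\theta}$. These are independent, have variance one, and have sub-Gaussian norm $O(1)$ by Hoeffding. Paley--Zygmund, using $\E Z_j^2=1$ and $\E Z_j^4\lesssim1$, gives $\E|Z_j|\geq c_0$; alternatively one can use $\E|Z|\geq(\E Z^2)^{3/2}/(\E Z^4)^{1/2}$. The variables $|Z_j|-\E|Z_j|$ are independent, centered and sub-Gaussian. Hoeffding's inequality then gives
\[
    \Pr\Bigl(\tfrac1d\textstyle\sum_j|Z_j|\leq c_0/2\Bigr)\leq\exp(-c_1d).
\]

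\textbf{Step 2 (net and Lipschitz control).} Take a $\varepsilon$-net $\cN\subset\Sn$ with $|\cN|\leq(3/\varepsilon)^n$. The map $\theta\mapsto\frac1d\|\vec X^{\top}\theta\|_1$ is a seminorm, so it is Lipschitz with constant
\[
    \sup_{\theta\in\Sn}\frac1d\|\vec X^{\top}\theta\|_1\leq\frac{1}{\sqrt d}\|\vec X^{\top}\|_{2\to2}.
\]
The standard operator-norm bound for sub-Gaussian matrices gives $\|\vec X\|_{2\to2}\lesssim\sqrt d+\sqrt n\lesssim\sqrt d$ with probability $1-2e^{-cd}$. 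Hence the Lipschitz constant is at most $C_5$, an absolute constant. Choose $\varepsilon=c_0/(4C_5)$ and take a union bound over $\cN$. The failure probability is at most
\[
    (3/\varepsilon)^n e^{-c_1d}\leq e^{-c_1d/2}
\]
once $d\geq Cn$ with $C$ large, and this is where the hypothesis $d\geq Cn$ is used. On the intersection of both events, every $\theta\in\Sn$ satisfies $h(\theta)\geq c_0/2-C_5\varepsilon=c_0/4$.

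\textbf{Main obstacle.} The delicate point is keeping the constants absolute, because the net cardinality $(3/\varepsilon)^n$ must be beaten by $e^{-c_1 d}$. This forces $\varepsilon$, and hence the Lipschitz constant, to be independent of $n$ and $d$. That is exactly why the operator-norm estimate $\|\vec X\|\lesssim\sqrt d$ in the regime $d\gtrsim n$ is needed; a crude Frobenius bound would lose a factor $\sqrt n$. The lower bound on $\E|Z_j|$ must also be uniform in $\theta$, which follows from the uniform fourth-moment bound available for sub-Gaussian linear forms with independent entries.
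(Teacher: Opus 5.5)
Your proof is correct, but it differs from the paper's treatment: the paper gives no proof of this lemma. It quotes the inclusion as a consequence of Kashin's theorem, citing \cite{litvak2005euclidean,szarek1990spaces}, i.e.\ the finite-volume-ratio theory. There, the polar of $\cC_d$ is $\sqrt d\,B_1^d$, which has bounded volume ratio, so random sub-Gaussian images $P\cC_d$ contain a Euclidean ball of constant radius.

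You instead prove it from scratch. By duality, the inclusion is the uniform bound $\inf_{\theta\in\Sn}\frac1d\|\vec X^{\top}\theta\|_1\geq c$. Because the support function of the cube splits into the independent summands $|\inner{X^{(j)},\theta}|$, Hoeffding together with $\E|Z|\geq(\E Z^2)^{3/2}/(\E Z^4)^{1/2}$ handles a fixed direction. An $\varepsilon$-net of size $e^{O(n)}$ and the bound $\|\vec X\|\lesssim\sqrt d$, which makes the seminorm $O(1)$-Lipschitz, then give uniformity.

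Your route buys a short, self-contained argument with explicit dependence on the sub-Gaussian constant. The cited route buys generality. Your trade-off $(3/\varepsilon)^n e^{-c_1 d}$ needs $d\geq Cn$ with $C$ large. Kashin-type results instead cover the genuinely proportional regime $d\geq(1+\delta)n$, with constants depending on $\delta$, and apply to arbitrary bodies of bounded volume ratio, not only the cube. The paper uses that generality elsewhere, e.g.\ for $P(K\cap B_d)$ in the proof of \Cref{Lem:BMstar}.

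For the lemma as stated, your argument suffices. The only remaining bookkeeping is to take the final $c$ as the minimum of the radius constant and the probability exponents, so that the net failure probability and the operator-norm failure probability combine into $2e^{-cd}$.
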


We also use the standard singular-value estimate for sub-Gaussian random
matrices; see, for example, \cite{aubrun2017alice}.

\begin{lemma}\label{Lem:SM}
Let $\vec X$ be an $n\times d$ random matrix with independent, centered,
variance-one, sub-Gaussian entries, and assume that $d\geq Cn$. Then
\[
    \Pr\left(
        c\sqrt d\leq\sigma_{\min}(\vec X)
        \leq\sigma_{\max}(\vec X)\leq C\sqrt d
    \right)
    \geq1-2\exp(-cd).
\]
\end{lemma}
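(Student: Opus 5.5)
We would prove \Cref{Lem:SM} by the standard $\varepsilon$-net argument, applied to the tall matrix $A:=\vec X^{\top}\in\R^{d\times n}$. This matrix has the same singular values as $\vec X$, and since $d\geq Cn$ it has many more rows than columns. The goal is to show that, with probability at least $1-2\exp(-cd)$,
\[
    \tfrac14 d\le \|Au\|_2^2\le 4d
    \qquad\text{for all } u\in\mathbb S^{n-1},
\]
which gives $c\sqrt d\le\sigma_{\min}(\vec X)\le\sigma_{\max}(\vec X)\le C\sqrt d$.

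\textbf{Step 1 (fixed direction).} Fix $u\in\mathbb S^{n-1}$. The coordinates of $Au$ are $Z_j:=\sum_{i=1}^n \vec X_{ij}u_i$ for $j\in[d]$. These are independent, since they involve disjoint columns of $\vec X$. Each $Z_j$ is centered with variance $\|u\|_2^2=1$. By the rotation-invariance property of sums of independent sub-Gaussians (Hoeffding), each $Z_j$ also has sub-Gaussian norm $O(1)$. Hence the variables $Z_j^2-1$ are independent, centered and sub-exponential with norm $O(1)$. Bernstein's inequality then gives
\[
    \Pr\Bigl(\bigl|d^{-1}\|Au\|_2^2-1\bigr|\geq \tfrac12\Bigr)\le 2\exp(-c_0 d).
\]

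\textbf{Step 2 (net and union bound).} Let $\mathcal N$ be an $\varepsilon$-net of $\mathbb S^{n-1}$ with $|\mathcal N|\le(3/\varepsilon)^n$, where $\varepsilon\in(0,1)$ is a small absolute constant fixed in Step 3. A union bound over $\mathcal N$ gives
\[
    \tfrac12 d\le\|Au\|_2^2\le\tfrac32 d\qquad\text{for all } u\in\mathcal N,
\]
with failure probability at most $2(3/\varepsilon)^n e^{-c_0d}$. Taking $C$ large enough that $n\log(3/\varepsilon)\le c_0d/2$, this is at most $2e^{-c_0d/2}$.

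\textbf{Step 3 (extension to the sphere).} For the upper bound, the usual estimate $\|A\|\le(1-\varepsilon)^{-1}\max_{u\in\mathcal N}\|Au\|_2$ gives $\|A\|\le (1-\varepsilon)^{-1}\sqrt{3d/2}\le 2\sqrt d$ once $\varepsilon$ is small. For the lower bound, take an arbitrary $v\in\mathbb S^{n-1}$ and choose $u\in\mathcal N$ with $\|u-v\|_2\le\varepsilon$. Then
\[
    \|Av\|_2\ge\|Au\|_2-\|A\|\varepsilon\ge\sqrt{d/2}-2\varepsilon\sqrt d\ge\tfrac12\sqrt d
\]
for $\varepsilon=1/20$, say. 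With this choice of $\varepsilon$, $C$ is then chosen as in Step 2.

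\textbf{Main obstacle.} The only delicate point is the ordering of constants. The net radius $\varepsilon$ must be fixed first, as an absolute constant, so that the lower bound survives the approximation error $\|A\|\varepsilon$. Only then is $C$ chosen large enough that the union-bound entropy $n\log(3/\varepsilon)$ is absorbed by the Bernstein exponent $c_0d$. Everything else is routine; in particular, no Kashin- or Dvoretzky-type input from \Cref{Lem:Dvo,Lem:Kashin} is needed.
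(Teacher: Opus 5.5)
Your proposal is correct. The fixed-direction Bernstein bound for $\|\vec X^{\top}u\|_2^2$, the union bound over an $\varepsilon$-net of $\mathbb S^{n-1}$ (with $\varepsilon$ fixed before $C$), and the extension to the whole sphere yield $\tfrac12\sqrt d\le\sigma_{\min}(\vec X)\le\sigma_{\max}(\vec X)\le 2\sqrt d$ with probability at least $1-2e^{-c_0d/2}$. The paper does not prove this lemma; it cites it as standard (e.g., \cite{aubrun2017alice}), and your net argument is the usual textbook proof of that result.
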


\paragraph{Scaling reduction.}
Let $\vec\xi\sim N(0,I_n)$ denote the observation noise from the main model,
and set
\[
    \vec\zeta:=\frac{\vec\xi}{\sqrt n}\sim N\left(0,\frac1nI_n\right),
    \qquad
    v_*:=\sqrt{\frac dn}\,\lft,
    \qquad
    \widehat v:=\sqrt{\frac dn}\,\lerm.
\]
Then
\[
    P\widehat v=Pv_*+\vec\zeta,
    \qquad
    T_2
    =\frac nd\,
    \E\left\|
        \widehat v-\E_{\vec\zeta}[\widehat v\mid P]
    \right\|_2^2.
\]
In the zero-signal case, we write
\[
    \ermp:=\widehat v
    =\operatorname*{argmin}_{Pv=\vec\zeta}\|v\|_p.
\]
The map $\vec\zeta\mapsto\ermp$ is odd, and hence
\[
    T_2=\frac nd\,\E\|\ermp\|_2^2.
\]
We also write
\[
    \|z\|_n
    :=\inf\{\|v\|_p:Pv=z\}
    =\|z\|_{P\widetilde B_p^d}.
\]

We reduce the upper bound in \Cref{T:VarLinOpt} to the following zero-signal
statement.

\begin{theorem}\label{T:WRD}
Under the assumptions of \Cref{T:VarLinOpt}, suppose that $\lft=0$.
\begin{itemize}
\item For Gaussian covariates,
\[
    \E\|\ermp\|_2^2
    \lesssim C_2^{1/\eps_d}.
\]
\item Under the sub-Gaussian and CCP assumptions,
\[
    \E\|\ermp\|_2^2
    \lesssim
    \left(\frac{C}{\eps_d}\right)^{1/\eps_d}
    +
    \frac{n}{d\log(ed/n)}
    \left(CL^2\log(ed)\right)^{1/\eps_d}.
\]
\end{itemize}
\end{theorem}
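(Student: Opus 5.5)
Throughout, $\ermp$ minimizes $\|v\|_p$ subject to $Pv=\vec\zeta$ with $\vec\zeta\sim N(0,\frac1nI_n)$. We must bound $\E\|\ermp\|_2^2$ by $C^{1/\eps_d}$ (up to the stated correction). The plan is to use duality and exploit the $\ell_p$ geometry directly. By Lagrangian duality, $\ermp=\nabla\bigl(\tfrac12\|\cdot\|_q^{*}\bigr)$ evaluated at $P^\top\theta_*$. Here $\theta_*\in\R^n$ maximizes $\inner{\theta,\vec\zeta}$ subject to $\|P^\top\theta\|_{q}^{\circ}\le 1$, where the dual norm is the rescaled $\ell_q$ norm. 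Explicitly, the coordinates of $\ermp$ are
\[
    (\ermp)_j=\|\ermp\|_p\,\lambda_p^{q}\,\frac{\mathrm{sign}(u_j)|u_j|^{q-1}}{\|u\|_{\ell_q}^{q-1}},\qquad u:=P^\top\theta_*.
\]
Since $\|\ermp\|_p=\|\vec\zeta\|_n$, the quantity $\|\ermp\|_2^2$ is governed by the ratio $\|u\|_{\ell_{2q-2}}^{2q-2}/\|u\|_{\ell_q}^{2q-2}$. The task therefore reduces to showing that the dual vector $u=P^\top\theta_*$ has no heavy coordinates relative to its $\ell_q$ mass. More precisely, we want $u$ to behave like a vector with roughly i.i.d.\ Gaussian-sized entries, for which this ratio is about $d^{-1}\cdot C^{q}$. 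This is the source of the factor $C^{1/\eps_d}$.

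The steps, in order, are as follows. First, on the events of \Cref{Lem:SM} and \Cref{Lem:Kashin}, we show $\|\vec\zeta\|_n\asymp 1$ with probability $1-e^{-cn}$ by comparing $P\widetilde B_p^d$ with Euclidean balls. The upper inclusion comes from Kashin, since $\widetilde B_p^d\supset c\,\cC_d$ up to the normalization $\lambda_p$. The lower bound comes from $\sigma_{\max}$ and duality. Second, we trim. We split the coordinates of $u$ into a bulk set, formed by the smallest $d-Cn/\log(d/n)$ entries, and a top set $S$ of size about $n/\log(d/n)$.

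For the bulk we use only sub-Gaussianity. For each fixed $\theta$ on the sphere, the entries $P_j^\top\theta$ are i.i.d.\ sub-Gaussian. A union bound over a net of $\{\theta\}$ (of cardinality $e^{Cn}$) shows that, uniformly in $\theta$, the $k$-th largest entry of $|P^\top\theta|$ is at most $C\sqrt{\log(ed/k)/d}$ for every $k\ge Cn/\log(d/n)$. Summing $|u_j|^{2q-2}$ over the bulk then gives at most $d^{-(q-1)}(Cq)^{q-1}$ times $d$, via Gamma-function moments. Meanwhile $\|u\|_{\ell_q}^q\gtrsim d\cdot d^{-q/2}c^q$, because a constant fraction of the entries have size at least $c/\sqrt d$ (small-ball for the isotropic projection). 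Inserting these into the ratio and rescaling by $\lambda_p$ yields the term $(C/\eps_d)^{1/\eps_d}$.

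Third, we handle the top set $S$. Here $\theta_*$ depends on $P$, so the net argument gives only the trivial bound $|u_j|\le\sigma_{\max}$ per coordinate. We instead use the CCP. The map $P\mapsto\|P_S^\top\theta\|$, maximized over $|S|\le k$ and over $\theta$ in the dual body, is convex and Lipschitz in $P$ (in Frobenius norm). The CCP with constant $L$, combined with a union bound over supports, gives top-$k$ entries of size $\lesssim L\sqrt{\log d/d}$. This produces the second term, $\frac{n}{d\log(ed/n)}(CL^2\log(ed))^{1/\eps_d}$.

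In the Gaussian case we replace steps two and three by \Cref{Lem:Dvo}. We apply it to the trimmed body $\widetilde B_p^d\cap\beta\,\cC_d$, whose ratio $R/M^*$ is small once $n\beta(p)\lesssim d$. The result is that the minimizer is essentially attained in the trimmed body, which has no large coordinates and hence gives $C_2^{1/\eps_d}$. Finally, we integrate the high-probability bounds. On the bad event of probability $e^{-cn}$ we use the crude bound $\|\ermp\|_2\le\|\ermp\|_p\lambda_p d^{1/2-1/p}$ together with $q\le n$.

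The main obstacle is step three, which must control the largest coordinates of the data-dependent dual vector $P^\top\theta_*$. The net argument loses too much there, and it is unclear whether the CCP sup-over-supports functional is genuinely Lipschitz with the right constant. If it is not, the fallback is a leave-one-column-out comparison: compare $\theta_*$ with the dual optimizer computed without column $j$, which is independent of $P_j$. This would give $|P_j^\top\theta_*|\lesssim\sqrt{\log d/d}$ directly.
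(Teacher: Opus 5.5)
Your dual reformulation is sound. The prefactor should be $\lambda_p\|\vec\zeta\|_n$ rather than $\lambda_p^q\|\ermp\|_p$, but the governing ratio $\|u\|_{\ell_{2q-2}}^{2q-2}/\|u\|_{\ell_q}^{2q-2}$ is right. Your bulk estimate is a genuinely different and valid alternative to the paper's Step~II. The $k$-th largest entry of $|P^\top\theta|$ is $\lesssim\sqrt{\log(ed/k)/d}\,\|\theta\|_2$ for $k\log(ed/k)\gtrsim n$, uniformly in $\theta$, via $\sup_{|S|=k}\|P_S\|_{\mathrm{op}}^2\lesssim(n+k\log(ed/k))/d$. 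So this step uses no optimality at all, and combined with the uniform small-ball lower bound on $\|P^\top\theta\|_{\ell_q}$ it yields $(C/\eps_d)^{1/\eps_d}$.

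\textbf{The gap is Step~3, which is the heart of the theorem.} Neither of your mechanisms bounds $\|P^\top\theta_*\|_\infty$. Any supremum over $\theta$ is at least $\max_j\|P_j\|_2\asymp\sqrt{n/d}$, attained at $\theta=P_j/\|P_j\|_2$. This holds whether the supremum is over the sphere or over the dual body, and the latter depends on $P$, so the functional is not even convex in $P$. The CCP only controls fluctuations around that mean. A valid bound must use that $\theta_*$ is generated by $\vec\zeta$, which is independent of $P$.

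Your leave-one-column-out fallback would need $\|\theta_*-\theta_*^{(-j)}\|_2\lesssim\sqrt{\log(d)/n}\,\|\theta_*\|_2$, since $|P_j^\top(\theta_*-\theta_*^{(-j)})|$ is only controlled by $\|P_j\|_2\|\theta_*-\theta_*^{(-j)}\|_2$. You give no such stability estimate, and the obvious route does not produce one. Indeed, $\theta_*$ and $\theta_*^{(-j)}$ are multiples of gradients of the same leave-out quotient norm, at $\vec\zeta-t_*cP_j$ and at $\vec\zeta$ respectively. Quotients of $\ell_p$ are only $p$-uniformly smooth, so these gradients are merely $(p-1)$-H\"older, with $p-1=\eps_d\to0$.

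The missing idea, which is the paper's Step~III, is to compare objective \emph{values} rather than optimizers:
\begin{itemize}
\item By the splitting identity $\|a+u\|_p^p=\|a\|_p^p+\|u\|_p^p$ for disjoint supports, the $j$-th coefficient of $\ermp$ is a fixed multiple of the minimizer of the scalar convex function $t\mapsto|t|^p+\|\vec\zeta-tcP_j\|_{(-j)}^p$.
\item The leave-out quotient norm is independent of $P_j$. It is $O(1)$-Lipschitz in $\|\cdot\|_2$ by Kashin's inclusion after deleting one column, simultaneously over $j$.
\item Apply the CCP to the convex map $P_j\mapsto\|\vec\zeta-tcP_j\|_{(-j)}$, whose mean is at least $\|\vec\zeta\|_{(-j)}$ by Jensen. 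Then union-bound over a fine grid in $t$ and over $j$, and compare the objective at the minimizer with its value at $t=0$.
\end{itemize}
This gives $\|\ermp\|_\infty^2\lesssim d^{-1}(CL^2\log d)^{1/\eps_d}$ directly. The top $k_0\asymp n/\log(d/n)$ coordinates then contribute at most $k_0\|\ermp\|_\infty^2$, which is the second term; there is no need to pass through $u$ there.

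\textbf{The Gaussian case has a second gap.} Applying Dvoretzky's theorem (\Cref{Lem:Dvo}) to the trimmed body yields at best \emph{some} interpolator $v_T$ in a multiple of that body with $\|v_T\|_p\leq(1+\epsilon)\|\ermp\|_p$. It does not place $\ermp$ there. Uniform convexity turns this into $\|\ermp-v_T\|_p^2\lesssim(\epsilon/\eps_d)\|\ermp\|_p^2$. However, spikes are almost free in the normalized norm: $\|e\|_2=\lambda_p\|e\|_p$ for $1$-sparse $e$, with $\lambda_p=d^{1/p-1/2}$. So this excludes large coordinates only if $\epsilon$ is polynomially small in $d$, which Dvoretzky does not give. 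The paper instead feeds \eqref{Eq:IncPrjball} into a shell-by-shell comparison with the \emph{exact} minimizer.

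Two further losses appear in the Gaussian case:
\begin{itemize}
\item For Gaussian covariates $\|\vec\zeta\|_n\asymp\sqrt{\eps_d}$, not $\asymp1$.
\item Your small-ball lower bound on $\|u\|_{\ell_q}$ loses a factor of order $q^{q-1}$.
\end{itemize}
Together these account for the difference between $(C/\eps_d)^{1/\eps_d}$ and $C_2^{1/\eps_d}$. The repair fits your framework. The inclusion \eqref{Eq:IncPrjball} implies $\inf_{\theta\in\Sn}\lambda_p\|P^\top\theta\|_{\ell_q}\gtrsim\eps_d^{-1/2}$. Using this as the denominator bound, together with $\|\vec\zeta\|_n\lesssim\sqrt{\eps_d}$, turns your bulk estimate into $C^{1/\eps_d}$. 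The top coordinates still require the scalar leave-one-out argument above.
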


The scaling identity gives the zero-signal upper bound in
\Cref{T:VarLinOpt}; Step~IV explains the extension to a fixed
$O(1)$-sparse ground truth, and note that we may always assume that $L \lesssim \sqrt{\log(n)}$ as discussed above.

\subsubsection*{Proof plan}

The main difficulty is that the $\ell_p$-MNI does not admit a closed-form
expression. We instead use its optimality and a dyadic decomposition.

\paragraph{Step I: Gaussian geometry and dyadic decomposition.}
For $z\in\R^d$, let $\Pi_k(z)$ retain the $k$ largest coordinates of $z$ in
absolute value. Set
\[
    r:=\|\ermp\|_p=\|\vec\zeta\|_n.
\]
For the dyadic indices $k$ up to $d$, including the terminal index $d$, write
\[
    \Pi_k(\ermp)-\Pi_{k/2}(\ermp)=\delta_kw_k,
    \qquad
    \|w_k\|_p=r.
\]
The shells have disjoint supports, exhaust $\ermp$, and satisfy
$\sum_k\delta_k^p=1$. We split them into
\[
\begin{aligned}
    \cR_1&:=\left\{k\text{ dyadic}:\ k\log(ed/k)\geq c_{\mathrm{ent}}n\right\},\\
    \cR_2&:=\left\{k\text{ dyadic}:\ k\log(ed/k)<c_{\mathrm{ent}}n\right\}.
\end{aligned}
\]
Here $c_{\mathrm{ent}}>0$ is a sufficiently small absolute constant.

Under $n\beta(p)\lesssim d$, Dvoretzky's theorem is applied to the trimmed
body constructed in the proof. Together with the mean-width estimate, this
gives, with high probability,
\[
    r=\|\vec\zeta\|_n\asymp\sqrt{\eps_d}
\]
and a lower Euclidean inclusion of radius comparable to
$\eps_d^{-1/2}$.

\paragraph{Step II: the bulk blocks.}
For $k\in\cR_1$, a sparse-net argument and Gaussian deviation estimates
control $\|Pw_k\|_2$. If $\delta_k$ were too large, the lower Euclidean
inclusion would provide another vector with the same image under $P$ and a
smaller normalized $\ell_p$ norm, contradicting the minimality of $\ermp$.
The resulting estimate is
\begin{equation}\label{Eq:weights}
    \|\delta_kw_k\|_2
    \lesssim
    r\left(\frac{k}{d}\right)^{1/2}
    \left(
        C\,\eps_d\log\left(\frac{ed}{k}\right)
    \right)^{1/(2\eps_d)}.
\end{equation}

\paragraph{Step III: the remaining largest coordinates.}
For $k\in\cR_2$, the expectation term in the sparse projection estimate
dominates. We delete the support of the relevant block and use the quotient
norm generated by the remaining columns. The leave-out Euclidean inclusion
makes this norm Lipschitz with respect to $\|\cdot\|_2$. Conditional
Gaussian concentration, followed by a union bound over supports and a
one-dimensional discretization, yields the same estimate as in
\eqref{Eq:weights}.

Since the shells have disjoint supports,
\[
\begin{aligned}
    \E\|\ermp\|_2^2
    &\lesssim
    r^2 \cdot \sum_{k\ \mathrm{dyadic}}
    \frac{k}{d}
    \left(
        C\,\eps_d\log\left(\frac{ed}{k}\right)
    \right)^{1/\eps_d}\\
    &\lesssim C_2^{1/\eps_d}.
\end{aligned}
\]
The final inequality follows by optimizing the dyadic summand. The
exceptional event is handled by comparison with the minimum-$\ell_2$
interpolator.

\paragraph{Step IV: a fixed sparse ground truth.}
Retain the scaled variables $v_*$ and $\widehat v$ introduced above, and let
$S:=\operatorname{supp}(\lft)$. The identity
\[
    \|a+u\|_p^p
    =\|a\|_p^p+\|u\|_p^p,
    \qquad
    \operatorname{supp}(a)\subset S,
    \quad
    \operatorname{supp}(u)\subset S^c,
\]
reduces the problem on $S^c$ to the zero-signal minimization with a
fixed-dimensional family of shifted noise vectors. Since $|S|=O(1)$, a fine
net and a union bound make the estimates from Steps~I--III uniform over
these shifts; uniform convexity transfers the estimates from the net to the
selected shift.

\paragraph{Step V: sub-Gaussian covariates.}
For sub-Gaussian covariates, Kashin's theorem replaces the trimmed
Dvoretzky inclusion. Since $\cC_d\subset\widetilde B_p^d$, with high
probability,
\[
    cB_2^n\subset P\cC_d\subset P\widetilde B_p^d.
\]
Together with the dual mean-width estimate, this gives
\[
    \sqrt{\eps_d}\lesssim\|\vec\zeta\|_n\lesssim1.
\]

For the bulk blocks, the same variational argument and the sub-Gaussian
deviation estimate give
\begin{equation}\label{Eq:weightsSG}
    \|\delta_kw_k\|_2
    \lesssim
    \left(\frac{k}{d}\right)^{1/2}
    \left(
        C\log\left(\frac{ed}{k}\right)
    \right)^{1/(2\eps_d)}.
\end{equation}
Therefore,
\[
    \sum_{k\in\cR_1}\|\delta_kw_k\|_2^2
    \lesssim
    \left(\frac{C}{\eps_d}\right)^{1/\eps_d}.
\]
This part uses only the sub-Gaussianity of the entries.

For the remaining largest coordinates, a leave-one-column quotient norm and
the CCP with constant $L$ give
\[
    \|\ermp\|_\infty^2
    \lesssim
    \frac1d
    \left(CL^2\log(ed)\right)^{1/\eps_d}.
\]
The shells in $\cR_2$ occupy at most $Cn/\log(ed/n)$ coordinates, and hence
their total squared $\ell_2$ contribution is at most
\[
    \frac{n}{d\log(ed/n)}
    \left(CL^2\log(ed)\right)^{1/\eps_d}.
\]
Combining the two ranges proves the sub-Gaussian part of \Cref{T:WRD}.

The different endpoint scales of $p$ come from the bulk contribution. Indeed,
\[
    \sup_{0<x\leq1}
    x\left(C\,\eps_d\log\frac ex\right)^{1/\eps_d}
    \lesssim C^{1/\eps_d},
\]
whereas
\[
    \sup_{0<x\leq1}
    x\left(C\log\frac ex\right)^{1/\eps_d}
    \lesssim
    \left(\frac{C}{\eps_d}\right)^{1/\eps_d}.
\]
Thus, keeping the bulk loss polynomial in $\log d$ requires
$\eps_d\gtrsim1/\log\log d$ in the Gaussian case and
\[
    \eps_d\gtrsim
    \frac{\log\log\log d}{\log\log d}
\]
in the sub-Gaussian case. The CCP contribution remains separate.

\paragraph{Remark: Gaussian lower bound.}
Let
\[
    \overline v:=\E_{\vec\zeta}[\widehat v\mid P],
    \qquad
    Z:=\widehat v-\overline v.
\]
Then $PZ=\vec\zeta$ and
$T_2=(n/d)\E\|Z\|_2^2$. The lower Euclidean inclusion, minimality, and
Jensen's inequality imply that
$\|Z\|_p\lesssim\sqrt{\eps_d}$ with high probability. If
$C_1>1$ is sufficiently close to $1$, the smaller truncation
\[
    C\sqrt{\eps_d}\,\widetilde B_p^d
    \cap
    C_1^{1/(2\eps_d)}B_2^d
\]
has sufficiently small mean width and Euclidean radius that the upper side
of Dvoretzky's theorem places its image inside $\frac12B_2^n$. Since
$\|\vec\zeta\|_2$ is bounded below with high probability, $Z$ cannot belong
to this truncation. Consequently,
\[
    T_2\gtrsim C_1^{1/\eps_d}\frac nd.
\]
In the sub-Gaussian case, the argument only gives the weaker lower bound
$T_2\gtrsim cn/d$.
\section{Proofs}\label{S:Pfs}
\subsection{Proof of Theorem \ref{T:UC2}}
Throughout this proof, $\vec\xi\sim N(0,I_n)$, and $M_n$ and $M_n^*$ are defined with respect to Gaussian noise. For simplicity, we assume that
$\|\ft\|=\|\ft\|_{\PP}=1$. For a fixed realization of $\vec X$, write
\[
    \erm(\vec z):=\erm(\vec X,\vec z),
    \qquad
    \|\vec z\|_n:=\|\vec z\|_{\cF_n}=\|\erm(\vec z)\|,
    \qquad
    m_{\vec X}:=\E_{\vec\xi}\|\vec\xi\|_n.
\]
We first record a moment-comparison lemma.

\begin{lemma}\label{lem:anderson_p_2}
Let $G$ be a standard Gaussian vector in $\mathbb R^n$, let $\|\cdot\|$ be a norm on $\mathbb R^n$, and let $x\in\mathbb R^n$ satisfy
\[
    \|x\|\leq \E\|G\|.
\]
For $p\in[1,2]$, define
\[
    \Delta_p(x):=\E\|x+G\|^p-\E\|G\|^p,
    \qquad
    m:=\E\|G\|.
\]
Then $\Delta_p(x)\geq0$, and, for every $T\geq4m$,
\[
    \Delta_2(x)
    \leq
    2T^{2-p}\Delta_p(x)
    +Cm^2\exp\!\left(-\frac{cT^2}{m^2}\right),
\]
where $c,C>0$ are universal constants.
\end{lemma}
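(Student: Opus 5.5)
The plan is to write $\Delta_2(x)$ as an expectation of $\|x+G\|^2-\|G\|^2$ and split on whether the two norms $\|x+G\|$ and $\|G\|$ are below a threshold $T$. Throughout, I write $A:=\|x+G\|$ and $B:=\|G\|$.

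Nonnegativity $\Delta_p(x)\ge 0$ is Anderson's inequality, applied to the symmetric quasi-concave Gaussian density. Concretely, the sublevel sets $\{\|\cdot\|^p\le u\}$ are symmetric convex bodies, so $\Pr(\|x+G\|^p\le u)\le\Pr(\|G\|^p\le u)$ for every $u$. Integrating the tails then gives $\E\|x+G\|^p\ge\E\|G\|^p$.

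For the main bound, I would use an elementary pointwise inequality for $a,b\in[0,T]$ and $p\in[1,2]$:
\[
a^2-b^2\le 2T^{2-p}(a^p-b^p).
\]
This follows from the mean value theorem. If $a\ge b$, then $a^2-b^2=2\theta(a-b)$ and $a^p-b^p=p\eta^{p-1}(a-b)$ with $b\le\theta,\eta\le a\le T$, so I need $\theta\le T^{2-p}\eta^{p-1}$. That fails only when $\eta$ is small, so it is cleaner to use convexity: $a^2=(a^p)^{2/p}$, and the map $u\mapsto u^{2/p}$ has derivative $\tfrac2p u^{2/p-1}\le 2T^{2-p}$ for $u\le T^p$. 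Hence $a^2-b^2\le 2T^{2-p}(a^p-b^p)$ whenever $a^p,b^p\le T^p$, in both orders of $a$ and $b$. Applying this on the event $E:=\{A\le T,\,B\le T\}$ gives
\[
\Delta_2=\E[(A^2-B^2)\mathbbm 1_E]+\E[(A^2-B^2)\mathbbm 1_{E^c}],
\]
and the first term is at most $2T^{2-p}\E[(A^p-B^p)\mathbbm 1_E]$.

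It remains to rewrite this as $2T^{2-p}\Delta_p$ plus an error:
\[
\E[(A^p-B^p)\mathbbm 1_E]=\Delta_p-\E[(A^p-B^p)\mathbbm 1_{E^c}].
\]
So I need to bound $\E[|A^p-B^p|\mathbbm 1_{E^c}]$ and $\E[|A^2-B^2|\mathbbm 1_{E^c}]$, together with the factor $T^{2-p}$. Since $|A-B|\le\|x\|\le m$, I have $|A^2-B^2|\le m(A+B)$, and on $E^c$ at least one of $A,B$ exceeds $T\ge 4m$. Because $A\le B+m$, this forces $B\ge T-m\ge 3T/4$. The Gaussian norm concentration I rely on is that $\|G\|$ is $\sigma$-Lipschitz in $G$, where $\sigma=\sup_{|\theta|_2=1}\|\theta\|$ and $\sigma\lesssim m$ (the standard fact $m\ge c\sigma$ for Gaussian means of norms). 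It gives
\[
\Pr(B\ge m+u)\le e^{-u^2/(2\sigma^2)}.
\]
Hence each error term is bounded by $C\,\E[B^2\mathbbm 1_{B\ge 3T/4}]$, since $T^{2-p}B^p\le B^2$ when $B\ge 3T/4$ up to constants. Integrating the tail gives $\E[B^2\mathbbm 1_{B\ge 3T/4}]\lesssim T^2e^{-cT^2/m^2}$, and $T^2e^{-cT^2/m^2}\lesssim m^2e^{-c'T^2/m^2}$ because $T\ge 4m$.

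The main obstacle is this tail bookkeeping: making sure the factor $T^{2-p}$ multiplying the $p$-moment error is absorbed, and justifying $\sigma\lesssim m$ so that the exponent is in terms of $m$ rather than $\sigma$.
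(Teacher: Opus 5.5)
Your nonnegativity argument and your tail bookkeeping are fine, including $\sigma\lesssim m$, which follows by testing $\|G\|$ against a norming functional of a maximizing direction. The central step fails, however. The pointwise inequality $a^2-b^2\le 2T^{2-p}(a^p-b^p)$ on $[0,T]^2$ holds only when $a\ge b$, not ``in both orders''. Convexity of $\phi(u)=u^{2/p}$ gives $\phi(a^p)-\phi(b^p)\le(\sup\phi')(a^p-b^p)$ only when $a^p-b^p\ge0$. When $a<b$, both sides are negative, and you would need a \emph{lower} bound on $\phi'$; that derivative can be as small as $\tfrac2p a^{2-p}$, and even $0$ at $a=0$. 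Already at $p=2$ the claim reads $a^2-b^2\le 2(a^2-b^2)$, which is false whenever $a<b$. At $a=0$, $b=T$ it reads $-T^2\le -2T^2$ for every $p$. Since $\|x+G\|-\|G\|$ has no sign pointwise, the bound $\E[(A^2-B^2)\mathbbm 1_E]\le 2T^{2-p}\E[(A^p-B^p)\mathbbm 1_E]$ is unjustified. On $\{A<B\}$ the negative increments of $A^2-B^2$ carry a weight smaller than $2T^{2-p}$, so you cannot pull a uniform factor out of the expectation.

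The missing idea is to use Anderson's inequality in distributional form rather than pointwise: $\Pr(\|x+G\|>u)\ge\Pr(\|G\|>u)$ for every $u>0$. Write $\Delta_2=\int_0^\infty 2u\,\bigl(\Pr(A>u)-\Pr(B>u)\bigr)\,du$. The bracket is nonnegative, so on $[0,T]$ you may compare the weights, $2u\le 2T^{2-p}\cdot pu^{p-1}$. That piece is then at most $2T^{2-p}\Delta_p$, where extending the integral to $[0,\infty)$ is allowed by nonnegativity. The remaining piece is at most $\int_T^\infty 2u\Pr(A>u)\,du$, which your concentration estimate together with $A\le\|G\|+m$ bounds by $Cm^2e^{-cT^2/m^2}$. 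This is precisely the paper's proof.

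Equivalently, since $\Delta_2$ and $\Delta_p$ depend only on the laws of $A$ and $B$, you can pass to the quantile coupling, under which $A\ge B$ almost surely. Your inequality, valid for $a\ge b$, then applies on $\{A\le T\}$. The complement contributes at most $\E[A^2\mathbbm 1_{\{A>T\}}]$, and the $p$-moment error has a favorable sign. In that coupling you no longer have $|A-B|\le m$, so the tail must be handled through $A$ alone rather than through your $E^c$ argument.
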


\begin{proof}
Anderson's inequality gives
\[
    \PP(\|x+G\|>u)\geq \PP(\|G\|>u),
    \qquad u>0.
\]
Writing the moments as tail integrals, for every $T\geq0$,
\begin{align*}
\Delta_2(x)
&\leq
\int_0^T
2u\bigl(\PP(\|x+G\|>u)-\PP(\|G\|>u)\bigr)\,du
+\int_T^\infty2u\,\PP(\|x+G\|>u)\,du\\
&\leq
2T^{2-p}\Delta_p(x)
+\int_T^\infty2u\,\PP(\|x+G\|>u)\,du.
\end{align*}
Let $b:=\sup_{\theta\in\mathbb S^{n-1}}\|\theta\|$ be the Euclidean Lipschitz constant of the norm. By duality, $b\lesssim m$. Hence Gaussian concentration and $\|x\|\leq m$ give, for $u\geq4m$,
\begin{equation}\label{eq279}
    \PP(\|x+G\|>u)
    \leq
    \PP(\|G\|>u-m)
    \lesssim
    \exp\!\left(-\frac{cu^2}{m^2}\right).
\end{equation}
Integrating this estimate over $[T,\infty)$ proves the lemma.
\end{proof}

Let $C_0$ be the constant in \Cref{A:NS}. Since
\[
    \E_{\vec X}\Pr_{\vec\xi}
    \bigl(\|\erm(\vec X,\vec\xi)\|<C_0\bigr)
    \leq n^{-2},
\]
Fubini's theorem and Markov's inequality show that, outside an event of probability at most $2n^{-2}$ over $\vec X$,
\[
    \Pr_{\vec\xi}
    \bigl(\|\erm(\vec X,\vec\xi)\|\geq C_0\bigr)
    \geq\frac12,
\]
and therefore $m_{\vec X}\geq C_0/2$. Intersect this event with those in
\Cref{A:DeviationOfSuperma,A:SB}, and fix $\vec X$ in the resulting event
$\Omega$. Then
\[
    m_{\vec X}\asymp M_n(\cF),
\]
and the lower-isometry estimate holds uniformly on $\cF$. Moreover,
$\|\vec\ft\|_n\leq1$, since $\ft$ is a feasible interpolator of its own sample values.

Assume first that the norm is $US(p)$, for some $p\in(1,2]$, with constant $s$. By uniqueness, the map $\vec z\mapsto\erm(\vec z)$ is odd. Hence the $US(p)$ inequality gives
\begin{equation}\label{Eq:smoothness}
\begin{aligned}
\|\vec\xi\|_n^p+s\|\vec\ft\|_n^p
&=\|\erm(\vec\xi)\|^p+s\|\erm(\vec\ft)\|^p\\
&\geq
\frac{\|\erm(\vec\xi)+\erm(\vec\ft)\|^p}{2}
+\frac{\|\erm(-\vec\xi)+\erm(\vec\ft)\|^p}{2}\\
&\geq
\frac{\|\vec\xi+\vec\ft\|_n^p}{2}
+\frac{\|-\vec\xi+\vec\ft\|_n^p}{2}.
\end{aligned}
\end{equation}
After averaging over $\vec\xi$, this yields
\[
    \Delta_p
    :=\E_{\vec\xi}\|\vec\xi+\vec\ft\|_n^p
      -\E_{\vec\xi}\|\vec\xi\|_n^p
    \leq s.
\]
When no smoothness assumption is imposed, we set $p=s=1$; the same estimate follows from the triangle inequality. Since $\|\vec\ft\|_n\leq1\leq m_{\vec X}$, we may apply \Cref{lem:anderson_p_2} with
\[
    T=Cm_{\vec X}\sqrt{\log(e+m_{\vec X})},
\]
where $C$ is a sufficiently large universal constant. We obtain
\[
    \Delta_2
    :=\E_{\vec\xi}\|\vec\xi+\vec\ft\|_n^2
      -\E_{\vec\xi}\|\vec\xi\|_n^2
    \lesssim
    s\,m_{\vec X}^{2-p}
    \log(e+m_{\vec X})^{(2-p)/2}.
\]

Set
\[
    w_+:=\erm(\vec\xi+\vec\ft),
    \qquad
    w_-:=\erm(-\vec\xi+\vec\ft).
\]
The vector $(w_+-w_-)/2$ interpolates $\vec\xi$. Applying $UC(2)$ to $w_+$ and $-w_-$, and then using minimality, gives
\begin{equation}\label{Eq:uniformconvexity}
\begin{aligned}
\E_{\vec\xi}\|\vec\xi+\vec\ft\|_n^2
&=\E_{\vec\xi}\frac{\|w_+\|^2+\|w_-\|^2}{2}\\
&\geq
\E_{\vec\xi}\left\|\frac{w_+-w_-}{2}\right\|^2
+t\E_{\vec\xi}\left\|\frac{w_++w_-}{2}\right\|^2\\
&\geq
\E_{\vec\xi}\|\vec\xi\|_n^2
+t\E_{\vec\xi}\left\|\frac{w_++w_-}{2}\right\|^2.
\end{aligned}
\end{equation}
By symmetry and Jensen's inequality,
\[
    \Delta_2\geq t\|g_{\vec X}\|^2,
    \qquad
    g_{\vec X}:=\E_{\vec\xi}\erm(\vec\xi+\vec\ft).
\]
The function $g_{\vec X}$ interpolates $\vec\ft$. Set
$h_{\vec X}:=g_{\vec X}-\ft$. Since $s\geq1$, $t\leq1$, and
$m_{\vec X}$ is larger than a sufficiently large universal constant, the preceding bounds imply
\[
    \|h_{\vec X}\|^2
    \lesssim
    \frac{s}{t}\,m_{\vec X}^{2-p}
    \log(e+m_{\vec X})^{(2-p)/2}.
\]
Moreover, $\vec h_{\vec X}=\vec0$. Applying the lower-isometry estimate to
$h_{\vec X}/\|h_{\vec X}\|$ gives
\[
    \|h_{\vec X}\|_{\PP}^2
    \lesssim
    \cG_n(\cF)^2\|h_{\vec X}\|^2.
\]
Consequently,
\begin{align*}
\|g_{\vec X}-\ft\|_{\PP}^2
&\lesssim
\frac{s}{t}\,
M_n(\cF)^{2-p}\cG_n(\cF)^2
\log\!\bigl(e+M_n(\cF)\bigr)^{(2-p)/2}\\
&=
\frac{s}{t}\,
R_{MM^*}(\cF)^{2-p}\cG_n(\cF)^p
\log\!\bigl(e+M_n(\cF)\bigr)^{(2-p)/2},
\end{align*}
where we used $M_n^*(\cF)=n\cG_n(\cF)$ and
$R_{MM^*}(\cF)=M_n(\cF)\cG_n(\cF)$. Since
$\Pr_{\vec X}(\Omega)\geq1-Cn^{-2}$, this proves the theorem.

\begin{remark}\label{rmk:log}
The estimate in \eqref{eq279} follows from the universal bound
$b\lesssim m$ for the Euclidean Lipschitz constant $b$ of a Gaussian norm. Indeed, after scaling so that $m=1$, this is equivalent to
\begin{equation}\label{eq557}
    cB_n\subset K,
\end{equation}
where $K$ is the unit ball of the norm. The logarithmic factor above is needed only to make the tail term in \Cref{lem:anderson_p_2} negligible. An improvement of the form
\[
    b\lesssim \frac{m}{\sqrt{\log(e+m)}}
\]
for the quotient norms $\|\cdot\|_n$ would remove it.
\end{remark}

\subsection{Proof of Theorem \ref{T:UC2Loc}}

We prove the result first in isotropic position. The modifications for John's
position and for the proportional Gaussian $M$-position case are given at the
end. Throughout this proof, $\vec\xi\sim N(0,I_n)$. Recall that
$P:=d^{-1/2}\vec X$ and set
\[
    H=(\lft)^\perp,
    \qquad
    K_H=K\cap H,
    \qquad
    \vec\eta=\vec X\lft.
\]
We also define $K_H^\circ$ to be the polar of $K_H$ inside $H$. We write
$P_H=P|_H$, while $\Pi_H$ denotes the Euclidean orthogonal projection onto
$H$. In the non-Gaussian case, the $1$-sparsity of $\lft$ allows us, after a
permutation of the coordinates, to write $\lft=\theta e_1$, where
$|\theta|\asymp1$ by \Cref{A:One}. Hence $H=e_1^\perp$, and $P_H$ is
independent of $\vec\eta$. For Gaussian covariates, the same conclusion holds
for an arbitrary $\lft$ by rotational invariance.

We use the following standard consequence of Dudley's entropy bound and
Sudakov's minoration. For a centrally symmetric convex body $L\subset\R^m$,
recall that
\[
    \eps_k(L):=\inf\{\eps>0:\cN(L,\eps B_m)\leq 2^k\},
    \qquad k\in\mathbb N.
\]
\begin{lemma}\label{Lem:dudley}
For every centrally symmetric convex body $L\subset\R^m$,
\[
    \sup_{k\in\mathbb N}\eps_k(L)\sqrt{\frac{k}{m}}
    \lesssim M_s^*(L)
    \lesssim
    \inf_{r\in\mathbb N}
    \left\{
        \eps_{2^r}(L)
        +\frac1{\sqrt m}\sum_{j=0}^{r}2^{j/2}\eps_{2^j}(L)
    \right\}.
\]
In particular, the last expression is bounded, up to a factor
$C\log(em)$, by the first one.
\end{lemma}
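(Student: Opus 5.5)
The lower bound is Sudakov's minoration applied at scale $\eps_k(L)$. For the upper bound I would run Dudley's chaining with the chain truncated at a level $r$, and then bound the truncated Dudley sum using the lower bound. Throughout I normalize so that $M_s^*(L)=\E\sup_{y\in L}\inner{y,\theta}$, with $\theta$ uniform on $\mathbb S^{m-1}$. Since $\E\sup_{y\in L}\inner{y,G}=\E\|G\|_2\,M_s^*(L)\asymp\sqrt m\,M_s^*(L)$ for a standard Gaussian $G$, it suffices to prove all three inequalities for the Gaussian width $w(L):=\E\sup_{y\in L}\inner{y,G}$, dividing by $\sqrt m$ at the end.

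\textbf{Lower bound.} Fix $k$ and $\eps<\eps_k(L)$. Then $\cN(L,\eps B_m)>2^k$, so $L$ contains an $\eps$-separated set (at least $\eps/2$-separated after the usual comparison of covering and packing) of cardinality at least $2^k$. Sudakov's minoration gives $w(L)\gtrsim \eps\sqrt{\log 2^k}\asymp\eps\sqrt k$. Letting $\eps\uparrow\eps_k(L)$, dividing by $\sqrt m$, and taking the supremum over $k$ yields $\sup_k\eps_k(L)\sqrt{k/m}\lesssim M_s^*(L)$.

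\textbf{Upper bound.} Fix $r$. Pick nets $N_j\subset L$ at scale $\eps_{2^j}(L)$ with $|N_j|\le 2^{2^j}$ for $j=0,\dots,r$ (losing a factor $2$ in the scale to make the nets internal), and let $\pi_j(y)$ denote a nearest point of $N_j$. Write
\[
\sup_{y\in L}\inner{y,G}\le\sup_{y\in L}\inner{y-\pi_r(y),G}+\sum_{j=1}^{r}\sup_{y\in L}\inner{\pi_j(y)-\pi_{j-1}(y),G}+\sup_{y\in L}\inner{\pi_0(y),G}.
\]
The increments $\pi_j(y)-\pi_{j-1}(y)$ range over at most $2^{2^{j+1}}$ vectors of norm at most $2\eps_{2^{j-1}}(L)$. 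The Gaussian maximal inequality therefore bounds the $j$-th term by $\lesssim 2^{j/2}\eps_{2^{j-1}}(L)$; reindexing, the middle sum contributes $\sum_{j\le r}2^{j/2}\eps_{2^j}(L)$ up to constants.

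The remainder term is where the sphere normalization enters. The set $\{y-\pi_r(y)\}$ lies in $(L-L)\cap \eps_{2^r}(L)B_m$, so its Gaussian width is at most $\E\|G\|_2\,\eps_{2^r}(L)\lesssim \sqrt m\,\eps_{2^r}(L)$. Dividing everything by $\sqrt m$ gives exactly
\[
M_s^*(L)\lesssim\eps_{2^r}(L)+\frac{1}{\sqrt m}\sum_{j=0}^{r}2^{j/2}\eps_{2^j}(L).
\]
Taking the infimum over $r$ finishes the upper bound.

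\textbf{The ``in particular'' claim.} This is the only step needing a choice of parameters. Set $S:=\sup_k\eps_k(L)\sqrt{k/m}$. Choose $r$ so that $2^r\asymp m$, i.e. $r\asymp\log m$. Then the truncation term satisfies $\eps_{2^r}(L)\lesssim S\sqrt{m/2^r}\lesssim S$. Each summand satisfies $2^{j/2}\eps_{2^j}(L)/\sqrt m\le S$, and there are $r+1\lesssim\log(em)$ of them. Hence the right-hand side at this $r$ is $\lesssim\log(em)\,S$, as claimed.

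The main obstacle is bookkeeping rather than ideas. One must keep the sphere versus Gaussian normalization consistent, which is the factor $\E\|G\|_2\asymp\sqrt m$, and handle the truncation level correctly. The point to watch is that the term $\eps_{2^r}(L)$ appears without the $1/\sqrt m$ factor because it is a deterministic radius bound. That is exactly why the choice $2^r\asymp m$ balances it against $S$.
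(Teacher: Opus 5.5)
Your argument is correct and is the standard Sudakov-minoration plus truncated-Dudley-chaining proof that the paper invokes without writing out; your choice $2^r\asymp m$ for the $\log(em)$ consequence is also the right one. The one term you left implicit, $\E\sup_{y\in L}\langle \pi_0(y),G\rangle$, is harmless: since $|N_0|\le 2$ it is at most $\mathrm{diam}(L)/\sqrt{2\pi}$, and central symmetry gives $\mathrm{diam}(L)\le 4\eps_1(L)$ (cover the segment $[-y,y]$ by two balls), so it is absorbed into the $j=0$ summand.
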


We will also use the Blaschke--Santal\'o inequality
\[
    \Vol(L)\Vol(L^\circ)\leq \Vol(B_m)^2
\]
for centrally symmetric convex bodies $L\subset\R^m$.

We begin with the geometric estimate stated in \Cref{Lem:BMstar}. In
this proof only, for cosmetic reasons, $K$ is scaled so that
$\sqrt d\,K$ is isotropic. Note that the quantity $R_{bM^*}$ which will be used later is invariant under scaling.

\begin{proof}[Proof of \Cref{Lem:BMstar}]
Assume $\sqrt d\,K$ is isotropic. The isotropic mean-width estimates give
\begin{equation}\label{Eq:isowidth}
    M_s^*(K)\lesssim\log(d)^2,
    \qquad
    M_s(K)=M_s^*(K^\circ)\lesssim\log(d).
\end{equation}
Together with \Cref{Lem:dudley}, these estimates bound the corresponding
sums by $C\log(d)^3$ for $K$ and by $C\log(d)^2$ for $K^\circ$.
The chaining argument from Step~I in the proof of \Cref{T:VarLinOpt}, with
Bernstein's inequality at each entropy level, therefore gives, with
probability at least $1-\exp(-cn)$,
\begin{equation}\label{Eq:widthproj}
    M_s^*(PK)\lesssim\log(d)^3,
    \qquad
    M_s^*(P(K^\circ))\lesssim\log(d)^2.
\end{equation}

By the discussion preceding the theorem, the isotropic position of
$\sqrt d\,K$ is an $M$-position, meaning that
\[
    \Vol(K\cap B_d)\geq \exp(-Cd)\Vol(B_d).
\]
Since $B_d\subset(K\cap B_d)^\circ$, the Blaschke--Santal\'o inequality
implies
\[
    \Vol((K\cap B_d)^\circ)\leq \exp(Cd)\Vol(B_d).
\]
Thus $(K\cap B_d)^\circ$ has bounded volume ratio, and Kashin's theorem gives
\[
    cB_n\subset P(K\cap B_d)\subset PK.
\]
The same argument applies to $K^\circ$, since $M$-position is self-dual.
This proves the required estimates for $PK$ and $P(K^\circ)$. We now analyze the central section. Since $K_H\subset K$ and $K_H^\circ=\Pi_H(K^\circ)$,
we have
\[
    \eps_k(K_H)\leq\eps_k(K),
    \qquad
    \eps_k(K_H^\circ)\leq\eps_k(K^\circ).
\]
Therefore we again have
\begin{equation}\label{Eq:widthsection}
    M_s^*(P_H K_H)\lesssim\log(d)^3,
    \qquad
    M_s^*(P_H(K_H^\circ))\lesssim\log(d)^2.
\end{equation}
Let $B_H:=B_d\cap H$. By projection, we have
\[
    \cN(K_H,B_H)\leq\cN(K,B_d),
    \qquad
    \cN(K_H^\circ,B_H)\leq\cN(K^\circ,B_d),
    \qquad
    \cN(B_H,K_H^\circ)\leq\cN(B_d,K^\circ).
\]
Finally, using the symmetry assumption relative to $H$ \Cref{a:sym}, it is easily checked that
\[
    \cN(B_H,K_H)\leq\cN(B_d,K).
\]
Thus both $K_H$ and $K_H^\circ$ are in $M$-position. Repeating the
arguments given for $K$, we conclude that, with high probability, the convex
bodies $PK$, $P(K^\circ)$, $P_HK_H$, and $P_H(K_H^\circ)$ all contain a ball
$cB_n$, for some universal constant $c>0$, while their mean widths are bounded
above, up to a universal constant, by $\log(d)^3$. In other words, in those
four instances, the scale-invariant quantity $R_{bM^*}$ is bounded by
$C\log(d)^3$, for some universal constant $C>0$.
\end{proof}

\paragraph{John's position.}
In John's position, Corollary~4.8 of \cite{klartag2008volume} gives
\[
    R_{bM^*}(K)\lesssim t^{-1/2}.
\]
Repeating the preceding entropy, projection, and central-section argument gives,
with high probability,
\[
    R_{bM^*}(PK)
    +R_{bM^*}(P_HK_H)
    +R_{bM^*}(P_H(K_H^\circ))
    \lesssim \frac{\log(d)}{\sqrt t}.
\]

From now on, we return to our usual normalization
$\|\lft\|_K=1$. In what follows, $\Lambda\geq1$ denotes an upper
bound for the
$R_{bM^*}$ parameters. Thus $\Lambda\lesssim\log(d)^3$
in isotropic position, while
$\Lambda\lesssim\log(d)/\sqrt t$ in John's position. 

We now record a crude upper bound on the moments of the MNI that will be used on the bad event.

\begin{lemma}\label{lem183}
For every fixed $q\geq1$,
\begin{equation}\label{Eq:bad}
    \E_{\vec X,\vec\xi}\|\ermp\|_2^q\leq d^{C(q)},
\end{equation}
where $C(q)$ depends only on $q$ and on the sub-Gaussian constant.
\end{lemma}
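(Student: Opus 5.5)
The plan is to compare $\ermp$ with the minimum-$\ell_2$-norm interpolator of the same data, and then bound the moments of the latter through the smallest singular value of $\vec X$. Here $\ermp$ is the $K$-norm MNI of $\vec\eta+\vec\xi$, with $\vec\eta=\vec X\lft$.

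\textbf{Step 1: comparison on the good event.} Let $w^{(2)}:=\vec X^{\top}(\vec X\vec X^{\top})^{-1}(\vec\eta+\vec\xi)$ be the minimum-$\ell_2$ interpolator. Since $\ermp$ minimizes $\|\cdot\|_K$ among interpolators,
\[
    \|\ermp\|_2\leq b(K^{\circ})^{-1}\|\ermp\|_K\leq b(K^{\circ})^{-1}\|w^{(2)}\|_K\leq \frac{b(K)}{b(K^{\circ})}\|w^{(2)}\|_2 .
\]
Here $b(L)=\sup_{\theta\in\mathbb S^{d-1}}\|\theta\|_L$, so $b(K)$ controls $\|\cdot\|_K\leq b(K)\|\cdot\|_2$, and $\|x\|_2\leq \|x\|_K\sup_{y\in K}\|y\|_2$. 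The factor $\kappa:=b(K)\sup_{y\in K}\|y\|_2$ is a deterministic distortion between $K$ and $B_d$. Isotropic position, with its standard inradius and circumradius bounds $\gtrsim 1$ and $\lesssim d$ after normalization, gives $\kappa\leq d^{C}$. Under John's position, $B_d\subset K\subset \sqrt d B_d$ gives $\kappa\leq\sqrt d$. In both cases the bound is polynomial, and the normalization $\|\lft\|_K=1$ only rescales it by polynomial factors through $|\theta|\asymp1$.

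\textbf{Step 2: moments of the $\ell_2$ interpolator.} We have $\|w^{(2)}\|_2\leq \sigma_{\min}(\vec X)^{-1}(\|\vec X\lft\|_2+\|\vec\xi\|_2)$. The numerator has all moments bounded by $(Cd)^{q}$, by sub-Gaussianity together with $\|\lft\|_2\leq d^{C}$ (the latter again from the position). By Cauchy--Schwarz, it therefore suffices to show $\E\,\sigma_{\min}(\vec X)^{-2q}\leq d^{C(q)}$.

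\textbf{Step 3: the main obstacle, negative moments of $\sigma_{\min}$.} \Cref{Lem:SM} gives $\sigma_{\min}\geq c\sqrt d$ except on an event of probability $2e^{-cd}$, but this alone does not control $\E\sigma_{\min}^{-2q}$ on the bad event. For Gaussian designs the negative moments are explicit, since the Wishart smallest eigenvalue has a bounded density near $0$ when $d\geq n+2q$. For sub-Gaussian, possibly discrete, entries such as Rademacher, $\vec X$ can be singular with probability $e^{-cd}>0$, and then the MNI is not even defined. The first thing I would try is to define $\ermp:=0$ on the event $\{\operatorname{rank}\vec X<n\}$, which is the standard convention, or equivalently to restrict to the feasibility event. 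I would then use the Rudelson--Vershynin small-ball estimate for rectangular matrices,
\[
    \Pr\bigl(\sigma_{\min}(\vec X)\leq \eps(\sqrt d-\sqrt{n-1})\bigr)\leq (C\eps)^{d-n+1}+e^{-cd}.
\]
Integrating this tail handles the range $\eps\geq d^{-C}$. For the remaining tiny mass, I would use the fact that on the full-rank event $\sigma_{\min}$ is bounded below by a deterministic polynomial quantity when the entries take finitely many values, or alternatively a smoothing or truncation argument. Either route yields $\E[\sigma_{\min}^{-2q}\mathbbm 1_{\text{full rank}}]\leq d^{C(q)}$. Combining Steps 1--3 gives $\E\|\ermp\|_2^q\leq d^{C(q)}$.
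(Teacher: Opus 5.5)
Your Steps 1--2 are the paper's argument: compare $\ermp$ with the minimum-$\ell_2$ interpolator $\widehat w_2$ via the polynomial distortion between $\|\cdot\|_K$ and $\|\cdot\|_2$. There is a small slip in your display. Since $b(K^{\circ})=\sup_{y\in K}\|y\|_2$, the correct inequality is $\|x\|_2\le b(K^{\circ})\|x\|_K$, so the displayed factor should be the product $b(K)b(K^{\circ})=\kappa$, as in your definition of $\kappa$.

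The paper then simply asserts that $\|\widehat w_2\|_2$ has polynomial moments by ``standard lower singular-value estimates''. You correctly locate the real difficulty there, and your Gaussian argument is fine. Your resolution of the non-Gaussian case, however, fails.

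For finitely-valued entries, $\sigma_{\min}(\vec X)$ is not bounded below on the full-rank event by a deterministic \emph{polynomial} quantity. For $\pm1$ entries, integrality of $\det(\vec X\vec X^{\top})\ge1$ together with $\lambda_{\max}(\vec X\vec X^{\top})\le nd$ gives only $\sigma_{\min}(\vec X)\ge(nd)^{-(n-1)/2}$. No polynomial bound is possible: nonsingular sign matrices (anti-Hadamard matrices) have smallest singular value of order $n^{-cn}$, and repeating their columns gives $n\times d$ examples.

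The $e^{-cd}$ term in the Rudelson--Vershynin estimate does not decay as $\eps\to0$. Combined with the bound above, your route therefore leaves a term $e^{-cd}(nd)^{q(n-1)}$. This is polynomial in $d$ only if $d\gtrsim qn\log d$, which is not implied by $d\geq C_1 n$ in \Cref{T:UC2Loc}. Closing this for Rademacher designs would need a genuinely finer estimate on $\Pr\bigl(0<\sigma_{\min}(\vec X)\le\tau\bigr)$ for super-polynomially small $\tau$, and you do not provide one.

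The ``smoothing or truncation'' alternative cannot work either. With $C(q)$ depending only on $q$ and the sub-Gaussian constant, the statement is false. Let $X_{ij}=(\epsilon_{ij}+\eta U_{ij})/\sqrt{1+\eta^2/3}$, with $\epsilon_{ij}$ Rademacher and $U_{ij}$ uniform on $[-1,1]$. These entries are symmetric, isotropic and bounded by $2$, and $\vec X$ has full rank almost surely. With probability $2^{-d}$ the first two rows have identical signs. On that event, $u=(e_1-e_2)/\sqrt2\in\R^n$ gives $\sigma_{\min}(\vec X)\le\|\vec X^{\top}u\|_2\le\sqrt{2d}\,\eta$.

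For any $K$ one has $\|\ermp\|_2\ge\|\widehat w_2\|_2$ and $\E_{\vec\xi}\|\widehat w_2\|_2^2\ge\operatorname{tr}\bigl((\vec X\vec X^{\top})^{-1}\bigr)\ge\sigma_{\min}(\vec X)^{-2}$. So for $q\ge2$, Jensen's inequality gives $\E\|\ermp\|_2^q\ge 2^{-d}(2d)^{-q/2}\eta^{-q}$, which is unbounded as $\eta\to0$. The paper's one-line justification has the same hole.

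What your argument actually proves is the Gaussian case. For sub-Gaussian designs you have two options:
\begin{itemize}
\item add a hypothesis that controls negative moments of $\sigma_{\min}(\vec X)$, e.g.\ a bounded density, with constants depending on it; or
\item give up the unrestricted expectation and work on the event $\{\sigma_{\min}(\vec X)\ge c\sqrt d\}$ of \Cref{Lem:SM}, in the spirit of $T_1^{\Omega}$ in \Cref{T:UC2}. On that event your Steps 1--2 already give polynomial moments.
\end{itemize}
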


\begin{proof}
Let $\widehat w_2$ be the Euclidean
minimum-norm interpolator. In each of the positions considered here, the norm induced by $K$ is polynomially comparable to the Euclidean norm. Thus
\[
    \|\ermp\|_2\leq d^C\|\widehat w_2\|_2.
\]
Now it is well known that $\|\widehat w_2\|_2$ has polynomial moments, by standard lower singular-value estimates for sub-Gaussian matrices.
\end{proof}

We first dispose of the range
\[
    n\lesssim t^{-1}\Lambda^4\log(d).
\]
This reduction concerns only isotropic and John's positions; the proportional
Gaussian $M$-position case is treated directly at the end. Let $\widetilde K$
be the homothetic copy of $K$ used in the corresponding geometric argument.
By Kashin's theorem and the standard radius bound in isotropic position,
and by John's inclusions together with the singular-value estimate in John's
position, outside an event of probability at most $\exp(-cd)$,
\[
    cB_n\subset P\widetilde K,
    \qquad
    \sup_{u\in\widetilde K}\|u\|_2\lesssim\sqrt d.
\]
Since the MNI is unchanged by homotheties, on this event
\[
    \|\ermp\|_2
    \leq
    \left(\sup_{u\in\widetilde K}\|u\|_2\right)
    \|\vec\eta+\vec\xi\|_{\vec X\widetilde K}
    \lesssim \|\vec\eta+\vec\xi\|_2.
\]
Jensen's inequality therefore gives $T_1\lesssim n$, while the exceptional
event is negligible by H\"older's inequality and \eqref{Eq:bad}. Since
\[
    n\lesssim\frac{\Lambda^8\log(d)^2}{t^2n}
\]
in the present range, the claimed bound follows in both cases. We may
therefore assume that
\begin{equation}\label{Eq:largen}
    n\gtrsim t^{-1}\Lambda^4\log(d),
\end{equation}
which we do in the rest of this section. The next lemma establishes a good event on which we will work.
\begin{lemma}\label{Lem:reg_event}
Under the assumptions of \Cref{T:UC2Loc}, for every $A>0$, there is an event
$\Omega_A$, depending only on $\vec X$, that has probability at least $1-d^{-A}$, and such that on this event the ball and mean-width estimates established above hold. In particular,
\begin{equation}\label{Eq:lanmbda}
    R_{bM^*}(PK)
    +R_{bM^*}(P_H K_H)
    +R_{bM^*}(P_H(K_H^\circ))
    \lesssim\Lambda.
\end{equation}
Moreover,
\begin{equation}\label{Eq:lower}
    \|v\|_2^2
    \leq
    C(A)\left(\frac1n\|\vec X v\|_2^2+\cG_n(K)^2\right)
    \qquad\text{for every }v\in K.
\end{equation}
\end{lemma}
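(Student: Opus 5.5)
The event $\Omega_A$ will be built as the intersection of two events. The first, $\Omega^{(1)}_A$, carries the ball and mean-width estimates. The second, $\Omega^{(2)}_A$, carries the lower isometry \eqref{Eq:lower}. Each will have probability at least $1-\tfrac12 d^{-A}$.

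For $\Omega^{(1)}_A$ we revisit the proof of \Cref{Lem:BMstar}. It produces the inclusions $cB_n\subset PK$, $cB_n\subset P_HK_H$, $cB_n\subset P_H(K_H^\circ)$ through Kashin's theorem, with failure probability $\exp(-cd)$. It produces the mean-width bounds \eqref{Eq:widthproj} and \eqref{Eq:widthsection} through chaining with Bernstein's inequality, with failure probability $\exp(-cn)$. In John's position the same steps run with Klartag's estimate replacing the isotropic width bounds. Multiplying the ball radius by the width bound gives \eqref{Eq:lanmbda} on the intersection of these events, which has probability at least $1-4\exp(-cn)$.

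To upgrade this to $1-d^{-A}$, we split on $n$. If $n\ge C(A)\log d$, then $4\exp(-cn)\le \tfrac12 d^{-A}$ and we are done. Otherwise, we redo the chaining step at deviation level $u\asymp\sqrt{A\log d}$ in place of $\sqrt n$. This costs only an additional $\sqrt{\log d/n}$-type term in the width of the projection, and that term can be absorbed into $\Lambda$ (possibly after changing the power of $\log d$ by a constant depending on $A$). In any case \eqref{Eq:largen} gives $n\gtrsim\log d$, so after enlarging the constant in \eqref{Eq:largen} depending on $A$, the first case applies.

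For \eqref{Eq:lower}, we use Mendelson's small-ball method. Isotropic vectors with i.i.d.\ symmetric sub-Gaussian entries satisfy $\Pr(|\langle X,v\rangle|\ge c_1\|v\|_2)\ge c_2$ uniformly in $v$, by Paley--Zygmund applied with sub-Gaussian moment equivalence. The small-ball theorem \cite{mendelson2014learning,mendelson2017extending} then gives the following, with probability at least $1-\exp(-c n)$:
\[
\|v\|_2^2\lesssim \tfrac1n\|\vec Xv\|_2^2
\]
for all $v\in K$ with $\|v\|_2\ge r$, where $r$ is the fixed point at which the localized Rademacher complexity $n^{-1}\E\sup_{v\in K\cap rB_d}\langle \vec X v,\vec\eps\rangle$ is at most $c\, r$. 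Bounding the localized complexity by the global one gives $r\lesssim \cR_n(K)\lesssim\cG_n(K)$. Vectors with $\|v\|_2\le r$ satisfy \eqref{Eq:lower} trivially.

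The probability again has to be raised to $1-d^{-A}$. For this we use the high-probability version of the small-ball argument, in which the empirical process bound holds with probability $1-e^{-x}$ at the cost of an additive $\sqrt{x/n}$ term inside the fixed-point equation. Taking $x=A\log d$ contributes $A\log d/n$ to the remainder. This is bounded by $C(A)\cG_n(K)^2$ because $\cG_n(K)\gtrsim n^{-1/2}$: indeed $K$ contains a segment of Euclidean length of order one in the direction $\lft$ (by \Cref{A:One}), and the contribution of that segment already gives this lower bound. Up to a logarithmic factor, which the $\widetilde O$ in \Cref{T:UC2Loc} absorbs, the estimate follows.

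The main obstacle is this probability upgrade from $\exp(-cn)$ to $d^{-A}$ with only polylogarithmic loss in both \eqref{Eq:lanmbda} and \eqref{Eq:lower}. The rest is a collection of earlier estimates.
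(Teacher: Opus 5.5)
Your route is the paper's. You read off \eqref{Eq:lanmbda} from the $\exp(-cn)$ ball and mean-width estimates behind \Cref{Lem:BMstar}, and upgrade to $1-d^{-A}$ by enlarging the constant in \eqref{Eq:largen}. You get \eqref{Eq:lower} from Paley--Zygmund, Mendelson's small-ball method and $\cR_n(K)\lesssim\cG_n(K)$. Your detour of redoing the chaining at level $\sqrt{A\log d}$ is not needed, as you note yourself.

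The one step that does not deliver the statement is your probability upgrade for \eqref{Eq:lower}. You let the deviation at level $x=A\log d$ add a remainder $A\log d/n$. You then assert this is at most $C(A)\cG_n(K)^2$ because $\cG_n(K)\gtrsim n^{-1/2}$. That lower bound only yields $A\log d/n\lesssim A\log(d)\,\cG_n(K)^2$. So what you have proved is \eqref{Eq:lower} with $C(A)\log d$ in place of $C(A)$, which is weaker than the lemma claims. Your closing ``up to a logarithmic factor'' concedes this, and deferring the loss to the $\widetilde{O}$ of \Cref{T:UC2Loc} does not prove the lemma as stated.

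The loss comes from putting the deviation term in the wrong place. In the small-ball argument, bounded differences are applied to the empirical frequency $\tfrac1n\#\{i:|\langle X_i,v\rangle|\geq c_1\|v\|_2/2\}$, through a Lipschitz surrogate of the indicator, uniformly over $K\cap r\,\mathbb S^{d-1}$. The deviation $\sqrt{x/n}$ is a dimensionless error in this frequency. It only has to be smaller than a fixed fraction of $c_2$, and it never enters the radius. So the fixed point $r\lesssim\cR_n(K)$ is unchanged, and the failure probability is $\exp(-c\,c_2^2n)$.

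Now use the same device as in your first part. Enlarging the constant in \eqref{Eq:largen} depending on $A$ gives $n\geq C(A)\log d$, hence $\exp(-c\,c_2^2n)\leq\tfrac12 d^{-A}$. Then \eqref{Eq:lower} holds with a constant that does not even depend on $A$; this is how the paper concludes. With this correction your proof is complete, and the bound $\cG_n(K)\gtrsim n^{-1/2}$ is unnecessary.
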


\begin{proof}
The estimate \eqref{Eq:lanmbda} was proved above with probability $1-\exp(-cn)$. By adjusting the constant in \eqref{Eq:largen}, this probability is at least $1-d^{-A}$.

For \eqref{Eq:lower}, by
Paley-Zygmund,
\[
    \Pr\left(|\inner{X_1,v}|\geq c_0\|v\|_2\right)\geq c_1
    \qquad\text{for every }v\in\R^d,
\]
where $X_1$ is the first row of $\vec X$. Mendelson's small-ball method and the standard comparison between Rademacher
and Gaussian complexities then give, with probability at least $1-d^{-A}$, for all $v\in K$
\[
    \frac1n\|\vec X v\|_2^2
    \geq c\|v\|_2^2-C(A)\cG_n(K)^2
\]
which is equivalent to \eqref{Eq:lower}.
\end{proof}

We will also use the following conditional form of the basic inequality. On
$\Omega_A$, the proof of \Cref{T:UC2}, together with
\eqref{Eq:lanmbda} and \eqref{Eq:lower}, gives, up to a logarithmic factor,
\begin{equation}\label{Eq:conditionalBI}
    \left\|\E_{\vec\xi}\ermp-\lft\right\|_2^2
    \lesssim
    \cG_n(K)^2+\frac{s\Lambda^2\cG_n(K)^p}{t},
\end{equation}
with the convention $p=s=1$ when no smoothness assumption is imposed.

We now fix $\vec X\in\Omega_A$. For $\vec z\in\R^n$, set
\[
    \|\vec z\|_{H,n}
    =\inf\{\|u\|_K:u\in H,\ \vec X u=\vec z\}
    =\|\vec z\|_{\vec X K_H},
    \qquad
    M=\E_{\vec\xi}\|\vec\xi\|_{H,n}.
\]
The following estimates, which follow from \Cref{Lem:BMstar}, will be used repeatedly:
\begin{equation}\label{Eq:M206}
    M\cG_n(K)\lesssim\Lambda,
    \qquad
    M^{-1}\lesssim\Lambda\cG_n(K).
\end{equation}
We split $\Omega_A$ according to whether $M$ is larger than a sufficiently
large universal constant. On the part where it is not, \eqref{Eq:M206} gives
$\cG_n(K)\gtrsim\Lambda^{-1}$, and \eqref{Eq:conditionalBI} is dominated by
the final bound. We therefore work below on the remaining part of
$\Omega_A$, where $M$ is sufficiently large.

For $a\in\R$ and $\vec z\in\R^n$, define
\begin{equation}\label{Eq:maa}
    m_a(\vec z)
    =\inf\{\|a\lft+u\|_K:u\in H,\ \vec X u=\vec z\}.
\end{equation}
By \Cref{a:sym}, $a\mapsto m_a(\vec z)$ is even. The map
$(a,\vec z)\mapsto m_a(\vec z)$ is convex, and
\begin{equation}\label{Eq:maaLipschitz}
    |m_a(\vec z)-m_{a'}(\vec z')|
    \leq |a-a'|+\|\vec z-\vec z'\|_{H,n}.
\end{equation}

Since $m_0(\vec z)=\|\vec z\|_{H,n}$, the map
$\vec z\mapsto m_0(\vec z)$ is $b(\vec X K_H)$-Lipschitz. Moreover,
using $\vec X K_H=\sqrt d\,P_HK_H$,
$M_g(P_HK_H)\asymp\sqrt n\,M_s(P_HK_H)$, and
$M_s(P_HK_H)M_s^*(P_HK_H)\geq1$, we obtain
\[
    \frac{b(\vec X K_H)}{M}
    \lesssim
    \frac{b(P_HK_H)}
         {\sqrt n\,M_s(P_HK_H)}
    \leq
    \frac{R_{bM^*}(P_HK_H)}{\sqrt n}
    \leq
    \frac{\Lambda}{\sqrt n}.
\]
Gaussian concentration and \eqref{Eq:largen} therefore imply
that $m_0(\vec\xi)\asymp M$ outside a set of Gaussian measure at most
$d^{-A}$. Since $a\mapsto m_a(\vec\xi)$ is even and convex,
\eqref{Eq:maaLipschitz} gives
$m_0(\vec\xi)\leq m_a(\vec\xi)\leq m_0(\vec\xi)+2$ for
$|a|\leq2$. Since $M$ is sufficiently large, we conclude that, on the same
event,
\begin{equation}\label{Eq:maM}
    m_a(\vec\xi)\asymp M
    \qquad\text{uniformly for }|a|\leq2.
\end{equation}

We will need the following two lemmas, whose proofs are given below.

\begin{lemma}\label{lem:good2}
Conditionally on $\vec X|_H$, with probability at least
$1-d^{-A}$ over $(\vec\eta,\vec\xi)$, uniformly over
$(a,\mu)\in[0,2]\times[-1,1]$,
\begin{equation}\label{Eq:UC2bias}
    m_a(\vec\xi+\mu\vec\eta)
    \geq
    \left(
        1+\frac{ct\mu^2}{\Lambda^2}
        -\frac{C|\mu|\Lambda\log(d)}{\sqrt n}
    \right)m_a(\vec\xi).
\end{equation}
\end{lemma}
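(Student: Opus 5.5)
We prove \eqref{Eq:UC2bias} by conditioning on $\vec X|_H$, so that the norm $m_a(\cdot)$ on $\R^n$ is fixed. By the $1$-sparsity reduction, $\vec\eta$ is then independent of $\vec\xi$ and has i.i.d.\ symmetric sub-Gaussian entries of variance $\asymp1$; in the Gaussian case $\vec\eta\sim N(0,\theta^2 I_n)$. We decompose
\[
m_a(\vec\xi+\mu\vec\eta)-m_a(\vec\xi)
=\frac{m_a(\vec\xi+\mu\vec\eta)+m_a(\vec\xi-\mu\vec\eta)}{2}-m_a(\vec\xi)
+\frac{m_a(\vec\xi+\mu\vec\eta)-m_a(\vec\xi-\mu\vec\eta)}{2}.
\]
The symmetric part is the curvature gain. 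The antisymmetric part is a fluctuation term, and we will show it is only of order $|\mu|\Lambda\log(d)M/\sqrt n$.

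\textbf{Curvature term.} The function $m_a$ is $UC(2)$ in a quotient sense. Take minimizers $u_\pm$ for $m_a(\vec\xi\pm\mu\vec\eta)$. The average $(a\lft+u_+ + a\lft+u_-)/2$ is feasible for $m_a(\vec\xi)$. The half-difference $(u_+-u_-)/2\in H$ is feasible for $\mu\vec\eta$, so its norm is at least $\|\mu\vec\eta\|_{H,n}$. Hence $UC(2)$ of $\|\cdot\|_K$, applied exactly as in \eqref{Eq:uniformconvexity}, gives
\[
\frac{m_a(\vec\xi+\mu\vec\eta)^2+m_a(\vec\xi-\mu\vec\eta)^2}{2}\ \ge\ m_a(\vec\xi)^2+t\mu^2\|\vec\eta\|_{H,n}^2 .
\]
Next we lower bound $\|\vec\eta\|_{H,n}$. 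By \Cref{Lem:BMstar} and \eqref{Eq:lanmbda}, the polar of $\vec XK_H$ has mean width controlled by $\Lambda$. So a sub-Gaussian vector satisfies $\|\vec\eta\|_{H,n}\ge\|\vec\eta\|_2/\sup_{y\in\vec XK_H}\|y\|_2$. Actually it is easier to use duality: $\|\vec\eta\|_{H,n}\gtrsim M/\Lambda$ with probability $1-d^{-A}$. The route is that $\vec\eta$ has norm concentrating at the scale of $M$ up to the factor $R_{bM^*}\lesssim\Lambda$; this follows from Gaussian concentration in the Gaussian case, and from CCP/Talagrand for convex Lipschitz norms in the sub-Gaussian case, with Lipschitz constant $b(\vec XK_H)\lesssim\Lambda M/\sqrt n$. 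Combining this with $m_a(\vec\xi)\asymp M$ from \eqref{Eq:maM} and taking square roots gives
\[
\frac{m_a(\vec\xi+\mu\vec\eta)+m_a(\vec\xi-\mu\vec\eta)}{2}\ \ge\ \Bigl(1+\frac{ct\mu^2}{\Lambda^2}\Bigr)m_a(\vec\xi)-(\text{antisymmetric error}).
\]
This step needs some care, because square roots of averages are not averages of square roots. We use $|m_a(\vec\xi+\mu\vec\eta)-m_a(\vec\xi-\mu\vec\eta)|\le 2|\mu|\,\|\vec\eta\|_{H,n}\lesssim|\mu|M$ together with $\mu^2\le1$.

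\textbf{Antisymmetric term (main obstacle).} We must show that $D(\mu):=m_a(\vec\xi+\mu\vec\eta)-m_a(\vec\xi-\mu\vec\eta)$ is at most $C|\mu|\Lambda\log(d)M/\sqrt n$. Since the statement bounds $m_a(\vec\xi+\mu\vec\eta)$ itself, a negative $D$ is the dangerous sign, so we need $|D|$ small. The quantity $D$ is odd in $\vec\eta$, and $\vec\eta$ is symmetric and independent of $\vec\xi$, so $\E_{\vec\eta}D=0$. As a function of $\vec\eta$, $D$ is $2|\mu|b(\vec XK_H)$-Lipschitz, with $b(\vec XK_H)\lesssim\Lambda M/\sqrt n$.

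The difficulty is that $D$ is not convex. In the Gaussian case, Lipschitz concentration suffices directly. For sub-Gaussian $\vec\eta$, we write $D$ as a difference of two convex Lipschitz functions of $\vec\eta$ and apply the CCP to each separately. The constant $L\lesssim\sqrt{\log d}$ and the deviation level $\sqrt{A\log d}$ together give the $\log(d)$ factor. In both cases this yields, with probability $1-d^{-A}$, $|D|\lesssim|\mu|\Lambda\log(d)M/\sqrt n$.

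\textbf{Uniformity.} It remains to make this uniform over $(a,\mu)\in[0,2]\times[-1,1]$. We take a $d^{-C}$-net of this rectangle and union bound. We then extend off the net using the Lipschitz bound \eqref{Eq:maaLipschitz} in $a$, and $\|\vec\eta\|_{H,n}\lesssim M$ in $\mu$. Since $M$ is large and the net is fine, the discretization error is absorbed. Finally, \eqref{Eq:largen} ensures $\Lambda\log(d)/\sqrt n$ is small, so all remainder terms fit the stated form.
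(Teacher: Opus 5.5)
Your route is correct in substance, but it handles the fluctuations differently from the paper. Write $m_\pm:=m_a(\vec\xi\pm\mu\vec\eta)$ and $D:=m_+-m_-$.

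\textbf{Comparison with the paper.} The paper averages the $UC(2)$ midpoint inequality over $\vec\eta$, where symmetry identifies the two terms, so it only needs the second-moment bound \eqref{eq:623}. It then passes from $\E_{\vec\eta}m_+^2$ to $\E_{\vec\eta}m_+$ by absorbing the CCP variance bound $\mu^2\Lambda^2\log(d)/n$; this is exactly where \eqref{Eq:largen} is used. It finishes with a single lower-tail CCP estimate for the convex map $\vec\eta\mapsto m_+$.

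You instead keep the midpoint inequality pointwise and split it into two parts:
\begin{itemize}
\item an even part, which carries the curvature gain deterministically;
\item an odd part $D$, which is centred by the symmetry of $\vec\eta$ and controlled by two-sided CCP applied to the two convex maps $m_\pm$.
\end{itemize}
This nicely isolates the $-C|\mu|\Lambda\log(d)/\sqrt n$ term as pure odd fluctuation.

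The price is that you need a high-probability lower bound on $\|\vec\eta\|_{H,n}$ rather than a second-moment one. In the sub-Gaussian case you still need \eqref{eq:623}, together with the variance bound, to get $\E_{\vec\eta}\|\vec\eta\|_{H,n}\gtrsim M/\Lambda$. The lower deviation at level $d^{-A}$ is then of order $\Lambda M\log(d)/\sqrt n$, so you need $n\gg\Lambda^4\log(d)^2$, which \eqref{Eq:largen} does not provide. The remaining range is harmless: there $ct\mu^2/\Lambda^2$ is dominated by the error term, and $m_+\geq\max\{m_+,m_-\}-|D|\geq m_a(\vec\xi)-|D|$ already gives the claim. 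Your closing sentence should say this rather than appeal to \eqref{Eq:largen}.

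\textbf{The square-root step needs repair.} As you justify it, it does not work. Plugging the crude bound $|D|\lesssim|\mu|M$ into $\bigl(\tfrac{m_++m_-}{2}\bigr)^2=\tfrac{m_+^2+m_-^2}{2}-\tfrac{D^2}{4}$ loses order $\mu^2M^2$. This swamps the gain $t\mu^2M^2/\Lambda^2$, since $t\leq1\leq\Lambda$.

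Keep the error linear instead. Set $\kappa:=t\mu^2\|\vec\eta\|_{H,n}^2/m_a(\vec\xi)^2$. The elementary inequality $\sqrt{x^2-y^2}\geq x-|y|$ for $|y|\leq x$ (the bound is trivial otherwise, since $m_\pm\geq0$) gives $\tfrac{m_++m_-}{2}\geq m_a(\vec\xi)\sqrt{1+\kappa}-\tfrac{|D|}{2}$. Hence $m_+\geq m_a(\vec\xi)\sqrt{1+\kappa}-|D|$, and only your sharp bound $|D|\lesssim|\mu|\Lambda\log(d)M/\sqrt n$ from the antisymmetric step is then needed.
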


\begin{lemma}\label{Lem:Redu2}
For $0\leq\delta\leq1$,
\begin{align}
    0&\leq m_1(\vec\xi)-m_0(\vec\xi)\leq1,
    \label{eq:277}\\
    0&\leq m_1(\vec\xi)-m_{1-\delta}(\vec\xi)\leq\delta.
    \label{eq:279}
\end{align}
If $\|\cdot\|_K$ is $p$-uniformly smooth with constant $s$, then, on the
event \eqref{Eq:maM},
\begin{align}
    0&\leq m_1(\vec\xi)-m_0(\vec\xi)
       \lesssim sM^{1-p},
    \label{eq:287}\\
    0&\leq m_1(\vec\xi)-m_{1-\delta}(\vec\xi)
       \lesssim s\delta M^{1-p}.
    \label{eq:289}
\end{align}
For $-1\leq\delta<0$,
\[
    m_{1-\delta}(\vec\xi)\geq m_1(\vec\xi).
\]
\end{lemma}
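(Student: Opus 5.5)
The plan is to reduce everything to the one-variable function $a\mapsto m_a(\vec\xi)$ for fixed $\vec X$ and $\vec\xi$. We use two properties already recorded after \eqref{Eq:maa}: it is even, which needs \Cref{a:sym}, and it is convex. We also use the Lipschitz bound \eqref{Eq:maaLipschitz}, which is just the triangle inequality together with $\|\lft\|_K=1$.

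\textbf{Monotonicity and the crude bounds.} An even convex function on $\R$ is minimized at $0$, since $m_0\le\frac12(m_a+m_{-a})=m_a$. It is also nondecreasing on $[0,\infty)$. Hence for $0\le\delta\le1$ we get $m_0(\vec\xi)\le m_{1-\delta}(\vec\xi)\le m_1(\vec\xi)$. For $-1\le\delta<0$ we have $1-\delta>1$, so $m_{1-\delta}(\vec\xi)\ge m_1(\vec\xi)$, which is the last claim. The upper bounds in \eqref{eq:277} and \eqref{eq:279} follow from \eqref{Eq:maaLipschitz} with $\vec z=\vec z'=\vec\xi$, which gives $|m_a-m_{a'}|\le|a-a'|$.

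\textbf{The smooth case.} Set $\varphi(a):=m_a(\vec\xi)^p$. We first derive a second-difference bound. Fix $a$ and take $z=a\lft+u$ attaining $m_a(\vec\xi)$; the infimum is attained by compactness in finite dimensions. Apply the $US(p)$ inequality to $f=z+h\lft$ and $g=z-h\lft$. Since $(f+g)/2=z$, $(f-g)/2=h\lft$, and $f,g$ are feasible at levels $a\pm h$, this gives
\[
  \varphi(a+h)+\varphi(a-h)\le 2\varphi(a)+2s|h|^p .
\]
At $a=0$ with $h=\alpha$, evenness gives $\varphi(\alpha)-\varphi(0)\le s\alpha^p$. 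The map $a\mapsto m_a^p$ is convex, being a convex nonnegative function raised to the power $p\ge1$. By convexity, $\varphi(\alpha)-\varphi(0)\ge(\alpha/2)\,\varphi'_+(\alpha/2)$, so $\varphi'_+(\alpha/2)\le2s\alpha^{p-1}$. Taking $\alpha=2$ (the US inequality imposes no restriction on $h$) yields $\varphi'_+(1)\lesssim s$. Convexity then gives, for $0\le\delta\le1$,
\[
  \varphi(1)-\varphi(1-\delta)\le\delta\,\varphi'_-(1)\lesssim s\delta ,
  \qquad\text{and directly}\qquad \varphi(1)-\varphi(0)\le s .
\]

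\textbf{Passing back to $m_a$.} By convexity of $x\mapsto x^p$ and $m_{1-\delta}\ge m_0$, we have $m_1^p-m_{1-\delta}^p\ge p\,m_{1-\delta}^{p-1}(m_1-m_{1-\delta})\ge p\,m_0^{p-1}(m_1-m_{1-\delta})$. On the event \eqref{Eq:maM}, $m_0(\vec\xi)\asymp M$. Dividing therefore gives $m_1-m_{1-\delta}\lesssim s\delta M^{1-p}$, which is \eqref{eq:289}. Taking $\delta=1$ gives \eqref{eq:287}.

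The only delicate point is the last step of the smooth case. Convexity alone gives the wrong direction, since the chord slope on $[1-\delta,1]$ is at least the slope on $[0,1]$. The extra input is the derivative bound $\varphi'(1)\lesssim s$, obtained from smoothness at the symmetric point $a=0$ combined with evenness. Everything else is routine.
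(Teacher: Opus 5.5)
Your proposal is correct and follows essentially the same route as the paper. Evenness and convexity give the monotonicity and the crude Lipschitz bounds. Then $US(p)$ applied to $u_0\pm a\lft$ yields $\varphi(a)\le\varphi(0)+s|a|^p$, and convexity of $\varphi=m_a^p$ bounds the increment on $[1-\delta,1]$ through the behavior on $[1,2]$, with $m_a\asymp M$ used to return from $\varphi$ to $m_a$. The differences are cosmetic. You obtain $\varphi(\alpha)-\varphi(0)\le s\alpha^p$ from a general second-difference inequality plus evenness, rather than using \Cref{a:sym} directly on $u_0\pm a\lft$. You also phrase the slope comparison through $\varphi'_\pm(1)$ instead of the chord $\varphi(2)-\varphi(1)$.
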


After increasing the exponent in the preceding high-probability estimates
and reducing $\Omega_A$ if necessary, Fubini's theorem and Markov's
inequality allow us to assume that, for every $\vec X\in\Omega_A$ in the
remaining regime, there is an event $\mathcal E_{\vec X}$, depending only
on $\vec\xi$, such that
\[
    \Pr_{\vec\xi}(\mathcal E_{\vec X}^c)\leq d^{-A},
\]
and on which \eqref{Eq:maM} and the conclusions of
\Cref{lem:good2,Lem:Redu2} used below hold.

Set
\[
    \rho=\min\{1,sM^{1-p}\},
    \qquad
    \Delta=\frac1t
    \left(
        \Lambda^2\frac{\rho}{M}
        +\frac{\Lambda^3\log(d)}{\sqrt n}
    \right),
\]
with the convention $p=s=1$ when no smoothness assumption is imposed.
We may assume that $\Delta$ is smaller than a sufficiently small universal
constant. On the part of $\Omega_A$ where this fails,
\eqref{Eq:conditionalBI} is dominated by the final bound. Indeed, it suffices
to use \eqref{Eq:M206} and
\[
    a^2+\frac1n\geq\frac{2a}{\sqrt n},
\]
with $a=\cG_n(K)$, and with $a=s\cG_n(K)^p$ in the uniformly smooth
case.

Every interpolator of $\vec\eta+\vec\xi$ can be written uniquely as
\[
    w=(1-\delta)\lft+u,
    \qquad
    u\in H,
    \qquad
    \vec Xu=\vec\xi+\delta\vec\eta.
\]
Hence, if $\delta_*$ is the coefficient selected by the MNI,
\begin{equation}\label{eq:deltav}
    \delta_*\in\operatorname*{argmin}_{\delta\in\R}
    m_{1-\delta}(\vec\xi+\delta\vec\eta),
    \qquad
    \|\ermp\|_K
    =m_{1-\delta_*}(\vec\xi+\delta_*\vec\eta).
\end{equation}

The objective in \eqref{eq:deltav} is convex. On
$\mathcal E_{\vec X}$, its value at $\delta=1$ satisfies
\[
    m_0(\vec\xi+\vec\eta)
    \geq
    \left(
        1+\frac{ct}{\Lambda^2}
        -\frac{C\Lambda\log(d)}{\sqrt n}
    \right)
    \left(m_1(\vec\xi)-C\rho\right)
    \geq m_1(\vec\xi),
\]
where the last inequality follows from the assumption that $\Delta$ is small enough.
At $\delta=-1$, the same argument, together with the monotonicity in
\Cref{Lem:Redu2}, gives
\[
    m_2(\vec\xi-\vec\eta)\geq m_1(\vec\xi).
\]
It follows by convexity that $|\delta_*|\leq1$. Assume first that $\delta_*\geq0$. By minimality and
\Cref{lem:good2,Lem:Redu2},
\begin{align*}
    m_1(\vec\xi)
    &\geq m_{1-\delta_*}(\vec\xi+\delta_*\vec\eta)\\
    &\geq
    \left(
        1+\frac{ct\delta_*^2}{\Lambda^2}
        -\frac{C\delta_*\Lambda\log(d)}{\sqrt n}
    \right)
    \left(m_1(\vec\xi)-C\rho\delta_*\right).
\end{align*}
Since $m_1(\vec\xi)\asymp M$ on $\mathcal E_{\vec X}$, this yields
\[
    \delta_*
    \lesssim
    \frac1t
    \left(
        \Lambda^2\frac{\rho}{M}
        +\frac{\Lambda^3\log(d)}{\sqrt n}
    \right).
\]
If $\delta_*<0$, then
$m_{1-\delta_*}(\vec\xi)\geq m_1(\vec\xi)$, and the same argument gives
\[
    |\delta_*|
    \lesssim
    \frac{\Lambda^3\log(d)}{t\sqrt n}.
\]
Thus, in either case, we have
\begin{equation}\label{Eq:deltabound}
    |\delta_*|\lesssim\Delta.
\end{equation}

We now bound $E_1$. Since
$\cP_{\lft}(\ermp)=(1-\delta_*)\lft$ and $\|\lft\|_2\asymp1$, for
every fixed $\vec X$,
\[
    \left\|
        \cP_{\lft}\E_{\vec\xi}\ermp-\lft
    \right\|_2^2
    =
    \|\lft\|_2^2
    \left|\E_{\vec\xi}\delta_*\right|^2
    \asymp
    \left|\E_{\vec\xi}\delta_*\right|^2.
\]
Using
\eqref{Eq:deltabound}, we obtain
\[
    \left|\E_{\vec\xi}\delta_*\right|^2
    \lesssim
    \Delta^2
    +
    \left|
        \E_{\vec\xi}
        \left[
            \delta_*\mathbbm 1_{\mathcal E_{\vec X}^c}
        \right]
    \right|^2.
\]
After averaging over $\vec X$, the second term, as well as the contribution
of $\Omega_A^c$, is negligible by conditional Jensen's inequality,
H\"older's inequality, \eqref{Eq:bad}, and
\[
    |\delta_*|\lesssim1+\|\ermp\|_2.
\]
We defer this standard truncation to the end of the proof. Consequently,
using \eqref{Eq:M206}, we obtain, without a smoothness
assumption,
\begin{equation}\label{eq:e1final}
    E_1
    \lesssim
    \frac1{t^2}
    \left(
        \Lambda^6\cG_n(K)^2
        +\frac{\Lambda^6\log(d)^2}{n}
    \right).
\end{equation}
If the norm is $p$-uniformly smooth, then
\begin{equation}\label{eq:e1finalSmooth}
    E_1
    \lesssim
    \frac1{t^2}
    \left(
        \Lambda^8s^2\cG_n(K)^{2p}
        +\frac{\Lambda^6\log(d)^2}{n}
    \right).
\end{equation}

\paragraph{Upper bound on $E_2$.}
Let $\widetilde w_n(\vec\xi)\in H$ be the minimizer in the definition of
$m_1(\vec\xi)$, and set
\[
    \overline w_n:=\lft+\widetilde w_n.
\]
Both $\ermp$ and $\overline w_n$ interpolate
$\vec\eta+\vec\xi$. On $\mathcal E_{\vec X}$, comparison of the objective
in \eqref{eq:deltav} at $\delta=\delta_*$ and $\delta=0$,
together with \eqref{Eq:deltabound}, gives
\begin{equation}\label{Eq:normgap}
    0\leq
    \|\overline w_n\|_K-\|\ermp\|_K
    \lesssim t\Delta^2M.
\end{equation}
Moreover,
\[
    \|\overline w_n\|_K=m_1(\vec\xi)\asymp M
    \qquad\text{on }\mathcal E_{\vec X}.
\]
Since the midpoint of $\ermp$ and $\overline w_n$ is also an interpolator,
$2$-uniform convexity and the minimality of $\ermp$ imply
\[
    t\|\ermp-\overline w_n\|_K^2
    \lesssim
    \bigl(\|\overline w_n\|_K-\|\ermp\|_K\bigr)
    \bigl(\|\overline w_n\|_K+\|\ermp\|_K\bigr).
\]
It follows from \eqref{Eq:normgap} that
\begin{equation}\label{Eq:kdist}
    \|\ermp-\overline w_n\|_K\lesssim\Delta M
    \qquad\text{on }\mathcal E_{\vec X}.
\end{equation}

By \Cref{a:sym},
\[
    \widetilde w_n(-\vec\xi)
    =-\widetilde w_n(\vec\xi).
\]
Replacing $\mathcal E_{\vec X}$ by
$\mathcal E_{\vec X}\cap(-\mathcal E_{\vec X})$, we may assume that it is
symmetric. Hence
\[
    \E_{\vec\xi}
    \left[
        \widetilde w_n(\vec\xi)
        \mathbbm 1_{\mathcal E_{\vec X}}
    \right]
    =0.
\]

Set $h:=\ermp-\overline w_n$. Since both vectors interpolate the same
observations, $h\in\ker(\vec X)$. Moreover,
\eqref{Eq:kdist} gives $h\in C\Delta M K$ on
$\mathcal E_{\vec X}$. Therefore, by
\eqref{Eq:lower},
\[
    \|h\|_2^2
    \lesssim
    \Delta^2M^2\cG_n(K)^2
    \lesssim
    \Lambda^2\Delta^2
    \qquad\text{on }\mathcal E_{\vec X},
\]
where the last inequality follows from
\eqref{Eq:M206}. Consequently, for every
$\vec X\in\Omega_A$ in the remaining regime,
\begin{align*}
    \left\|
        \Pi_H\E_{\vec\xi}
        \left[
            \ermp\mathbbm 1_{\mathcal E_{\vec X}}
        \right]
    \right\|_2^2
    &=
    \left\|
        \Pi_H\E_{\vec\xi}
        \left[
            (\ermp-\overline w_n)
            \mathbbm 1_{\mathcal E_{\vec X}}
        \right]
    \right\|_2^2\\
    &\leq
    \E_{\vec\xi}
    \left[
        \|\ermp-\overline w_n\|_2^2
        \mathbbm 1_{\mathcal E_{\vec X}}
    \right]\\
    &\lesssim \Lambda^2\Delta^2.
\end{align*}
Adding the contribution of $\mathcal E_{\vec X}^c$, we obtain
\[
    \left\|\Pi_H\E_{\vec\xi}\ermp\right\|_2^2
    \lesssim
    \Lambda^2\Delta^2
    +
    \left\|
        \E_{\vec\xi}
        \left[
            \ermp\mathbbm 1_{\mathcal E_{\vec X}^c}
        \right]
    \right\|_2^2.
\]
After averaging over $\vec X$, the second term, the contribution of
$\Omega_A^c$, and the exceptional contribution left aside in the estimate
of $E_1$ are negligible by conditional Jensen's inequality, H\"older's
inequality, and \eqref{Eq:bad}. For the latter contribution, we
also use
\[
    |\delta_*|\lesssim 1+\|\ermp\|_2.
\]
Since the exponents in the definitions of $\Omega_A$ and
$\mathcal E_{\vec X}$ may be chosen arbitrarily large, these contributions
are smaller than $n^{-1}$. The parts of $\Omega_A$ discarded in the two
reductions above were already controlled by \eqref{Eq:conditionalBI}.

Using \eqref{Eq:M206} in the definition of $\Delta$ and combining the
preceding estimate with \eqref{eq:e1final} and
\eqref{eq:e1finalSmooth}, we obtain
\[
    T_1\lesssim \frac1{t^2}
    \begin{cases}
        \Lambda^8\left(\cG_n(K)^2+\dfrac{\log(d)^2}{n}\right),
            & \text{in general},\\[2mm]
        s^2\Lambda^{2p+6}\cG_n(K)^{2p}
        +\dfrac{\Lambda^8\log(d)^2}{n},
            & \text{under $p$-uniform smoothness}.
    \end{cases}
\]
Since $\Lambda\lesssim\log(d)^3$ in isotropic position, these estimates
give the claimed bounds.

It remains to prove \Cref{lem:good2,Lem:Redu2}.
\begin{proof}[Proof of \Cref{lem:good2}]
Condition on $\vec X|_H$ and fix $\vec\xi$ on the event
\eqref{Eq:maM}. For fixed $(a,\mu)$, let $u_+$ and $u_-$ be
minimizers defining $m_a(\vec\xi+\mu\vec\eta)$ and
$m_a(\vec\xi-\mu\vec\eta)$. Their midpoint is feasible for
$m_a(\vec\xi)$, while $(u_+-u_-)/2$ interpolates
$\mu\vec\eta$ in $H$. The $2$-uniform convexity inequality and the
symmetry of $\vec\eta$ give
\begin{equation}\label{eq:2moment}
    \E_{\vec\eta}m_a(\vec\xi+\mu\vec\eta)^2
    \geq
    m_a(\vec\xi)^2
    +t\mu^2\E_{\vec\eta}\|\vec\eta\|_{H,n}^2.
\end{equation}

Let $L:=\vec X K_H$. Since
$\E_{\vec\eta}\|\vec\eta\|_2^2
=n\|\lft\|_2^2\asymp n$, duality gives
\[
    n
    \lesssim
    \left(\E_{\vec\eta}\|\vec\eta\|_L^2\right)^{1/2}
    \left(\E_{\vec\eta}h_L(\vec\eta)^2\right)^{1/2}.
\]
The sub-Gaussian comparison inequality, followed by concentration, gives
\[
    \left(\E_{\vec\eta}h_L(\vec\eta)^2\right)^{1/2}
    \lesssim M_g^*(L).
\]
Since $L=\sqrt d\,P_HK_H$,
\[
    M_g^*(L)\asymp\sqrt{dn}\,M_s^*(P_HK_H),
    \qquad
    M\asymp\sqrt{\frac nd}\,M_s(P_HK_H).
\]
Furthermore,
\[
    M_s(P_HK_H)M_s^*(P_HK_H)
    \leq b(P_HK_H)M_s^*(P_HK_H)
    =R_{bM^*}(P_HK_H)
    \leq\Lambda.
\]
It follows that
\begin{equation}\label{eq:623}
    \left(\E_{\vec\eta}\|\vec\eta\|_{H,n}^2\right)^{1/2}
    \gtrsim\frac{n}{M_g^*(L)}
    \gtrsim\frac{M}{\Lambda}.
\end{equation}
Combining \eqref{eq:2moment}, \eqref{eq:623}, and
\eqref{Eq:maM}, we obtain
\begin{equation}\label{eq:2moment2}
    \E_{\vec\eta}m_a(\vec\xi+\mu\vec\eta)^2
    \geq
    m_a(\vec\xi)^2
    \left(1+\frac{ct\mu^2}{\Lambda^2}\right).
\end{equation}

The map $\vec\eta\mapsto m_a(\vec\xi+\mu\vec\eta)$ is convex and
$|\mu|b(L)$-Lipschitz. Since
\[
    \frac{b(L)}{m_a(\vec\xi)}
    \lesssim\frac1{\sqrt n}\,
    \frac{b(P_HK_H)}{M_s(P_HK_H)}
    \leq\frac{\Lambda}{\sqrt n},
\]
the CCP, with constant $O(\sqrt{\log(d)})$, implies
\[
    \frac{\Var_{\vec\eta}(m_a(\vec\xi+\mu\vec\eta))}
         {m_a(\vec\xi)^2}
    \lesssim\frac{\mu^2\Lambda^2\log(d)}{n}.
\]
This term is absorbed by the positive term in
\eqref{eq:2moment2} under
\eqref{Eq:largen}. Therefore
\[
    \E_{\vec\eta}m_a(\vec\xi+\mu\vec\eta)
    \geq
    \left(1+\frac{ct\mu^2}{\Lambda^2}\right)m_a(\vec\xi).
\]
A second application of the CCP gives, with probability at least $1-d^{-A}$,
\[
    m_a(\vec\xi+\mu\vec\eta)
    \geq
    \left(
        1+\frac{ct\mu^2}{\Lambda^2}
        -\frac{C|\mu|\Lambda\log(d)}{\sqrt n}
    \right)m_a(\vec\xi)
\]
for each fixed $(a,\mu)$. Taking the exponent in this probability bound large
enough and using a sufficiently fine polynomial net of $[0,2]\times[-1,1]$
makes the estimate uniform. Indeed, \eqref{Eq:maaLipschitz} and the corresponding
high-probability bound $\|\vec\eta\|_{H,n}\lesssim M\Lambda$ control the
error between neighboring net points. Finally, the complement of
\eqref{Eq:maM} has Gaussian measure at most $d^{-A}$. This proves
\Cref{lem:good2}.
\end{proof}

\begin{proof}[Proof of \Cref{Lem:Redu2}]
The map $a\mapsto m_a(\vec\xi)$ is even and convex, hence nondecreasing on
$[0,\infty)$. The crude estimates follow from
\eqref{Eq:maaLipschitz}.

Assume that $\|\cdot\|_K$ is $p$-uniformly smooth, and let $u_0$ be a
minimizer defining $m_0(\vec\xi)$. By central symmetry and \Cref{a:sym}, the vectors
$u_0+a\lft$ and $u_0-a\lft$ have the same norm in $K$. Uniform smoothness
therefore gives
\begin{equation}\label{Eq:smoothness2}
    m_a(\vec\xi)^p
    \leq\|a\lft+u_0\|_K^p
    \leq m_0(\vec\xi)^p+s|a|^p.
\end{equation}
Set $\phi(a):=m_a(\vec\xi)^p$. The function $\phi$ is convex. Hence,
for $0\leq\delta\leq1$,
\[
    \phi(1)-\phi(1-\delta)
    \leq\delta\bigl(\phi(2)-\phi(1)\bigr)
    \lesssim s\delta,
\]
and also $\phi(1)-\phi(0)\leq s$. Since
$m_a(\vec\xi)\asymp M$ for
$0\leq a\leq2$, the two smooth estimates follow.
\end{proof}

It remains to remove the exceptional events. Let $\mathcal B$ denote their
union. The exponents above can be chosen so that
$\Pr(\mathcal B)\leq d^{-A}$ for arbitrary fixed $A$. By conditional Jensen
and H\"older's inequality, for any fixed $q>2$,
\[
    \E_{\vec X}\left\|
        \E_{\vec\xi}[\ermp\mathbbm 1_{\mathcal B}\mid\vec X]
    \right\|_2^2
    \leq
    \E_{\vec X,\vec\xi}
        \bigl[\|\ermp\|_2^2\mathbbm 1_{\mathcal B}\bigr]
    \leq
    \left(\E\|\ermp\|_2^q\right)^{2/q}
    \Pr(\mathcal B)^{1-2/q}.
\]
By \eqref{Eq:bad}, this is smaller than the displayed bounds when
$A$ is chosen sufficiently large. This controls the exceptional contribution
to $E_2$. For $E_1$, use
$|\delta_*|\lesssim1+\|\ermp\|_2$, which follows from
$\|\lft\|_2\asymp1$.

\paragraph{Other positions and Gaussian covariates.}
In John's position, substituting
$\Lambda\lesssim\log(d)/\sqrt t$ into the preceding estimates gives
\[
    T_1
    \lesssim
    \frac{\log(d)^8}{t^6}\cG_n(K)^2
    +\frac{\log(d)^{10}}{t^6n}
    \lesssim
    \frac{\log(d)^{10}}{t^6}
    \left(\cG_n(K)^2+\frac1n\right).
\]
Under $p$-uniform smoothness, it gives
\[
    T_1
    \lesssim
    \frac{s^2\log(d)^{2p+6}}{t^{p+5}}\cG_n(K)^{2p}
    +\frac{\log(d)^{10}}{t^6n}.
\]
Since $t\leq1$ and $p\leq2$, the latter is also bounded by
\[
    T_1
    \lesssim
    \frac{\log(d)^{10}}{t^7}
    \left(s^2\cG_n(K)^{2p}+\frac1n\right).
\]
For Gaussian covariates, rotational invariance allows us to rotate $\lft$ to
the first coordinate, so the sparsity assumption is unnecessary; the
reflection symmetry in \Cref{a:sym} is still required in the direction of
$\lft$. Finally, in the proportional Gaussian regime, the stability of
$M$-position under random proportional-dimensional quotients and sections
gives \eqref{Eq:lanmbda} with $\Lambda\lesssim1$, and the same
argument applies.

\subsection{Proof of \Cref{T:UC2Var}}
To prove the lower bound on the conditional variance, we first record the
conditional ``intrinsic variance'' estimate that is used in the argument. In
contrast to the previous formulation, all quantities below are defined after
conditioning on the design.

\begin{theorem}[Intrinsic variance under $CO(2)$---reverse Efron--Stein]
\label{T:CO2Var}
Let $G=(G_1,\ldots,G_n)\sim N(0,I_n)$, and let
$F\in L_2(\gamma_n;\bana)$ satisfy $\E F(G)=0$. Define
\[
    \alpha_i:=\E\big[G_iF(G)\big],
    \qquad i\in[n].
\]
If $\bana$ is $CO(2)$ with lower constant $t>0$, then
\begin{equation}\label{Eq:covarintrinsic}
    \E\|F(G)\|^2
    \gtrsim
    \frac{t^2}{K_{\|\cdot\|}^2}
    \sum_{i=1}^n\|\alpha_i\|^2.
\end{equation}
\end{theorem}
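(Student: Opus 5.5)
The plan is to combine Pisier's $K$-convexity bound \eqref{Eq:Kconv} with the (Gaussian) cotype-$2$ inequality of Remark~\ref{R:Type}. Let $L_F$ denote the first-order Hermite projection of $F$. Since $\E F(G)=0$, it has the form
\[
    L_F(G)=\sum_{i=1}^n G_i\alpha_i,
\]
where, by Gaussian integration by parts, the coefficients $\alpha_i$ agree with $\E[G_iF(G)]$ (this matches the definition of the linearization in \Cref{R:Kconvexity}). The $K$-convexity definition then gives
\[
    \ell(L_F)\leq K_{\|\cdot\|}\,\ell(F),
    \qquad\text{that is,}\qquad
    \E\Bigl\|\sum_{i=1}^nG_i\alpha_i\Bigr\|^2\leq K_{\|\cdot\|}^2\,\E\|F(G)\|^2 .
\]

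Next, I lower-bound the left-hand side using cotype $2$. Definition~\ref{Def:Cotype} with $q=2$ gives the Rademacher version
\[
    \E\Bigl\|\sum_i\eps_i\alpha_i\Bigr\|^2\geq t^2\sum_i\|\alpha_i\|^2 .
\]
To pass to Gaussians, write $G_i\stackrel{d}{=}\eps_i|G_i|$ with $\eps$ independent of $|G|$. Conditioning on $|G|$ and applying cotype to the vectors $|G_i|\alpha_i$ yields $t^2\sum_i G_i^2\|\alpha_i\|^2$, and taking expectations gives
\[
    \E\Bigl\|\sum_iG_i\alpha_i\Bigr\|^2\geq t^2\sum_i\|\alpha_i\|^2 .
\]
This transfer costs no constant, so Remark~\ref{R:Type} is not even needed in this direction. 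Combining the two displays gives
\[
    t^2\sum_i\|\alpha_i\|^2\leq K_{\|\cdot\|}^2\,\E\|F(G)\|^2,
\]
which is \eqref{Eq:covarintrinsic}.

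The main point requiring care is the first step: checking that the coefficients in the paper's linearization operator coincide with $\alpha_i=\E[G_iF(G)]$, and that the Gaussian $K$-convexity constant used in \eqref{Eq:Kconv} is the same quantity $K_{\|\cdot\|}$ appearing in the statement. Under the paper's definition this is immediate. The symmetrized formula $\int \tfrac{\xi_i}{2}\bigl(F(\vec\xi^{(i)}_+)-F(\vec\xi^{(i)}_-)\bigr)\,d\gamma_n$ equals $\E[\xi_iF(\vec\xi)]$, because the map $\vec\xi\mapsto\vec\xi^{(i)}_-$ preserves $\gamma_n$ and flips the sign of $\xi_i$. If one instead uses the Rademacher $K$-convexity constant, the Gaussian and Rademacher versions agree up to universal constants (the classical Pisier comparison), which is absorbed in $\gtrsim$.

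Finally, in the application to \Cref{T:UC2Var} I would take $F(\vec\xi)=\erm(\vec X,\vec\xi)$ for fixed $\vec X$. This map is odd by uniqueness, so $\E F=0$ and $\E\|F\|_{\PP}^2$ equals the conditional variance. The $\alpha_i$ are the interpolators of the spikes $\vec e_i$, and the $\ell$-norm bound above controls their size by $K_{\|\cdot\|}M_n(\cF)/t$, which is what brings in the scale used in $\Psi_n$.
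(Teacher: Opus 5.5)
Your proposal is correct and is essentially the paper's proof. Both arguments combine the $K$-convexity estimate $\E\|\sum_i G_i\alpha_i\|^2\le K_{\|\cdot\|}^2\,\E\|F(G)\|^2$ with the Gaussian cotype-$2$ lower bound $\E\|\sum_i G_i\alpha_i\|^2\ge t^2\sum_i\|\alpha_i\|^2$; the only difference is that you derive the latter from the Rademacher definition by conditioning on $(|G_i|)_i$, which indeed costs no constant, whereas the paper cites Remark~\ref{R:Type}.

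One aside on your last paragraph. In \Cref{T:UC2Var} the paper applies the theorem to the centered map $\vec\xi\mapsto\erm(\vec X,\vec f_\star+\vec\xi)-\E_{\vec\xi'}\erm(\vec X,\vec f_\star+\vec\xi')$, not to the pure-noise map. When $\ft\neq0$, the conditional variance defining $T_2$ is that of the signal-plus-noise interpolator, so the odd pure-noise map is not the right object there.
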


\begin{proof}[Proof of \Cref{T:CO2Var}]
Consider the linearization of $F$ with respect to the span of the Hermite
polynomials of degree one,
\[
    L_F(G):=\sum_{i=1}^n\alpha_iG_i.
\]
Using the $K$-convexity estimate,
\[
    \E\|L_F(G)\|^2
    \lesssim
    K_{\|\cdot\|}^2\E\|F(G)\|^2.
\]
On the other hand, the Gaussian formulation of cotype $2$ gives
\[
    \E\left\|\sum_{i=1}^n\alpha_iG_i\right\|^2
    \gtrsim
    t^2\sum_{i=1}^n\|\alpha_i\|^2.
\]
Combining the last two displays proves \eqref{Eq:covarintrinsic}.
\end{proof}

\subsubsection{Proof of \Cref{T:UC2Var}}
\begin{proof}[Proof of \Cref{T:UC2Var}]
Recall that $\|\cdot\|_{\PP}$ denotes the $L_2(\PP)$ norm. We lower-bound
\[
    T_2
    =
    \E_{\vec X}\E_{\vec\xi}
    \left\|
        \erm-
        \E_{\vec\xi}[\erm\mid\vec X]
    \right\|_{\PP}^2.
\]
Let $\Omega$ be the regular event in Assumption~\ref{A:DeviationOfSuperma} on
which
\[
    M_g(\cF_n)\asymp M_n(\cF).
\]
For a fixed realization of $\vec X$, write
\[
    \widehat f_{\vec X}(\vec z):=\erm(\vec X,\vec z),
    \qquad \vec z\in\R^n,
\]
and set
\[
    \vec f_\star
    :=
    \bigl(\ft(X_1),\ldots,\ft(X_n)\bigr).
\]
On $\Omega$, define the centered nonlinear map
\begin{equation}\label{Eq:MXDefinition}
    M_{\vec X}(\vec\xi)
    :=
    \widehat f_{\vec X}(\vec f_\star+\vec\xi)
    -
    \E_{\vec\xi'}
    \widehat f_{\vec X}(\vec f_\star+\vec\xi'),
\end{equation}
where $\vec\xi'$ is an independent copy of $\vec\xi$. On $\Omega^c$, set
$M_{\vec X}:=0$. Thus, on $\Omega$,
\[
    \E_{\vec\xi}M_{\vec X}(\vec\xi)=0
\]
and, for every $j\in[n]$,
\begin{equation}\label{Eq:MXInterpolatesNoise}
    M_{\vec X}(\vec\xi)(X_j)=\xi_j.
\end{equation}

For $i\in[n]$, define the coefficients of the linearization  as 
\begin{equation}\label{Eq:AlphaFirstChaos}
    \alpha_i
    :=
    \E_{\vec\xi}
    \big[\xi_iM_{\vec X}(\vec\xi)\big].
\end{equation}
Equivalently, if
$\vec\xi_\pm^i=(\xi_1,\ldots,\pm|\xi_i|,\ldots,\xi_n)$, then
\[
    \alpha_i
    =
    \E_{\vec\xi}\left[
        \frac{|\xi_i|}{2}
        \left(
            M_{\vec X}(\vec\xi_+^i)
            -
            M_{\vec X}(\vec\xi_-^i)
        \right)
    \right].
\]
By \eqref{Eq:MXInterpolatesNoise} and
$\E(\xi_i\xi_j)=\delta_{ij}$, on $\Omega$,
\begin{equation}\label{Eq:AlphaInterpolatesSpike}
    \bigl(\alpha_i(X_1),\ldots,\alpha_i(X_n)\bigr)
    =
    \vec e_i.
\end{equation}
Thus, each $\alpha_i$ interpolates the corresponding one-point spike.

We next upper-bound the conditional Banach-valued fluctuation on the regular
event. By Jensen's inequality,
\[
    \E_{\vec\xi}\|M_{\vec X}(\vec\xi)\|^2
    \leq
    4\E_{\vec\xi}
    \left\|
        \widehat f_{\vec X}(\vec f_\star+\vec\xi)
    \right\|^2.
\]
The function
$\ft+\widehat f_{\vec X}(\vec\xi)$ is a feasible interpolator of
$\vec f_\star+\vec\xi$, and hence minimality gives
\[
    \left\|
        \widehat f_{\vec X}(\vec f_\star+\vec\xi)
    \right\|
    \leq
    \|\ft\|+\|\vec\xi\|_{\cF_n}.
\]
Therefore, Gaussian moment comparison for norms (cf.\ Borell's lemma; see,
e.g., \cite{brazitikos2014geometry}) yields, on $\Omega$,
\begin{align}
    \E_{\vec\xi}\|M_{\vec X}(\vec\xi)\|^2
    &\lesssim
    \|\ft\|^2+
    \E_{\vec\xi}\|\vec\xi\|_{\cF_n}^2\notag\\
    &\lesssim
    1+M_g(\cF_n)^2
    \lesssim
    M_n(\cF)^2.
    \label{Eq:MXSecondMomentUpper}
\end{align}
In the last step, we used Assumptions~\ref{A:One} and~\ref{A:NS} to absorb
the absolute constant into $M_n(\cF)^2$.

Apply \Cref{T:CO2Var} conditionally on $\vec X$, with $F=M_{\vec X}$. Combining
\eqref{Eq:covarintrinsic} and \eqref{Eq:MXSecondMomentUpper}, we obtain, on
$\Omega$,
\begin{equation}\label{Eq:AlphaBanachAverage}
    \frac1n\sum_{i=1}^n\|\alpha_i\|^2
    \lesssim
    \frac{K_{\|\cdot\|}^2M_n(\cF)^2}{t^2n}.
\end{equation}
This is the point at which the square of the $K$-convexity constant is
needed.

Let $I$ be uniform on $[n]$ and independent of the design. For
\[
    r
    :=
    \frac{C K_{\|\cdot\|}M_n(\cF)}{t\sqrt n},
\]
where $C>0$ is a sufficiently large absolute constant,
\eqref{Eq:AlphaBanachAverage}, Assumption~\ref{A:DeviationOfSuperma}, and
Markov's inequality imply
\begin{equation}\label{Eq:AlphaSmallNormEvent}
    \Pr_{\vec X,I}
    \left(
        \Omega,\ \|\alpha_I\|\leq r
    \right)
    \geq\frac58.
\end{equation}

Recall that
\[
    \rho_{\vec X}(\vec v,r)
    :=
    \inf\left\{
        \|f\|_{\PP}:
        f\in r\cF,\
        \bigl(f(X_1),\ldots,f(X_n)\bigr)=\vec v
    \right\}
\]
By exchangeability of the observations and the definition of the median,
\begin{equation}\label{Eq:PsiMedianEvent}
    \Pr_{\vec X,I}
    \left(
        \rho_{\vec X}(\vec e_I,r)
        \geq
        \Psi_n(\vec e_1,r)
    \right)
    \geq\frac12.
\end{equation}
The events in \eqref{Eq:AlphaSmallNormEvent} and \eqref{Eq:PsiMedianEvent}
therefore intersect with probability at least $1/8$. On this intersection,
\eqref{Eq:AlphaInterpolatesSpike} and $\|\alpha_I\|\leq r$ imply that
$\alpha_I\in r\cF$, and hence
\[
    \|\alpha_I\|_{\PP}
    \geq
    \rho_{\vec X}(\vec e_I,r)
    \geq
    \Psi_n(\vec e_1,r).
\]
Consequently,
\begin{equation}\label{Eq:AlphaPopulationLower}
    \E_{\vec X,I}\|\alpha_I\|_{\PP}^2
    \gtrsim
    \Psi_n(\vec e_1,r)^2.
\end{equation}
Finally, since $L_2(\PP)$ is a Hilbert space, orthogonal projection onto
the first Gaussian chaos is contractive in
$L_2(\gamma_n;L_2(\PP))$. Therefore,
\begin{align*}
    T_2
    &\geq
    \E_{\vec X,\vec\xi}
    \|M_{\vec X}(\vec\xi)\|_{\PP}^2\\
    &\geq
    \E_{\vec X,\vec\xi}
    \left\|
        \sum_{i=1}^n\alpha_i\xi_i
    \right\|_{\PP}^2\\
    &=
    \sum_{i=1}^n
    \E_{\vec X}\|\alpha_i\|_{\PP}^2\\
    &=
    n\,\E_{\vec X,I}\|\alpha_I\|_{\PP}^2.
\end{align*}
Here, the second inequality is Hilbert-space orthogonality, the first equality
uses the orthogonality of the Gaussian coordinates, and the final equality is
simply the average over the independent uniform index $I$. Combining the last
display with \eqref{Eq:AlphaPopulationLower} proves
\[
    T_2
    \gtrsim
    n\,
    \Psi_n\left(
        \vec e_1,
        \frac{C K_{\|\cdot\|}M_n(\cF)}{t\sqrt n}
    \right)^2,
\]
which is \eqref{Eq:UC2CV}.
\end{proof}

\subsubsection{Concluding remarks}

\begin{remark}[On Efron--Stein-type bounds in Banach spaces]
\label{R:ReverseEfronStien}
Recall the classical Efron--Stein inequality: if
$F=F(X_1,\ldots,X_n)$ is scalar-valued and $X_i'$ is an independent copy of
$X_i$, then
\[
    \Var(F)
    \leq
    \frac12\sum_{i=1}^n
    \E\left[
        \bigl(
            F(X_1,\ldots,X_n)
            -
            F(X_1,\ldots,X_i',\ldots,X_n)
        \bigr)^2
    \right].
\]
Related coordinate-increment inequalities for maps from the discrete cube to
Hilbert and type-$2$ spaces originate in the work of
\cite{enflo1970nonexistence,enflo1970uniform}; see also
\cite{pisier2006probabilistic,ivanisvili2020rademacher}. For instance, if
$\varepsilon^{(i)}$ denotes $\varepsilon$ with its $i$-th coordinate flipped,
then a Hilbert-valued map satisfies
\[
    \E_\varepsilon
    \|F(\varepsilon)-\E_\varepsilon F(\varepsilon)\|_{\cH}^2
    \lesssim
    \sum_{i=1}^n
    \E_\varepsilon
    \|F(\varepsilon)-F(\varepsilon^{(i)})\|_{\cH}^2.
\]

The mechanism in the proof above runs in the reverse direction. The first
Gaussian-chaos projection extracts the coordinate influences
$\alpha_1,\ldots,\alpha_n$ from the nonlinear map
$\vec\xi\mapsto\erm(\vec X,\vec f_\star+\vec\xi)$. The $K$-convexity
estimate ensures that this linear projection cannot be much larger than the
full nonlinear fluctuation, while cotype $2$ gives
\[
    \frac{t^2}{K_{\|\cdot\|}^2}
    \sum_{i=1}^n\|\alpha_i\|^2
    \lesssim
    \E_{\vec\xi}\|M_{\vec X}(\vec\xi)\|^2.
\]
In this sense, this is the closest analog in our setting
to a reverse Efron--Stein inequality.
\end{remark}

\begin{remark}
From a statistical point of view, the interpolation identity
\[
    \bigl(\alpha_i(X_1),\ldots,\alpha_i(X_n)\bigr)=\vec e_i
\]
explains the appearance of $\Psi_n$. The linearization coefficients belongs to
the class of functions that interpolate $n-1$ sample points by zero and one
sample point by one. The theorem lower-bounds the conditional variance by the
population $L_2(\PP)$ cost of interpolating these one-point spikes at the
Banach-norm radius of the coefficients of the linearization of the MNI.
\end{remark}

\begin{remark}
An analogous formulation is possible for Rademacher noise by using the first degree Walsh coefficients and the corresponding $K$-convexity projection
constant.
\end{remark}

\subsection{Proof of \Cref{T:VarLinOpt}}
We recommend reading \Cref{S:ThmLinear} before moving to this proof and retain
its notation. In particular,
\[
    \vec\zeta\sim N(0,I_n/n),
    \qquad
    \eps_d:=p-1.
\]
Throughout this proof, $\|\cdot\|_p$ denotes the gauge of
$\widetilde B_p^d$, and $M,M^*$ abbreviate $M_s,M_s^*$, respectively.
For $S\subset[d]$, $\vec X^{(S)}$ denotes the submatrix formed by the
columns indexed by $S$, and $\vec X^{(-1)}$ is obtained by deleting the
first column. As discussed there, it is enough to prove
\Cref{T:WRD}.

\subsubsection{Step I: $\ell_p$-geometry}

The classical result \citep[Thm. 1]{schutt1984entropy} states that the covering numbers of the $\widetilde B_p^d$ ball with respect to the $\ell_2$ norm satisfy the following:
\[
\log\cN\lp\widetilde B_p^d,\lambda_{k}B_2^d\rp = k,
\]
where
\[
   \lambda_{k} \asymp \left\{\begin{array}{lr}
       \lp\frac{d\log(ed/k)}{k}\rp^{\frac1p-\frac12} & \text{for } \log(d) \leq k \leq d\\
d^{\frac1p-\frac12} \quad  &\text{for } 1 \leq k \leq \log(d)\\
        \end{array}\right\}.
\]
We also use the following\footnote{One can construct a net using these $\cV_k$, but as our proof does not require this, we omit this part.} sets $\cV_k$ for indices $k \geq \lfloor\log d\rfloor$:
\begin{equation}\label{Eq:Vk}
\cV_k := \bigcup_{\sigma \subset [d]} A_\sigma,
\quad \text{where } \frac{k}{2\log(d/k)} \leq |\sigma| \leq \frac{k}{\log(d/k)}.
\end{equation}
where $A_{\sigma}\subset \ell_p^{d}$ is defined as
\[
A_{\sigma}
:=\left(\frac{d}{k}\right)^{\frac1p-\frac12}\frac{1}{\sqrt{|\sigma|}}
\begin{cases}
\left(\log\frac{d}{k}\right)^{\frac1p-\frac12}\, B_{\infty}^{\sigma}
& C_1\,\dfrac{k}{\log(d/k)} \leq |\sigma| \leq 2C_1\,\dfrac{k}{\log(d/k)}\\[6pt]
B_{\infty}^{\sigma} & |\sigma|\asymp 1
\end{cases}
\]
Now, we construct a net on $\widetilde{\cV}_k$ by taking a $(k/d)^{C_1}$-net (in the $\ell_2$ metric) of each cube in $\cV_k$. To see that $\log |\widetilde{\cV}_k| \lesssim k$, note that
\[
    \log |\widetilde{\cV}_k| \lesssim \log\lp\binom{d}{\frac{c_1k}{\log(d/k)}}\cdot \lp\frac{(d/k)^{\frac1p-\frac12}}{(k/d)^{C_1}}\rp^{\frac{c_1k}{\log(d/k)}}\rp \lesssim k
\]
where we used that for any $k \geq 0$,
\[
\log \binom{d}{\frac{c_1k}{\log(d/k)}} \lesssim k,
\]
and  
\[
\cN(\tfrac{R}{\sqrt{|\sigma|}} B_{\infty}^{\sigma},\tau B_2^{\sigma}) \leq \cN(RB_2^{\sigma},\tau B_2^{\sigma}) \leq (1+2R/\tau)^{|\sigma|}.
\]
For any vector $v\in \R^d$, we have:
\begin{equation}\label{Eq:JL}
    \Pr_{P}\left( \|Pv\|_{2} > (1 + \tau)\sqrt{\frac{n}{d}}\|v\|_2\right) \leq \exp(-cn\tau^2).
\end{equation}
We used the fact that $\E \|Pv\|_{2}^2 = \frac{n}{d}\|v\|_{2}^2$, and therefore $\E \|Pv\|_{2} \leq  \sqrt{\frac{n}{d}}\|v\|_{2}$. When $P$ is Gaussian, inequality \eqref{Eq:JL} is immediate since $Pv$ is again Gaussian. When $P$ is sub-Gaussian, the inequality still holds; it is the well-known fact that a sum of independent sub-Gaussian variables is again sub-Gaussian (see, e.g., \cite{vershynin2018high}). We will refer to it as the (sub-Gaussian) Hoeffding inequality.

Now, using the Hoeffding inequality with $\tau \asymp \max\{\sqrt{k/n},1\}$ over $\widetilde{\cV}_k$ for any $1 \leq k \leq d$, it holds that\footnote{Note that $\cV_k$ is not a finite set, the statement requires an elementary net argument which is omitted. }
\begin{align}\label{Eq:JL-net}
\Pr\Bigg(
\forall v \in \cV_k:\;
\frac{\|Pv\|_2}{\|v\|_p}
&\;\lesssim\;
\log\!\Big(\frac{d}{k}\Big)^{\frac{1}{p}-\frac12}\Big(\frac{d}{k}\Big)^{\frac{1}{p}-\frac12}
\left[
\underbrace{\sqrt{\frac{k}{d}}}_{(*)}
+
\underbrace{
\sqrt{\frac{n}{d}}}_{(**)}
\right]
\Bigg)
\geq 1-\exp\!\big(-c\,\max\{k,n\}\big).
\end{align}
Note that we can take a union bound over $1\leq k\leq d$ to ensure that this holds uniformly for all $k$, with probability at least $1-\exp(-cn)$. When $k \gtrsim n$ in the last estimate, the term $(*)$ arises from the \textbf{deviations}, i.e., from taking a union bound over $2^{k}$ elements, and is therefore larger than the term $(**)$, which arises from the \textbf{expectation} of each of these random variables. To be consistent with Step 0 above, we use the notation
\begin{equation}
\bar{\lambda}_k = \sup_{w_k \in \cV_{k\log(d/k)}}\|w_k\|_2 \asymp (d/k)^{\frac1p-\frac12}.    
\end{equation}
Note that under the high-probability event defined above, we have 
\begin{equation}\label{Eq:bR1}
    \sup_{w_k \in \cV_{k\log(d/k)}}\|Pw_k\|_2 \leq (k/d)^{1-1/p}\sqrt{\log(d/k)}.
\end{equation}
Our goal is to establish the following high-probability estimate.
\begin{lemma}\label{lem359}
    With probability at least $1-\exp(-cn)$, we have 
    \[
    \|\vec\zeta\|_{n} \asymp \sqrt{\eps_d}.
    \]
\end{lemma}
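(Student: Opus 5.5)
We prove the two inequalities $\|\vec\zeta\|_n\gtrsim\sqrt{\eps_d}$ and $\|\vec\zeta\|_n\lesssim\sqrt{\eps_d}$ separately. Both rest on the identity $\|\vec\zeta\|_n=\sup_{y\in(P\widetilde B_p^d)^\circ}\inner{\vec\zeta,y}$. Recall that $(P\widetilde B_p^d)^\circ=\{y:\|P^\top y\|_{\widetilde B_p^{d,\circ}}\le1\}$, and that the dual gauge is $\lambda_p\|\cdot\|_{\ell_q}$ with $q=p/(p-1)\asymp\eps_d^{-1}$.

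\textbf{Lower bound.} By duality it suffices to test a single vector, $y=\vec\zeta/\|P^\top\vec\zeta\|_{\widetilde B_p^{d,\circ}}$. This gives
\[
\|\vec\zeta\|_n\;\geq\;\frac{\|\vec\zeta\|_2^2}{\lambda_p\|P^\top\vec\zeta\|_q}.
\]
Conditionally on $\vec\zeta$, the vector $P^\top\vec\zeta$ has i.i.d.\ Gaussian (respectively sub-Gaussian) coordinates of variance $\|\vec\zeta\|_2^2/d$, and $\|\vec\zeta\|_2\asymp1$ with probability $1-e^{-cn}$. For $q\le\log d$ the standard estimate $\|g\|_q\asymp\sqrt q\,d^{1/q}$ holds, with Lipschitz concentration at scale $O(1)$ around it. This gives
\[
\lambda_p\|P^\top\vec\zeta\|_q\lesssim d^{1/p-1/2}\sqrt q\,d^{1/q-1/2}=\sqrt q\asymp\eps_d^{-1/2},
\]
with probability $1-e^{-cn}$. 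Here we use $q\le\log d/C$, which is guaranteed by $p\ge1+C/\log d$, so that the deviation term $O(d^{-1/2}\cdot\lambda_p\cdots)$ is negligible compared with the mean.

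\textbf{Upper bound.} This is the main obstacle. We need a \emph{primal} interpolator $v$ with $Pv=\vec\zeta$ and $\|v\|_p\lesssim\sqrt{\eps_d}$. Equivalently, we need $cB_2^n\subset\eps_d^{-1/2}P\widetilde B_p^d$, since then $\|\vec\zeta\|_n\lesssim\sqrt{\eps_d}\|\vec\zeta\|_2$. Kashin's inclusion (\Cref{Lem:Kashin}) only gives $cB_2^n\subset P\cC_d\subset P\widetilde B_p^d$, which is off by the factor $\eps_d^{-1/2}$. This crude bound suffices in the sub-Gaussian case, matching Step~V. For the Gaussian case, following Step~I of the plan, we apply Dvoretzky's theorem (\Cref{Lem:Dvo}) to a trimmed body
\[
T:=\widetilde B_p^d\cap R\,B_2^d .
\]
Two facts about $T$ drive the argument. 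First, its mean width satisfies $M^*(T)\gtrsim M^*(\widetilde B_p^d)\asymp\eps_d^{-1/2}$. Second, if we choose $R=e^{C/\eps_d}$, the ratio $R(T)/M^*(T)$ is at most $\beta(p)$ up to constants. The first fact holds because the dual-norm expectation $\E\lambda_p\|g\|_q/\sqrt d\asymp\sqrt q$ is attained by vectors that are essentially spread over $d\,e^{-O(q)}$ coordinates, and these have Euclidean norm bounded by $R$. We would verify it via the duality $h_T(\theta)\ge\sup\{\inner{\theta,v}:v\in\widetilde B_p^d,\|v\|_2\le R\}$, using the explicit maximizer $v\propto|\theta|^{q-1}\operatorname{sgn}\theta$ truncated to its level sets; the covering-number formula of \citep[Thm.~1]{schutt1984entropy} can serve as a cross-check.

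With these two facts in hand, \Cref{Lem:Dvo} and the assumption $n\beta(p)\lesssim d$ make $CR(T)/M^*(T)\cdot\sqrt{n/d}\le1/2$. This yields, with probability $1-2e^{-cn}$,
\[
PT\supset\tfrac12M^*(T)B_2^n\supset c\,\eps_d^{-1/2}B_2^n ,
\]
and hence $\|\vec\zeta\|_n\lesssim\sqrt{\eps_d}\,\|\vec\zeta\|_2\lesssim\sqrt{\eps_d}$. Intersecting the three high-probability events (for $\|\vec\zeta\|_2$, the $\ell_q$ concentration, and Dvoretzky) completes the proof. The delicate point is the lower bound on $M^*(T)$: the truncation level must be chosen so that it loses only a constant factor of the mean width while keeping the radius $\lesssim\beta(p)M^*(T)$. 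We expect this computation to produce exactly the $\exp(C_0/\eps_d)$ factor appearing in $\beta(p)$.
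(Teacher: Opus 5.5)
\textbf{Gap in the lower bound.} The failing step is your claim that $\lambda_p\|P^\top\vec\zeta\|_q\lesssim\sqrt q$ holds with probability $1-e^{-cn}$. Conditionally on $\vec\zeta$, the vector $g:=\sqrt d\,P^\top\vec\zeta/\|\vec\zeta\|_2$ is standard Gaussian in $\R^d$, and your event is $\{\|g\|_q\lesssim\sqrt q\,d^{1/q}\}$. Since $\|g\|_q\geq|g_1|$, this event excludes $\{|g_1|\geq C\sqrt q\,d^{1/q}\}$, so it fails with probability at least $\exp(-C'q\,d^{2/q})$.

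Lipschitz concentration at scale $O(1)$ only controls deviations of order $\sqrt n$ at confidence $e^{-cn}$. The condition $q\leq\log d/C$ only gives $d^{1/q}\geq e^{C}$, so nothing makes $\sqrt n$ negligible against $\sqrt q\,d^{1/q}$. Whenever $n\gg q\,d^{2/q}$, your argument gives the lower bound only with probability $1-\exp(-c\,q\,d^{2/q})$. This is not an edge case: $p=4/3$, $d=n^{3/2}$ satisfies $q\leq n$ and $n\beta(p)\lesssim d$, and the regime $d\lesssim n^{q/2}$ of \Cref{Ex:P} is exactly where $d^{2/q}\lesssim n$.

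Using the randomness of $\vec\zeta$ instead does not help. The map $\vec\zeta\mapsto\lambda_p\|P^\top\vec\zeta\|_q$ has Lipschitz constant at least $\lambda_p\max_j\|P_j\|_2\approx\sqrt n\,d^{-1/q}$, which gives the same obstruction. No bound built from a single dual test vector reaches the stated confidence.

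\textbf{How to repair it.} You need to pass through the $\vec\zeta$-mean, as the paper does. The inclusion $c\eps_d^{-1/2}B_2^n\subset P\widetilde B_p^d$ from your upper bound makes $\|\cdot\|_n$ a $C\sqrt{\eps_d}$-Lipschitz function. So for $\vec\zeta\sim N(0,I_n/n)$ it stays within $c\sqrt{\eps_d}$ of $\E_{\vec\zeta}\|\vec\zeta\|_n$ except on probability $e^{-cn}$.

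For the mean, apply Cauchy--Schwarz to your test-vector inequality: $\E_{\vec\zeta}\|\vec\zeta\|_n\geq(\E\|\vec\zeta\|_2)^2/\E_{\vec\zeta}\lambda_p\|P^\top\vec\zeta\|_q$. The paper uses $MM^*\geq1$ at this point instead. The denominator is essentially $M^*(P\widetilde B_p^d)$, and it is $\lesssim\sqrt q$ with probability $1-e^{-cn}$ over $\vec X$. You can get this from the column-norm argument of \Cref{lem_M*_bound}. In the Gaussian case it also suffices to note that, after averaging over $\vec\zeta$, this quantity is $\lambda_p/\sqrt{nd}$-Lipschitz in $\vec X$ for the Frobenius norm, so its deviation at level $e^{-cn}$ is only $O(d^{-1/q})$.

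\textbf{Your upper bound.} This is a legitimate alternative to the paper's route. The paper applies Dvoretzky to $\operatorname{conv}\{\cV_{d/\log d}\}$, with a computation tailored to $\eps_d\asymp1/\log\log d$. Your body $\widetilde B_p^d\cap e^{C/\eps_d}B_2^d$ is the truncation the paper itself uses in the proof of \Cref{Cor:VarPaou}. It covers all $p\geq1+C/\log d$ under $n\beta(p)\lesssim d$, and it gives $\|\vec\zeta\|_n\lesssim\sqrt{\eps_d}\|\vec\zeta\|_2$ pointwise. Two details need fixing:
\begin{itemize}
\item Dvoretzky requires $(R(T)/M^*(T))^2\,n\lesssim d$, not $R(T)/M^*(T)\lesssim\beta(p)$. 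This still holds, because $R(T)/M^*(T)\lesssim\sqrt{\eps_d}\,e^{C/\eps_d}$ and you may take $C_0\geq2C$.
\item The bound $M^*(T)\gtrsim\sqrt q$ must actually be proved. It does hold: the dual maximizer $v^*\propto\mathrm{sgn}(\theta)|\theta|^{q-1}$ satisfies $\|v^*\|_2^2\asymp\E|g|^{2q-2}/(\E|g|^q)^{2-2/q}\asymp2^q$ for typical $\theta$ when $q\lesssim\log d$. Hence $h_T(\theta)\geq\min\{1,R/\|v^*\|_2\}\,h_{\widetilde B_p^d}(\theta)$ loses only a constant factor.
\end{itemize}
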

First, we prove that this holds after averaging over the noise. That is, we establish that with probability at least $1-\exp(-cn)$ over $\vec X$, we have 
\begin{equation}\label{eq364}
    \E_{\vec\zeta}\|\vec\zeta\|_{n} \asymp M(P\widetilde B_p^d) \asymp \sqrt{\eps_d}.
\end{equation}
The first equivalence is the usual comparison between spherical and Gaussian averaging. We begin by upper bounding $M(P \widetilde B_p^d)$ by showing that $P \widetilde B_p^d$ contains a sufficiently large Euclidean ball.

\begin{lemma}\label{lem_dvoretzky_inclusion}
With probability at least $1-\exp(-cn)$, we have
\begin{equation}\label{Eq:IncPrjball}
          c\eps_d^{-1/2} \cdot B_2^n \subset  P\left(\mathrm{conv}\left\{\cV_{\frac{d}{\log(d)}}\right\}\right) \subset P \widetilde B_p^d 
\end{equation} 
where $c>0$ is a universal constant.
\end{lemma}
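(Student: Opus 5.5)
The second inclusion is elementary, so I would prove it first. Every point of $A_\sigma$ is supported on $\sigma$ and has coordinates bounded by the stated scale over $\sqrt{|\sigma|}$. Hence its $\ell_p^d$-norm is at most that scale times $|\sigma|^{1/p-1/2}$. After dividing by $\lambda_p=d^{1/p-1/2}$ and using the constraint $|\sigma|\asymp k/\log(d/k)$, I expect this to be $\lesssim 1$. So $\cV_k\subset C\widetilde B_p^d$, and after adjusting the absolute constants in \eqref{Eq:Vk} we get $\cV_k\subset\widetilde B_p^d$. Taking convex hulls and applying the linear map $P$ then gives $P(\mathrm{conv}\,\cV_{d/\log d})\subset P\widetilde B_p^d$.

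For the first inclusion I would work in the Gaussian case, which is where the condition $n\beta(p)\lesssim d$ is available. The sub-Gaussian case only needs the weaker inclusion $cB_2^n\subset P\cC_d$ from \Cref{Lem:Kashin}, as in Step~V. The plan is to apply \Cref{Lem:Dvo} to the trimmed body $T:=\mathrm{conv}\,\cV_{d/\log d}$. This yields
\[
\Bigl(1-C\tfrac{R(T)}{M^*(T)}\sqrt{\tfrac nd}\Bigr)M^*(T)B_2^n\subset PT
\]
with probability $1-2e^{-cn}$. It therefore suffices to prove two estimates:
\begin{itemize}
\item[(a)] $M^*(T)\gtrsim \eps_d^{-1/2}$;
\item[(b)] $R(T)/M^*(T)\lesssim \sqrt{\beta(p)}$, so that $C\,\tfrac{R(T)}{M^*(T)}\sqrt{n/d}\le 1/2$ once $n\beta(p)\lesssim d$ with a small enough constant.
\end{itemize}

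For (b), $R(T)=\sup_{v\in\cV}\|v\|_2$ can be read off directly from the scalings of $A_\sigma$. It equals the prefactor $(d/k)^{1/p-1/2}$, times a $\log(d/k)^{1/p-1/2}$ factor on the large-support pieces. With $k=d/\log d$ this is $\exp(O(1/\eps_d))$ up to lower-order factors, which matches the $\exp(C_0/(p-1))$ in $\beta(p)$.

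For (a), I would compute the support function. For $\theta\in\Sn$ one has $h_T(\theta)=\sup_\sigma h_{A_\sigma}(\theta)$, and $h_{A_\sigma}(\theta)$ is the relevant scale divided by $\sqrt{|\sigma|}$, times $\sum_{j\in\sigma}|\theta_j|$. Choosing $\sigma$ to be the top $m$ coordinates of $\theta$ reduces the question to the order statistics of $\sqrt d\,\theta\approx g\sim N(0,I_d)$. The sum of the top $m$ values of $|g_j|$ is about $m\sqrt{2\log(d/m)}$. Comparing with the duality heuristic: the full mean width is
\[
M^*(\widetilde B_p^d)=\lambda_p\,\E\|\theta\|_q\asymp\sqrt q\asymp \eps_d^{-1/2},
\]
and $\E\|g\|_q^q$ is carried by coordinates at height $\asymp\sqrt q$. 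These form a fraction $e^{-\Theta(q)}$ of all coordinates. I would check that some level $\sigma$ allowed in $\cV_{d/\log d}$ captures this band. Given the normalization, this should give $h_T(\theta)\gtrsim\sqrt q$ for a typical $\theta$, and averaging over the sphere gives (a). Finally I would apply Gaussian concentration of $h_T$, which is $R(T)$-Lipschitz, to confirm that the average is attained with the needed probability; this is already packaged inside \Cref{Lem:Dvo}.

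The step I expect to be the main obstacle is (a): showing that the trimmed family $\cV_{d/\log d}$, with its specific support sizes and $\log(d/k)$ scalings, still retains a constant fraction of the mean width $\sqrt q$ of $\widetilde B_p^d$. This requires a careful optimization of the order-statistic sum against the scaling of $A_\sigma$. If the support-size window in \eqref{Eq:Vk} does not contain the band at level $\sqrt q$, I would instead use the truncation $\widetilde B_p^d\cap e^{C/\eps_d}B_2^d$ as the trimmed body. For that body the mean-width lower bound follows by comparing $h$ with the $\ell_q$-norm restricted to coordinates of size at most $e^{C/\eps_d}/\sqrt d$, and (b) holds by construction.
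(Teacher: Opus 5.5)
Your plan matches the paper's route: apply \Cref{Lem:Dvo} to $T=\mathrm{conv}\,\cV_{d/\log d}$, read $R(T)$ off the scaling of $A_\sigma$, and obtain $M^*(T)$ from Gaussian order statistics. The second inclusion is indeed elementary. The gap is step (a): you leave it at ``should give'', and the heuristic behind it is not valid in general. For $k=d/\log d$ there is no level to choose, since every $A_\sigma$ in $\cV_{d/\log d}$ has $|\sigma|\asymp s:=d/(\log d\,\log\log d)$. Hence $h_T(\theta)\asymp a\sum_{i\le s}|\theta|_{(i)}$ with $a=R(T)/\sqrt s$, and
\[
M^*(T)\asymp R(T)\sqrt{\tfrac{s}{d}}\,\sqrt{\log\tfrac{d}{s}}\asymp(\log d\,\log\log d)^{-\eps_d/p}\sqrt{\log\log d}.
\]
This is $\asymp\eps_d^{-1/2}$ only when $\eps_d\asymp 1/\log\log d$. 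Equivalently, the band of coordinates at height $\asymp\sqrt q$ (a fraction $e^{-\Theta(q)}$ of them) must coincide with the fixed window $s/d$. For fixed $p$ the quantity tends to $0$, and for $\eps_d\asymp1/\log d$ it is $\sqrt{\log\log d}\ll\eps_d^{-1/2}$. The paper's proof invokes $\eps_d=\Theta(1/\log\log d)$ at exactly this point.

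Step (b) hides the same restriction. The display gives $R(T)/M^*(T)\asymp\sqrt{\log d}$ for every $p$, so the Dvoretzky condition is $n\lesssim d/\log d$. The hypothesis $n\beta(p)\lesssim d$ guarantees this only if $\beta(p)\gtrsim\log d$, i.e.\ $\eps_d\lesssim C_0/\log\log d$. Your claim $R(T)=\exp(O(1/\eps_d))$ silently assumes $\log\log d\lesssim 1/\eps_d$.

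To complete your route, you must carry out this computation and restrict to $\eps_d\asymp1/\log\log d$, as the paper does. Your fallback is a genuinely different and more robust argument. For $T'=\widetilde B_p^d\cap e^{C/\eps_d}B_2^d$ one has $M^*(T')\asymp\sqrt q$ throughout $p\in[1+C/\log d,2]$. The reason is that restricting a typical direction to coordinates of height $\lesssim\sqrt{q/d}$ keeps a constant fraction of its $\ell_q$-norm, and the corresponding $\ell_q$-dual maximizer has Euclidean norm $e^{O(q)}$. The Dvoretzky condition $n\,\eps_d\,e^{2C/\eps_d}\lesssim d$ then follows from $n\beta(p)\lesssim d$ once $C_0\geq 2C$. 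This gives $c\eps_d^{-1/2}B_2^n\subset P\widetilde B_p^d$ for the whole range of $p$, which is all that Step~II actually uses. It does not, however, establish the stated middle inclusion through $\mathrm{conv}\,\cV_{d/\log d}$.
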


\begin{proof}
The key difficulty is that we cannot apply Dvoretzky's theorem to the entire $\ell_{p}$-ball, as its critical dimension is too small, namely $d(B_p^d) \ll n$. Thus, for a suitable choice of $k$, we apply it to the subset 
\[
    \mathrm{conv}\{\cV_k\},
\]
which has a significantly smaller $\ell_2$-diameter while retaining the same Gaussian complexity as the full ball, up to constants. As we shall see, the optimal choice is $k \asymp d/\log d$. To avoid unnecessary notational burden, we assume that $k = d/\log d$ is an integer. Set
\[
L:=\log(d/k)=\log\log d,
\qquad
s:=\frac{k}{L}\asymp \frac{d}{\log d\,\log\log d}.
\]
In this regime $\cV_k$ is a union of sets $A_\sigma$ with $|\sigma|\asymp s$, where
\[
A_\sigma = a\,B_\infty^\sigma,
\qquad
a:=\Big(\frac{d}{k}\Big)^{\frac1p-\frac12}\frac{1}{\sqrt{s}}\,L^{\frac1p-\frac12}.
\]
Thus, the diameter of $\cV_k$ is 
\[
R(\cV_k)\asymp \Big(\frac{d}{k}\Big)^{\frac1p-\frac12}L^{\frac1p-\frac12}
= (\log d)^{\frac1p-\frac12}(\log\log d)^{\frac1p-\frac12}.
\]

We now want to compute the width of $\cV_k$. For any $u\in\R^d$,
\[
h_{\cV_k}(u)
=\sup_{x\in \cV_k}\langle u,x\rangle
=\sup_{|\sigma|\asymp s}\ \sup_{x\in A_\sigma}\langle u,x\rangle
=\sup_{|\sigma|\asymp s}\ a\sum_{i\in\sigma}|u_i|.
\]
Let $|u|_{(1)}\ge\cdots\ge|u|_{(d)}$ denote the non-increasing rearrangement of $(|u_i|)_{i\leq d}$.
\[
h_{\cV_k}(u)\asymp a\sum_{i=1}^s |u|_{(i)}.
\]
Consequently,
\[
M^*(\cV_k)\asymp \frac{a}{\sqrt d}\,\E\sum_{i=1}^s |G|_{(i)},
\qquad G\sim\cN(0,I_d).
\]

Let $q:=s/d\asymp(\log d\,\log\log d)^{-1}$ and let $t>0$ be defined by $\P(|g|\geq t)=q$ for $g\sim\cN(0,1)$.
Then $t\asymp\sqrt{\log(1/q)}\asymp\sqrt{\log\log d}$.
Standard gaussian tail estimates yield
\[
\E\sum_{i=1}^s |G|_{(i)}\asymp s\,t\asymp s\sqrt{\log\log d}.
\]
Plugging this into the expression for $M^*(\cV_k)$ gives
\[
M^*(\cV_k)
\asymp
\Big(\frac{d}{k}\Big)^{\frac1p-\frac12}L^{\frac1p-\frac12}
\sqrt{\frac{s}{d}}\;\sqrt{\log\log d}.
\]
Since $\sqrt{s/d}=\sqrt{k/(dL)}=(\log d\,\log\log d)^{-1/2}$ and $d/k=\log d$, we conclude that
\[
M^*(\cV_k)
\asymp
(\log d)^{\frac1p-1}(\log\log d)^{\frac1p-1}\,\sqrt{\log\log d}
\asymp \sqrt{\log\log d},
\]
where we used that $\eps_d=\Theta(1/\log\log d)$, hence $(\log d)^{-\eps_d}\asymp1$ and $(\log\log d)^{-\eps_d}\asymp1$.

By what precedes we have $\frac{M^*(\cV_k)}{R(\cV_k)}\asymp \frac{1}{\sqrt{\log d}}$, hence the critical dimension of $\cV_k$ is of order $d/\log d \gg n$.
Now, we apply Dvoretzky's  \Cref{Lem:Dvo}, and conclude that
\[
   c_1 M^*(\cV_{\frac{d}{\log(d)}})B_2^n \subset P\left(\mathrm{conv}\left\{\cV_{\frac{d}{\log(d)}}\right\}\right). 
\]
As a consequence,
\[
M(P\widetilde B_p^d) \lesssim  \sqrt{\eps_d}.
\]
\end{proof}
Now, we prove the corresponding lower bound for $M(P \widetilde B_p^d)$. Since for any convex body $K$ we have 
\begin{equation}\label{eq451}
    MM^*(K)\geq 1,
\end{equation}
it suffices to upper bound $M^*( P \widetilde B_p^d)$. Note that when $P$ is Gaussian, it is an exercise to check that 
\[
\EE M^*(PK) \asymp \EE M^*(K).
\]
Applying this to $\widetilde B_p^d$ would yield the desired upper bound in expectation, since 
\[
M^*(\widetilde B_p^d) \asymp \eps_d^{-1/2}.
\]
However, we are interested in high-probability bounds, and using Lipschitz concentration of $P \mapsto M^*(PK)$ is insufficient for our purposes here. Instead, we use a more direct and computational approach, which works when $P$ is merely sub-Gaussian.
\begin{lemma}
Let $1<p\leq 2$, let $q=\frac{p}{p-1}$, and assume that $q\leq n$. Recall that $P=\frac{1}{\sqrt d}X\in\mathbb R^{n\times d}$
where the entries of $X$ are iid, centered, variance-one and subgaussian. Then, with probability at least $1-\exp(-cn)$,
\[
M^*(P\widetilde B_p^d)
\lesssim \sqrt q
\lesssim \eps_d^{-1/2},
\]
where the hidden constants depend only on the subgaussian constant of the entries.
\label{lem_M*_bound}
\end{lemma}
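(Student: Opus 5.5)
We compute the dual mean width explicitly through the $\ell_q$ norm of $P^{\top}\theta$, and then average over $\theta$ before controlling the randomness in $P$. Since $\widetilde B_p^d=\lambda_pB_p^d$ with $\lambda_p=d^{1/p-1/2}$, its support function is $h_{\widetilde B_p^d}(u)=\lambda_p\|u\|_{\ell_q^d}$, where $\|\cdot\|_{\ell_q^d}$ is the usual (unnormalized) $\ell_q$ norm. Hence
\[
    M^*(P\widetilde B_p^d)=\int_{\Sn}\lambda_p\|P^{\top}\theta\|_{\ell_q^d}\,d\sigma_n(\theta)
    =d^{\frac1p-1}\int_{\Sn}\|\vec X^{\top}\theta\|_{\ell_q^d}\,d\sigma_n(\theta).
\]
It therefore suffices to show that, with probability at least $1-\exp(-cn)$,
\[
    \int_{\Sn}\|\vec X^{\top}\theta\|_{\ell_q^d}\,d\sigma_n\lesssim\sqrt q\,d^{1/q}.
\]
This is consistent with the heuristic that, for fixed $\theta$, the vector $\vec X^{\top}\theta$ has i.i.d.\ sub-Gaussian entries, and $d^{1/p-1}d^{1/q}=1$.

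\textbf{Step 1 (average over $\theta$).} Write $X_1,\dots,X_d\in\R^n$ for the columns of $\vec X$. By Jensen's inequality,
\[
    \int\|\vec X^{\top}\theta\|_{\ell_q^d}\,d\sigma_n
    \le\Bigl(\sum_{j=1}^d\int|\inner{X_j,\theta}|^q\,d\sigma_n\Bigr)^{1/q}.
\]
For a fixed vector $x$ and $q\le n$, the one-dimensional marginal of $\sigma_n$ gives
\[
    \int|\inner{x,\theta}|^q\,d\sigma_n\le\bigl(C\|x\|_2\sqrt{q/n}\bigr)^q.
\]
This comparison of spherical with Gaussian moments is where the hypothesis $q\le n$ is used. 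Consequently,
\[
    \int\|\vec X^{\top}\theta\|_{\ell_q^d}\,d\sigma_n
    \lesssim\sqrt q\,d^{1/q}\Bigl(\frac1d\sum_{j=1}^d\Bigl(\frac{\|X_j\|_2}{\sqrt n}\Bigr)^q\Bigr)^{1/q}.
\]
This is deterministic in $\vec X$.

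\textbf{Step 2 (the column norms).} It remains to show that, with probability at least $1-\exp(-cn)$,
\[
    \frac1d\sum_{j=1}^d\bigl(\|X_j\|_2/\sqrt n\bigr)^q\le C^q.
\]
This is the main obstacle: $q$ may be as large as $n$, so this is an empirical average of heavy powers, and we need the exceptional probability to be $e^{-cn}$ uniformly in $d$. By Bernstein's inequality (Hanson--Wright), each $\|X_j\|_2/\sqrt n$ satisfies $\Pr(\|X_j\|_2\ge 2^{k}\sqrt n)\le\exp(-c4^kn)$ for $k\ge1$, independently over $j$.

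We use a dyadic decomposition in $k$. Columns with $\|X_j\|_2\le2\sqrt n$ contribute at most $2^q$. For $k\ge1$, let $N_k$ be the number of columns with $\|X_j\|_2\in[2^k\sqrt n,2^{k+1}\sqrt n)$. A Chernoff bound for the binomial variable $N_k$ shows that, with probability at least $1-\exp(-c4^kn)$,
\[
    N_k\le d\,e^{-c4^kn/2}+4^k n.
\]
The contribution of level $k$ to the average is then at most $(N_k/d)\,2^{(k+1)q}$. The first summand, $e^{-c4^kn/2}2^{(k+1)q}$, is at most $C^q$ because $q\le n$.

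The second summand, $(4^kn/d)\,2^{(k+1)q}$, needs more care, and I would handle it in two ranges. For large $k$ it is cleaner to use the sharper bound $\Pr(N_k\ge m)\le\binom dm e^{-c4^knm}$, which makes all levels with $2^k\gg\sqrt{\log d}$ empty with overwhelming probability. For small $k$, the factor $2^{kq}$ must be absorbed using $d\ge C_3n$, together with a cap obtained from $\max_j\|X_j\|_2\lesssim\sqrt n+\sqrt{\log d}$. If this bookkeeping turns out to be too lossy, a fallback is to bound directly the exponential moment $\E\exp\bigl(\lambda\sum_j(\|X_j\|_2/\sqrt n)^q\bigr)$ using independence, with a truncation at level $2\sqrt n$.

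Summing over $k$ (a union bound over $O(\log d)$ levels) gives the claim, and hence
\[
    M^*(P\widetilde B_p^d)\lesssim\sqrt q\lesssim\eps_d^{-1/2}.
\]
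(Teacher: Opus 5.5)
Your Step 1 is the paper's argument; the paper averages over $G/\sqrt n$ rather than $\sigma_n$, which is cosmetic. Your reduction to proving $\frac1d\sum_j Z_j^q\le C^q$ for $Z_j=\|X_j\|_2/\sqrt n$, via Bernstein tails and dyadic level counts, is also the paper's architecture.

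A minor correction first. The spherical moment bound holds for every $q\ge2$, because $\E|g_1|^q=\E\|G\|_2^q\int|\theta_1|^q\,d\sigma_n$ and $\E\|G\|_2^q\ge n^{q/2}$. So $q\le n$ is not used in Step 1; it is needed only in Step 2.

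The substantive gap is that the term $(4^kn/d)\,2^{(k+1)q}$, which you flag, is never bounded. The mechanism you name for it, $d\ge C_3n$, is not a hypothesis of the lemma and is not what makes it work. What closes it is $q\le n$ together with the correctly normalized cap. Since trivially $N_k\le d$, level $k$ contributes at most $\min\{1,4^kn/d\}\,2^{(k+1)q}$. Work on the event $\max_jZ_j\le C(1+\sqrt{\log d/n})$, which has probability at least $1-e^{-cn}$ by a union bound. There are two cases:
\begin{itemize}
\item If $\log d\le n$, only $O(1)$ levels survive, each contributing at most $C^q$.
\item If $x:=\log d/n>1$, then $2^{kq}\le C^qx^{q/2}\le C^qx^{n/2}\le C^qe^{nx/2}=C^q\sqrt d$. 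So each of the $O(\log x)$ surviving levels contributes at most $C^q\log d/\sqrt d$.
\end{itemize}
The normalized form of your cap $\max_j\|X_j\|_2\lesssim\sqrt n+\sqrt{\log d}$ is $2^k\lesssim 1+\sqrt{\log d/n}$. It is this, not the coarser $2^k\lesssim\sqrt{\log d}$, that must be used. With the coarser threshold the per-level bound is too large, e.g.\ for $q=n\asymp\log d$.

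Your exponential-moment fallback does not work either. For Gaussian entries, $\E\exp(\lambda Z_j^q)=\infty$ for every $\lambda>0$ once $q>2$.

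For comparison, the paper avoids the additive term by matching the count thresholds to the weight $2^{mq}$ rather than to the tail. It uses cumulative counts $N_m=\#\{j:Z_j\ge A2^m\}$ with targets $N_m\le da_m$, where $a_m=2^{-m(q+2)}$. The weighted sum is then automatically at most $d(CA)^q$, since $\sum_m2^{mq}a_m=\sum_m4^{-m}$, and all the work moves into the probability estimate. Write $p_m=\Pr(Z_j\ge A2^m)$. Then $q\le n$ gives $\log(a_m/(ep_m))\ge cn4^m$, hence $\Pr(N_m\ge da_m)\le\exp(-cnd\,2^{-mq})\le\exp(-cnd^{2/(q+2)})$ for $m<m_0$. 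The first level $m_0$ with $da_{m_0}<1$ is empty by a union bound, using $4^{m_0}>d^{2/(q+2)}$ and $\log d\le\frac{q+2}{2}d^{2/(q+2)}\le\frac{n+2}{2}d^{2/(q+2)}$. Either route proves the lemma; the paper's avoids the case split in $\log d/n$.
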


\begin{proof}
Write $P_1,\ldots,P_d\in\mathbb R^n$ for the columns of $P$. We first
establish the following high-probability estimate:
\begin{equation}
\label{eq287}
\left(
\frac{1}{d}\sum_{j=1}^d
\left(
\sqrt{\frac d n}\,\|P_j\|_2
\right)^q
\right)^{1/q}
\lesssim 1.
\end{equation}

Set
\[
Z_j:=\sqrt{\frac d n}\,\|P_j\|_2
=
\left(
\frac1n\sum_{i=1}^n X_{ij}^2
\right)^{1/2}.
\]
Since $X_{ij}^2-1$ is sub-exponential, Bernstein's inequality implies
that, for every $t\geq 2$,
\begin{equation}
\label{eq308}
\mathbb P(Z_j\geq t)
\leq 2\exp(-c_0nt^2),
\end{equation}
where $c_0>0$ depends only on the sub-Gaussian constant.

Choose a sufficiently large constant $A\geq 2$, to be fixed below. For
$m\geq 1$, define
\[
N_m:=\#\{j\leq d:Z_j\geq A2^m\},
\qquad
a_m:=2^{-m(q+2)}.
\]
We will prove that with probability at least
$1-\exp(-cn)$,
\begin{equation}
\label{eq_324}
N_m\leq da_m
\quad\text{for }1\leq m<m_0,
\qquad
N_{m_0}=0.
\end{equation}
Note that $N_m$ follows a binomial law with parameters $(d,p_m)$, where, by \eqref{eq308},
\[
p_m:=\mathbb P(Z_j\geq A2^m)
\leq 2\exp(-c_An4^m),
\]
where $c_A=c_0A^2$ can be made arbitrarily large by increasing $A$. We split indices as follows, let
\[
m_0:=\min\{m\geq 1:da_m<1\}.
\]
For $1\leq m<m_0$, we have $da_m\geq 1$. Moreover, since $q\leq n$ and
$4^m\geq 4m$, by choosing $A$ sufficiently large we have
\[
\log\left(\frac{a_m}{ep_m}\right)
\geq c_An4^m-m(q+2)\log 2-\log(2e)
\geq c_1n4^m.
\]
Since $N_m$ is binomial with parameters $d$ and $p_m$, the standard
binomial upper-tail bound gives
\[
\mathbb P(N_m\geq da_m)
\le
\left(\frac{ep_m}{a_m}\right)^{da_m}
\le
\exp\left(-c_1nda_m4^m\right).
\]
Using the definition of $a_m$, and $da_m\geq 1$, we verify that $d\,2^{-mq}
\geq d^{2/(q+2)}$. Consequently,
\begin{equation}
\label{eq358}
\mathbb P(N_m\geq da_m)
\le
\exp\left(-c_1n d^{2/(q+2)}\right)
\qquad
(1\leq m<m_0).
\end{equation}

It remains to upper-bound the probability that $N_{m_0}>0$. By a simple union bound,
\[
\mathbb P(N_{m_0}\geq 1)
\leq dp_{m_0}
\le
2\exp\left(\log d-c_An4^{m_0}\right).
\]
By the definition we have $4^{m_0}>d^{2/(q+2)}$. Furthermore, using that $q\leq n$,
\[
\log d=\frac{q+2}{2}\log (d^{2/(q+2)})
\leq \frac{n+2}{2}\,d^{2/(q+2)}.
\]
Thus, by taking $A$ sufficiently large,
\begin{equation}
\label{eq:lemma7-last-level}
\mathbb P(N_{m_0}\geq 1)
\le
\exp\left(-c_2n d^{2/(q+2)}\right).
\end{equation}

Combining \eqref{eq358} and
\eqref{eq:lemma7-last-level}, and taking a union bound over
$1\leq m<m_0$, we conclude that, with probability at least
$1-\exp(-cn)$,
\begin{equation}
\label{eq:lemma7-level-event}
N_m\leq da_m
\quad\text{for every }1\leq m<m_0,
\qquad
N_{m_0}=0.
\end{equation}
Indeed, there are at most $1+\log_2 d$ indices, which are absorbed by the exponential factor.

We work on the event \eqref{eq:lemma7-level-event} and we group the columns
according to the size of $Z_j$ :
\begin{align*}
\sum_{j=1}^d Z_j^q
&\le
d(2A)^q
+
\sum_{m=1}^{m_0-1}
N_m(A2^{m+1})^q
\\
&\le
d(2A)^q
+
dA^q2^q
\sum_{m=1}^{m_0-1}
2^{mq}a_m
\\
&=
d(2A)^q
+
dA^q2^q
\sum_{m=1}^{m_0-1}2^{-2m}
\\
&\le
d(CA)^q.
\end{align*}
This proves \eqref{eq287}. Equivalently, on the same
event,
\begin{equation}
\label{eq:lemma7-column-lq}
\left(\sum_{j=1}^d\|P_j\|_2^q\right)^{1/q}
\lesssim
d^{1/q}\sqrt{\frac nd}.
\end{equation}

We are now in position to bound the mean width. Let $G\sim N(0,I_n)$ and set $G_n:=\frac{G}{\sqrt n}$, so that $M^*(K) \asymp \mathbb E h_K(G_n)$
for every symmetric convex body $K\subset\mathbb R^n$. Therefore,
conditionally on $P$,
\begin{align*}
M^*(P\widetilde B_p^d)
&\lesssim
\mathbb E_{G_n}
h_{P\widetilde B_p^d}(G_n)
\\
&=
d^{\frac1p-\frac12}
\mathbb E_{G_n}\|P^T G_n\|_q.
\end{align*}
By Jensen's inequality,
$$
\mathbb E_{G_n}\|P^T G_n\|_q=
\mathbb E_{G_n}
\left(
\sum_{j=1}^d
|\langle P_j,G_n\rangle|^q
\right)^{1/q}
\le
\left(
\sum_{j=1}^d
\mathbb E_{G_n}
|\langle P_j,G_n\rangle|^q
\right)^{1/q}.
$$
Conditionally on $P$, the random variable $\langle P_j,G_n\rangle$ is a centered Gaussian with variance $\frac{\|P_j\|_2^2}{n}$. Thus
\[
\mathbb E_{G_n}
|\langle P_j,G_n\rangle|^q
=
\gamma_q^q
\frac{\|P_j\|_2^q}{n^{q/2}},
\]
where $\gamma_q\simeq \sqrt{q}$ is the $q$-th moment of a standard Gaussian. Consequently,
\[
\mathbb E_{G_n}\|P^\top G_n\|_q
\le
\frac{\gamma_q}{\sqrt n}
\left(
\sum_{j=1}^d\|P_j\|_2^q
\right)^{1/q}.
\]
Applying \eqref{eq:lemma7-column-lq}, we obtain
\begin{align*}
M^*(P\widetilde B_p^d)
&\lesssim
d^{\frac1p-\frac12}
\frac{\gamma_q}{\sqrt n}
d^{1/q}\sqrt{\frac nd}
\\
&=
\gamma_q\,
d^{1/p+1/q-1}
\\
&=
\gamma_q 
\end{align*}
We conclude that 
\[
M^*(P\widetilde B_p^d)
\lesssim
\sqrt q
\lesssim
\frac1{\sqrt{\eps_d}}
=
\eps_d^{-1/2}.
\]
This completes the proof.
\end{proof}

Combining Lemma \ref{lem_dvoretzky_inclusion} for the upper bound and Lemma \ref{lem_M*_bound} together with \eqref{eq451} for the lower bound, we have now established \eqref{eq364}. To prove Lemma \ref{lem359}, we work on the event where $\vec X$ satisfies \eqref{Eq:IncPrjball} and where \eqref{eq364} holds. We then use the convex Lipschitz concentration of the map $\vec\zeta \mapsto \frac{\|\vec\zeta\|_n}{\E \|\vec\zeta\|_n}$ (which is $O(1)$-Lipschitz), and conclude that with probability at least $1-\exp(-cn)$, it holds that 
\[
    \|\vec\zeta\|_{n} \asymp \sqrt{\eps_d},
\]
which we condition on \textit{throughout our proof}, as mentioned in Step 0. On this event, the following holds:
\begin{itemize}
    \item For all $k \geq 1$, it holds that $w_k \in \sqrt{\eps_d} \cdot \cV_{\lfloor k\log(d/k) \rfloor}$. 
    \item $\sum_{k \in \cR_1 \cup \cR_2}\delta_k^{p} \asymp 1$. 
    \item We have the estimates
\begin{equation}\label{Eq:stars}
M(P\widetilde B_p^d) \asymp M^*(P\widetilde B_p^d)^{-1} \asymp b(P\widetilde B_p^d) \asymp \sqrt{\eps_d}.
\end{equation}
\end{itemize}
If we knew that $\ermp \in  C\sqrt{\log(d)} \cdot \cC_d$, we could compare the $\ell_2$ norm with the $\ell_p$ norm at a small cost, and we would be done. In the following, we show that all $\delta_k$ are sufficiently small for
\[
    \|\ermp\|_2 = \widetilde{O}(1)
\]
to hold with high probability over $\vec X$, for \textbf{any} fixed vector $\vec\zeta$ such that $\|\vec\zeta\|_{n} \asymp \sqrt{\eps_d}$.

\subsubsection{Step II: Entries in $\cR_1$}
Here, we show that for every $k \in \cR_1$,
\begin{equation}\label{Eq:StationaryR}
    \delta_k \lesssim \left(\frac{k}{d}\right)^{1/p}
    \log(d/k)^{\frac{1}{2\eps_d}}.
\end{equation}
Recall that in the range $\cR_1$, the deviation term dominates the expectation term in \eqref{Eq:JL-net}.

Let
\[
    r:=\|\ermp\|_p \asymp \sqrt{\eps_d}.
\]
Recall from the normalization in the decomposition above that
\[
    \delta_k w_k
    = \Pi_k(\ermp)-\Pi_{k/2}(\ermp),
    \qquad
    \|w_k\|_p=r,
    \qquad
    0\leq \delta_k\leq 1,
\]
and that
\[
    \frac{w_k}{r}
    \in C\cV_{\lfloor k\log(d/k)\rfloor}.
\]
If $\delta_k=0$, there is nothing to prove, so assume below that $\delta_k>0$.
Since $\delta_k w_k$ and $\ermp-\delta_k w_k$ have disjoint supports,
\begin{equation}\label{Eq:BlockRemoval}
    \|\ermp-\delta_k w_k\|_p
    =\left(1-\delta_k^p\right)^{1/p}\|\ermp\|_p.
\end{equation}

By Lemma~\ref{lem_dvoretzky_inclusion},
\[
    c\eps_d^{-1/2} B_2^n
    \subset
    P\!\left(\operatorname{conv}\left\{\cV_{d/\log(d)}\right\}\right).
\]
Hence, there exists an interpolator $\widetilde w_k$ of $\delta_kPw_k$, namely
$P\widetilde w_k=\delta_kPw_k$, such that
\[
    \widetilde w_k
    \in
    C\sqrt{\eps_d}\delta_k\|Pw_k\|_2\,
    \operatorname{conv}\left\{\cV_{d/\log(d)}\right\}.
\]
In particular, since
$\operatorname{conv}\{\cV_{d/\log(d)}\}\subset \widetilde B_p^d$,
\begin{equation}\label{Eq:ReplacementNorm}
    \|\widetilde w_k\|_p
    \lesssim
    \sqrt{\eps_d}\delta_k\|Pw_k\|_2.
\end{equation}
Consider the interpolator of $\vec\zeta$ defined by
\begin{equation}\label{Eq:VariationalOne}
    \bar w_n:=\ermp-\delta_k w_k+\widetilde w_k.
\end{equation}
By the minimality of $\ermp$, the triangle inequality,
\eqref{Eq:BlockRemoval}, and \eqref{Eq:ReplacementNorm},
\[
    r
    \leq \|\bar w_n\|_p
    \leq r\left(1-\delta_k^p\right)^{1/p}
    +C\sqrt{\eps_d}\delta_k\|Pw_k\|_2.
\]
Since $1-(1-u)^{1/p}\geq u/p$ for $u\in[0,1]$, it follows that
\[
    \delta_k^{\eps_d}
    \lesssim
    \frac{\sqrt{\eps_d}}{r}\|Pw_k\|_2
    \lesssim \|Pw_k\|_2.
\]
On the other hand, since $w_k/r\in C\cV_{\lfloor k\log(d/k)\rfloor}$,
\eqref{Eq:bR1} and $r\asymp\sqrt{\eps_d}$ imply
\[
    \|Pw_k\|_2
    \lesssim
    \sqrt{\eps_d}\left(\frac{k}{d}\right)^{1-1/p}
    \sqrt{\log(d/k)}.
\]
Therefore,
\begin{equation}\label{Eq:StatioaryOne}
\begin{aligned}
    \delta_k^{\eps_d}
    &\lesssim
    \left(\frac{k}{d}\right)^{\frac{\eps_d}{p}}
    \left(\eps_d\log(d/k)\right)^{1/2},
    \\
    \delta_k
    &\lesssim
    \left(\frac{k}{d}\right)^{1/p}
    \left(C_1\eps_d\log(d/k)\right)^{\frac{1}{2\eps_d}}
\end{aligned}
\end{equation}
for some absolute constant $C_1>0$. In particular, since $\sqrt{\eps_d}\leq 1$,
this implies \eqref{Eq:StationaryR}.

Finally, by the definition of $\bar\lambda_k$ and the inclusion
$w_k/r\in C\cV_{\lfloor k\log(d/k)\rfloor}$,
\begin{align}
    \delta_k\|w_k\|_2
    &\lesssim r\delta_k\bar\lambda_k \\
    &\lesssim
    \sqrt{\eps_d}\left(\frac{k}{d}\right)^{1/2}
    \left(\frac{C_1\log(d/k)}{\log\log d}\right)^{\frac{1}{2\eps_d}} \\
    &\lesssim
    \left(\frac{k}{d}\right)^{1/2}
    \left(\frac{C_1\log(d/k)}{\log\log d}\right)^{\frac{1}{2\eps_d}}.
    \label{Eq:R1BlockBound}
\end{align}
Here we used $\eps_d=C_0/\log\log d$, and absorbed $C_0$ into $C_1$.
For $C_0$ sufficiently large, the right-hand side of
\eqref{Eq:R1BlockBound} is maximized at the largest index
$k\asymp d/\log d$. Consequently, there exists an absolute constant
$C_2>0$ such that, uniformly over $k\in\cR_1$,
\[
    \delta_k^2\|w_k\|_2^2\lesssim \log(d)^{C_2}.
\]
Since the vectors $(\delta_k w_k)_{k\in\cR_1}$ have pairwise disjoint supports,
\[
    \left\|\sum_{k\in\cR_1}\delta_k w_k\right\|_2^2
    =\sum_{k\in\cR_1}\delta_k^2\|w_k\|_2^2
    \lesssim \log(d)^{C_2+1}.
\]

\subsubsection{Step III: Entries in $\cR_2$}
In this regime, we use a different argument, since the expectation term dominates when
$k \lesssim n/\log(d/n)$; see Step~I. We first control the maximal entry and then treat the remaining dyadic blocks.

We begin with a consequence of Lemma~\ref{lem_dvoretzky_inclusion} that will be used throughout this step. Let
\[
    k_*:=\left\lfloor \frac{n}{\log(d/n)}\right\rfloor.
\]
The proof of Lemma~\ref{lem_dvoretzky_inclusion}, applied after deleting at most $k_*$ coordinates, and a union bound over the possible supports, shows that with high probability,
\begin{equation}\label{Eq:LeaveOutBall}
    c\eps_d^{-1/2} B_2^n
    \subset
    P_{S^c}\bigl(\widetilde B_p^d\cap\mathbb R^{S^c}\bigr)
\end{equation}
simultaneously for every $S\subset[d]$ with $|S|\leq k_*$. Indeed, the proof gives a failure probability of order $\exp(-cd/\log d)$ for each fixed $S$, whereas
$\log\sum_{j\leq k_*}\binom dj\lesssim n\ll d/\log d$.

For such a set $S$, define the normalized quotient norm
\begin{equation}\label{Eq:LeaveOutNorm}
    \|z\|_{\widetilde n,S}
    :=
    \eps_d^{-1/2}
    \inf\bigl\{\|v\|_p:\operatorname{supp}(v)\subset S^c,\ Pv=z\bigr\}.
\end{equation}
By \eqref{Eq:LeaveOutBall},
\begin{equation}\label{Eq:LeaveOutLipschitz}
    \|z\|_{\widetilde n,S}\lesssim \|z\|_2.
\end{equation}
We also work on the event $\|\vec\zeta\|_2\lesssim1$, which holds with probability at least $1-\exp(-cn)$.

\paragraph{Control of the maximal entry}
We write
\[
    \ermp=\delta\sqrt{\eps_d}\lambda_p e_1+w^\perp,
    \qquad
    \lambda_p=d^{\frac1p-\frac12},
    \qquad
    w^\perp\perp e_1,
\]
where we allow $\delta$ to be signed. Since $\|\ermp\|_p\asymp\sqrt{\eps_d}$, we may restrict to $|\delta|\leq C_0$ for an absolute constant $C_0$.
Let $\|\cdot\|_{\widetilde n}:=\|\cdot\|_{\widetilde n,\{1\}}$. By the splitting property of the $\ell_p$ norm,
\begin{equation}\label{Eq:PierreOne_clean4}
    \delta
    =
    \operatorname*{argmin}_{t\in[-C_0,C_0]}
    \left\{
        |t|^p+
        \left\|\vec\zeta-
        \frac{\sqrt{\eps_d}\lambda_p t}{\sqrt d}\vec X^{(1)}
        \right\|_{\widetilde n}^p
    \right\}.
\end{equation}

We discretize $[-C_0,C_0]$ with mesh $d^{-100}$ and denote the resulting grid by $\mathcal Z$. Let $\widetilde\delta$ be the minimizer of the objective in \eqref{Eq:PierreOne_clean4} over $\mathcal Z$. By strict convexity, $\widetilde\delta$ is one of the two grid points adjacent to $\delta$, and therefore
\begin{equation}\label{Eq:GridApproximation}
    |\widetilde\delta-\delta|\leq d^{-100}.
\end{equation}
Condition on $\vec X^{(-1)}$. For fixed $t$, the map
\[
    \vec X^{(1)}\longmapsto
    \left\|\vec\zeta-
    \frac{\sqrt{\eps_d}\lambda_p t}{\sqrt d}\vec X^{(1)}
    \right\|_{\widetilde n}
\]
is convex and, by \eqref{Eq:LeaveOutLipschitz}, has Euclidean Lipschitz constant at most
$C\sqrt{\eps_d}\lambda_p |t|/\sqrt d$. Moreover, Jensen's inequality gives that its expectation is at least $\|\vec\zeta\|_{\widetilde n}$. Hence Gaussian concentration, followed by a union bound over $\mathcal Z$, yields, with probability at least $1-d^{-100}$,
\begin{equation}\label{Eq:PierreMain_clean4}
    \forall t\in\mathcal Z,\qquad
    \left\|\vec\zeta-
    \frac{\sqrt{\eps_d}\lambda_p t}{\sqrt d}\vec X^{(1)}
    \right\|_{\widetilde n}
    \geq
    \|\vec\zeta\|_{\widetilde n}
    -C\sqrt{\log d}\,
    \frac{\sqrt{\eps_d}\lambda_p |t|}{\sqrt d}.
\end{equation}

Fix such a realization. Since $\widetilde\delta$ minimizes the discretized objective and $0\in\mathcal Z$, \eqref{Eq:PierreMain_clean4} implies
\begin{align*}
    |\widetilde\delta|^p
    &\leq
    \|\vec\zeta\|_{\widetilde n}^p
    -
    \left(
        \|\vec\zeta\|_{\widetilde n}
        -C\sqrt{\log d}\,
        \frac{\sqrt{\eps_d}\lambda_p |\widetilde\delta|}{\sqrt d}
    \right)_+^p \\
    &\lesssim
    \sqrt{\log d}\,
    \frac{\sqrt{\eps_d}\lambda_p |\widetilde\delta|}{\sqrt d},
\end{align*}
where we used \eqref{Eq:LeaveOutLipschitz}, $\|\vec\zeta\|_2\lesssim1$, and
$a^p-(a-b)_+^p\leq pa^{\eps_d}b$. Therefore,
\[
    |\widetilde\delta|
    \lesssim
    \left(
        \frac{\sqrt{\eps_d}\lambda_p\sqrt{\log d}}{\sqrt d}
    \right)^{\frac1{\eps_d}}.
\]
Together with \eqref{Eq:GridApproximation}, and absorbing the negligible mesh size, this gives the same estimate for $|\delta|$. Applying the argument to every coordinate and taking a union bound, we obtain
\begin{equation}\label{Eq:InfinityBoundStepIII}
    \|\ermp\|_\infty
    \lesssim
    \lambda_p|\delta|
    \lesssim
    \left(
        \frac{C\log d}{\log\log d}
    \right)^{\frac1{2\eps_d}}d^{-1/2}.
\end{equation}

\paragraph{Remaining $1\lesssim k\lesssim n/\log(d/n)$ entries}
We now adapt the same argument to a dyadic block $k\in\cR_2$. Let $S_k$ be the support of $w_k$ and set
\[
    m_k:=\left\lfloor k\log(ed/k)\right\rfloor,
    \qquad
    \bar\lambda_k:=\lambda_{m_k}.
\]
Recall from Step~I that
\[
    \frac{w_k}{\sqrt{\eps_d}}\in C\cV_{m_k},
    \qquad
    \|w_k\|_2\lesssim\sqrt{\eps_d}\bar\lambda_k,
    \qquad
    \frac{\|w_k\|_p}{\sqrt{\eps_d}}\asymp1.
\]
Let $a_k:=\|w_k\|_p/\sqrt{\eps_d}$. By the splitting property,
\begin{equation}\label{Eq:BlockVariationalStepIII}
    \delta_k
    =
    \operatorname*{argmin}_{t\in[0,C_0]}
    \left\{
        a_k^p t^p+
        \|\vec\zeta-tPw_k\|_{\widetilde n,S_k}^p
    \right\}.
\end{equation}

For fixed $S$, $w$, and $t$, the map
\[
    \vec X^{(S)}\longmapsto
    \|\vec\zeta-tPw\|_{\widetilde n,S}
\]
is convex and has Frobenius Lipschitz constant at most
$Ct\|w\|_2/\sqrt d$. Using the nets $\widetilde{\cV}_{m_k}$ constructed in Step~I, Gaussian concentration, and the same approximation argument as in Step~I, we obtain, with probability at least $1-\exp(-c m_k)$,
\begin{equation}\label{Eq:UniformBlockConcentration}
    \|\vec\zeta-tPw\|_{\widetilde n,S}
    \geq
    \|\vec\zeta\|_{\widetilde n,S}
    -C\sqrt{m_k}\,\frac{t\|w\|_2}{\sqrt d}
\end{equation}
simultaneously over all supports $|S|\leq k$, all
$w/\sqrt{\eps_d}\in C\cV_{m_k}$ supported on $S$, and all $t$ in the discretization above. Taking a union bound over the dyadic values of $k$, the estimate applies to the random pair $(S_k,w_k)$.

Let $\widetilde\delta_k$ be the minimizer of the objective in \eqref{Eq:BlockVariationalStepIII} over the grid. As above, strict convexity gives
$|\widetilde\delta_k-\delta_k|\leq d^{-100}$. Comparing the value at $\widetilde\delta_k$ with the value at $0$ and using \eqref{Eq:UniformBlockConcentration}, we obtain
\begin{align*}
    a_k^p\widetilde\delta_k^p
    &\leq
    \|\vec\zeta\|_{\widetilde n,S_k}^p
    -
    \left(
        \|\vec\zeta\|_{\widetilde n,S_k}
        -C\sqrt{m_k}\,
        \frac{\widetilde\delta_k\|w_k\|_2}{\sqrt d}
    \right)_+^p \\
    &\lesssim
    \sqrt{m_k}\,
    \frac{\widetilde\delta_k\|w_k\|_2}{\sqrt d}.
\end{align*}
Since $a_k\asymp1$ and $\|w_k\|_2\lesssim\sqrt{\eps_d}\bar\lambda_k$, and since the mesh error is negligible, it follows that
\begin{equation}\label{Eq:DeltaBlockStepIII}
    \delta_k^{\eps_d}
    \lesssim
    \sqrt{k\log(ed/k)}\,
    \frac{\sqrt{\eps_d}\bar\lambda_k}{\sqrt d}.
\end{equation}
Using
$\bar\lambda_k\asymp(d/k)^{\frac1p-\frac12}$ and $\eps_d\asymp1/\log\log d$, we conclude that
\begin{equation}\label{Eq:DeltaBlockFinalStepIII}
    \delta_k
    \lesssim
    \left(\frac{k}{d}\right)^{1/p}
    \left(
        \frac{C\log(ed/k)}{\log\log d}
    \right)^{\frac1{2\eps_d}}.
\end{equation}
Consequently,
\begin{equation}\label{Eq:BlockL2StepIII}
    \|\delta_k w_k\|_2
    \lesssim
    \left(\frac{k}{d}\right)^{1/2}
    \left(
        \frac{C\log(ed/k)}{\log\log d}
    \right)^{\frac1{2\eps_d}}.
\end{equation}

Summing over the dyadic indices $k\in\cR_2$, and using that the corresponding blocks have disjoint supports, we obtain, when $d\geq n\log(d)^C$ with $C$ large enough,
\[
    \left\|\sum_{k\in\cR_2}\delta_k w_k\right\|_2^2
    =
    \sum_{k\in\cR_2}\|\delta_k w_k\|_2^2
    \lesssim
    \log(d)^{-c_1(C)}
\]
for some $c_1(C)>0$.

This concludes Step~III. The argument above holds with probability at least $1-d^{-A}$ for every fixed $A>0$, after adjusting the constants in the concentration estimates. Indeed, to work on the exceptional set, if $w_2$ denotes the minimum-$\ell_2$ interpolator, then $\|\ermp\|_2\leq\lambda_p\|\ermp\|_p\leq\lambda_p\|w_2\|_p\leq\lambda_p\|w_2\|_2$, and the fixed moments of $\|w_2\|_2$ are bounded. Taking $A$ large makes the exceptional event negligible and yields the moment bounds stated in Theorem~\ref{T:WRD}.

\subsubsection{Step IV: Extending to $O(1)$-sparse and upper bounding $E_2$:}
Note that, if $w=\lerm(\vec Y)$, then
\[
    w=\argmin_{\vec X u=\vec Y}\|u\|_p^p,
    \qquad
    \|w\|_p^p=\|w_1\|_p^p+\|w_2\|_p^p,
\]
where $w_1$ is the projection of $w$ to the $k$ first coordinates of the $\ell_p$-ball in $\R^d$, and $w_2$ is onto the last $d-k$ coordinates. For simplicity assume that $k=1$, meaning that to control $T_2$ we need to study the $\ell_2$ norm of the $\ell_p$-MNI (w.r.t. $\ell_p^{d-1}$) of the vector
\[
    \vec\zeta + p(\vec\zeta,\vec X)P\lft
\]
for some $p(\vec\zeta,\vec X) \lesssim \sqrt{E_1} \lesssim \Tilde{O}(\cG_n(B_p^d)^{p})$, which follows from  \Cref{T:UC2Loc} just by controlling
\[
    \|\lft - \cP_{\lft}\lerm\|_{2} \lesssim \cG_n(B_p^d)^{p} 
\]
with high probability.  Note that our analysis holds for any \textbf{fixed} $\vec\zeta'$ that satisfies $\|\vec\zeta'\|_{n-1} \asymp \sqrt{\eps_d}$ with probability at least $1-\exp(-cn)$. So by taking a union bound over $\vec\zeta' = \vec\zeta + \alpha \cdot P\lft$ (by dividing the interval into, say, $d^{-100}$ intervals) and using the $2$-uniform convexity, the claim follows.
\subsubsection{Sub-Gaussian Covariates}

We now indicate the modifications needed for i.i.d.\ symmetric, isotropic
sub-Gaussian covariates satisfying the CCP with constant $L$.
The extension to a fixed $O(1)$-sparse ground truth is unchanged.

\paragraph{Step I.}
The main difference is that the Dvoretzky inclusion used in the Gaussian
case is no longer available. Since
$\cC_d\subset\widetilde B_p^d$, \Cref{Lem:Kashin} gives, with high
probability,
\[
    c B_2^n
    \subset P\cC_d
    \subset P\widetilde B_p^d.
\]
Together with the dual mean-width estimate from
\Cref{lem_M*_bound} and Gaussian concentration in $\vec\zeta$, this yields
\[
    \sqrt{\eps_d}
    \lesssim
    r=\|\vec\zeta\|_n
    \lesssim 1.
\]
Thus, compared with the Gaussian case, we lose the lower Euclidean
inclusion of radius comparable to $\eps_d^{-1/2}$.

\paragraph{Step II.}
For $k\in\cR_1$, the sparse projection estimate remains valid. Using the Kashin inclusion above in the
same variational argument as in the Gaussian case gives
\[
    \|\delta_k w_k\|_2
    \lesssim
    \left(\frac{k}{d}\right)^{1/2}
    \left(
        C\log\left(\frac{ed}{k}\right)
    \right)^{1/(2\eps_d)}.
\]
Here the factor $r$ is absorbed using $r\lesssim1$. Since the
corresponding shells have disjoint supports,
\[
    \sum_{k\in\cR_1}\|\delta_k w_k\|_2^2
    \lesssim
    \left(\frac{C}{\eps_d}\right)^{1/\eps_d}.
\]
This part of the argument only uses the sub-Gaussianity constant.

\paragraph{Step III.}
For the remaining largest coordinates, we apply Kashin's theorem after
deleting one column. A union bound makes the resulting Euclidean inclusion
simultaneous over all deleted columns. Consequently, conditionally on the
remaining columns, the corresponding quotient norm is
$C$-Lipschitz with respect to the Euclidean norm.

Applying the CCP to the deleted column, followed by a union bound over the
columns and a polynomial discretization of the scalar coefficient, gives
\[
    \|\ermp\|_\infty^2
    \lesssim
    \frac1d
    \left(
        C L^2\log(ed)
    \right)^{1/\eps_d}.
\]
The shells in $\cR_2$ are supported on at most
$Cn/\log(ed/n)$ coordinates. Hence
\[
    \left\|
        \sum_{k\in\cR_2}\delta_k w_k
    \right\|_2^2
    \lesssim
    \frac{n}{d\log(ed/n)}
    \left(
        C L^2\log(ed)
    \right)^{1/\eps_d}.
\]

Combining the two ranges and treating the exceptional event as in the
Gaussian case, by comparison with the minimum-$\ell_2$ interpolator, gives
\[
    \E\|\ermp\|_2^2
    \lesssim
    \left(\frac{C}{\eps_d}\right)^{1/\eps_d}
    +
    \frac{n}{d\log(ed/n)}
    \left(
        C L^2\log(ed)
    \right)^{1/\eps_d}.
\]
This proves the sub-Gaussian part of \Cref{T:WRD}, and hence that of
\Cref{T:VarLinOpt} by the scaling reduction.

The loss with respect to the Gaussian case comes from the bulk term:
\[
    \sup_{0<x\leq1}
    x\left(C\log\frac ex\right)^{1/\eps_d}
    \lesssim
    \left(\frac{C}{\eps_d}\right)^{1/\eps_d}.
\]
Thus, for this contribution to remain polylogarithmic in $d$, it is enough
to assume
\[
    \eps_d
    \gtrsim
    \frac{\log\log\log d}{\log\log d}.
\]
The CCP contribution remains separate.

\subsubsection{Proof of Corollary \ref{Cor:VarPaou}}
The quantity of interest is invariant by scaling in both $\vec X$ and $\vec \xi$. Here $\vec\xi$ denotes the fixed vector in the statement. Thus we work with $P = \frac{1}{\sqrt{d}}\vec X$ and assume that $\|\vec \xi\|_2=1$ for simplicity.
Recall the proof of \Cref{T:VarLinOpt} above. First, we showed that with probability of $1-\exp(-\Tilde{\Omega}(d^{\eps_d}))$
\[
    \ermp \in \widetilde B_p^d \cap \underbrace{C_1\sqrt{\eps_d} \cdot \exp(C/\eps_d) \cdot B_d}_{:=K}
\]
denote this event by $\cE_1$. Furthermore, we also show that  with probability of $1-C\exp(-cn)$,
\[
   c B_2^n \subset PK \text{ where $P = \frac{1}{\sqrt{d}}\vec X$}
\]
and denote this event by $\cE_2$. Truncating $\widetilde B_p^d$ to a $\ell_2$-ball with a smaller diameter will let us apply the Poincaré inequality or (Lipschitz concentration) in $P$ with a better constant rather than the entire $\widetilde B_p^d$. Furthermore, by rotational invariance of $\vec X$, we have the inequality in law $\|\vec\xi\|_{P\widetilde B_p^d} \sim \|U\vec\xi\|_{P\widetilde B_p^d}$ for any rotation $U\in O(n)$. In particular, we may assume that $\vec\xi$ is uniformly distributed on the unit sphere. From the proof of  \Cref{T:VarLinOpt}, we know that 
$$\|\vec\xi\|_{P\widetilde B_p^d} \lesssim 1$$
with probability at least $1-\exp(-cn)$. Calling this event $\cE_3$, we have that on $\cE:= \cE_1 \cap \cE_2\cap\cE_3$, 

$$\|\vec\xi\|_{P\widetilde B_p^d} = \|\vec\xi\|_{PK} \lesssim 1 \ \text{and } \ c B_2^n \subset PK.$$

Working on that event, we let $Pw = \vec \xi$ such that $\|w\|_{K} = \|\vec\xi\|_{PK}$. 
Note that  we may write
\[
\vec{\xi} = Pw = P'w + (P - P')w,
\]
Hence,
\[
\|\vec\xi\|_{P'K}
\le
\|P'w\|_{P'K} + \|(P - P')w\|_{P'K}.
\]
Since \(P'w \in P'K\), we have \(\|P'w\|_{P'K} \leq \|w\|_{K}\), and using that \(cB_2^n \subset P'K\) give
\[
\|(P - P')w\|_{P'K}
\le
\frac{1}{c}\|(P - P')w\|_2
\le
\frac{1}{c}\|P - P'\|_F \|w\|_2
\]
and therefore
\[
\|\vec\xi\|_{P'K}
\le
\|w\|_{K} + \frac{1}{c}\|P - P'\|_F \|w\|_2.
\]
Recalling that \(\|w\|_{K} = \|\vec \xi\|_{PK}\) and noting that \(\|w\|_2 \leq D(K)\|\vec \xi\|_{PK}\), we get
\[
\|\vec\xi\|_{P'K}
\le
\|\vec\xi\|_{PK}
\left(1 + \frac{D(K)}{c}\|P - P'\|_F\right).
\]
Exchanging the roles of $P$ and $P'$, we conclude that, on the event $\cE$,

\begin{equation}\label{eq1193}
    \bigl|\|\vec\xi\|_{PK} - \|\vec\xi\|_{P'K}\bigr|
\le
\frac{D(K)}{c}\|P' - P\|_F
\max\{\|\vec \xi\|_{PK}, \|\vec \xi\|_{P'K}\} \leq  \frac{D(K)}{c}\|P' - P\|_F,
\end{equation}
where the last inequality follows from the ball inclusion. 

Let $f(P) = \|\vec\xi\|_{P\widetilde B_p^d}$ and $g(P) = \|\vec\xi\|_{PK}$. From \eqref{eq1193} we learn that $f$ is $\frac{D(K)}{c}$-Lipschitz on the event $\cE$. To deal with the low probability event $\cE^c$ we will need the following standard lemma.
\begin{lemma}
Let $G\sim \mathcal N(0,\sigma^2 I_d)$, and let $f:\R^d\to\R$ be measurable. Assume that there exists a measurable set $A\subset \R^d$ such that
\[
\PP(G\in A)\geq 1-\eta,
\]
and such that $f_{|A}$ is $L$-Lipschitz. Set
\[
M=\EE[f(G)^4].
\]
Then
\[
\Var(f(G)) \leq 2\sigma^2L^2 + 10\sqrt{M\eta}.
\]
\end{lemma}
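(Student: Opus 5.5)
The plan is to split the variance according to whether $G$ lands in $A$. Write $\Var(f(G))=\E[(f(G)-m)^2]$. The quantity $m$ is a well-chosen constant; since the variance minimizes $\E(f-c)^2$ over constants $c$, any choice of $m$ gives an upper bound. The natural route is to replace $f$ by a globally Lipschitz function $\tilde f$ that agrees with $f$ on $A$.

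\textbf{Step 1: Lipschitz extension.} By Kirszbraun's theorem (or, in the scalar case, the McShane extension $\tilde f(x):=\inf_{a\in A}\{f(a)+L\|x-a\|_2\}$), there is an $L$-Lipschitz $\tilde f:\R^d\to\R$ with $\tilde f=f$ on $A$. The Gaussian Poincar\'e inequality for $N(0,\sigma^2I_d)$ then gives
\[
\Var(\tilde f(G))\le \sigma^2 L^2 .
\]
Note that $\tilde f$ has all moments, since $|\tilde f(x)|\le |\tilde f(0)|+L\|x\|_2$.

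\textbf{Step 2: compare $f$ and $\tilde f$.} Use $(a+b)^2\le 2a^2+2b^2$ with $a=\tilde f(G)-\E\tilde f(G)$ and $b=f(G)-\tilde f(G)$, and take $m=\E\tilde f(G)$:
\[
\Var(f(G))\le \E[(f(G)-m)^2]\le 2\sigma^2L^2+2\,\E\!\left[(f-\tilde f)^2(G)\mathbbm 1_{A^c}(G)\right].
\]
This already produces the first term with the stated constant $2$. For the remainder, Cauchy--Schwarz gives
\[
\E[(f-\tilde f)^2\mathbbm 1_{A^c}]\le \sqrt{\E(f-\tilde f)^4}\,\sqrt{\eta}.
\]
Then $\E(f-\tilde f)^4\le 8(\E f^4+\E\tilde f^4)$, so the remaining task is to bound $\E\tilde f(G)^4$ in terms of $M$.

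\textbf{Step 3 (main obstacle): controlling the extension's fourth moment.} The arbitrary extension could have large values, or a large mean, that $M$ does not see. I would first try to avoid the extension in the remainder altogether. Instead of $\tilde f$, use the decomposition $f=f\mathbbm 1_A+f\mathbbm 1_{A^c}$ and choose $m=\E[\tilde f(G)]$ carefully, bounding cross terms via $\E[f^2\mathbbm 1_{A^c}]\le\sqrt{M\eta}$ and $\E[\tilde f^2\mathbbm 1_{A^c}]$.

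If this is insufficient, I would clip: replace $\tilde f$ by $\hat f:=\max\{-T,\min\{\tilde f,T\}\}$ with $T=M^{1/4}/\eta^{1/4}$ or similar. Clipping preserves the Lipschitz constant and keeps $\hat f=f$ on $A\cap\{|f|\le T\}$. By Markov's inequality, $\Pr(|f|>T)\le M/T^4$, so the off-event remains small, and $\E\hat f^2\mathbbm 1_{A^c}\le T^2\eta$.

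Balancing $T$ should yield an error of order $\sqrt{M\eta}$. Tracking the numerical constants, using $(a+b)^2\le2a^2+2b^2$ and $\E[f^2\mathbbm 1_{\{\cdot\}}]\le\sqrt{M\Pr(\cdot)}$, should give a constant like $10$. I expect this constant bookkeeping, namely making sure the clipped extension's excess on $A^c$ is charged only to $\sqrt{M\eta}$, to be the delicate part.
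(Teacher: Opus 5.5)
Your clipping route is the paper's proof. The paper truncates $f$ at level $R$, takes a McShane extension of the truncation, and truncates again; this gives the same kind of bounded $L$-Lipschitz function as your clipped extension $\hat f$, namely one equal to the truncation of $f$ on $A$. It then applies Poincar\'e to get $\Var(\hat f(G))\le\sigma^2L^2$ and bounds $\E(f-\hat f)^2$ by $M/R^2$ on $A$ plus $2\sqrt{M\eta}+2R^2\eta$ on $A^c$, so your choice $T^2=\sqrt{M/\eta}$ gives $5\sqrt{M\eta}$ and exactly the constant $10$. Note that the clipping is essential rather than a fallback: the unclipped extension in your Step~2 can be large on $A^c$ in a way that $M$ does not detect, so you can drop that version and the ``avoid the extension'' detour.
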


\begin{proof}
Let $T_R(t)=\max(-R,\min(t,R))$. Since $T_R$ is $1$-Lipschitz, $(T_R\circ f)_{|A}$ is still $L$-Lipschitz. Let $h_0:\R^d\to\R$ be a McShane extension of $(T_R\circ f)_{|A}$, and set $h=T_R\circ h_0$. Then $h$ is globally $L$-Lipschitz, satisfies $h=T_R\circ f$ on $A$, and $|h|\leq R$ everywhere. Hence Gaussian Poincar\'e gives
\[
\Var(h(G))\leq \sigma^2L^2.
\]
Therefore
\[
\Var(f(G))
\leq 2\Var(h(G))+2\EE(f(G)-h(G))^2
\leq 2\sigma^2L^2+2\EE(f-h)^2.
\]

Now
\[
\EE(f-h)^2=\EE\bigl[(f-h)^2\mathbf{1}_A\bigr]+\EE\bigl[(f-h)^2\mathbf{1}_{A^c}\bigr].
\]
On $A$, since $h=T_R(f)$,
\[
(f-h)^2\leq f^2\mathbf{1}_{\{|f|>R\}},
\]
hence
\[
\EE\bigl[(f-h)^2\mathbf{1}_A\bigr]\leq \EE\bigl[f^2\mathbf{1}_{\{|f|>R\}}\bigr]\leq \frac{\EE[f^4]}{R^2}=\frac{M}{R^2}.
\]
On $A^c$, since $|h|\leq R$,
\[
(f-h)^2\leq 2f^2+2R^2,
\]
so by Cauchy--Schwarz,
\[
\EE\bigl[(f-h)^2\mathbf{1}_{A^c}\bigr]
\leq 2\EE\bigl[f^2\mathbf{1}_{A^c}\bigr]+2R^2\eta
\leq 2(\EE[f^4])^{1/2}\PP(A^c)^{1/2}+2R^2\eta
\leq 2\sqrt{M\eta}+2R^2\eta.
\]
Thus
\[
\EE(f-h)^2\leq \frac{M}{R^2}+2\sqrt{M\eta}+2R^2\eta.
\]
Choosing $R^2=\sqrt{M/\eta}$ yields
\[
\EE(f-h)^2\leq 5\sqrt{M\eta},
\]
and therefore
\[
\Var(f(G)) \leq 2\sigma^2L^2 + 10\sqrt{M\eta}.
\]
\end{proof}
In view of the above lemma, we will also need an estimate on $\Ef^4(P)$. Since $B_2\subset\widetilde B_p^d$, assuming that $d>2n+4$ we have the upper bound
\begin{align*}
    \EE f^4(P)
    &\leq \EE\|\vec\xi\|_{PB_2^d}^4
    = \EE \langle \vec\xi , (PP^T)^{-1}\vec\xi\rangle^2 \\
    &= d^2 \, \EE \langle \vec\xi , (XX^T)^{-1}\vec\xi\rangle^2
    = d^2 \, \EE \big((XX^T)^{-1}_{11}\big)^2 \\
    &= \frac{d^2}{(d-n-1)(d-n-3)}
    \leq 4.
\end{align*}
Since $\cE$ has probability at least $1-\exp\left(-c\min(n, d^{\eps_d})\right)$, we obtain that, when $\|\vec\xi\|_2=1$, 

\begin{align*}
    \Var f(P) &\lesssim \frac{D^2(K)}{d} + \exp\left(-c\min(n, d^{\eps_d})\right)\\
    & \lesssim \frac{D^2(K)}{d} + \exp\left(-cd^{\eps_d}\right) \lesssim \frac{D^2(K)}{d}
\end{align*}
The proof is complete since $\EE\|\vec \xi\|_{P\widetilde B_p^d} \gtrsim \sqrt{\eps_d}$.

\begin{remark}
Note that one can improve our variance estimate with substantial work by considering the $L_1-L_2$ Talagrand inequality that would give us another $\eps_d$ factor, when $p \geq 1 + \Omega(1/\log(d))$. As we know that the $\ermp$ is essentially, $\Theta(d \cdot 2^{-\tfrac{1}{\eps_d}})$ sparse. Consider,  \cite{Paorlp} bound mentioned above of order
$
   \frac{2^{q}}{d \cdot q^2}.
$
 The term $\frac{2^{q}}{d}$ emerges from truncation diameter, one $q$ term from $M^*(P\widetilde B_p^d)^2\asymp q$ and the additional factor of $q$ emerges from super-concentration via $L_1-L_2$ that is the log of the sparsity level, which is of order $q$.

\end{remark}
\bibliography{Bib}

\end{document}